\numberwithin{equation}{section}
\newtheorem{theorem}{Theorem}[section]
\newtheorem{lemma}[theorem]{Lemma}
\newtheorem{proposition}[theorem]{Proposition}
\newtheorem{corollary}[theorem]{Corollary}
\newtheorem{rem}[theorem]{Remark}
\newtheorem{definition}[theorem]{Definition}
\newtheorem{conjecture}[theorem]{Conjecture}
\newtheorem*{assA}{Assumption (A)}
\newtheorem*{assB}{Assumption (B)}
\newcommand{\bbE}{{\ensuremath{\mathbb E}} }
\newcommand{\bbL}{{\ensuremath{\mathbb L}} } 
\newcommand{\bbN}{{\ensuremath{\mathbb N}} } 
\newcommand{\bbP}{{\ensuremath{\mathbb P}} } 
\newcommand{\bbR}{{\ensuremath{\mathbb R}} } 
\newcommand{\bbS}{{\ensuremath{\mathbb S}} }
\renewcommand{\P}{{\ensuremath{\bbP}} } 
\renewcommand{\tilde}{\widetilde}          
\DeclareMathSymbol{\leqslant}{\mathalpha}{AMSa}{"36} 
\DeclareMathSymbol{\geqslant}{\mathalpha}{AMSa}{"3E} 
\DeclareMathSymbol{\eset}{\mathalpha}{AMSb}{"3F}     
\renewcommand{\leq}{\;\leqslant\;}                   
\renewcommand{\geq}{\;\geqslant\;}                   
\newcommand{\dd}{\mathrm{d}}             
\DeclareMathOperator*{\Cov}{\mathrm{Cov}}  
\newcommand{\C}{\mathbb{C}}
\newcommand{\R}{\mathbb{R}}
\newcommand{\N}{\mathbb{N}}
\newcommand{\E}{\mathds{E}}
\newcommand{\Pb}{\mathds{P}}
\newcommand{\ind}{\mathds{1}}
\renewcommand{\ge}{\geq}
\renewcommand{\le}{\leq}
\newcommand{\gga}{\gamma}
\newcommand{\gl}{\lambda}
\newcommand{\gb}{\beta}
\newcommand{\gep}{\varepsilon}
\newcommand{\go}{X}
\renewcommand{\log}{\ln}
\title{Complex Gaussian multiplicative chaos}
\date{}
\begin{document}

\maketitle
\begin{center}

{ Hubert Lacoin \footnotemark[1],\ R\'emi Rhodes \footnotemark[2]\footnotemark[4],\ 
Vincent Vargas \footnotemark[3]\footnotemark[4]}

\bigskip

\footnotetext[1]{IMPA, Intitudo Nacional de Matem\`atica Pura e Aplicada,
Estrada Dona Castorina 110,
Rio de Janeiro / Brasil 22460-320} 
\footnotetext[2]{Universit{\'e} Paris-Est Marne la Vall\'ee, LAMA, Champs sur Marne, France.}
\footnotetext[3]{\'Ecole Normale Supérieure, DMA, 45 rue d'Ulm,  75005 Paris, France.}
 \footnotetext[4]{Partially supported by grant ANR-11-JCJC  CHAMU}

\end{center}

\begin{abstract}
In this article, we study complex Gaussian multiplicative chaos. More precisely, we study the renormalization theory and the limit of the exponential of a complex log-correlated Gaussian field in all dimensions (including  Gaussian Free Fields in dimension 2). Our main working assumption is that the real part and the imaginary part are independent. We also discuss applications in $2D$ string theory; in particular we give a rigorous mathematical definition of the so-called Tachyon fields, the conformally invariant operators in critical Liouville Quantum Gravity with a $c=1$ central charge, and derive the original KPZ formula for these fields.
\end{abstract}
\vspace{0.3cm}
\footnotesize


\noindent{\bf Key words or phrases:} Random measures, complex Gaussian multiplicative chaos, tachyon fields, multifractal.

\medskip

\medskip

\normalsize


\section{Introduction}

In dimension $d$, a real Gaussian multiplicative chaos is a random measure on a given domain $D$ of $\R^d$ that can be formally written,  for any Borel set $A\subset D$ as:
\begin{equation}\label{measintrorev}
M^\gamma(A)=\int_Ae^{\gamma X(x)-\frac{\gamma^2}{2}\E[X^2(x)]}\,\dd x,
\end{equation}
 where $\dd x$ stands for the Lebesgue measure on $D$ (more generally Gaussian multiplicative chaos can also be constructed when $\dd x$ is replaced 
 by any Radon measure:  see \cite{review} 
 for a recent review on the subject) and $X$ is a centered Gaussian distribution possessing a covariance kernel of the form:
\begin{equation}\label{Kintrorev}
\E[X(x)X(y)]=\ln _+\frac{1}{|x-y|}+g(x,y),
\end{equation} with $\ln _+(u)=\max(\ln u , 0)$ and $g$  a continuous  function over $D\times D$. The covariance kernel thus possesses a singularity along the diagonal and it is clear that
making sense of \eqref{measintrorev} is not straightforward (how do you define the exponential of a distribution?). The standard approach consists in applying a ``cut-off " to the distribution $X$, that is in regularizing the field $X$ in order to get rid of the singularity of the covariance kernel. The regularization usually depends on a small parameter, call it $\gep$, that stands for the extent to which the field has been regularized. The measure \eqref{measintrorev} is naturally understood as the 
limit of the random measures:
\begin{equation}\label{limitreal}
M^\gamma_\gep(A)\index{Mgamma@$M^\gamma_\gep$}=\int_Ae^{\gamma X_\gep(x)-\frac{\gamma^2}{2}\E[X_\gep^2(x)]}\,\dd x.
\end{equation}
 when the regularization parameter $\gep$ goes to $0$. It is well known \cite{cf:Kah,Bar,review} that this procedure  produces non trivial limiting objects when the real parameter $\gamma$ is strictly less than the critical value $\gamma_c=\sqrt{2d}$.  

The critical case, i.e. $\gamma= \gamma_c$, has been investigated in \cite{Rnew7,Rnew12}. An extra renormalization term is necessary to obtain
 a non-degenerate random limit $M'$ \index{Mprime@$M'$}. This limiting measure is also called the \textit{derivative Gaussian multiplicative chaos} because it can also be obtained by differentiating \eqref{measintrorev} with respect to the parameter $\gamma$:
\begin{equation}\label{deriv}
M'(A)=\lim_{\gep\to 0} \int_A\big(\gamma_c\E[X_\gep^2(x)]-X_\gep(x)\big)e^{\gamma_c X_\gep(x)-\frac{\gamma_c^2}{2}\E[X_\gep^2(x)]}\,\dd x.
\end{equation}
This object has recently received much attention because of its fundamental role in analyzing the behaviour of the maximum of log-correlated Gaussian fields. The reader is referred to the papers \cite{DZ,Louisdor,maxmadaule} for substantial recent advances on this topic.
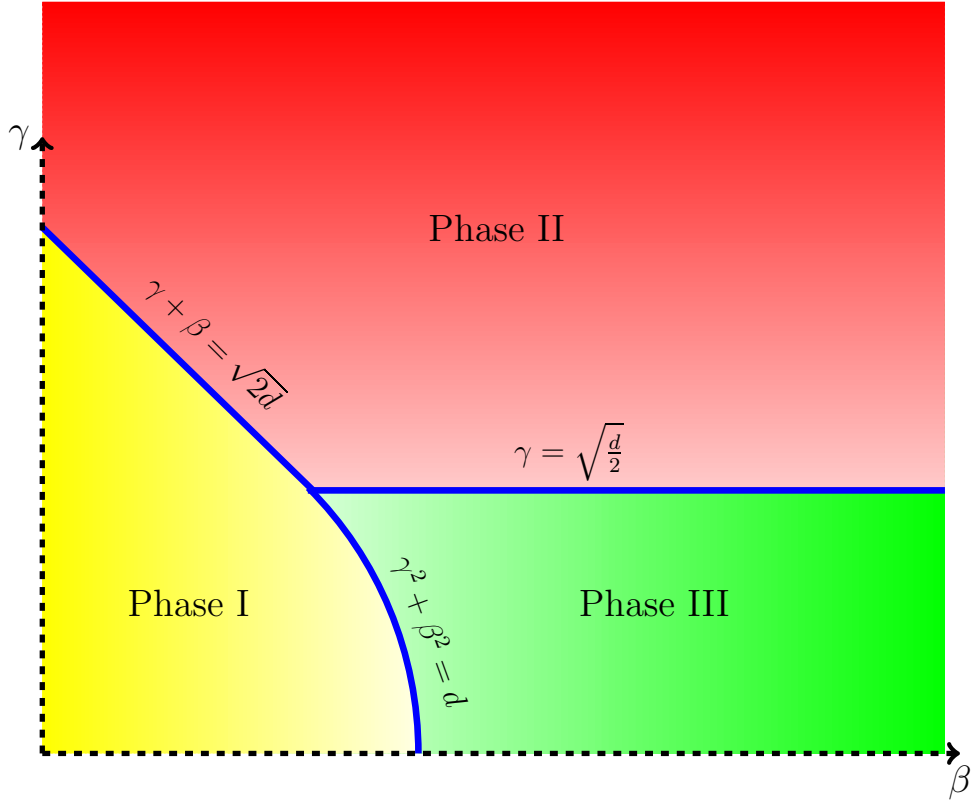
\begin{figure}[t]
\centering
\begin{tikzpicture}
\shade[left color=green!10,right color=green] (3,0) -- (12,0) -- (12,6) -- (3,6) ;
\shade[top color=red, bottom color=red!20] (3.5,3.5) -- (12,3.5) -- (12,10) -- (0,10) -- (0,7);
\shade[left color=yellow,right color=yellow!10] (5,0) arc (0:45:5) -- (0,7) -- (0,0) -- (5,0);
\draw [line width=2.5pt,color=blue](3.53,3.5) -- (12,3.5);
\draw [line width=2.5pt,color=blue](3.6,3.5) -- (0,7);
\draw [line width=2.5pt,color=blue](5,0)  arc (0:45:5);
\draw[style=dashed,line width=2pt,->] (0,0) -- (0,8.2) node[left]{{\Large $\gamma$}};
\draw[style=dashed,line width=2pt,->] (0,0) -- (12.2,0)node[below]{{\Large $\beta$}};
\draw (7,3.5)  node[above]{{\large $\gamma=\sqrt{\frac{d}{2 }}$}};
\draw (2,5.2)  node[above,rotate=-45,line width=2pt]{{\large $\gamma+\beta=\sqrt{2d}$}};
\draw (4.8,1.5)  node[above,rotate=-70,line width=2pt]{{\large $\gamma^2+\beta^2=d$}};
\draw (1,2)  node[right]{{\Large Phase I}};
\draw (5,7)  node[right]{{\Large Phase II}};
\draw (7,2)  node[right]{{\Large Phase III}};
\end{tikzpicture}
\caption{Phase diagram}
\label{diagram}
\end{figure}
The super-critical case $\gamma>\gamma_c$ has been investigated in \cite{MRV}: we will recall the obtained results at the beginning of section \ref{sec:conj}.

The standard theory of Gaussian multiplicative chaos has found many applications in finance, Liouville Quantum Gravity \index{Liouville Quantum Gravity} or turbulence (see \cite{review} and references therein). Yet, the need of understanding the renormalization theory of Gaussian multiplicative chaos with a 
complex value of the parameter $\gamma$ has emerged. This is for instance the case in $2D$-string theory and more precisely when looking at conformal field theories coupled to gravity (see below). 

In this paper, we consider two independent identically distributed centered Gaussian distributions $X$ and $Y$, each of which with covariance kernel of the type \eqref{Kintrorev}. By considering their respective regularizations $(X_\gep)_\gep$ and $(Y_\gep)_\gep$, the problem addressed here is to find a proper renormalization as well as the limit of the family of complex random measures \index{Mgammabeta@$M^{\gamma,\beta}_\gep$}:
\begin{equation}\label{intromeps}
M^{\gamma,\beta}_\gep(A)=\int_A e^{\gamma X_\gep(x)+i\beta Y_\gep(x)}\,\dd x
\end{equation}
where $\gamma,\beta$ are real constants. The convention for $M^{\gamma,\beta}_\gep$ we will use in this paper is thus different than the one sometimes used in the real case, i.e. \eqref{limitreal} where one already renormalizes by the mean in the definition. Notice that we may restrict to the case when $\gamma,\beta$ are nonnegative by symmetry of the Gaussian law. We will see that the renormalization theory of these measures presents three phases, summarized in Figure \ref{diagram}, depending on the considered values of $\gamma$ and $\beta$.

 It is of course natural to consider other frameworks than the one considered in this paper. For example, one can consider the general case where for all fixed $x$ the variables $X_\gep(x)$ and $Y_\gep(x)$ have some fixed correlation $\rho \in [-1,1]$. In this case, one expects a similar phase diagram to hold, at least at the level of the free energy of the system. More specifically, if $A$ is a fixed set, the following limit  
\begin{equation*}
\underset{\gep \to 0}{\lim} \: \frac{1}{\gep}  \,   \ln |M^{\gamma,\beta}_\gep(A)|
\end{equation*}
should be independent from $\rho$. However, we are not aware of a solid argument which would say that the proper renormalization of $M^{\gamma,\beta}_\gep$ (which is the object of this work in the case $\rho=0$) is independent from $\rho$ (except in the inner phase I where it is a consequence of the results of \cite{bar:comp1,bar:comp2} in dimension 1). Let us stress that the general case is of course interesting (and natural to look at in the case $\rho=1$)  but its study requires new ideas beyond the methodology of this paper.

\subsection{Previous related works}

We first mention the recent work \cite{KaKli} where the authors conduct a thorough study of all phases (inner and frontier) in the simpler context of the complex Random Energy Model (REM) partition function. They give the precise asymptotics of all phases. Note that in this context, phase I (inner and frontier) is trivial at order 1 as the (mean) renormalized partition function converges to a non vanishing constant: this is due to the lack of correlations in the model. Hence, in \cite{KaKli}, the authors go in fact one step further as they give the fluctuations.

As is now well known, correlations may be added for instance on a tree structure like Mandelbrot multiplicative cascades. In this context, this problem is investigated in \cite{bar:comp1,bar:comp2,derrida}. In the pioneering work \cite{derrida}, the authors computed the free energy and deduced a phase diagram similar to our Figure \ref{diagram}. In \cite{bar:comp1}, the authors treat the case of dyadic multiplicative cascades. Since this model is $1$-dimensional, they consider the complex random measure $M^{\gamma,\beta}_\gep$ as a random function $t\mapsto M^{\gamma,\beta}_\gep([0,t])$. Translated in our context, the results in \cite{bar:comp1} are the following:
 \begin{itemize}
 \item
 Phase I: the authors prove  that there is almost sure convergence in the space of continuous functions when $M^{\gamma,\beta}_\gep$ is renormalized by its mean. In fact, in the companion paper \cite{bar:comp2}, the authors show almost sure convergence in the space of continuous functions for a class of models that includes the one we consider here (except for Gaussian free fields).   
 \item
 Phase II, frontier  I/II, frontier II/III and triple point: not investigated.
 \item
 Phase III and frontier I/III::  the authors show that the sequence is tight when $M^{\gamma,\beta}_\gep$ is properly renormalized (in this case, the renormalization is different than the one in phase I).  Convergence is not investigated.
 \end{itemize}
We further stress that the authors in \cite{bar:comp1,bar:comp2} do not prove convergence in law  in phase III but claim that if convergence holds then every possible limit is a Brownian motion in multifractal time. The argument is based on the uniqueness property of the solution of some fixed point equation, the star equation for multiplicative cascades.  The corresponding equation for Gaussian multiplicative chaos has been introduced in  \cite{Rnew1} but uniqueness has only been established in the real subcritical case so far. So the same argument cannot be used in the context of Gaussian multiplicative chaos. 


\subsection{Content of the paper} 
The purpose of this manuscript is not only to investigate the phase diagram in the context of Gaussian multiplicative chaos but also to describe the limiting object  that we obtain when  renormalizing properly the family $(M^{\gamma,\beta}_\gep)_\gep$ as $\gep\to 0$. We will first show that the model exhibits three phases, which are represented in Figure \ref{diagram}. We will also describe the limiting objects after renormalization. Apart from  the inner phase I and the frontier of phases I/II where the renormalization procedure produces new objects, we  prove that the renormalization procedure leads to objects that can be described in terms of the limiting measures that we get on the real line $\beta=0$. Roughly speaking, in phase II (excluding frontier I/II and the real line $\beta=0$) and in phase III (and the frontier I/III), we get a 
complex Gaussian white noise (see \eqref{defwhitenoise} for a definition) with a random intensity: the real part $X_\gep$ governs the description of the intensity whereas all the information about the field $Y$ is lost into a white noise (a similar phenomenon is observed in \cite{DoLeWi} for a different model). Figure \ref{limit} is a brief yet complete description of the picture we draw. Actually, we do not treat the triple point and the description 
we give in the inner phase II is a conjecture: our method to explain how the complex Gaussian random measure appears indicates that the inner phase II can be described by a complex Gaussian random measure with random intensity given by the objects we get  in the real supercritical case ($\gamma>\sqrt{2d}$): see section \ref{sec:conj} for more on this. We will also detail applications in $2D$-string theory, which are summarized below.

\begin{figure}[t]
\centering
\begin{tikzpicture}
\shade[left color=green!10,right color=green] (3,0) -- (12,0) -- (12,6) -- (3,6) ;
\shade[top color=red, bottom color=red!20] (3.5,3.5) -- (12,3.5) -- (12,10) -- (0,10) -- (0,7);
\shade[left color=yellow,right color=yellow!10] (5,0) arc (0:45:5) -- (0,7) -- (0,0) -- (5,0);
\draw [line width=23pt,color=blue!50](3.53,3.5) -- (12,3.5);
\draw [line width=23pt,color=blue!50](3.6,3.5) -- (-0.2,7.2);
\draw [line width=23pt,color=blue!50](5,0)  arc (0:45:5);
\draw (8.5,3.2)  node[above]{Limit: $W_{\sigma^2 M'}$ $(|\ln\gep|^{1/4})$};
\draw (1.6,4.9)  node[above,rotate=-45,line width=2pt]{new family ($\gep^{ \frac{\gamma^2}{2}-\frac{\beta^2}{2}}$)};
\draw (4.1,1.5)  node[above,rotate=-70,line width=2pt]{$W_{\sigma^2M^{2\gamma}}$ ($\frac{\gep^{ \gamma^2-\frac{d}{2}}}{|\ln\gep|^{\frac{1}{2}}}$)};
\draw (1,3)  node[right]{{\Large Phase I}};
\draw (0.2,2.3)  node[right]{Limit: new family };
\draw (0.9,1.7)  node[right]{($\gep^{ \frac{\gamma^2}{2}-\frac{\beta^2}{2}}$)};
\draw (7,7.5)  node[right]{{\Large Phase II}};
\draw (6.2,6.8)  node[right]{Limit: $W_{N^\alpha_{M'}}$ with $\alpha=\sqrt{d/2}\gamma^{-1}$ };
\draw (7,6.1)  node[right]{$(|\ln\gep|^{\frac{3\gamma}{2\sqrt{2d}}}\gep^{\gamma\sqrt{2d}-d})$ };
\draw (7,2)  node[right]{{\Large Phase III}};
\draw (7,1.3)  node[right]{Limit: $W_{\sigma^2 M^{2\gamma,0}}$ ($\gep^{ \gamma^2-\frac{d}{2}}$)};
\node[draw,fill=white] (M) at (3.5,6.9) {Limit: $\sigma^2 M'$ $(\gep^{d} |\ln\gep|^{1/2})$};
\draw[->,line width=1.5pt] (M) -- (0.1,7);
\node[draw,fill=white] (R) at (5,9) {Limit: $\sigma^2 N^\alpha_{M'}$ with $\alpha=\frac{\sqrt{2d}}{\gamma}$ $(|\ln\gep|^{\frac{3\gamma}{2\sqrt{2d}}}\gep^{\gamma\sqrt{2d}-d})$};
\draw[->,line width=1.5pt] (R) -- (0.05,9.8);
\draw[->,line width=1.5pt] (R) -- (0.05,8.5);
\draw[->,line width=1.5pt] (R) -- (0.05,7.5);
\node[draw,fill=white] (M) at (7.3,4.6) {Triple point:  $W_{\sigma^2M'}$ ($|\ln \gep|^{-1/4}$)};
\draw[->,line width=1.5pt] (M) -- (3.8,4);
\draw[color=white,fill=white] (-0.5,4.5) -- (0,4.5) -- (0,8) -- (-0.5,8);
\draw[style=dashed,line width=2pt,->] (0,0) -- (0,10.2) node[left]{{\Large $\gamma$}};
\draw[style=dashed,line width=2pt,->] (0,0) -- (12.2,0)node[below]{{\Large $\beta$}};
\end{tikzpicture}
\caption{Limiting measure diagram. 
We indicate between brackets how to renormalize the field 
$M^{\gamma,\beta}_\gep$ and the limiting field. For instance in phase III, ``Limit $W_{\sigma^2M^{2\gamma,0}}$ ($\gep^{ \gamma^2-\frac{d}{2}}$)" means that the field $\gep^{ \gamma^2-\frac{d}{2}}M^{\gamma,\beta}_\gep$ converges as $\gep\to 0$ towards $W_{\sigma^2M^{2\gamma,0}}$. Now we explain the description of the limiting law: conditionally on $\mu$, $W_{\mu}$ \index{Wmu@$W_\mu$}stands for a complex Gaussian random measure with intensity $\mu$. Conditionally on $\mu$, $N^\alpha_\mu$ is a $\alpha$-stable Poisson random measure with intensity $\mu$. $M'$ is the derivative martingale and $M^{2\gamma,0}$ is a standard Gaussian multiplicative chaos with intermittency parameter $2\gamma$. The constant $\sigma^2$ depends on $\gamma$ and $\beta$.}
\label{limit}
\end{figure}

\subsection{Applications in $2D$-string theory} 
$2D$ string theory is the coupling of the Polyakov action with the action of $2D$-gravity on a two-dimensional surface $\Sigma$. Polyakov \cite{Pol} showed that $2D$ string theory could be interpreted as a theory of two dimensional Liouville quantum field theory (LFT for short, see \cite{Nak} for an exhaustive review on the topic and \cite{DKRV} for a mathematical construction) together with an independent Gaussian Free Field \index{Gaussian free field (GFF)} $Y$. The metric on $\Sigma$ is thus a random variable, which roughly takes on the form \cite{Pol,cf:KPZ,cf:Da} (we consider an Euclidean background metric for simplicity):
\begin{equation}\label{tensorintro}
g(z)=e^{2 X(x)}dx^2,
\end{equation}
  and $X$ is a Liouville field, the fluctuations of which   are governed by the Liouville action. If one neglects the cosmological effects in LFT, the field $X$ becomes a Free Field, with appropriate mean and boundary conditions. This framework is also conjectured to be  the  scaling limit of critical statistical physical models having a $c=1$ central charge (like the $O(n=2)$ loop model or the $4$-state Potts model) defined on random lattices. We do not review here the huge amount of work on this topic and we refer the reader to \cite{david-hd,cf:Da,cf:DuSh,DFGZ,DistKa,bourbaki,GM,glimm,Kle,cf:KPZ,Nak,Pol} for further insights.

In $2D$-string theory, the so-called {\bf tachyon fields} \index{tachyon field} $T$ are the operators which are conformally invariant within the theory (see the excellent reviews \cite{Kle,Nak}).   In this paper, we will mathematically construct the tachyon fields
$$e^{\gamma X(x)+i\beta Y(x)}\,\dd x$$ for $\gamma\pm\beta=2$ and $\gamma\in ]1,2[$, where following the above discussion $X$ and $Y$ are two independent Gaussian Free Fields.

Finally, we will also derive the corresponding KPZ formula (see \cite{cf:KPZ})
$$\Delta^0_{i\beta}= \Delta^q_{i\beta}+\Delta^q_{i\beta}(\Delta^q_{i\beta}-1),$$
which is a relation between the conformal dimension $\Delta^0_{i\beta}$ of the 
spinless vertex operator $e^{i\beta Y}$ and the quantum dimension $\Delta^q_{i\beta}$ 
 of this operator gravitationally dressed. 
The reader is referred to Section \ref{LQG} for further details.

\subsection*{Acknowledgements}
We would like to thank F. David, P. Le Doussal, B. Duplantier and S. Sheffield for interesting discussions on these topics. Special thanks are addressed to R. Allez for his help during the early stages of this project. Finally, we would like to thank the anonymous referees for their very careful reading of a first version of this manuscript.

\section{Setup } \label{mainsetup}
Let us introduce a canonical family of log-correlated Gaussian distributions, called star scale invariant\index{star scale invariant}, and their cut-off approximations, which we will work with in the first part of  this paper. Of course, other natural choices are possible and they are discussed in subsection \ref{extension} below.

We consider a positive definite function $k$. For technical reasons 
we assume that $k$ satisfy the following assumption.

\begin{assA} There exists a constant $C_k$ such that
\begin{description}\label{assA}
\item[A1.] $k$ is  normalized by the condition $k(0)=1$,
\item[A2.] $|k(x)|\leq C_k(1+|x|)^{-\nu}$ for some $\nu>d$.
\item[A3.] $|k(x)-k(0)|\leq C_k|x|  $ for all  $x\in\R^d$.
\end{description}
\end{assA}

We set for $\gep\in ]0,1]$ and $x\in\R^d$
 \begin{equation}\label{deflenoyau}
K_\gep(x)\index{Keps@$K_\gep$}=  \int_1^{\frac{1}{ \gep }}  \frac{k(xu)}{u}\dd u
 \end{equation}
 and  $$G_\gep(x)=e^{-K_\gep(x)}.$$ \index{Geps@$G_\gep$} 
We consider two independent families of centered Gaussian processes $(X_\gep(x))_{x\in\R^d,\gep\in]0,1]}$ and $(Y_\gep(x))_{x\in\R^d,\gep\in]0,1]}$ with covariance kernel given by:
 \begin{equation}\label{ladefcorrel}
\forall\gep,\gep'\in ]0,1],\quad  \E[ X_\gep(x) X_{\gep'}(y)   ]= \E[ Y_\gep(x) Y_{\gep'}(y)   ]
= K_{\gep \vee \gep'}  (y-x ),
 \end{equation}
where $\gep \vee \gep':= \sup (\gep,\gep')$. The construction of such fields is possible via a white noise decomposition as explained in \cite{Rnew1}. For the sake of completeness, we recall this construction here. First, given $\nu(\dd x)$ a locally finite measure one $\bbR^d$, we define $W_\nu$
the complex white noise or standard complex Gaussian measure
with intensity $\nu$ to be the centered Gaussian process indexed by the Borelians of $\bbR^d$, for which real and imaginary part are independent
and whose covariance is given by

\begin{equation}\label{defwhitenoise}
 \E[W_\nu(A)\overline{ W}_\nu(B)]=\nu(A\cap B).
\end{equation}
One can similarly define a real white noise where in definition \eqref{defwhitenoise} there is no complex conjugate.  

Let $F$ be the symmetric spectral measure associated to $k$ by the following relation for all $x \in \R^d$
\begin{equation*}
k(x)= \int_{\R^d} e^{i \xi . x}  \, F(\dd \xi).
\end{equation*}
The process $(X_\gep(x))_{x\in\R^d,\gep\in]0,1]}$ is defined by 
\begin{equation}\label{defprocessus}
X_\gep(x)=  \int_{\R^d}  \int_1^{\frac{1}{\gep}}  \cos (u x. \xi)  W(u, \dd \xi)  + \int_{\R^d}  \int_1^{\frac{1}{\gep}}  \sin (u x. \xi)  W'(u, \dd \xi)
\end{equation}
where $x.\xi$ denotes the standard scalar product between $x$ and $\xi$ and $W,W'$ are two independent real white noises associated to the measure $\frac{\dd u}{u} F(\dd \xi)$. Note that $X_\gep$ is an approximation of the field $X$ defined by: 
\begin{equation*}
X(x)=  \int_{\R^d}  \int_1^{\infty}  \cos (u x. \xi)  W(u, \dd \xi)  + \int_{\R^d}  \int_1^{\infty}  \sin (u x. \xi)  W'(u, \dd \xi),
\end{equation*}
in the sense that $X_\gep$ converges almost surely to $X$ in the space of tempered distributions as $\gep$ goes to $0$.
Now, the covariance of $X_\gep$ given by \eqref{defprocessus} is for $\gep' \leq \gep$
\begin{align*}
\E[ X_\gep(x) X_{\gep'}(y)   ] & =    \int_{\R^d}  \int_1^{\frac{1}{\gep}}  \cos (u x. \xi) \cos (u y. \xi)  \frac{\dd u}{u} F(\dd \xi)  + \int_{\R^d}  \int_1^{\frac{1}{\gep}}  \sin (u x. \xi) \sin (u y. \xi)  \frac{\dd u}{u} F(\dd \xi)  \\
& =    \int_1^{\frac{1}{\gep}}  \int_{\R^d} \cos (u (x-y). \xi)   F(\dd \xi) \frac{\dd u}{u}\\
& =    \int_1^{\frac{1}{\gep}} \int_{\R^d}  e^{ i u (x-y). \xi}   F(\dd \xi) \frac{\dd u}{u}\\
& =     \int_1^{\frac{1}{\gep}}  k( u (x-y))  \frac{\dd u}{u}, 
\end{align*}
hence the covariance of $X_\gep$ satisfies \eqref{ladefcorrel} (therefore the field $X$ has covariance $ \int_1^{\infty}  k( u (x-y))  \frac{\dd u}{u}$ which is of the form \eqref{Kintrorev}).  

 Now, we  set: 
\begin{align*}%
\mathcal{F}^X_{\gep}\index{FXeps@$\mathcal{F}^X_{\gep}$}= \sigma \lbrace X_u(x);x\in\R^d,u \geq \gep\rbrace\quad & \text{and}\quad \mathcal{F}^Y_{\gep}  \index{FYeps@$\mathcal{F}^Y_{\gep}$}= \sigma \lbrace Y_u(x);x\in\R^d,u \geq \gep\rbrace,   \\
\mathcal{F}^X \index{FX@$\mathcal{F}^X$} = \sigma \lbrace X_u(x);x\in\R^d,u \in ]0,1]\rbrace\quad &\text{and}\quad \mathcal{F}^Y \index{$\mathcal{F}^Y$}= \sigma \lbrace Y_u(x);x\in\R^d,u \in ]0,1]\rbrace   
\end{align*} 
and
$$\mathcal{F}_{\gep} = \sigma \lbrace X_u(x),Y_u(x);x\in\R^d,u \geq \gep\rbrace    .$$
An important aspect of the construction is the fact  that, for $u< \gep$, the field $(X_u(x)-X_\gep(x))_{x \in \R^d}$ is independent from $\mathcal{F}_{\gep}$. Then we consider the following locally finite complex measure:
\begin{equation}\label{def:meas}
\forall A\in \mathcal{B}(\R^d),\quad M^{\gamma,\gb}_\gep(A)=\int_Ae^{\gamma X_\gep(x)+ i \gb Y_\gep (x)}\,\dd x.
\end{equation}
 Let us finally notice that $K$ can be approximated as follows for all $R>0$
 \begin{equation}\label{froomk}
 \forall x\in B(0,R),\quad  \big| K_\gep(x)- |\log (|x|\vee \gep) | \big |\le C_R.
 \end{equation}
 
 \begin{rem}
 Note that in \eqref{def:meas}, contrary to the real case \eqref{limitreal}, 
 we have chosen not to
 include a term $\gamma^2\bbE[(X_\gep(x))^2]/2$ (and a similar one for $Y$).
 In particular, note that $M^{\gamma,\gb}_\gep(A)$ is not a martingale in $\gep$.
 \end{rem}

\begin{rem}
Let us comment a bit Assumption (A):
\begin{itemize}
\item
Assumption A.1 is just a normalization convention ensuring that the kernel $K_\gep(x)$ satisfies \eqref{froomk}. Changing the value of $k(0)$ has the same effect as changing the value of $\gamma,\beta$ in \eqref{def:meas} hence it is no restriction.  
\item
Assumption A.2 is a technical condition ensuring that the integral \eqref{deflenoyau} is convergent.
\item
Assumption A.3 is a technical condition which was used in the construction of the critical Gaussian multiplicative chaos in \cite{Rnew7,Rnew12}; however, it can probably be relaxed to a H\"older type condition  in \cite{Rnew7,Rnew12}  but for the sake of clarity we do not go into the details here. Hence, this assumption is only used to deal with the frontier II/III (elsewhere, one can just suppose that $k$ is continuous).    
\end{itemize}

\end{rem}

\subsection{Examples and other frameworks}\label{extension}
Let us also mention here some  important Gaussian fields covered by  our methods:
\begin{enumerate}
\item {\bf Exact scale invariant kernels} like the one studied in \cite{bacry,Rnew9}.
At first sight, these kernels do not satisfy Assumption (A) as 
they cannot be written as $\int_1^{\infty}\frac{k(ux)}{u}\,\dd u$. 
Yet, they can be written as $\int_1^{\infty}\frac{k(ux)}{u}\,\dd u+H(x)$ 
for some continuous translation invariant covariance kernel $H$. 
Adding $H$ in the covariance is equivalent to adding an independent 
smooth field with covariance $H$ to $X_\gep$ and $Y_\gep$.
Our proofs are easy to adapt in that case.

\item {\bf Massive Gaussian Free Field}  (MFF for short) on $\R^2$. 
The whole plane MFF is a centered Gaussian distribution with covariance kernel given by the Green function of the operator $\frac 1{2\pi} (m^2-\triangle)$ on $\R^2$, i.e. by:
\begin{equation}\label{MFFkernel}
\forall x,y \in \R^2,\quad G_m(x,y)=\int_0^{\infty}e^{-\frac{m^2}{2}u-\frac{|x-y|^2}{2u}}\frac{\dd u}{2 u}.
\end{equation}
  The real $m>0$ is called the mass. This kernel is of $\sigma$-positive type in the sense of Kahane \cite{cf:Kah} since we integrate a continuous function of positive type with respect to a positive measure. It is furthermore  a star-scale invariant kernel (see \cite{Rnew1,sohier}): it can be rewritten as 
\begin{equation}\label{MFF3}
G_m(x,y)=\int_{1}^{+\infty}\frac{k_m(u(x-y))}{u}\,\dd u.
\end{equation}
 for the continuous covariance kernel $k_m=\frac{1}{2}\int_0^\infty e^{-\frac{m^2}{2v}|z|^2-\frac{v}{2}}\,dv$.   
\item {\bf Gaussian Free Fields} in a compact domain. In dimension $2$, an important family of Gaussian distributions  is the family of Gaussian free fields (see \cite{glimm,She07} for instance). 
They do not satisfy Assumption (A) (in particular they are not translation invariant) and some modifications are needed to adapt the proofs to this case. 
 Because of the importance of applications in this context, we treat specifically these fields  in section \ref{GFF}.  
Applications are given in section \ref{LQG}.
\item  {\bf Log-correlated Gaussian Fields} (LGF) with covariance kernel given by $(m^2-\triangle)^{-d/2}$ in any dimension $d$ for $m\geq 0$ (see \cite{LCGF,LSSW} for instance). Furthermore, in the case of the whole plane and $m>0$, the Green function of the operator $(m^2-\triangle)^{d/2}$ is a star scale invariant kernel.  
\end{enumerate}

\subsection{Notations}
We will further denote by $C(E,F)$ the space of continuous functions from $E$ to $F$. The notation $f(x)\simeq_{x\to x_0} g(x)$ \index{$\simeq$} means that  
$$\lim_{x\to x_0} \frac{f(x)}{g(x)}=1.$$ 
We let $C^{k}(D)$ \index{$C^{k}(\R^d)$} (resp. $C^{k}_c(D)$ \index{$C^{k}_c(\R^d)$}) denote the space of functions defined on $D \subset \R^d$ that are $k$ times continuously differentiable (resp. $k$ times continuously differentiable with a compact support) equipped with the topology of uniform convergence for the derivatives up to order $k$. The random variables in this paper are defined on a probability space 
$(\Omega,\mathcal{F},\Pb)$ and we denote by $\E$ the corresponding expectation. The space of random variables with integrable $p$-th power is denoted $\bbL_p$ \index{L@$\bbL_p$}. The space of measurable functions defined on a Borel set $D \subset \R^d$ with integrable $p$-th power will be denoted $L^p(D)$.

When we make use of one of the two following inequalities
\begin{equation} 
\left( \sum_{i\in I} a_i\right)^{\theta} \ge  \sum_{i\in I} a_i^{\theta} \text{ when  } \theta \ge 1, \quad \quad \left ( \sum_{i\in I} a_i \right ) ^{\theta} \le \sum_{i\in I} a_i^{\theta} \text{ when  } \theta \le 1
\end{equation}
 which are valid for any collection of positive numbers $(a_i)_{i\in I}$
 we will simply say {\sl by superadditivity} or
 {\sl by subadditivity}.

\subsection{General convention} 

The paper is roughly divided in two parts. The first part is dedicated to the study of the renormalization theory of $M^{\gamma,\gb}_\gep$ given by \eqref{def:meas} where the approximation fields $X_\gep,Y_\gep$ have covariance \eqref{ladefcorrel}: this is the content of sections \ref{phaseI} to \ref{sec:conj}. In the second part, the complex measure $M^{\gamma,\gb}_\gep$ will also be given by expression \eqref{def:meas} where the fields $X_\gep,Y_\gep$ are this time approximations of the GFF in some bounded domain: this is the convention of section \ref{GFF} and the same convention will be used in section \ref{LQG}. In section \ref{GFF}, the approximations $X_\gep,Y_\gep$ can vary from one subsection to another but it should be clear from the context which approximation we are working with.

Finally, in phase I and its I/II boundary, we will always denote by $M^{\gamma,\beta}$ the following limit (which we will show to exist in an appropriate sense):
\begin{equation*}
M^{\gamma,\beta} := \underset{\gep \to 0}{\lim} \: \gep^{\frac{\gamma^2}{2}-\frac{\beta^2} {2}}   \,M_\gep^{\gamma, \beta}. 
\end{equation*}  
 We stress here that when $X_\gep,Y_\gep$ have covariance \eqref{ladefcorrel} the above limit corresponds to renormalizing $M_\gep^{\gamma, \beta}$ by its mean whereas, when $X_\gep,Y_\gep$ are approximations of a GFF, this corresponds to renormalizing $M_\gep^{\gamma, \beta}$ by its mean multiplied by a function of the conformal radius: see section \ref{GFF}.

\subsection{A toolbox of useful results}
Let us introduce here some useful results that we will massively use in the proofs.
The first one is the Girsanov formula expressed for general Gaussian processes, and will be used in many contexts:

\begin{lemma}[Girsanov Formula]
 Let $(X(x))_{x \in D}$ be a centered real Gaussian process indexed by $D \subset \R^d$ and
 $Z$ be a centered Gaussian random variable which is such that $(X,Z)$ is Gaussian (we let $\P$ denote the associated probability law).
 
 \medskip
 
Let $\tilde \P$ be the measure defined by

$$\frac{\dd \tilde \P}{\dd \P}=e^{Z-\bbE[Z^2]/2}.$$

 \medskip
 
 Then under $\tilde \P$, $(X(x))_{x \in D}$ is still a Gaussian process with the same 
 convariance. Its mean is equal to 
 
 $$\tilde \E [X(x)]=h_Z(x)=\Cov[Z, X(x)] $$

\end{lemma}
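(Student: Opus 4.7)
The plan is the classical one: reduce to finite-dimensional distributions, and verify the claim through an explicit Laplace-transform computation. Since a Gaussian process is characterized by its finite-dimensional distributions, it suffices to show that for any finite collection $x_1,\ldots,x_n \in D$ the vector $(X(x_1),\ldots,X(x_n))$ under $\tilde\P$ is Gaussian with the same covariance matrix as under $\P$ and with mean vector $(\Cov[Z,X(x_j)])_{j=1}^n$.

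To do this, I would fix $t_1,\ldots,t_n \in \R$, set $S := \sum_{j=1}^n t_j X(x_j)$, and compute the Laplace transform
\begin{equation*}
\tilde\E[e^{S}] \;=\; \E\bigl[e^{S + Z - \E[Z^2]/2}\bigr].
\end{equation*}
The joint Gaussianity of $(X(x_1),\ldots,X(x_n),Z)$ implies that $S+Z$ is centered Gaussian with variance
\begin{equation*}
\mathrm{Var}(S+Z) \;=\; \mathrm{Var}(S) \;+\; 2\sum_{j=1}^n t_j \Cov[Z,X(x_j)] \;+\; \E[Z^2].
\end{equation*}
Applying the Gaussian Laplace transform and cancelling $\E[Z^2]/2$ yields
\begin{equation*}
\tilde\E[e^{S}] \;=\; \exp\!\Bigl(\sum_{j=1}^n t_j\,\Cov[Z,X(x_j)] \;+\; \tfrac{1}{2}\mathrm{Var}(S)\Bigr),
\end{equation*}
which is precisely the Laplace transform of a Gaussian vector with mean $(\Cov[Z,X(x_j)])_j$ and covariance matrix $(\Cov[X(x_i),X(x_j)])_{i,j}$. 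By uniqueness of Laplace transforms of Gaussian vectors (or, equivalently, by replacing $t_j$ by $it_j$ and invoking uniqueness of characteristic functions), this identifies the $n$-dimensional law of $(X(x_j))_j$ under $\tilde\P$.

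There is no real obstacle here beyond bookkeeping: the key input is joint Gaussianity of $(X,Z)$, which guarantees that $S+Z$ stays Gaussian so that the moment-generating function is explicit. Once the finite-dimensional distributions are identified, the statement on the process follows by the standard Kolmogorov identification of Gaussian processes through their mean and covariance functions; no issue of measurability or existence of a modification arises since we only characterize the law of $X$ on cylinder events under $\tilde\P$.
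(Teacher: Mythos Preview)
Your proof is correct and is exactly the standard argument for this fact. The paper itself states this lemma without proof, treating it as a well-known tool; your Laplace-transform computation on finite-dimensional marginals is the natural way to supply the missing justification, and nothing further is needed.
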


The following convexity inequality is proved in \cite{cf:Kah} by interpolation and Gaussian integration by parts;
we also present a special consequence of it.

\begin{proposition}[Kahane's convexity inequality]
\label{kahaha}
 
Let $Z_1$ and $Z_2$ be two centered Gaussian fields on $\bbR^d$ (or on any metric space) 
with covariance kernels 
$K_1(x,y)$ and $K_2(x,y)$ respectively.

\medskip

If \,\,\,$\forall x,y \in \bbR^d,\quad  K_1(x,y)\le K_2(x,y)$ then,
for all real convex functions $F$ and all positive measures $\sigma$ on $\bbR^d$, we have
\begin{equation}\label{khan2}
\bbE\left[ F\left(\int_{\bbR^d} e^{Z_1(x)-\bbE[Z^2_1(x)]/2}\sigma(\dd x)\right) \right]\le 
\bbE\left[ F\left(\int_{\bbR^d} e^{Z_2(x)-\bbE[Z^2_2(x)]/2}\sigma(\dd x)\right) \right].
\end{equation}

As a consequence if $\forall x,y \in \bbR^d,\quad K_1(x,y)\le K_2(x,y)+\alpha$  for some $\alpha\in \bbR$ then we have for $p>0$
\begin{equation}\begin{split}\label{khan1}
 \bbE\left[\left(\int_{\bbR^d} e^{Z_1(x)-\bbE[Z^2_1(x)]/2}\sigma(\dd x)\right)^p \right]&\le e^{\frac{1}{2}\alpha p(p-1)}
 \bbE\left[\left(\int_{\bbR^d} e^{Z_2(x)-\bbE[Z^2_2(x)]/2}\sigma(\dd x)\right)^p \right] \quad \text{ if } p>1 \\
 \bbE\left[\left(\int_{\bbR^d} e^{Z_1(x)-\bbE[Z^2_1(x)]/2}\sigma(\dd x)\right)^p \right]&\ge e^{\frac{1}{2}\alpha p(p-1) }
 \bbE\left[\left(\int_{\bbR^d} e^{Z_2(x)-\bbE[Z^2_2(x)]/2}\sigma(\dd x)\right)^p \right]\quad \text{ if } p<1.
\end{split}\end{equation}
\end{proposition}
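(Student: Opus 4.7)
The plan is the classical interpolation argument. Without loss of generality, assume $Z_1$ and $Z_2$ are realized independently on a product probability space, which does not alter their marginal laws. For $t\in [0,1]$ introduce the interpolating field
$$Z(t,x) = \sqrt{1-t}\,Z_1(x) + \sqrt{t}\,Z_2(x),$$
which is centered Gaussian with covariance $K_t(x,y) := (1-t)K_1(x,y) + t K_2(x,y)$, reducing to $Z_1$ at $t=0$ and $Z_2$ at $t=1$. Set
$$I(t) = \int_{\bbR^d} e^{Z(t,x) - \bbE[Z(t,x)^2]/2}\,\sigma(\dd x), \qquad \phi(t) = \bbE\bigl[F(I(t))\bigr].$$
It suffices to prove $\phi'(t) \ge 0$ for $t\in(0,1)$, since integration from $0$ to $1$ yields \eqref{khan2}.

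The key computation is to differentiate under the expectation and apply Gaussian integration by parts in $Z_1$ and $Z_2$ separately. The derivative $\partial_t Z(t,x)$ produces linear factors in $Z_1(x)$ and $Z_2(x)$ (with $\sqrt{1-t}$, $\sqrt{t}$ normalizations), while $\partial_t \bbE[Z(t,x)^2]/2$ produces the diagonal drift $(K_2(x,x)-K_1(x,x))/2$. Integration by parts converts each $Z_i(x)$ factor into two types of terms: one from hitting the exponential $e^{Z(t,x)-\cdots}$ inside the integral defining $I(t)$, which produces $F'(I(t))$ times diagonal covariances and exactly cancels the drift contribution; and one from hitting $F'(I(t))$ through the Gâteaux derivative $\partial_{Z_i(z)}I(t) = \sqrt{1-t_i}\,e^{Z(t,z) - \cdots}$ (with $t_1 = 1-t$, $t_2=t$), which produces $F''$ and an off-diagonal covariance. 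A direct bookkeeping yields
$$\phi'(t) = \frac12 \iint \bigl(K_2(x,y) - K_1(x,y)\bigr)\,\bbE\!\left[F''(I(t))\,e^{Z(t,x)+Z(t,y) - \bbE[Z(t,x)^2]/2 - \bbE[Z(t,y)^2]/2}\right] \sigma(\dd x)\,\sigma(\dd y).$$
Convexity of $F$ gives $F'' \ge 0$ (in the distributional sense) and the hypothesis gives $K_2-K_1 \ge 0$, hence $\phi'(t)\ge 0$, which proves \eqref{khan2}.

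For the consequence \eqref{khan1}, suppose first $\alpha \ge 0$ and let $N \sim \mathcal{N}(0,\alpha)$ be independent of $(Z_1,Z_2)$. Then $\widetilde Z_2(x) := Z_2(x) + N$ has covariance $K_2(x,y)+\alpha$, which dominates $K_1(x,y)$. Applying \eqref{khan2} with $F(u)=u^p$ (convex for $p\ge 1$) and using that $e^{\widetilde Z_2(x) - \bbE[\widetilde Z_2(x)^2]/2} = e^{N - \alpha/2}\,e^{Z_2(x) - \bbE[Z_2(x)^2]/2}$ yields
$$\bbE\!\left[\left(\int e^{Z_1 - \bbE[Z_1^2]/2}\,\dd\sigma\right)^{\!p}\,\right] \le \bbE\bigl[e^{pN - p\alpha/2}\bigr]\,\bbE\!\left[\left(\int e^{Z_2 - \bbE[Z_2^2]/2}\,\dd\sigma\right)^{\!p}\,\right],$$
and $\bbE[e^{pN-p\alpha/2}] = e^{\alpha p(p-1)/2}$. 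For $0<p<1$ one applies the same argument to the convex function $u\mapsto -u^p$, which reverses the inequality while keeping the same multiplicative factor. When $\alpha<0$, one absorbs $|\alpha|$ into $Z_1$ via an independent Gaussian shift on the other side.

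The main obstacle lies in justifying the differentiation under the integral sign and the Gaussian integration by parts when $F$ is merely convex (and possibly non-smooth) and $\sigma$ is an arbitrary positive measure (possibly infinite or unboundedly supported). The standard remedy is a double approximation: mollify $F$ into $C^2$ bounded convex functions, replace $\sigma$ by its restrictions to large balls truncated to finite mass, perform the computation above in this regularized setting where all manipulations are legitimate, and pass to the limit via monotone and dominated convergence, using that the inequality is preserved at each step.
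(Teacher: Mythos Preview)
Your proposal is correct and matches the paper's approach: the paper does not reprove \eqref{khan2} but cites Kahane's original argument by ``interpolation and Gaussian integration by parts,'' which is precisely the $\sqrt{1-t}\,Z_1+\sqrt t\,Z_2$ computation you carry out; for the consequence \eqref{khan1}, the paper does exactly what you do---add an independent $\mathcal N(0,\alpha)$ to $Z_2$ when $\alpha\ge 0$ (respectively to $Z_1$ when $\alpha<0$) and integrate out. Your treatment of the case $0<p<1$ via the convex function $-u^p$ is the standard justification of the paper's one-line claim.
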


\begin{proof}[Proof of \eqref{khan1}]
We consider the case $\alpha\ge 0$. In that case we apply \eqref{khan2} to the fields $Z_1$ and $Z_2+N$ where $N$ is a Gaussian of variance $\alpha$
which is independent of $Z_2$ (the kernel of $Z_2+N$ is $K_2+\alpha$).
Then the inequalities \eqref{khan1} are obtained by integrating over the variable $Z$.
When $\alpha<0$ we consider $Z_1+N$ and $Z_2$ instead.\end{proof}

The above proposition allows us 
to compare moments of order $p$ for two different log-normal multiplicative chaos integrated with respect 
to a measure $\sigma$. 
We will sometimes use it to make comparisons with a chaos which presents the nice 
property of being 
stochasticly
scale invariant. This specific chaos is constructed in \cite[Proposition 2.9]{Rnew9}

\begin{proposition}\label{scalinv}
For every dimension $d$ and $T>0$, one can construct a sequence of Gaussian fields $\{ (X_{\gep}(x))_{x\in \bbR^d}, \gep\ge 0\}$
whose covariance structures are given by
\begin{equation}
 \bbE[X_{\gep}(x)X_{\gep}(y)]=\int_{m\in O(d)} g_\gep(m(x-y)) \sigma_d(\dd m),
\end{equation}
where $O(d)$ is the orthogonal group on $\bbR^d$, $\sigma_d$ is the Haar measure on it and
\begin{equation}
 g_{\gep}(x):=\begin{cases}\log (T/\gep)+1-\frac{|x_1|}{\gep} &\text{ when } |x_1|\le \gep, \\
            \log_+(T/|x_1|) & \text{ when } |x_1|\ge \gep.
           \end{cases}
\end{equation}
where $x_1$ is the first real-coordinate of $x$ in $\bbR^d$.

For any fixed $\gl\in (0,1)$, we have the following equality in distribution
\begin{equation}\label{sssscal}
(X_{\gl \gep}(\gl x))_{x\in B(0,T/2)}\stackrel{(\dd)}{=} \Omega_\gl+ (X_{ \gep}( x))_{x\in B(0,T/2)},
\end{equation}
where $\Omega_{\gl}$ is a Gaussian variable of variance $|\log \gl |$ which is independent of $X_{\gep}$.
Finally given $T\ge 0$, $R>0$ there exists a constant $C$ such that for all $z\in B(0,R)$
\begin{equation}\label{crook}
|\log (|z|\vee \gep) |-C \le \bbE[X_{\gep}(x)X_{\gep}(x+z)]\le  |\log (|z|\vee \gep) |+C.
\end{equation}

\end{proposition}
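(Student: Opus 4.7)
The plan is to (i) construct $X_\gep$ explicitly from a white noise on a product space augmented by an independent smooth field, (ii) verify the scale invariance \eqref{sssscal} by matching covariances of Gaussian processes, and (iii) derive the logarithmic bound \eqref{crook} by spherical integration.

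For (i), I note that $m\mapsto m^{-1}e_1$ pushes $\sigma_d$ forward to the normalized Haar measure $\sigma$ on $S^{d-1}$, so the target covariance rewrites as $\int_{S^{d-1}} g^{(1)}_\gep((x-y)\cdot\omega)\,\sigma(\dd\omega)$, where $g^{(1)}_\gep$ is the one-variable function from the statement. Let $W$ be a real Gaussian white noise on $S^{d-1}\times\R\times(0,\infty)$ with intensity $\sigma(\dd\omega)\otimes v^{-2}\dd u\,\dd v$, let $C_\gep(t):=\{(u,v):\gep\leq v\leq T,\ |u-t|\leq v/2\}$ denote the truncated Bacry--Muzy tent, and let $Y$ be an independent centered Gaussian field on $\R^d$ with covariance $\int_{S^{d-1}}(1-|(x-y)\cdot\omega|/T)_+\,\sigma(\dd\omega)$ (well-defined as a spherical average of positive-definite triangular kernels). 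Set
$$X_\gep(x):=\int\mathbf{1}_{C_\gep(x\cdot\omega)}(u,v)\,W(\dd\omega,\dd u,\dd v)\,+\,Y(x).$$
A direct cone-overlap computation shows the white-noise part has covariance $\int_{S^{d-1}}\bigl[g^{(1)}_\gep((x-y)\cdot\omega)-(1-|(x-y)\cdot\omega|/T)_+\bigr]\sigma(\dd\omega)$, which $Y$ exactly compensates to deliver the required kernel.

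For (ii), by Gaussianity it suffices to match covariances. Splitting into $|t|\leq\gep$ and $|t|\geq\gep$, I verify directly from the piecewise definition of $g^{(1)}_\gep$ that
$$g^{(1)}_{\gl\gep}(\gl t)=g^{(1)}_\gep(t)+|\log\gl|\qquad\text{for all }|t|\leq T,\ \gl\in(0,1).$$
Integrating over $\omega\in S^{d-1}$ and using $|x-y|\leq T$ whenever $x,y\in B(0,T/2)$, this yields $\bbE[X_{\gl\gep}(\gl x)X_{\gl\gep}(\gl y)]=\bbE[X_\gep(x)X_\gep(y)]+|\log\gl|$. Hence $X_{\gl\gep}(\gl\cdot)$ and $X_\gep(\cdot)+\Omega_\gl$, with $\Omega_\gl$ a centered Gaussian of variance $|\log\gl|$ independent of $X_\gep$, are centered Gaussian processes on $B(0,T/2)$ with identical covariance kernels, hence equal in distribution. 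For (iii), from $\bbE[X_\gep(x)X_\gep(x+z)]=\int_{S^{d-1}} g^{(1)}_\gep(z\cdot\omega)\sigma(\dd\omega)$ together with the elementary identity $\int_{S^{d-1}}\log|z\cdot\omega|\,\sigma(\dd\omega)=\log|z|+c_d$ (obtained by rotating $z$ to $|z|e_1$, with $c_d:=\int_{S^{d-1}}\log|\omega_1|\,\sigma(\dd\omega)$) and the piecewise form of $g^{(1)}_\gep$, the estimate $|\log(|z|\vee\gep)|+O(1)$ uniform in $z\in B(0,R)$ follows.

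The main obstacle is step (i): producing the nonstandard $+1-|x_1|/\gep$ spike in $g^{(1)}_\gep$ near the origin, which the bare Bacry--Muzy cone does not deliver. The tent-kernel field $Y$ compensates at the covariance level. To ensure scale invariance survives, I check case-by-case the identity $g^{\mathrm{cone}}_{\gl\gep}(\gl\tau)-g^{\mathrm{cone}}_\gep(\tau)=|\log\gl|+(\gl-1)|\tau|/T$ for $|\tau|\leq T$; the defect $(\gl-1)|\tau|/T$ is exactly cancelled by $(1-\gl|\tau|/T)_+-(1-|\tau|/T)_+=(1-\gl)|\tau|/T$ coming from $Y$, so the net shift between the covariances of $X_{\gl\gep}(\gl\cdot)$ and $X_\gep(\cdot)$ is indeed $|\log\gl|$, as required.
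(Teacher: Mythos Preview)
The paper does not prove this proposition; it simply cites \cite[Proposition 2.9]{Rnew9} for the construction. Your argument is correct and self-contained: the white-noise-plus-compensation construction delivers the stated covariance, the identity $g^{(1)}_{\gl\gep}(\gl t)=g^{(1)}_\gep(t)+|\log\gl|$ for $|t|\le T$ gives \eqref{sssscal} directly (your last paragraph is a redundant re-derivation of this, since once the total covariance equals $\int g^{(1)}_\gep\,\dd\sigma$ the decomposition into cone and $Y$ parts is irrelevant), and the spherical-average computation for \eqref{crook} goes through.

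The one place where you are working harder than necessary is the auxiliary field $Y$. If you replace the truncated cone $C_\gep(t)=\{(u,v):\gep\le v\le T,\ |u-t|\le v/2\}$ by the \emph{capped} cone $\tilde C_\gep(t)=\{(u,v):v\ge\gep,\ |u-t|\le\min(v,T)/2\}$, a direct overlap computation gives, for separation $\tau\le\gep$,
\[
\int_\gep^T\frac{v-\tau}{v^2}\,\dd v+\int_T^\infty\frac{T-\tau}{v^2}\,\dd v=\log(T/\gep)+1-\tau/\gep,
\]
and similarly $\log(T/\tau)$ for $\gep\le\tau\le T$; that is, exactly $g^{(1)}_\gep$ with no correction. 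This is the construction in \cite{Rnew9}, and it removes the need for $Y$ and the compensation check entirely. Your route is equally valid, just slightly less economical.
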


\begin{rem}
The consequence of Equation \eqref{sssscal} is that the (real) multiplicative chaos satisfies the star-equation
\begin{equation}
M^\gamma(\gl \cdot)\stackrel{(\dd)}{=}\gl^{d+\gamma^2/2}e^{\gamma \Omega_\gl} M^\gamma(\cdot).
\end{equation}
For more on this subject see \cite{sohier}.
\end{rem}

\section{Study of phase I and its I/II boundary}\label{phaseI}
This section is concerned with the values of the parameters $(\gamma,\beta)$  for which a renormalization by the mean of $M^{\gamma,\beta}_\gep$  defined in \eqref{intromeps} yields a non trivial limit. Basically, this convergence relies on martingale techniques. Moment estimates in $\bbL_p$ of the martingale will be obtained by conditioning on the real part of the field, i.e. the field $X_\gep$, to be left with establishing capacity estimates for  a real Gaussian multiplicative chaos measure (these estimates of independent interest are the content of lemmas \ref{capI} and \ref{capI_II}). 
 
\subsection{Study of the inner phase}
 We study the inner phase I\index{inner phase I}, namely 
 \begin{equation}\label{innerI}
 \mathcal P_I:=\left\{\gamma+\beta<\sqrt{2d},\gamma\in\left[\sqrt{\frac{d}{2}},\sqrt{2d}\right)\right\}\cup\left\{\gamma^2+\beta^2<d\right\}.
 \end{equation}
Throughout the paper, we will use the terminology "inner phase I" to denote set of pairs 
$(\gamma,\beta)$ satisfying \eqref{innerI} in order to avoid heavy notations.

 We are interested in the martingale $( \gep^{\frac{\gamma^2}{2}-\frac{\beta^2} {2}} M_\gep^{\gamma, \beta}(\dd x) )_{\gep>0}$.
The reader can check that \eqref{innerI} is equivalent to the existence of some $p \in ]1,2[$ such that $\zeta(p)>d$ where:
 \begin{equation*}
 \zeta(p)=\left(d+ \frac{\gamma^2}{2}-\frac{\beta^2} {2}\right)p -\frac{\gamma^2}{2}p^2.
 \end{equation*}
As a warm-up, the reader may check that if  one considers $p\geq 2$ such that  $\zeta(p)>d$ then the martingale is bounded in $\bbL_2$.
This corresponds to the parameters $(\gamma,\beta)$ such that  $\gamma^2+\beta^2<d$. This $\bbL_2$ phase is rather straightforward to study. Also, if one introduces 
\begin{equation}
 \label{pc}
 p_c(\gamma,\gb):= \sup \left\{ p >1 \ | \ \zeta(p) >d \right\},
\end{equation}
 one gets that $p_c \in \left(\frac{\sqrt{2d}}{\gamma}, \frac{2d}{\gamma^2}\right]$.   We have the following behaviour inside phase I:
 
\begin{theorem}{\bf (Convergence)}\label{phase1} 
Let $(\gamma,\beta)$ belong to inner phase I. Consider $p \in ]1,2[$  such that $\zeta(p)>d$. \\
\noindent 1.\,\,\, For all compactly supported bounded measurable functions $f$, 
the martingale
\begin{equation*} 
 \left( \gep^{\frac{\gamma^2}{2}-\frac{\beta^2} {2}}\int_{\R^d} f(x)\,M_\gep^{\gamma, \beta}(\dd x) \right)_{\gep>0}.
\end{equation*}
is uniformly bounded in $\bbL_p$. Furthermore, for all $R>0$, there exists a constant $C_{p,R}$ (only depending on $p,R$) such that for all bounded measurable functions $f$ with compact support in $B(0,R)$:
$$\E\Big[\sup_{\gep\in ]0,1]}\Big| \gep^{\frac{\gamma^2}{2}-\frac{\beta^2} {2}}\int_{\R^d} f(x)\,M_\gep^{\gamma, \beta}(\dd x)\Big|^p\Big]\leq C_{p,R}\|f\|_\infty^p.$$
\noindent 2.\,\,\, The $\mathcal{D}'(\R^d)$-valued martingale:
\begin{equation*} 
 \gep^{\frac{\gamma^2}{2}-\frac{\beta^2} {2}} M^{\gamma,\beta}_\gep: \varphi \rightarrow  \gep^{\frac{\gamma^2}{2}-\frac{\beta^2} {2}}\int_{\R^d} \varphi(x) e^{\gamma X_\gep(x)+ i \gb Y_\gep (x)} \dd x
\end{equation*}
converges almost surely in the space $\mathcal{D}'_{d}(\R^d)$ of distributions of order $d$ towards a non trivial limit $M^{\gamma,\beta}$. More precisely, for each $R>0$, there exists a random variable $Z_R\in\bbL_{p}$ such that for all functions $\varphi\in C^d_c(B(0,R))$:
$$|M^{\gamma,\beta}(\varphi)|\leq Z_R \sup_{x \in B(0,R) } \left|\frac{\partial^d \varphi(x)}{\partial x_1 \cdots \partial x_d }\right|  .$$
\noindent 3.\,\,\, In dimension 1, 
we have convergence  of $\left(\gep^{\frac{\gamma^2}{2}
-\frac{\beta^2} {2}} M^{\gamma,\beta}_\gep[0,t]\right)_{t\in [0,T]}$ 
in the space of continuous functions. 
\end{theorem}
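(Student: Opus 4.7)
The plan is to first establish a uniform $\bbL_p$ estimate for some $p\in(1,2)$ with $\zeta(p)>d$. Since the increment $(X_u-X_\gep,Y_u-Y_\gep)_{u<\gep}$ is independent of $\mathcal F_\gep$ and $K_\gep(0)=\log(1/\gep)$ by Assumption~A.1, a direct computation yields
\begin{equation*}
\E\bigl[e^{\gamma X_{\gep'}(x)+i\beta Y_{\gep'}(x)}\mid\mathcal F_\gep\bigr]=(\gep/\gep')^{(\gamma^2-\beta^2)/2}\,e^{\gamma X_\gep(x)+i\beta Y_\gep(x)},\qquad \gep'\le\gep,
\end{equation*}
so $(\gep^{(\gamma^2-\beta^2)/2}M_\gep^{\gamma,\beta}(f))_{\gep}$ is a complex-valued martingale for every bounded compactly supported $f$. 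The supremum bound in~(1) will then follow from Doob's maximal inequality applied to the real and imaginary parts once the uniform $\bbL_p$-bound is obtained.

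The core moment bound goes through conditioning on $\mathcal F^X$. Using the independence of $X$ and $Y$ and the identity $\E[e^{i\beta(Y_\gep(x)-Y_\gep(y))}]=\gep^{\beta^2}e^{\beta^2 K_\gep(x-y)}$, expansion of $|M_\gep^{\gamma,\beta}(f)|^2$ gives
\begin{equation*}
\gep^{\gamma^2-\beta^2}\E\bigl[|M_\gep^{\gamma,\beta}(f)|^2\mid\mathcal F^X\bigr]=\iint f(x)\overline{f(y)}\,e^{\beta^2 K_\gep(x-y)}\,\tilde M_\gep(\dd x)\,\tilde M_\gep(\dd y),
\end{equation*}
where $\tilde M_\gep(\dd x)=e^{\gamma X_\gep(x)-\gamma^2 K_\gep(0)/2}\,\dd x$ is the (subcritical) real chaos normalized by its mean. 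Since $p/2<1$, conditional Jensen yields
\begin{equation*}
\E\bigl[|\gep^{(\gamma^2-\beta^2)/2}M_\gep^{\gamma,\beta}(f)|^p\bigr]\le \|f\|_\infty^p\,\E\!\left[\left(\iint_{B(0,R)^2}\frac{\tilde M_\gep(\dd x)\,\tilde M_\gep(\dd y)}{(|x-y|\vee\gep)^{\beta^2}}\right)^{\!p/2}\,\right].
\end{equation*}
The main obstacle is proving that this expectation stays bounded in $\gep$ precisely when $\zeta(p)>d$; this is a $\beta^2$-capacity estimate for the real subcritical chaos, which I would isolate as an auxiliary lemma. I would prove it by splitting the domain into dyadic shells $\{|x-y|\asymp 2^{-k}\}$, using subadditivity (valid because $p/2<1$) to bring the exponent inside the sum, and then applying Kahane's inequality (Proposition~\ref{kahaha}) to compare with the exactly scaling chaos of Proposition~\ref{scalinv}; the resulting geometric series converges iff $\zeta(p)>d$.

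For part~(2), the uniform $\bbL_p$ bound from~(1), applied to indicators of boxes $[-R,y_1]\times\cdots\times[-R,y_d]$ and to their differences, yields martingale convergence of $F_\gep(y):=\gep^{(\gamma^2-\beta^2)/2}M_\gep^{\gamma,\beta}([-R,y_1]\times\cdots\times[-R,y_d])$ together with a Kolmogorov-type modulus of continuity in $y$ uniform in $\gep$; a standard chaining argument promotes this to a.s.\ uniform convergence of $F_\gep$ on $[-R,R]^d$ to a continuous limit $F$. Repeated integration by parts expresses $\varphi\in C^d_c([-R,R]^d)$ as
\begin{equation*}
\varphi(x)=(-1)^d\int_{[-R,R]^d}\mathbf 1_{\{x_j\le y_j,\,\forall j\}}\,\frac{\partial^d\varphi(y)}{\partial y_1\cdots\partial y_d}\,\dd y,
\end{equation*}
so $M^{\gamma,\beta}(\varphi)=(-1)^d\int\partial^d\varphi(y)F(y)\,\dd y$ and the order-$d$ distributional bound holds with $Z_R:=(2R)^d\sup_{y\in[-R,R]^d}|F(y)|\in\bbL_p$. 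Finally, part~(3) in dimension one reduces to tightness in $C([0,T],\bbC)$ of $t\mapsto F_\gep(t)=\gep^{(\gamma^2-\beta^2)/2}M_\gep^{\gamma,\beta}([0,t])$; part~(1) applied to $f=\mathbf 1_{[s,t]}$ and scale invariance of the capacity estimate give $\E[|F_\gep(t)-F_\gep(s)|^p]\le C|t-s|^{\zeta(p)}$, and since phase~I in dimension one is precisely the set where some $p\in(1,2)$ satisfies $\zeta(p)>1$, Kolmogorov's criterion combined with the already-established marginal convergence gives convergence in $C([0,T],\bbC)$.
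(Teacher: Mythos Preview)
Your approach to Item~1 is correct and matches the paper's: condition on $\mathcal F^X$, apply Jensen with $p/2<1$, and reduce to a $\beta^2$-capacity estimate for the real chaos. Your proposed proof of that capacity lemma (dyadic shells in $|x-y|$, subadditivity, then Kahane comparison with the exactly scaling field) is a valid alternative to the paper's argument; the paper instead splits $[0,1]^d$ into two halves and obtains a recursion $u(\gep/2)\le 2^{1-\zeta(p)}u(\gep)+C$. Both routes give the same condition $\zeta(p)>d$. Your Item~3 is also fine: the moment bound $\E[|F_\gep(t)-F_\gep(s)|^p]\le C|t-s|^{\zeta(p)}$ with $\zeta(p)>1$ does give Kolmogorov tightness in dimension~1 (the paper instead cites \cite[Prop.~3.2]{bar:comp2}).

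The gap is in Item~2 for $d\ge 2$. You claim ``a Kolmogorov-type modulus of continuity in $y$ uniform in~$\gep$'' for $F_\gep(y)=\gep^{(\gamma^2-\beta^2)/2}M_\gep^{\gamma,\beta}\bigl(\prod_j[-R,y_j]\bigr)$ and use it to deduce uniform convergence to a continuous limit, setting $Z_R=(2R)^d\sup_y|F(y)|$. But the first-order increments $F_\gep(y)-F_\gep(y')$ are $M_\gep$ of L-shaped regions, i.e.\ unions of thin slabs, and the multiparameter Kolmogorov/Centsov criterion requires product-form moment bounds on rectangular increments. Covering a slab $[0,\ell]\times[0,1]^{d-1}$ by cubes of side $\ell$ and using the triangle inequality in $\bbL_p$ only gives $\E[|M_\gep(\text{slab})|^p]\lesssim \ell^{\zeta(p)-p(d-1)}$, and $\zeta(p)-p(d-1)>1$ is strictly stronger than $\zeta(p)>d$ when $p>1$; it fails in parts of inner phase~I. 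The paper's own remark after the theorem flags exactly this issue (``their tightness criterion clearly works in dimension~$1$ but an extension to higher dimensions does not make sense'').

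The paper avoids continuity entirely: it takes $Z_\gep:=\int_{]0,1[^d}|F_\gep(x)|\,\dd x$, observes that $(Z_\gep)_\gep$ is a positive \emph{submartingale} bounded in $\bbL_p$ (since $\E[|F_\gep(x)|^p]\le C_{p,R}$ uniformly in $x$ and $\gep$ by Item~1), hence converges a.s.\ and in $\bbL_p$. The integration-by-parts identity then gives $|M_\gep^{\gamma,\beta}(\varphi)|\le Z_\gep\,\sup|\partial^d\varphi|$ directly, with no modulus of continuity required. Your argument is easily repaired by replacing $\sup_y|F(y)|$ with $\int|F(y)|\,\dd y$, which is exactly this.
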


\begin{rem}
For the reader who wishes to skip the proofs, we  stress here that item 1. of Theorem  \ref{phase1}  is proved in dimension $1$ in \cite[Prop. 3.1]{bar:comp2} in greater generality. Actually, their argument is quite elegant and flexible: it may be extended to treat situations like
$$\int_{\R}f(x)e^{\gamma X_\gep(x)+i\beta Y_\gep(x)}\,\sigma(\dd x)$$ for general possibly correlated $X_\gep$ and $Y_\gep$ with $\sigma$ a Radon measure. In our context, their  main assumption is that   the kernel $k$ introduced in section \ref{mainsetup} has compact support. Their proof is written in dimension $1$ but clearly adapts to 
higher dimensions. However, the proofs of their paper can not be adapted to the case of long range correlated fields like Gaussian Free Fields \index{Gaussian free field (GFF)} (see section \ref{GFF}). Furthermore their tightness criterion clearly works in dimension $1$ but an extension to 
higher dimensions does not make sense. Here, we suggest to study tightness  in the space  of distributions of order $d$ as this will turn out to be important in view of the applications in Euclidean Field Theory. This step is carried out via a sharp analysis of the capacity of the involved measures (see Lemma \ref{capI_II}). Other choices of spaces for tightness may be investigated as well. 
\end{rem}

\begin{rem}
Important enough, we point out that the strategy developed in \cite{bar:comp2} 
is not robust enough to treat the frontier of phases 1 and 2. 
This point will be developed in 
Section \ref{frontier12}: we prove (Lemma \ref{capI})
that the martingale is bounded in $\bbL_p$ for some $p>1$ (whose value may not be optimal).
\end{rem}

\begin{rem}
Item 1. Theorem \ref{phase1}  describes a sufficient condition on $p\in (1,2)$ in order for the martingale to be uniformly bounded in $\bbL_p$.  We do not know if this condition is sharp as in the real case $\beta=0$. We prove a weaker statement in Proposition \ref{sharp} by proving that the condition $\xi(p)\geq d$ is necessary when $p\geq 2$.  
\end{rem}
 
The fact that for any $f$
$$\left(\gep^{\frac{\gamma^2}{2}-\frac{\beta^2} {2}}\int_{\R^d} f(x)\,M_\gep^{\gamma, \beta}(\dd x) \right)_{\gep\in[0,1]}$$ 
 is a martingale for decreasing $\gep$ simply follows from our construction of the $X_\gep$, which are sums of independent infinitesimal fields, and the 
 choice of the renormalization which guarantees that the mean is constant.
 The martingale property is not required to prove convergence in $\bbL^p$  (see for instance the circle average construction of the GFF \index{Gaussian free field (GFF)} exponential in Section \ref{GFF}) 
 but it allows to have shorter and perhaps more elegant proofs. An important step in the proof of the Theorem is the uniform 
 control of the capacity of the measure $M^{\gamma,0}_\gep$ which we prove only later in Lemma \ref{capI}.
 
\vspace{2mm}

\noindent {\it Proof of Theorem  \ref{phase1}.}  \\
{\it Item 1.} We do not follow the proof of  \cite[Prop. 3.1]{bar:comp2} as we want to give a proof that is also valid for fields with long range correlations.  Let us consider a bounded measurable function $\varphi:\R^d\to \R$ with support included in $B(0,1)$ and $p\in ]1,2[$ such that $\zeta(p)>d$. We have by Jensen's inequality:
\begin{align*}
\E\left[\Big|\gep^{\frac{\gamma^2}{2}-\frac{\beta^2} {2}}\int_{\R^d} \varphi(x)\,M_\gep^{\gamma, \beta}(\dd x)\Big|^p\right]\leq &
\E\left[\E\Big[\gep^{\gamma^2- \beta^2}\int_{B(0,1)^2} \varphi(x)\varphi(y)\,M_\gep^{\gamma, \beta}(\dd x)M_\gep^{\gamma, \beta}(\dd y)|\mathcal{F}^X\Big]^{p/2}\right]\\
\leq & C \|\varphi\|_\infty^p\E\left[\Big( \gep^{\gamma^2}\int_{B(0,1)^2}\frac{1}{|x-y|^{\beta^2}}\,M_\gep^{\gamma, 0}(\dd x)M_\gep^{\gamma,0}(\dd y)\Big)^{p/2}\right].
\end{align*}
We can then conclude with Lemma \ref{capI}.

\vspace{1mm}  
\noindent {\it Item 2.}   Let $\varphi$ be a smooth test function with support in $]0,1[^d$.  We consider the mapping
$$x=(x_1,\dots,x_d)\in\R^d\mapsto F^{\gamma,\beta}_\gep(x_1,\dots,x_d):= \gep^{ \frac{\gamma^2}{2} -\frac{\beta^2}{2}}\int_{[0,x_1]\times \dots\times[0,x_d]} e^{\gamma X_\gep(x)+ i \gb Y_\gep (x)} \,\dd x.$$
 By integration by parts, we get:
\begin{align*}
& \gep^{\frac{\gamma^2}{2}-\frac{\beta^2} {2}} \int_{ ]0,1[^d } \varphi(x) e^{\gamma X_\gep(x)+ i \gb Y_\gep (x)} \dd x  \\ 
& = (-1)^d  \gep^{\frac{\gamma^2}{2}-\frac{\beta^2} {2}} \int_{ ]0,1[^d } \frac{\partial^d \varphi(x)}{\partial x_1 \cdots \partial x_d } \left( \int_{[0,x_1] \times \cdots \times [0,x_d] } e^{\gamma X_\gep(u)+ i \gb Y_\gep (u)}\dd u_1 \cdots \dd u_d \right )  \dd x,  \
\end{align*}
where $u=(u_1, \cdots, u_d)$.
Therefore, we conclude that: 
\begin{equation*}
|M^{\gamma,\beta}_\gep ( \varphi)| \leq \sup_{x \in ]0,1[^d } \Big|\frac{\partial^d \varphi(x)}{\partial x_1 \cdots \partial x_d }\Big| \,Z_\gep,
\end{equation*} 
where:
\begin{equation*}
Z_\gep=  \int_{ ]0,1[^d } | F^{\gamma,\beta}_\gep(x_1,\dots,x_d) |  \dd x.
\end{equation*}
Observe that $(Z_\gep)_\gep $ is a positive submartingale. Furthermore, from Item 1, we deduce that
$$\E[|Z_\gep|^p]\leq \int_{ ]0,1[^d }\E[ | F^{\gamma,\beta}_\gep(x_1,\dots,x_d) | ^p] \dd x\leq C_{p,1}.$$
 
\vspace{1mm}  
\noindent {\it Item 3.}  One applies \cite[Prop 3.2]{bar:comp2}.\qed

 \begin{proposition}{\bf (Necessary conditions for $\bbL_p$ convergence for $p\geq 2$)}\label{sharp} 

If the martingale $$\big(\gep^{\frac{\gamma^2}{2}-\frac{\beta^2} {2}}  M^{\gamma,\beta}_\gep([0,1]^d)  \big)_\gep$$ is bounded in $\bbL_p$ for some $p\geq 2$ then $\xi(p)\geq d$.

 \end{proposition}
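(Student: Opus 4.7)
The approach is to establish a lower bound of the form $\E[|\tilde M_\gep|^p]\geq c\,\gep^{\zeta(p)-d}$, where $\tilde M_\gep:=\gep^{(\gamma^2-\beta^2)/2}M_\gep^{\gamma,\beta}([0,1]^d)$; when $\zeta(p)<d$ the right-hand side diverges as $\gep\to 0$, contradicting uniform boundedness in $\bbL_p$ and thus forcing $\zeta(p)\geq d$.

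\textbf{Exact moment and cluster bound for $p=2n$.} The cleanest case is $p=2n$ with $n\in\bbN$, where a closed-form moment formula is available. Writing $|M_\gep|^{2n}=M_\gep^n\overline{M_\gep}^n$ as a $2n$-fold integral and using the independence of $X$ and $Y$ together with the Gaussian variance formula, one obtains
\begin{equation*}
\E[|\tilde M_\gep|^{2n}]=\int_{([0,1]^d)^{2n}}\!\!\!\!\exp\!\Big[(\gamma^2-\beta^2)\Big(\sum_{i<i'}K_\gep(x_i-x_{i'})+\sum_{j<j'}K_\gep(y_j-y_{j'})\Big)+(\gamma^2+\beta^2)\sum_{i,j}K_\gep(x_i-y_j)\Big]\dd x\,\dd y,
\end{equation*}
the renormalization factor $\gep^{n(\gamma^2-\beta^2)}$ being exactly cancelled by the $n$ diagonal contributions $K_\gep(0)=\log(1/\gep)$ from each inner variance. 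Restricting the $2n$-fold integral to the cluster region $\{x_2,\ldots,x_n,y_1,\ldots,y_n\in B(x_1,\gep)\}$, every pairwise $K_\gep(\cdot)$ is of order $\log(1/\gep)$ by \eqref{froomk}; the exponent is thus $\sim E\log(1/\gep)$ with $E=\gamma^2 n(2n-1)+\beta^2 n$ (from $\binom{n}{2}$ pairs in each inner sum and $n^2$ cross pairs), and the configuration volume is $\sim\gep^{(2n-1)d}$. This yields $\E[|\tilde M_\gep|^{2n}]\geq c\,\gep^{(2n-1)d-E}$, and the algebraic identity $(2n-1)d-E=\zeta(2n)-d$ closes the argument when $p$ is an even integer.

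\textbf{Extension to general $p\geq 2$ and main obstacle.} For non-integer $p$ no closed-form moment is available; the natural plan is to partition $[0,1]^d$ into $N\sim\gep^{-d}$ cubes $C_i$ of side $\gep$, write $\tilde M_\gep=\sum_i Z_i$ with $Z_i:=\tilde M_\gep(C_i)$, and apply a Rosenthal-type lower bound to the sum (after centering, since each $Z_i$ has mean $\gep^d$). On each cell the field is essentially constant with variance $\log(1/\gep)$, so $\E|Z_i-\gep^d|^p\asymp\gep^{\zeta(p)}$, which combined with $N\sim\gep^{-d}$ yields $\gep^{\zeta(p)-d}$. The main technical obstacle is that the $Z_i$'s are not truly independent due to the logarithmic long-range correlations of $X_\gep$; this is to be handled either by Kahane's convexity inequality (Proposition \ref{kahaha}) to compare with a truly decoupled field, or by restricting to a sparse subfamily of cells that are approximately decorrelated.
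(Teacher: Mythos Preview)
Your even-integer argument is correct and stands on its own, but for general $p\ge 2$ the proposal has a genuine gap. The Rosenthal-type plan is not carried out, and it runs into real difficulties: the cell contributions $Z_i$ are complex-valued and only logarithmically decorrelated, so neither a standard Rosenthal lower bound nor Kahane's inequality (which in the paper's form acts on \emph{real} log-normal chaos) applies to them directly. Thinning to a sparse subfamily of cells to restore independence would not preserve the full count $N\sim\gep^{-d}$, and hence would not yield the required exponent $\zeta(p)-d$.

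The paper bypasses all of this with one structural idea you are missing: condition on the real field $X$ first. Since $p\ge 2$, Jensen's inequality gives
\[
\E\big[|\tilde M_\gep|^p\big]\;\ge\;\E\Big[\big(\E[|\tilde M_\gep|^2\mid \mathcal F^X]\big)^{p/2}\Big],
\]
and the inner conditional second moment is a \emph{real nonnegative} capacity integral,
\[
\E\big[|\tilde M_\gep|^2\mid \mathcal F^X\big]=\int_{([0,1]^d)^2}\gep^{\gamma^2}\,\frac{M_\gep^{\gamma,0}(\dd x)\,M_\gep^{\gamma,0}(\dd y)}{G_\gep(x-y)^{\beta^2}}.
\]
From this point on the problem is purely about real Gaussian chaos: superadditivity of $x\mapsto x^{p/2}$ over a partition of $([0,1]^d)^2$ into $n^d$ congruent diagonal blocks gives a factor $n^d$, and Kahane's convexity inequality (Proposition~\ref{kahaha}) transfers to the scale-invariant field of Proposition~\ref{scalinv}, for which exact scaling produces the factor $n^{-\zeta(p)}$. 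One obtains $\E[|\tilde M_\gep|^p]\ge C\,n^{d-\zeta(p)}$ for every $n$, and boundedness forces $\zeta(p)\ge d$.

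Your cluster bound for $p=2n$ is in fact recoverable from this same scheme (take $n\sim\gep^{-1}$), but the conditioning-then-Jensen step is what makes the argument go through for \emph{all} $p\ge 2$ without any decoupling.
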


\proof

\noindent  We consider $p \geq 2$ such that $\E  [  |  M^{\gamma,\beta}([0,1]^d)  |^p ] < \infty$. We have the following inequalities:
\begin{align*}
 \E  \left[  \left| \gep^{\frac{\gamma^2}{2}-\frac{\beta^2}{2}} M^{\gamma,\beta}_\gep ([0,1]^d)  \right|^p \right]   
& = \E  \left[  \E  \left[  \left(\left|    \gep^{\frac{\gamma^2}{2}-\frac{\beta^2}{2}} M^{\gamma,\beta}_\gep ([0,1]^d)   \right|^2\right)^{p/2} \ | \ \mathcal{F}^X \right] \right]    \\
&\ge \E  \left[  \E  \left[  \left(\left|    \gep^{\frac{\gamma^2}{2}-\frac{\beta^2}{2}} M^{\gamma,\beta}_\gep ([0,1]^d)   \right|^2\right)  \ | \ \mathcal{F}^X \right]^{p/2} \right]  \\
& =  \E  \left[   \left (   \int_{[0,1]^d \times [0,1]^d} \gep^{\gamma^2} \frac{M_{\gep}^{\gamma,0}(\dd x) M_{\gep}^{\gamma,0}(\dd y)}{G_\gep(x-y)^{\beta^2}}  \right)^{p/2} \right]   \\ 
& \geq n^{d}  \E  \left[    \left(   \int_{[0,1/n]^d \times [0,1/n]^d} \gep^{\gamma^2} \frac{M_{\gep}^{\gamma,0}(\dd x) M_{\gep}^{\gamma,0}(\dd y)}{G_\gep(x-y)^{\beta^2}}  \right)^{p/2}  \right]    
\end{align*}
where the first inequality is Jensen's inequality for the conditional expectation and in the last one we have used super-additivity and stationarity. 
Now from Kahane's inequality \eqref{khan1}, we can, at the cost of a multiplicative constant, replace $X$ in the last line by the scale invariant field given by Proposition 
\ref{scalinv}. This gives
\begin{equation*}
\E  \left[  |  M^{\gamma,\beta} [0,1]  |^p \right]   \geq C n^{d-\zeta(p)} \E  \left[   \left(   \int_{[0,1]^d \times [0,1]^d}  \frac{M^{\gamma,0}(\dd x) M^{\gamma,0}(\dd y)}{|y-x|^{\beta^2}}  \right)^{p/2}  \right]  , 
\end{equation*}
for some fixed constant $C>0$. Hence we get the desired result by letting $n \to \infty$.\qed

\vspace{2mm}
  
\begin{theorem}{\bf (Multifractal spectrum)}\label{coromuzy}
We consider $p\in ]1,2]$ such that   $\zeta(p)>d$. Then   for all $q\in [0,p]$ and some constant $C_q>0$:
\begin{equation*}
\E  \left[  |  M^{\gamma,\beta}(B(0,r))  |^q \right]\underset{r \to 0}{\simeq}  C_q r^{\zeta(q)}.
\end{equation*}

\end{theorem}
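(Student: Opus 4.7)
The plan is a scaling argument exploiting the approximate star scale invariance of the kernel $K_\gep$. Via the substitution $x=ru$ in the definition of $M^{\gamma,\beta}_\gep(B(0,r))$, the rescaled field $(X_\gep(r\cdot))$ on $B(0,1)$ has covariance $K_\gep(r(u-v))$, which via the substitution $t=rs$ in \eqref{deflenoyau} decomposes, for $\gep \leq r \leq 1$, as
$$K_\gep(r(u-v)) = |\log r| + K_{\gep/r}(u-v) + e_r(u-v), \qquad e_r(x) := \int_r^1 \frac{k(xt)-1}{t}\,\dd t.$$
By Assumption A3, $|e_r(x)| \leq C_k|x|$ uniformly in $r \leq 1$, and $e_r$ converges uniformly on $B(0,2)$ as $r\to 0$ to the continuous bounded function $e_0(x) := \int_0^1 (k(xt)-1)/t\,\dd t$.

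Combining this decomposition with the factor $r^d$ from the change of variable, the splitting $\gep^{(\gamma^2-\beta^2)/2} = r^{(\gamma^2-\beta^2)/2}(\gep/r)^{(\gamma^2-\beta^2)/2}$, and passing to the limit $\gep \to 0$ (using Theorem \ref{phase1} for the perturbed field), one obtains the distributional identity
$$M^{\gamma,\beta}(B(0,r)) \stackrel{(\dd)}{=} r^{d+\frac{\gamma^2-\beta^2}{2}}\, e^{\gamma \Omega_r + i\beta \tilde\Omega_r}\, \tilde M^{\gamma,\beta,(r)}(B(0,1)),$$
where $\Omega_r, \tilde\Omega_r$ are independent centered Gaussians of variance $|\log r|$, independent of the complex chaos $\tilde M^{\gamma,\beta,(r)}$ built from the kernel $K+e_r$. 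Taking the $q$-th power of the modulus and expectation, using $|e^{i\beta\tilde\Omega_r}|=1$ and $\E[e^{q\gamma\Omega_r}] = r^{-q^2\gamma^2/2}$, yields the explicit identity
$$\E[|M^{\gamma,\beta}(B(0,r))|^q] = r^{\zeta(q)}\, \E[|\tilde M^{\gamma,\beta,(r)}(B(0,1))|^q].$$

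It then suffices to show $\E[|\tilde M^{\gamma,\beta,(r)}(B(0,1))|^q] \to C_q \in (0,\infty)$ as $r \to 0$. The uniform-in-$r$ bound on this family for $q \leq p$ follows from Theorem \ref{phase1}(1) combined with Kahane's convexity inequality (Proposition \ref{kahaha}), which absorbs the uniformly bounded perturbation $e_r$ into a multiplicative constant. Convergence in law of $\tilde M^{\gamma,\beta,(r)}(B(0,1))$ to the chaos built from the limiting kernel $K+e_0$ follows from the uniform convergence $e_r \to e_0$ and the Cauchy/martingale arguments of Theorem \ref{phase1}; uniform integrability then upgrades to convergence of $q$-th moments, yielding $C_q>0$ by nondegeneracy of the limit. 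The main technical obstacle is to justify the distributional decomposition rigorously, since $K_{\gep/r} + e_r$ need not be positive definite for arbitrary $r,\gep$; one handles this either by restricting to the regime $r \gg \gep$ in which the decomposition is clean, or by bypassing the explicit construction and reasoning purely through Kahane-type moment comparison against the star scale invariant auxiliary field of Proposition \ref{scalinv} for which the exact scaling holds.
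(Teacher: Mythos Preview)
Your high-level strategy---extract the scaling factor $r^{\zeta(q)}$ via the star scale invariance of $K_\gep$ and then show the residual expectation converges---is exactly the paper's. The execution, however, diverges at the two points you yourself flag, and neither of your proposed fixes actually closes the gap for the \emph{sharp} asymptotic $\simeq C_q r^{\zeta(q)}$.

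First, the distributional identity
$M^{\gamma,\beta}(B(0,r)) \stackrel{(\dd)}{=} r^{d+(\gamma^2-\beta^2)/2} e^{\gamma\Omega_r+i\beta\tilde\Omega_r}\,\tilde M^{\gamma,\beta,(r)}(B(0,1))$
with $\Omega_r$ \emph{independent} of $\tilde M^{(r)}$ requires $K_{\gep/r}+e_r$ (and in the limit $K+e_r$) to be positive semidefinite. Since $e_r(0)=0$ while $e_r\not\equiv 0$, the kernel $e_r$ alone is never PSD, and there is no general reason the sum is. The paper sidesteps this entirely: instead of subtracting a global constant from the covariance, it pulls out the \emph{pointwise} value $e^{\gamma X_r(0)+i\beta Y_r(0)}$ and applies the Girsanov transform to the factor $e^{q\gamma X_r(0)}$. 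This leaves an integrand built from the genuine Gaussian increments $Z^X_r(x)=X'_r(rx)-X'_r(0)$ (covariance \eqref{covaz}, automatically PSD) together with a deterministic drift $f_r(x)$ playing the role of your $e_r$. No fictitious field needs to be constructed.

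Second, and more seriously, your argument that $\E[|\tilde M^{\gamma,\beta,(r)}(B(0,1))|^q]\to C_q\in(0,\infty)$ is only a sketch. Uniform $\bbL_p$-bounds via Kahane give tightness, but ``convergence in law of the perturbed chaos as $e_r\to e_0$'' is precisely the delicate continuity-in-the-kernel statement that needs proof; Theorem~\ref{phase1} does not provide it, since there the kernel is fixed. The paper's device here is a \emph{doubly indexed martingale}: after Girsanov one obtains (up to the correction $f_r-f_0$, which is shown to be negligible via the capacity estimate) a process $A_{\gep,r}$ that is simultaneously a martingale in $\gep$ and in $r$. For $q\ge 1$ this makes $\E[|A_{\gep,r}|^q]$ monotone, so $\lim=\sup$, which is finite by Lemma~\ref{capI} and strictly positive trivially. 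For $q<1$ one has $\lim=\inf>0$ because the $\bbL_p$-bounded martingale has a nontrivial limit. This monotonicity is what replaces your appeal to convergence in law and is the main technical idea missing from your outline.

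Finally, your fallback ``reason purely through Kahane comparison with the field of Proposition~\ref{scalinv}'' would only yield $\E[|M^{\gamma,\beta}(B(0,r))|^q]\asymp r^{\zeta(q)}$: Kahane's inequality (stated and valid for \emph{real} chaos, applied here only through the capacity bound after conditioning on $X$) gives two-sided estimates with multiplicative constants, not the existence of a single limiting constant $C_q$.
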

  
The above result is conceptually important as it also implies that $M^{\gamma,\beta}$
is non trivial, i.e. is not deterministic.  

\begin{rem}
Note that, if $\beta \not = 0$, then $\zeta(1)=d-\frac{\beta^2}{2}<d$ hence in average $M^{\gamma,\beta}$ will have infinite variation. Therefore, we do not expect it to be a complex random measure.

\end{rem}

\begin{rem}
We stress here that   the above result is standard in the real case
$\beta=0$. Yet, in the general case $\beta\not =0$, the proof is far from straightforward. The difficulty here is that the fluctuations of the process $Y$ in the term $e^{i\beta Y_\gep}$ may cause a faster decay of $M^{\gamma,\beta}_\gep$ than expected, a kind of Riemann-Lebesgue averaging to $0$. We have to make sure that this does not happen. We further stress that this averaging to $0$ occurs outside the phase I and that the phases II and III may be seen as the study of the fluctuations along this averaging.

Furthermore, we stress that the result holds as well if we replace the ball 
$B(0,r)$ by $\varphi(\cdot/r)$ 
for some continuous and compactly supported function $\varphi$.
\end{rem}

\vspace{2mm}

\noindent {\it Proof.} We carry out the proof in dimension $1$.   
Observe  that the martingale  $\left( \gep^{\frac{\gamma^2}{2}-\frac{\beta^2} {2}}M_\gep^{\gamma, \beta}(K)\right)_{\gep>0}$ is uniformly bounded in $\bbL_p$ for all compact sets $K$. Let us fix  $r>0$. Let us consider a family of complex random measures for $r>\gep>0$: $$M^{\gamma,\beta}_{r,\gep}(\dd x)=e^{\gamma(X_{\gep}-X_r)(x)+i\beta (Y_{\gep}-Y_r)(x)-(\frac{\gamma^2}{2}-\frac{\beta^2}{2})\ln\frac{r}{\gep}}\,\dd x.$$
 Observe that $M^{\gamma,\beta}_{r,\gep}(\dd x)$ is independent of the fields 
 $X_r$ and $Y_r$ and has  the same law as $ r \left(\frac{\gep}{r}\right)^{ \frac{\gamma^2}{2}-\frac{\beta^2}{2}}M^{\gamma,\beta}_{\frac{\gep }{r}}(\dd x/r)$. 
For $\gep<r$, we can decompose $M^{\gamma,\beta}_{\gep}$ as  
\begin{equation}\label{star} 
 \gep^{\frac{\gamma^2}{2}-\frac{\beta^2}{2}}M^{\gamma,\beta}_{\gep}(\dd x)=e^{ \gamma X_r(x)+i\beta Y_r(x)-(\frac{\gamma^2}{2}-\frac{\beta^2}{2})\ln\frac{1}{r} }M^{\gamma,\beta}_{r,\gep}(\dd x).
\end{equation} 
Therefore we have:
\begin{align*}
 \E&\big[ \big| \gep^{\frac{\gamma^2}{2}-\frac{\beta^2}{2}}M^{\gamma,\beta}_\gep([0,r])\big|^q \big]\\
 =& \E\Big[ \Big|\int_{[0,r]}e^{ \gamma X_r(x)+i\beta Y_r(x) -(\frac{\gamma^2}{2} -\frac{\beta^2}{2})\ln\frac{1}{r} }M^{\gamma,\beta}_{r,\gep}(\dd x) \Big|^q  \Big]\\
 =& r^{q(\frac{\gamma^2}{2} -\frac{\beta^2}{2})-\frac{q^2\gamma^2}{2}}\E\Big[e^{ q\gamma X_r(0)-\frac{q^2\gamma^2}{2}\ln\frac{1}{r}}  \Big|\int_{[0,r]}e^{ \gamma (X_r(x)-X_r(0))+i\beta (Y_r(x) -Y_r(0)) }M^{\gamma,\beta}_{r,\gep}(\dd x) \Big|^q  \Big] \\
=& r^{q(1+\frac{\gamma^2}{2} -\frac{\beta^2}{2})-\frac{q^2\gamma^2}{2}}\E\Big[e^{ q\gamma X'_r(0)-\frac{q^2\gamma^2}{2}\ln\frac{1}{r}} \Big|\int_{[0,r]}e^{ \gamma (X'_r(x)-X'_r(0))+i\beta (Y'_r(x) -Y'_r(0)) }\Big(\frac{\gep}{r}\Big)^{ \frac{\gamma^2}{2}-\frac{\beta^2}{2}}M^{\gamma,\beta}_{\frac{\gep }{r}}(\dd x/r) \Big|^q  \Big] ,
\end{align*}
where $X'$ and $Y'$ are fields that are independent of $X$ and $Y$ with the same law (and 
hence $(r\big(\frac{\gep}{r}\big)^{ \frac{\gamma^2}{2}-\frac{\beta^2}{2}} M^{\gamma,\beta}_{\frac{\gep }{r}}(\dd x/r),X'_r,Y'_r)$ has the same law as $(M^{\gamma,\beta}_{r,\gep}(\dd x),X_r,Y_r)$ ).

Now we make a change of variables in the integral and the use the Girsanov transform to get: 
\begin{align}
 \E\big[ \big|&\gep^{\frac{\gamma^2}{2}-\frac{\beta^2}{2}}M^{\gamma,\beta}_\gep([0,r])\big|^q \big]\nonumber\\
 =& r^{\zeta(q) }\E\Big[e^{ q\gamma X'_r(0)-\frac{q^2\gamma^2}{2}\ln\frac{1}{r}}  \Big|\int_{[0,1]}e^{ \gamma (X'_r(r x)-X'_r(0))+i\beta (Y'_r( r x) -Y'_r(0)) }\Big(\frac{\gep}{r}\Big)^{ \frac{\gamma^2}{2}-\frac{\beta^2}{2}}M^{\gamma,\beta}_{\frac{\gep }{r}}(\dd x) \Big|^q  \Big]\nonumber\\
  =& r^{\zeta(q) }\E\Big[   \Big|\int_{[0,1]}e^{ \gamma Z^X_r(x)+i\beta Z^Y_r(x)-\frac{\gamma^2-\gb^2}{2}\bbE[Z^X_r(x)^2]+f_r(x)}\Big(\frac{\gep}{r}\Big)^{ \frac{\gamma^2}{2}-\frac{\beta^2}{2}}M^{\gamma,\beta}_{\frac{\gep }{r}}(\dd x) \Big|^q  \Big] \label{eron}
\end{align}
 where we have set $Z^X_r(x)=X'_r(r x)-X'_r(0)$, $Z^Y_r(x)=Y'_r(r x)-Y'_r(0)$ and 
  $$f_r(x)=\left((q-1)\gamma^2+\gb^2\right)\int_r^1\frac{k(u x)-1}{u}\,\dd u.$$ 
 
 To conclude the proof, we have to show that 
\begin{equation}
\lim_{r\to 0} \lim_{\gep\to 0} \E\Big[   \Big|\int_{[0,1]}e^{ \gamma Z^X_r(x)+i\beta Z^Y_r(x)-\frac{\gamma^2-\gb^2}{2}\bbE[Z^X_r(x)^2]+f_r(x)}\gep^{ \frac{\gamma^2}{2}-\frac{\beta^2}{2}}M^{\gamma,\beta}_{\gep}(\dd x) \Big|^q  \Big] \label{eron2}
\end{equation}
 exists and is non-zero.
We do it using a martingale argument.  The covariance kernel of $Z$ is given by,
  \begin{align}\label{covaz}
\E[Z^X_r (x)Z^X_r (x') ]=&\int_1^{1/r}\frac{k(r(x-x')u)-k(rxu)-k(rx'u)+1}{u}\,\dd u\\
=&\int_r^{1}\frac{k((x-x')v)-k(xv)-k(x'v)+1}{v}\,dv.
\end{align}
Due to this structure of covariance, one can construct a process whose marginals have the same law (we also name it $Z=(Z^X,Z^Y)$ as it causes no confusion)
indexed by $r\le 1$ such that for each $x$ and $r'<r$,
$\bbE[ Z_{r'}(x) \ | \ Z_r(x)]=Z_r(x)$. 
With this construction the process
$$A_{\gep,r}=\left(\int_{[0,1]}e^{ \gamma Z^X_r(x)+i\beta Z^Y_r(x)-
\frac{\gamma^2-\gb^2}{2}\bbE[Z^X_r(x)^2]+f_0(x)}\gep^{ \frac{\gamma^2}{2}-\frac{\beta^2}{2}}M^{\gamma,\beta}_{\gep}(\dd x) \right)_{\gep\in[0,1],r\in[0,1]}$$
is a doubly indexed martingale (note that $f_r$ has been changed to $f_0$) and thus for $q\ge 1$ we have
\begin{equation}
 \lim_{r\to 0}\lim_{\gep\to 0} \bbE[|A_{\gep,r}|^q]
=\sup_{\gep,r} \bbE[|A_{\gep,r}|^q]>0
\end{equation}
exists and we just have to show uniform boundedness of $\E[ |A_{\gep,r}|^q]$.
Let $\mathcal F^Z$ denote the sigma algebra generated by $Z$, by Jensen's inequality 
\begin{multline}
\bbE[|A_{\gep,r}|^q]\le \bbE\left[\left(\bbE\left[|A_{\gep,r}|^2 | \mathcal F^Y, \mathcal F^Z \right]\right)^{q/2}\right]\\
\le C \E\Big[\Big( \gep^{\gamma^2}\int_{[0,1]^2}\frac{1}{|x-y|^{\beta^2}}\,M_\gep^{\gamma, 0}(\dd x)M_\gep^{\gamma,0}(\dd y)\Big)^{q/2}\Big].
\end{multline}
for an appropriate constant $C$. To obtain the second inequality, we compute explicitly the average as in the proof of Theorem \ref{phase1}, averaging over $Z$ only yields an extra constant as its  covariance is uniformly bounded in $r$.
We conclude 
using Lemma \ref{capI}.
For $q<1$ we have to prove that

\begin{equation}
 \lim_{r\to 0}\lim_{\gep\to 0} \bbE[|A_{\gep,r}|^q]
=\inf_{\gep,r} \bbE[|A_{\gep,r}|^q]>0,
\end{equation}
which is true as the martingale is uniformly integrable and thus has a non trivial limit.

What remains to show is that replacing $f_0$ by $f_r$ does not change the limit. This is easy: if $\tilde A_{\gep,r}$ denotes 
the process where $f_0$ is replaced by $f_r$ we obtain after redoing the same computation with an extra $e^{f_r-f_0}-1$ factor that
\begin{equation}
 \bbE\left[|A_{\gep,r}-\tilde A_{\gep,r}|^q\right]=o(1)
 \E\Big[\Big( \gep^{\gamma^2}\int_{[0,1]^2}\frac{1}{|x-y|^{\beta^2}}\,M_\gep^{\gamma, 0}(\dd x)M_\gep^{\gamma,0}(\dd y)\Big)^{q/2}\Big]
\end{equation}
when $r$ tends to zero, and conclude.\qed

We also have:
\begin{theorem}\label{corostar}{\bf (Star scale invariance).}
Assume that the kernel $k$ is of class $C^{2d}$ with  derivatives of order $2d$ H\"older. The distribution of order $d$ $M^{\gamma,\beta}$ is star scale invariant \index{star scale invariant} in the sense that it can be written as
$$M^{\gamma,\beta}(\dd x)=e^{ \gamma X_r(x)+i\beta Y_r(x)-(\frac{\gamma^2}{2}-\frac{\beta^2}{2})\ln\frac{1}{r} }\tilde{M}^{\gamma,\beta}_{r}(\dd x)$$
where $\tilde{M}^{\gamma,\beta}_{r}$ is a distribution of order $d$, is independent of the fields $X_r$ and $Y_r$ and has  the same law as $ r^d M^{\gamma,\beta}(\dd x/r)$. 
\end{theorem}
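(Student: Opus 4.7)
The plan is to pass to the limit $\gep \to 0$ in the pointwise $\gep$-level factorization already derived as equation \eqref{star} in the proof of Theorem \ref{coromuzy}:
\begin{equation*}
\gep^{\frac{\gamma^2}{2}-\frac{\beta^2}{2}} M^{\gamma,\beta}_\gep(\dd x) = e^{\gamma X_r(x) + i\beta Y_r(x) - (\frac{\gamma^2}{2}-\frac{\beta^2}{2})\log \frac{1}{r}} \, M^{\gamma,\beta}_{r,\gep}(\dd x),
\end{equation*}
in which $M^{\gamma,\beta}_{r,\gep}$ is built from the increments $X_\gep - X_r$ and $Y_\gep - Y_r$ and is therefore independent of $\mathcal F_r$ by the white-noise construction. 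Since the left-hand side already converges almost surely to $M^{\gamma,\beta}(\dd x)$ in $\mathcal{D}'_d(\R^d)$ by Theorem \ref{phase1}, the task reduces to constructing the limit of $M^{\gamma,\beta}_{r,\gep}$ and identifying its joint law with $(X_r,Y_r)$.

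For the construction, I would exploit the scaling identity (already noted in the proof of Theorem \ref{coromuzy})
\begin{equation*}
M^{\gamma,\beta}_{r,\gep}(\dd x) \stackrel{(d)}{=} r^d \left(\frac{\gep}{r}\right)^{\frac{\gamma^2}{2}-\frac{\beta^2}{2}} M^{\gamma,\beta}_{\gep/r}(\dd x/r),
\end{equation*}
where the right-hand side involves an independent copy of $(X,Y)$; it follows from the equality of the covariance of $X_\gep - X_r$ with that of the rescaled $X_{\gep/r}(\cdot/r)$ together with the Jacobian $r^d$ from the change of variables. Since $M^{\gamma,\beta}_{r,\gep}$ is a martingale in $\gep$ decreasing and, by the above identity combined with Theorem \ref{phase1}, is uniformly $\bbL_p$-bounded for some $p>1$, it converges almost surely in $\mathcal D'_d(\R^d)$ to some random distribution $\tilde M^{\gamma,\beta}_r$. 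Passing to the limit in the scaling identity shows that $\tilde M^{\gamma,\beta}_r$ has the same law as $r^d M^{\gamma,\beta}(\dd x/r)$, and independence from $\mathcal F_r$ is inherited from the pre-limits.

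To conclude, I would pair the factorization with a test function $\varphi \in C^d_c(\R^d)$ and let $\gep \to 0$. Under the hypothesis that $k \in C^{2d}$ with Hölder $2d$-th derivatives, the covariance $\int_1^{1/r} k(u\cdot)/u\,\dd u$ of $X_r$ is $C^{2d}$ with Hölder derivatives, so Kolmogorov's continuity theorem applied to the partial derivatives yields $C^d$ modifications of $X_r$ and $Y_r$ almost surely; hence the random function $\psi := \varphi\, e^{\gamma X_r + i\beta Y_r}$ lies in $C^d_c(\R^d)$. The main technical obstacle is that Theorem \ref{phase1} supplies almost-sure convergence of $M^{\gamma,\beta}_{r,\gep}(\psi)$ only against deterministic $\psi$, whereas here $\psi$ is $\mathcal F_r$-measurable. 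I would sidestep this by working conditionally on $\mathcal F_r$: independence of $M^{\gamma,\beta}_{r,\gep}$ from $\mathcal F_r$, together with the deterministic domination $|M^{\gamma,\beta}_{r,\gep}(\psi)| \le Z_R \sup|\partial^d \psi|$ of Theorem \ref{phase1} (with $\bbL_p$-integrable $Z_R$), upgrades the $\varphi$-by-$\varphi$ convergence to convergence in probability, and a countable dense family of test functions in $C^d_c(\R^d)$ then lifts this to the stated identity of order-$d$ distributions almost surely.
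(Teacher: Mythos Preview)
Your proposal is correct and follows essentially the same approach as the paper: start from the $\gep$-level factorization \eqref{star}, use Theorem~\ref{phase1} for the left-hand side, identify the limit and law of $M^{\gamma,\beta}_{r,\gep}$ via the scaling identity, and invoke the $C^{2d}$ hypothesis on $k$ to obtain $C^d$ versions of $X_r,Y_r$ (the paper does this via Proposition~\ref{kolm2}) so that one can pass to the limit in the product. You are in fact more careful than the paper on two points: you argue almost-sure convergence of $M^{\gamma,\beta}_{r,\gep}$ via its martingale structure and $\bbL_p$-boundedness (the paper phrases this step as convergence in law), and you address explicitly the issue of pairing against the $\mathcal F_r$-measurable random test function $\varphi\, e^{\gamma X_r+i\beta Y_r}$ by conditioning, whereas the paper simply writes ``we can then pass to the limit''.
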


\begin{rem}
The scaling relation \eqref{star} or that of Theorem \ref{corostar} is only valid for star scale invariant kernels as those described in section \ref{mainsetup}. If one wishes to apply this argument to more general situations than those described in sections \ref{extension} or \ref{GFF}, the   difference is that 
we have a decomposition
$$ M^{\gamma,\beta}(\dd t)=r^d e^{\gamma X_r(t)-
(\frac{\gamma^2}{2}-\frac{\beta^2}{2})\E[X_r(t)^2]}\overline{M}^{\gamma,\beta,r}\left(\frac{\dd t}{r}\right)$$
where the measure $\overline{M}^{\gamma,\beta,r}$ is independent of $X_r,Y_r$ and the family of complex valued distributions $(\overline{M}^{\gamma,\beta,r})_r$ is tight in the space of distributions of order $d$.
\end{rem}

\noindent {\it Proof.}  We stick to the notations  of the beginning of the proof of Theorem \ref{coromuzy} and write
\begin{equation}\label{starbis} 
 \gep^{\frac{\gamma^2}{2}-\frac{\beta^2}{2}}M^{\gamma,\beta}_{\gep}(\dd x)=e^{ \gamma X_r(x)+i\beta Y_r(x)-(\frac{\gamma^2}{2}-\frac{\beta^2}{2})\ln\frac{1}{r} }M^{\gamma,\beta}_{r,\gep}(\dd x).
\end{equation} 
where $M^{\gamma,\beta}_{r,\gep}(\dd x)$ is independent of the fields $X_r$ and $Y_r$ and has  the same law as $ r^d \left(\frac{\gep}{r}\right)^{ \frac{\gamma^2}{2}-\frac{\beta^2}{2}}M^{\gamma,\beta}_{\frac{\gep }{r}}(\dd x/r)$. From item 2 of Theorem \ref{phase1}, the left-hand side converges in the sense of distributions of order $d$ towards $M^{\gamma,\beta}(\dd x)$. Concerning the right-hand side,  for all $r>0$, 
the family $ r^d \left(\frac{\gep}{r}\right)^{ \frac{\gamma^2}{2}-\frac{\beta^2}{2}}M^{\gamma,\beta}_{\frac{\gep }{r}}(\dd x/r)$ converges in law as $\epsilon\to 0$ in the  sense of distributions of order $d$ towards $M^{\gamma,\beta}_r(\dd x)$, which has the same law as $r^dM^{\gamma,\beta}(\dd x/r)$ and is independent of $X_r,Y_r$. Because of our assumption on the regularity of $k$, we can apply Proposition \ref{kolm2} to prove that both processes $X_r,Y_r$ are almost surely of class $C^d$. We can then pass to the limit in \eqref{starbis} to complete the proof of Theorem \ref{corostar}.\qed

\subsubsection{Analysis of the capacity}
The following lemma settles the case where $(\gamma,\beta)$ belongs to the inner phase I.
Recall that this implies the existence of $p \in (1,2)$  such that $\zeta(p)>d$. 

\begin{lemma}\label{capI}

Let $(\gamma,\beta)$ belong to the inner phase I. If $p \in (1,2]$  is such that $\zeta(p)>d$, there exists $C>0$ such that we have for all $\gep<1$:
\begin{equation*}
\E  \left [ \left ( \int_{ x,y \in [0,1]^{ d}} \gep^{\gamma^2} \frac{M_{\gep}^{\gamma,0}(\dd x) M_{\gep}^{\gamma,0}(\dd y)}{|y-x|^{\beta^2}}  \right ) ^{p/2} \right ]  \leq C. 
\end{equation*}

\end{lemma}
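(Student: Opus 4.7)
The plan is to bound $I_\gep := \gep^{\gamma^2}\int_{[0,1]^{2d}} M_\gep^{\gamma,0}(\dd x) M_\gep^{\gamma,0}(\dd y)/|y-x|^{\beta^2}$ by splitting the integration region according to the diagonal distance, distinguishing the ``large'' scales $|y-x|\ge\gep$ from the ``small'' scales $|y-x|<\gep$. Writing $\tilde M_\gep := \gep^{\gamma^2/2} M_\gep^{\gamma,0}$ for the mean-normalized real chaos, the workhorse is the subadditivity $(\sum a_i)^{p/2}\le\sum a_i^{p/2}$ valid for $p/2\le 1$, combined with the multifractal moment bound $\bbE[\tilde M_\gep(B(x,r))^p] \leq C r^{\xi_r(p)}$, where $\xi_r(p) = pd + p\gamma^2/2 - p^2\gamma^2/2$, uniformly in $\gep\le r\le 1$; this is obtained via Kahane's inequality~\eqref{khan1} and comparison with the scale-invariant chaos of Proposition~\ref{scalinv}, applicable since $p<p_c\le 2d/\gamma^2$ guarantees finiteness of the $p$-th moment of the limit chaos.

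For the large-scale part, decompose dyadically: $A_k = \{(x,y):2^{-k-1}\le|y-x|\le 2^{-k}\}$ for $0\le k\le\log_2(1/\gep)$. On $A_k$, $|y-x|^{-\beta^2}\le C2^{k\beta^2}$ and $A_k\cap[0,1]^{2d}$ is covered by $O(2^{kd})$ products of neighboring cubes of side $2^{-k}$. Bounding $\tilde M_\gep(B_i)\tilde M_\gep(B_j)\le\frac{1}{2}(\tilde M_\gep(B_i)^2+\tilde M_\gep(B_j)^2)$ and applying subadditivity on the sum of squares with exponent $p/2\le 1$ yields
\[\bbE\biggl[\biggl(\int_{A_k}\frac{\tilde M_\gep(\dd x)\tilde M_\gep(\dd y)}{|y-x|^{\beta^2}}\biggr)^{\!p/2}\biggr] \le C\,2^{k(d+p\beta^2/2-\xi_r(p))}=C\,2^{k(d-\zeta(p))}.\]
The geometric series over $k\ge 0$ converges precisely because $\zeta(p)>d$.

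For the small-scale part, the mere expectation behaves like $\gep^{d-\beta^2-\gamma^2}$, which diverges whenever $\gamma^2+\beta^2>d$ (a case lying inside inner phase~I), so neither Jensen nor Cauchy--Schwarz suffices. I would cover $\{|y-x|<\gep\}\cap[0,1]^{2d}$ by adjacent pairs $B\times B'$ of cubes of side $\gep$. By Assumption~(A3), the residual $X_\gep(y)-X_\gep(x_B)$ has bounded variance over $B\cup B'$, so its exponential moments are controlled; combined with $\int_{B(x,\gep)}|y-x|^{-\beta^2}\,\dd y\le C\gep^{d-\beta^2}$ (since $\beta^2<d$ throughout phase~I) this gives
\[\int_{B\times B'}\frac{\tilde M_\gep(\dd x)\tilde M_\gep(\dd y)}{|y-x|^{\beta^2}}\le C\gep^{2d-\beta^2+\gamma^2}e^{2\gamma X_\gep(x_B)}U,\]
for some random factor $U$ of finite moments. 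Taking the $(p/2)$-th power and applying a Girsanov tilt by $e^{p\gamma X_\gep(x_B)-p^2\gamma^2\bbE[X_\gep(x_B)^2]/2}$ (under which $U$ retains $O(1)$ parameters) produces $\bbE\bigl[(\int_{B\times B'}\cdot)^{p/2}\bigr]\le C\gep^{(2d-\beta^2+\gamma^2)p/2-p^2\gamma^2/2}$. Summing over the $O(\gep^{-d})$ pairs, the exponents collapse to exactly $\zeta(p)-d>0$, so the small-scale contribution is $O(\gep^{\zeta(p)-d})$. The Girsanov step is essential: a naive Cauchy--Schwarz on $e^{p\gamma X_\gep(x_B)}U^{p/2}$ would cost an additional $\gep^{-p^2\gamma^2/2}$ and destroy the cancellation that is algebraically encoded in $\zeta(p)=p(d+\gamma^2/2-\beta^2/2)-p^2\gamma^2/2$. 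This small-scale obstacle is the hardest point of the proof.
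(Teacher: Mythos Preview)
Your argument is correct and takes a genuinely different route from the paper's. The paper first reduces (via Kahane's inequality \eqref{khan1}) to the exactly scale-invariant field of Proposition~\ref{scalinv}, and then exploits exact scaling to set up a \emph{recursion}: splitting $[0,1]$ into two halves, the cross term $\{x\in[0,1/2],\,y\in[1/2,1]\}$ contributes a uniformly bounded constant (via Jensen and $\gamma^2+\beta^2<2d$), while the diagonal term $\{x,y\in[0,1/2]\}$ reproduces the original quantity at scale $\gep$, yielding
\[
u(\gep/2)\le 2^{\,d-\zeta(p)}\,u(\gep)+C,
\]
and hence boundedness since $\zeta(p)>d$. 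There is no separate treatment of the sub-$\gep$ regime; the recursion swallows all scales at once.

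Your decomposition by dyadic shells in $|x-y|$ instead makes each scale explicit: the geometric series $\sum_k 2^{k(d-\zeta(p))}$ displays the same exponent competition, and your Girsanov step on cubes of side $\gep$ replaces the role of the recursion at the finest scale. What the paper's route buys is brevity and the complete avoidance of the small-scale analysis you flag as the hardest point. What yours buys is a transparent scale-by-scale accounting, and the fact that your sub-$\gep$ computation does not itself invoke scale invariance (only the Lipschitz increment bound from Assumption~(A3) and a Girsanov tilt), which may adapt to settings where an exactly scale-invariant comparison field is unavailable. One point worth making explicit: the moments of your residual factor $U$ must be bounded uniformly in $\gep$ and in the choice of cube; this follows from Borell--TIS combined with the increment bound $\mathrm{Var}(X_\gep(x)-X_\gep(x'))\le C|x-x'|/\gep$ for $x,x'$ in the cube, or more directly by keeping $U$ in integral form and applying Jensen after the tilt rather than passing to a supremum.
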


\proof
For simplicity, we suppose that $d=1$. From Proposition \ref{kahaha}, \eqref{froomk} and \eqref{crook}, it is sufficient to prove the result when
$X_{\gep}(x)$ is the scale invariant Gaussian log-correlated field described in Proposition \ref{scalinv} (we apply \eqref{khan1} to the field $X_\gep(x)+X_{\gep}(y)$ indexed by $(x,y) \in \bbR^2$) with $T=2$. 

Now, by subadditivity of $x \mapsto x^{p/2}$, we get:
\begin{align}\label{spiderman}  
 \E  \left [ \left ( \int_{ x,y \in [0,1]} \left(\frac{\gep}{2}\right)^{\gamma^2} \frac{M_{\gep/2}^{\gamma,0}(\dd x) M_{\gep/2}^{\gamma,0}(\dd y)}{|y-x|^{\beta^2}}  \right ) ^{\frac{p}{2}} \right ]    \leq &2 \E  \left [ \left ( \int_{ x,y \in [0,\frac{1}{2}]} \left(\frac{\gep}{2}\right)^{\gamma^2} \frac{M_{\gep/2}^{\gamma,0}(\dd x) M_{\gep/2}^{\gamma,0}(\dd y)}{|y-x|^{\beta^2}}  \right ) ^{\frac{p}{2}} \right ]  \\
 + &2 \E  \left [ \left ( \int_{x \in [0,\frac{1}{2}],y\in [\frac{1}{2},1]} \left(\frac{\gep}{2}\right)^{\gamma^2} \frac{M_{\gep/2}^{\gamma,0}(\dd x) M_{\gep/2}^{\gamma,0}(\dd y)}{|y-x|^{\beta^2}}  \right ) ^{\frac{p}{2}} \right ].
 \end{align}
We first handle the second term in the above sum. We have by Jensen's Inequality that:
\begin{align*}
 \E  \left [ \left ( \int_{ (x,y) \in [0,\frac{1}{2}]\times [\frac{1}{2},1]} \left(\frac{\gep}{2}\right)^{\gamma^2} \frac{M_{\gep/2}^{\gamma,0}(\dd x) M_{\gep/2}^{\gamma,0}(\dd y)}{|y-x|^{\beta^2}}  \right ) ^{\frac{p}{2}} \right ]  
& \leq C \left ( \int_{ (x,y) \in [0,\frac{1}{2}]\times [\frac{1}{2},1]}  \frac{\dd x \dd y}{|y-x|^{\gamma^2+\beta^2}} \right )^{\frac{p}{2}}   \\
& \leq C \int_{0}^1 \frac{\dd u}{u^{\gamma^2+\beta^2}} \int_0^u dv  .
\end{align*}
This latter quantity is finite since $\gamma^2+\beta^2 < 2$. Hence, we get the existence of some constant $C>0$ such that:
\begin{equation}\begin{split}\label{eqintermediaire} 
& \E  \left [ \left ( \int_{ x,y \in [0,1] } \left(\frac{\gep}{2}\right)^{\gamma^2} \frac{M_{\gep/2}^{\gamma,0}(\dd x) M_{\gep/2}^{\gamma,0}(\dd y)}{|y-x|^{\beta^2}}  \right ) ^{\frac{p}{2}} \right ]       \leq 2 \E  \left [ \left ( \int_{ x,y \in [0,\frac{1}{2}] } \left(\frac{\gep}{2}\right)^{\gamma^2} \frac{M_{\gep/2}^{\gamma,0}(\dd x) M_{\gep/2}^{\gamma,0}(\dd y)}{|y-x|^{\beta^2}}  \right ) ^{\frac{p}{2}} \right ] +C.    
\end{split}\end{equation}
By stochastic scale invariance \eqref{sssscal} for $\gl=1/2$, we have
\begin{align*}
&  \E  \left [ \left ( \int_{ (x,y) \in [0,\frac{1}{2}]^{2}} \left(\frac{\gep}{2}\right)^{\gamma^2} \frac{M_{\gep/2}^{\gamma,0}(\dd x) M_{\gep/2}^{\gamma,0}(\dd y)}{|y-x|^{\beta^2}}  \right ) ^{p/2} \right ]  \\
&   = \frac{1}{2^{ p(1+\gamma^2/2-\beta^2/2)  }} \E  \left [ \left ( \int_{ (x,y) \in [0,1]^{2}} \gep^{\gamma^2} \frac{e^{\gamma X_{\gep/2}(x/2)+ \gamma X_{\gep/2}(y/2)}  \dd x \dd y}{|y-x|^{\beta^2}}  \right ) ^{p/2} \right ] \\
&  = \frac{1}{2^{ p(1+\gamma^2/2-\beta^2/2)  }} \E  [e^{p \gamma \Omega_{1/2}}]    \E  \left [ \left ( \int_{ (x,y) \in [0,1]^{2}} \gep^{\gamma^2} \frac{e^{\gamma X_{\gep}(x)+ \gamma X_{\gep}(y)}  \dd x \dd y}{|y-x|^{\beta^2}}  \right ) ^{p/2} 
\right ] \\
&  = \frac{1}{2^{ \zeta(p)  }}   \E  \left [ \left ( \int_{ (x,y) \in [0,1]^{2}} \gep^{\gamma^2} \frac{e^{\gamma X_{\gep}(x)+ \gamma X_{\gep}(y)}  \dd x \dd y}{|y-x|^{\beta^2}}  \right ) ^{p/2} \right ] .
\end{align*} 
If we set $$u(\gep)= \E  \left [ \left ( \int_{ (x,y) \in [0,1]^{2}} \gep^{\gamma^2} \frac{e^{\gamma X_{\gep}(x)+ \gamma X_{\gep}(y)}  \dd x \dd y}{|y-x|^{\beta^2}}  \right ) ^{p/2} \right ]$$
 then inequality (\ref{eqintermediaire}) amounts to

 \begin{equation}\label{grossier}
 u(\gep/2) \leq \rho u(\gep)+C
 \end{equation}
  where $\rho= \frac{2}{2^{ \zeta(p)  }}  <1$.
 Notice that repeating the computation starting from \eqref{spiderman} 
 with $\alpha\in [1/2,1)$ entails
 \begin{equation}\label{grossier2}
 \forall \alpha \in [1/2,1), \; u(\alpha\gep) \leq \rho u(\gep)+C
 \end{equation}
 Hence by a trivial induction 
 the sequence $(u(2^{-n}))_{n \geq 1}$ is bounded and \eqref{grossier2} allows to conclude for general $\gep$.

 \qed
\subsection{Phase transition I/II}\label{frontier12}

\begin{theorem}\label{th:frontier12} 
Let us consider the frontier I/II: $\beta+\gamma=\sqrt{2d}$ and 
$\gamma \in \left( \sqrt{ \frac{d}{2}} , \sqrt{2d}\right)$. We further consider any $p\in 
\left(1, \frac{\sqrt{2d}}{\gamma}\right)$.

\noindent 1.\,\,\, For all compactly supported   bounded measurable functions $f$, 
the martingale
\begin{equation*} 
 ( \gep^{\frac{\gamma^2}{2}-\frac{\beta^2} {2}}\int_{\R^d} f(x)\,M_\gep^{\gamma, \beta}(\dd x) )_\gep
\end{equation*}
is uniformly bounded in $\bbL_p$. Furthermore, for all $R>0$, there exists a constant $C_{p,R}$ (only depending on $p,R$) such that for all bounded measurable functions $f$ with compact support in $B(0,R)$:
$$\E\Big[\sup_{\gep\in ]0,1]}\Big| \gep^{\frac{\gamma^2}{2}-\frac{\beta^2} {2}}\int_{\R^d} f(x)\,M_\gep^{\gamma, \beta}(\dd x)\Big|^p\Big]\leq C_{p,R}\|f\|_\infty^p.$$
\noindent 2.\,\,\,  The $\mathcal{D}'(\R^d)$-valued martingale:
\begin{equation*} 
M^{\gamma,\beta}_\gep: \varphi \rightarrow  \gep^{\frac{\gamma^2}{2}-\frac{\beta^2} {2}}\int_{\R^d} \varphi(x) e^{\gamma X_\gep(x)+ i \gb Y_\gep (x)} \dd x
\end{equation*}
converges almost surely in the space $\mathcal{D}'_{d}(\R^d)$ of distributions of order $d$ towards a non trivial limit $M^{\gamma,\beta}$. More precisely, for each $R>0$, there exists a random variable $Z_R\in\bbL_{p}$ such that for all functions $\varphi\in C^d_c(B(0,R))$:
$$|M^{\gamma,\beta}(\varphi)|\leq Z_R \sup_{x \in B(0,R) } |\frac{\partial^d \varphi(x)}{\partial x_1 \cdots \partial x_d }|  .$$
\noindent 3.\,\,\, For all $p<\frac{\sqrt{2d}}{\gamma} $, we set $\zeta(p)=(d+ \frac{\gamma^2}{2}-\frac{\beta^2} {2})p -\frac{\gamma^2}{2}p^2= \sqrt{2d} \gamma p -\frac{\gamma^2}{2}p^2$. There exists some  constant $C_p>0$ such that:
\begin{equation*}
\E  [  |  M^{\gamma,\beta}(B(0,r))  |^p ] \underset{r \to 0}{\simeq} C_p r^{\zeta(p)}.
\end{equation*}

\noindent 4.\,\,\, Assume that the kernel $k$ is of class $C^{2d}$ with  derivatives of order $2d$ H\"older. The distribution of order $d$, $M^{\gamma,\beta}$ is star scale invariant \index{star scale invariant} in the sense that it can be written as
$$M^{\gamma,\beta}(\dd x)=e^{ \gamma X_r(x)+i\beta Y_r(x)-(\frac{\gamma^2}{2}-\frac{\beta^2}{2})\ln\frac{1}{r} }\tilde{M}^{\gamma,\beta}_{r}(\dd x)$$
where $\tilde{M}^{\gamma,\beta}_{r}$ is a distribution of order $d$, independent of the fields $X_r$ and $Y_r$ and has  the same law as $ r M^{\gamma,\beta}(\dd x/r)$. 
 \end{theorem}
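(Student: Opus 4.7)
The plan is to mirror the proofs of Theorems \ref{phase1}, \ref{coromuzy} and \ref{corostar} almost line by line. The only but crucial change is that on the frontier $\beta+\gamma=\sqrt{2d}$ one has
\begin{equation*}
\zeta(p)-d=-\tfrac{\gamma^2}{2}\bigl(p-\sqrt{2d}/\gamma\bigr)^2\le 0,
\end{equation*}
with equality only at the endpoint $p=\sqrt{2d}/\gamma$, so the hypothesis $\zeta(p)>d$ of Lemma \ref{capI} is never satisfied and its geometric iteration $u(\gep/2)\le \rho\, u(\gep)+C$ with contracting factor $\rho=2^{d-\zeta(p)}\ge 1$ breaks down. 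Its role in the proofs of Theorems \ref{phase1} and \ref{coromuzy} must therefore be played by the frontier capacity estimate Lemma \ref{capI_II}; with that substitution in hand the four items follow with only cosmetic changes.

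For Item 1, note that $p\in(1,\sqrt{2d}/\gamma)$ and $\gamma>\sqrt{d/2}$ together force $p<2$, so $p/2<1$ and Jensen's inequality applied conditionally on $\mathcal{F}^X$ gives, for $f$ supported in $B(0,R)$,
\begin{equation*}
\bbE\Big[\Big|\gep^{(\gamma^2-\beta^2)/2}\int f(x)M_\gep^{\gamma,\beta}(\dd x)\Big|^p\Big]\le C\|f\|_\infty^p\,\bbE\Big[\Big(\gep^{\gamma^2}\!\!\int_{B(0,R)^2}\!\!\frac{M_\gep^{\gamma,0}(\dd x)M_\gep^{\gamma,0}(\dd y)}{|x-y|^{\beta^2}}\Big)^{p/2}\Big],
\end{equation*}
the right-hand side being bounded uniformly in $\gep$ by Lemma \ref{capI_II}. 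The supremum over $\gep\in(0,1]$ is then controlled by Doob's maximal inequality applied to the $\bbL_p$-bounded martingale. Item 2 is identical to the proof of Theorem \ref{phase1}(2): setting $F_\gep^{\gamma,\beta}(x_1,\dots,x_d):=\gep^{(\gamma^2-\beta^2)/2}\int_{[0,x_1]\times\cdots\times[0,x_d]}e^{\gamma X_\gep(u)+i\beta Y_\gep(u)}\,\dd u$, integration by parts yields $|M_\gep^{\gamma,\beta}(\varphi)|\le \sup_{x}|\partial^d\varphi/\partial x_1\cdots \partial x_d|\cdot Z_\gep$ with $Z_\gep=\int|F_\gep^{\gamma,\beta}|\,\dd x$; this is a positive submartingale which is bounded in $\bbL_p$ by Item 1, so Doob's $\bbL_p$-convergence theorem provides the almost sure limit in $\mathcal{D}'_d(\R^d)$ with the stated pointwise control.

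For Item 3 the proof of Theorem \ref{coromuzy} transposes verbatim: the decomposition \eqref{star}, the change of variables $x\mapsto rx$ and a Girsanov transform give
\begin{equation*}
\bbE\Big[\big|\gep^{(\gamma^2-\beta^2)/2} M_\gep^{\gamma,\beta}([0,r])\big|^q\Big] = r^{\zeta(q)}\,\bbE\bigl[|A_{\gep,r}|^q\bigr],
\end{equation*}
with $A_{\gep,r}$ as in \eqref{eron}. The doubly-indexed martingale argument reduces the matter to $\sup_{\gep,r}\bbE[|A_{\gep,r}|^q]<\infty$, which after Jensen's inequality conditional on the $\sigma$-algebra generated by $Y$ and by the auxiliary field $Z$ (whose covariance, by \eqref{covaz}, is uniformly bounded in $r$) reduces once more to Lemma \ref{capI_II}; for $q<1$ uniform integrability produces a positive limit exactly as in Theorem \ref{coromuzy}. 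Item 4 copies Theorem \ref{corostar} word for word: the $C^d$-regularity of $X_r,Y_r$ under the smoothness assumption on $k$, combined with the $\mathcal{D}'_d$-convergence of Item 2, allows one to pass to the limit $\gep\to 0$ in \eqref{starbis}.

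The only genuinely new piece of analysis is therefore Lemma \ref{capI_II} itself, which is the main obstacle. Because the naive dyadic iteration has $\rho\ge 1$ on the frontier, I expect the correct estimate to come from exploiting the strict inequality $p<\sqrt{2d}/\gamma$ to extract an explicit polylogarithmic gain at each dyadic scale; equivalently, from a sharp direct analysis of the real chaos Riesz-type energy $\iint |x-y|^{-\beta^2}M^{\gamma,0}(\dd x)M^{\gamma,0}(\dd y)$, which sits exactly at the integrability threshold since $\beta^2=(\sqrt{2d}-\gamma)^2$ is the precise exponent at which the $(\sqrt{2d}/\gamma)$-th moment of this energy would cease to be finite. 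Splitting the diagonal contribution $|x-y|\le\gep$ from the off-diagonal part and treating the latter by Kahane's convexity inequality (comparing to the scale invariant field of Proposition \ref{scalinv}) seems to be the natural path.
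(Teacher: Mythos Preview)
Your reduction of Items 1--4 to the capacity estimate of Lemma \ref{capI_II} is correct and matches the paper's own proof, which is simply one sentence saying exactly this: repeat the arguments of Theorems \ref{phase1}, \ref{coromuzy}, \ref{corostar} with Lemma \ref{capI_II} in place of Lemma \ref{capI}. One small point you gloss over: Lemma \ref{capI_II} is only stated for $\alpha=p/2\in\bigl(\tfrac{\sqrt{2d}}{3\gamma},\tfrac{1}{\gamma}\sqrt{d/2}\bigr)$, so it applies directly only for $p>\tfrac{2\sqrt{2d}}{3\gamma}$; for smaller $p$ the $\bbL_p$-bound follows from the $\bbL_{p'}$-bound for any $p'>p$ in that range.

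Where your proposal goes astray is the final paragraph, your sketch of how Lemma \ref{capI_II} itself should be proved. The dyadic splitting in $|x-y|$ is right, but the polylogarithmic gain does \emph{not} come from comparison to the scale-invariant field of Proposition \ref{scalinv}. After the dyadic decomposition and a conditional Jensen at each scale $j$, the paper is left with estimating
\[
\bbE\Big[\Big(\int_{[0,1]^d} 2^{dj}\,e^{2\gamma\bigl(X_{2^{-j}}(x)-\sqrt{2d}\,j\ln 2\bigr)}\,\dd x\Big)^{\alpha}\Big],
\]
and the crucial input is Lemma \ref{superhushi}: this moment is $O(j^{-3\alpha\gamma/\sqrt{2d}})$. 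The proof of \emph{that} bound uses Kahane's inequality to compare $X_\gep$ to a Gaussian $2^d$-adic \emph{cascade} (not the continuous scale-invariant field), after which the estimate is a known tail bound for branching random walks at criticality due to Hu--Shi and Madaule. This branching-random-walk ingredient is the genuine new idea on the frontier I/II, and your sketch does not identify it; a direct analysis of the Riesz energy via the scale-invariant field will not produce the required $j^{-3\alpha\gamma/\sqrt{2d}}$ decay.
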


\begin{rem}\label{rem:frontier}
 In this case, $\zeta$ is increasing on $]0, \frac{\sqrt{2d}}{\gamma}[$ with $\zeta(\frac{\sqrt{2d}}{\gamma})=d$ and $\zeta'(\frac{\sqrt{2d}}{\gamma})=0$. Hence the continuity of $M^{\gamma,\beta}$ in dimension 1 is not obvious.  
 \end{rem}
 
\vspace{1mm}
\noindent {\it Proof.} The argument for item 1 and 2 is the same as in the proof of Theorem \ref{phase1} except that we use Lemma \ref{capI_II} below  instead of Lemma \ref{capI} (indeed, we can choose $p$ such that  $\alpha=p/2\in ]\frac{\sqrt{2d}}{3 \gamma} , \sqrt{\frac{d}{2}} \frac{1}{\gamma}[$ and $3\alpha \gamma/\sqrt{2d}>1$.). Items 3 and 4 are proved as in Theorem \ref{coromuzy} and  \ref{corostar}.\qed

\subsubsection{Analysis of the capacity}
The following lemma settles the case 
$\beta+\gamma=\sqrt{2d}$ and $\gamma \in \left(\sqrt{\frac{d}{2}}, \sqrt{2d}\right)$:

\begin{lemma}\label{capI_II}

Let $\gamma \in \left(\sqrt{\frac{d}{2}}, \sqrt{2d}\right)$. Let $l \geq 0$. 
For all $\alpha \in  \left(\frac{\sqrt{2d}}{3 \gamma} , \sqrt{\frac{d}{2}} \frac{1}{\gamma}\right)$, there exists $C>0$ such that for all $\gep, \gep' \leq \frac{1}{2^l}$:  
\begin{equation*}
\E  \left [ \left ( \int_{ x \in [0,1]^d; \: |y-x| \leq \frac{1}{2^l}} (\gep')^{\gamma^2/2} \gep^{\gamma^2/2} \frac{M_{\gep'}^{\gamma,0}(\dd x) M_{\gep}^{\gamma,0}(\dd y)}{|y-x|^{(\sqrt{2d}-\gamma)^2}}  \right ) ^{\alpha} \right ]  \leq C  \sum_{j \geq l}  \frac{1}{j^{\frac{3}{\sqrt{2d}} \alpha \gamma }} .
\end{equation*}

\end{lemma}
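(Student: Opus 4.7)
Plan. The approach I would take is a dyadic decomposition in $|y-x|$, combined with scale invariance and a barrier/ballot argument to extract the polynomial correction in $j$.

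Since $\gamma>\sqrt{d/2}$, the hypothesis $\alpha<\sqrt{d/2}/\gamma$ forces $\alpha<1$, so $x\mapsto x^{\alpha}$ is subadditive. Decompose the integration region into dyadic annuli: set $A_j:=\{x\in[0,1]^d,\ |y-x|\in[2^{-j-1},2^{-j}]\}$ for $j\ge l$ and let $I_j$ denote the corresponding piece of the integral. By subadditivity, $\E[I^{\alpha}]\le\sum_{j\ge l}\E[I_j^{\alpha}]$, so it suffices to prove
\[
\E[I_j^{\alpha}]\le C\,j^{-3\alpha\gamma/\sqrt{2d}},
\]
uniformly in $\epsilon,\epsilon'\le 2^{-l}$; the summability of the right-hand side, i.e.\ $3\alpha\gamma/\sqrt{2d}>1$, is exactly the lower condition on $\alpha$ in the statement.

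For the per-$j$ bound, I would bound $|y-x|^{-\beta^{2}}\le 2^{(j+1)\beta^{2}}$ on $A_j$, cover $[0,1]^d$ by $O(2^{jd})$ cubes $B$ of side $2^{-j}$, discard pairs not lying in neighbouring cubes, and apply subadditivity once more to obtain
\[
\E[I_j^{\alpha}]\le C\,2^{j(\alpha\beta^{2}+d)}\,\E\!\left[\bigl(\tilde M^{\gamma}_{\epsilon'}(B_j)\,\tilde M^{\gamma}_{\epsilon}(N(B_j))\bigr)^{\alpha}\right],
\]
where $\tilde M^{\gamma}_{\epsilon}:=\epsilon^{\gamma^{2}/2}M^{\gamma,0}_{\epsilon}$, $B_j$ is a representative cube and $N(B_j)$ its $3\cdot 2^{-j}$-enlargement. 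Using Kahane's inequality (Proposition \ref{kahaha}) to replace the log-correlated field by the exact scale-invariant field of Proposition \ref{scalinv}, and applying the scaling \eqref{sssscal} with $\lambda=2^{-j}$, the two-cube chaos factorizes distributionally as $2^{-2j(d+\gamma^{2}/2)}\,e^{2\gamma\Omega_{2^{-j}}}$ times a unit-scale chaos product, with $\Omega_{2^{-j}}$ Gaussian of variance $j\log 2$ independent of the remaining chaos. Using the frontier identity $\gamma+\beta=\sqrt{2d}$, all explicit $2^{j}$-factors collapse into the exponent $(\sqrt{d}-\sqrt{2}\alpha\gamma)^{2}j$, which is strictly positive for $\alpha<\sqrt{d/2}/\gamma$; the naive bound $\E[e^{2\gamma\alpha\Omega_{2^{-j}}}]=2^{2j\gamma^{2}\alpha^{2}}$ would therefore give exponential growth rather than the needed polynomial decay.

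The heart of the proof, and its main obstacle, is thus to replace the exponential $\E[e^{2\gamma\alpha\Omega_{2^{-j}}}]$ by the polynomial $j^{-3\alpha\gamma/\sqrt{2d}}$ via a barrier argument on the underlying Gaussian random walk. I would decompose $\Omega_{2^{-j}}=\sum_{k=1}^{j}\Delta_{k}$ into independent Gaussian increments attached to successive dyadic scales, tilt by the exponential martingale $e^{2\gamma\alpha\Omega_{2^{-j}}-2\gamma^{2}\alpha^{2}j\log 2}$ so that the resulting walk acquires a positive drift, and split the expectation according to whether the tilted walk $(\sum_{i\le k}\Delta_{i})_{k\le j}$ stays below an appropriate linear barrier up to time $j$. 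On the good event, the Bramson-type ballot estimate $\P(\text{stays below the barrier})\lesssim j^{-3/2}$, combined with an effective interpolation exponent $2\alpha\gamma/\sqrt{2d}$ coming from the distance of the effective chaos parameter from its critical value $\sqrt{2d}$, produces the required polynomial factor $j^{-3\alpha\gamma/\sqrt{2d}}$; the remaining chaos moment at unit scale is bounded since $2\alpha<\sqrt{2d}/\gamma<p_{c}(\gamma)=2d/\gamma^{2}$, so $\E[\tilde M^{\gamma}(B_{0})^{2\alpha}]<\infty$. On the complementary event, a crude Gaussian tail combined with the same subcritical moment bound renders the contribution negligible. This barrier step, in the spirit of the critical chaos analysis of \cite{Rnew7,Rnew12}, is genuinely more subtle than the contraction argument used in Lemma \ref{capI}.
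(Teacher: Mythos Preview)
Your overall architecture matches the paper's: dyadic decomposition in $|y-x|$, subadditivity (using $\alpha<1$), and a per-scale bound of the form $j^{-3\alpha\gamma/\sqrt{2d}}$. The paper also reduces the two scales $\gep,\gep'$ to the dyadic scale $2^{-j}$ via a conditional Jensen/martingale step that you omit but which is routine. Where you diverge is precisely at the step you flag as ``the heart of the proof'', and there the argument as written does not work.

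Once you invoke Proposition~\ref{scalinv} to factor the chaos on a cube $B_j$ as $2^{-j(d+\gamma^2/2)}e^{\gamma\Omega_{2^{-j}}}$ times a unit-scale chaos, the variable $\Omega_{2^{-j}}$ is \emph{independent} of the remaining chaos. Hence the $\alpha$-moment factorizes exactly, and you are stuck with the full factor $\bbE[e^{2\gamma\alpha\Omega_{2^{-j}}}]=2^{2\gamma^2\alpha^2 j}$. Splitting according to whether the walk $\sum_{i\le k}\Delta_i$ stays below a barrier cannot improve this: on the complementary event the exponential is even larger, and by independence you cannot trade it against a smaller chaos factor. Moreover, the ballot estimate for a single centered walk staying below a barrier is $j^{-1/2}$, not $j^{-3/2}$; the $3/2$ is a genuinely branching phenomenon (Bramson correction), not a one-spine estimate.

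The paper obtains the polynomial correction by a different route. After the same reduction it arrives at
\[
\bbE\left[\left(\int_{[0,1]^d} 2^{dj}\, e^{2\gamma\left(X_{2^{-j}}(x)-\sqrt{2d}\,\ln 2^{j}\right)}\dd x\right)^{\alpha}\right],
\]
and bounds this by comparing, via Kahane's inequality, to a Gaussian $2^d$-adic cascade; the $j^{-3\alpha\gamma/\sqrt{2d}}$ then follows from a cited branching random walk moment estimate (Madaule's Prop.~2.1 or \cite[Lemma~9]{basic}). The key point is that the polynomial saving comes from the full tree structure (equivalently, a barrier imposed on $X_u(x)$ simultaneously for all $x$, as in the events $B_\kappa$ used later in the paper), not from a single spine extracted by star-scale invariance. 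If you want to avoid citing the BRW result, you would need to run a barrier argument on the whole field, not on $\Omega$ alone.
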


\proof
For simplicity, we suppose that $\gep=\gep_k=\frac{1}{2^k} $ and $\gep'=\gep_{k'}=\frac{1}{2^{k'}} $ with $k \leq k'$. 
 We set $X_k(x)=X_{\gep_k}(x) $ ($X_{k'}(x)=X_{\gep_{k'}}(x) $), 
 $M_{k}( \dd x)=M_{\gep_k}^{\gamma,0}( \dd x)$ ( $M_{k'}( \dd x)=M_{\gep_{k'}}^{\gamma,0}( \dd x)$)
 and $ \mathcal{F}_k=\mathcal{F}_{\gep_k}$ ($ \mathcal{F}_{k'}=\mathcal{F}_{\gep_{k'}}$). 
 We further define 
 $$A_j=\{(x,y)\in ([0,1]^d)^2;|x-y|\leq 2^{-j}\} \quad \quad D_j=\{(x,y)\in ([0,1]^d)^2;2^{-j}<|y-x|\le 2^{-j+1}\} $$
 We have:
\begin{align*}
   \int_{A_l} \frac{1}{2^{k \gamma^2/2}}  \frac{1}{2^{k' \gamma^2/2}} \frac{M_{k}(\dd x) M_{k'}(\dd y)}{|y-x|^{(\sqrt{2d}-\gamma)^2}}  
  \leq & C \sum_{j=l+1}^k  2^{j (\sqrt{2d}-\gamma)^2}  \int_{D_j}  \frac{1}{2^{k \gamma^2/2}} \frac{1}{2^{k' \gamma^2/2}}M_{k}(\dd x) M_{k'}(\dd y) \\
& +   \int_{ A_k} \frac{1}{2^{k \gamma^2/2}} \frac{1}{2^{k' \gamma^2/2}} \frac{M_{k}(\dd x) M_{k'}(\dd y)}{|y-x|^{(\sqrt{2d}-\gamma)^2}} .
\end{align*}
Therefore, if $\alpha<1$, by subadditivity we get the following inequality:
\begin{align}
   \left ( \int_{  A_l } \frac{1}{2^{k \gamma^2/2}} \frac{1}{2^{k' \gamma^2/2}} \frac{M_{k}(\dd x) M_{k'}(\dd y)}{|y-x|^{(\sqrt{2d}-\gamma)^2}} \right ) ^\alpha \nonumber 
  \leq &C \sum_{j=l+1}^k  2^{ \alpha j (\sqrt{2d}-\gamma)^2}  \left ( \int_{D_j} \frac{1}{2^{k \gamma^2/2}} \frac{1}{2^{k' \gamma^2/2}} M_{k}(\dd x) M_{k'}(\dd y) )\right )^\alpha \nonumber  \\
& +  \left (   \int_{A_k} \frac{1}{2^{k \gamma^2/2}} \frac{1}{2^{k' \gamma^2/2}} \frac{M_{k}(\dd x) M_{k'}(\dd y)}{|y-x|^{(\sqrt{2d}-\gamma)^2}} \right ) ^{\alpha} .\label{leporc}
\end{align}
Now, we estimate each quantity in the sum on the right hand side. We start with the last term above.
We have by Jensen's inequality and the martingale property applied to $\frac{1}{2^{k' \gamma^2/2}} M_{k'}(\dd y)$
\begin{align*}
 \E \left [ \left (   \int_{ A_k} \frac{1}{2^{k \gamma^2/2}} \frac{1}{2^{k' \gamma^2/2}} \frac{M_{k}(\dd x) M_{k'}(\dd y)}{|y-x|^{(\sqrt{2d}-\gamma)^2}} \right ) ^{\alpha}  \right ]  
  \leq  &\E  \left [ \left (  \E \left [  \int_{A_k} \frac{1}{2^{k \gamma^2/2}} \frac{1}{2^{k' \gamma^2/2}} \frac{M_{k}(\dd x) M_{k'}(\dd y)}{|y-x|^{(\sqrt{2d}-\gamma)^2}} | \mathcal{F}_k   \right ] \right ) ^{\alpha}  \right ]  \\
  =&  \E \left [ \left (   \int_{ A_k} \frac{1}{2^{k \gamma^2}} \frac{M_{k}(\dd x) M_{k}(\dd y)}{|y-x|^{(\sqrt{2d}-\gamma)^2}} \right ) ^{\alpha}  \right ]  .
\end{align*}
Now we use the inequality $ab\le a^2+b^2/2$ and obtain (for some constant $C(\gamma,d)$)
\begin{align*}
  \E \left [ \left (   \int_{A_k} \frac{1}{2^{k \gamma^2/2}} \frac{1}{2^{k' \gamma^2/2}} \frac{M_{k}(\dd x) M_{k'}(\dd y)}{|y-x|^{(\sqrt{2d}-\gamma)^2}} \right ) ^{\alpha}  \right ]   
& \leq \E \left [ \left (   \int_{ A_k} \frac{1}{2^{k \gamma^2}} \frac{e^{2 \gamma X_k(x)} \dd x\dd y}{|y-x|^{(\sqrt{2d}-\gamma)^2}} \right ) ^{\alpha}  \right ]  \\
& \le C 2^{\alpha k ((\sqrt{2d}-\gamma)^2-d)  } \E \left [ \left (   \int_{[0,1]^d} \frac{1}{2^{k \gamma^2}}  e^{2 \gamma X_k(x)}  \dd x \right ) ^{\alpha}  \right ]  \\ 
 & = C \E \left [ \left (   \int_{[0,1]^d} 2^{d k}  e^{2 \gamma (X_k(x)-\sqrt{2d} \ln 2^k  )  }  \dd x \right ) ^{\alpha}  \right ].
 \end{align*}

To conclude the proof we need the following result which is a refined version of  \cite[Theorem 1.6]{HuShi} coming from  \cite[Prop 2.1]{madaule} or \cite[Lemma 9]{basic}, the proof of which is postponed to the end of this section.

\begin{lemma}\label{superhushi}
For any $ \gamma>\sqrt{\frac{d}{2}}$ and any $\alpha< \sqrt{\frac{d}{2}} \frac{1}{\gamma}$, there exists $C$ (depending on $\gamma$ and $\alpha$) such that for all $\gep$ we have
\begin{equation}\label{cheeez}
\bbE\left[  \left (\int_{[0,1]^d} \gep^{-d} e^{2 \gamma \left(X_{\gep}(x)-\sqrt{2d} \log \frac{1}{\gep}  \right) } \dd x  \right ) ^\alpha \right]\le \frac{C}{|\log \gep|^{\frac{3}{\sqrt{2d}} \alpha \gamma }}.
\end{equation}
\end{lemma}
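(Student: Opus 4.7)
My first move would be to apply Kahane's convexity inequality (Proposition \ref{kahaha}) to replace $K_\gep$ by the exactly scale-invariant kernel from Proposition \ref{scalinv}. Combining \eqref{froomk} with \eqref{crook}, the two kernels agree with $|\log(|x|\vee \gep)|$ up to bounded additive terms, so they differ by a bounded function, and \eqref{khan1} applied with $p=\alpha<1$ (possibly after adding an independent Gaussian of bounded variance to dominate one kernel by the other) reduces the problem to the scale-invariant field, at the cost of a multiplicative constant. In this setting the integrand can be recognised as a rescaled super-critical chaos at parameter $2\gamma>\sqrt{2d}$: up to a power of $\gep$, the claim is equivalent to a sharp lower-tail estimate on $(M^{2\gamma,0}_\gep([0,1]^d))^\alpha$ for $\alpha<\sqrt{2d}/(2\gamma)$, with the characteristic polylogarithmic correction familiar from branching random walks at super-critical inverse temperature.

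To produce the $|\log \gep|^{-3\alpha\gamma/\sqrt{2d}}$ factor I would adapt the Hu--Shi strategy for BRW to this continuous-scale setting. Setting $\gep=2^{-n}$ and iterating \eqref{sssscal}, one identifies $M^{2\gamma,0}_\gep([0,1]^d)$ as a generation-$n$ sum in a continuum cascade of depth $n\sim \log(1/\gep)$. A Girsanov tilt of $X_\gep$ by $e^{2\gamma X_\gep(x_0)-2\gamma^2\,\E[X_\gep(x_0)^2]}$ then yields a many-to-one-type identity in which the $\alpha$-moment reduces to an expectation under a tilted measure, where the scale process $s\mapsto X_{e^{-s}}(x_0)$ has acquired an upward drift $2\gamma$. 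The event that $M^{2\gamma,0}_\gep$ attains its typical size becomes the event that this tilted scale process stays below the critical barrier $\sqrt{2d}\,s$ up to time $s\sim |\log \gep|$; the classical ballot/Mogulskii estimate for a one-dimensional walk with drift $2\gamma-\sqrt{2d}>0$ conditioned to stay below an $O(1)$-height barrier over time $n$ gives probability $\asymp n^{-3/2}$. Careful bookkeeping of this estimate with the $\alpha$-moment tilting (as in Hu--Shi, Madaule) produces the claimed exponent $3\alpha\gamma/\sqrt{2d}$, where the factor $\gamma/\sqrt{2d}$ records the ratio of the chaos parameter $2\gamma$ to the critical value $\sqrt{2d}$.

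The chief obstacle is executing this ballot estimate rigorously in the continuous log-correlated setting rather than for a discrete cascade, and promoting a pointwise barrier probability to a bound on the integrated $\alpha$-moment. One must simultaneously handle the continuum of spatial points, the continuous scale parameter, and the star equation (which in \eqref{sssscal} is stated for a fixed $\gl\in(0,1)$, so a dyadic iteration is natural). Subadditivity of $x\mapsto x^\alpha$ along a scale-adapted decomposition of $[0,1]^d$ is the natural tool for this last step, but recovering the sharp exponent $3/\sqrt{2d}$ (rather than a strictly weaker power) requires a two-sided barrier argument controlling both the running maximum and the terminal position of the scale process, and this is the part of the argument I would expect to demand the most technical care.
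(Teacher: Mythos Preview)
Your strategy is coherent but takes a substantially harder route than the paper. You propose to pass via Kahane to the exactly scale-invariant field of Proposition \ref{scalinv} and then \emph{redo} the Hu--Shi barrier analysis in that continuous-scale setting; you correctly identify this continuum adaptation as the main technical obstacle. The paper sidesteps that obstacle entirely: instead of comparing to a continuous scale-invariant field, it compares (again via Kahane, with $\alpha<1$ so the inequality goes the right way) to a \emph{discrete} Gaussian $2^d$-adic cascade $Y^{(k)}$ whose covariance is $\log(1/d_2(x,y)\vee 2^{-k})$ for the dyadic distance $d_2$. The point is that $\E[X_k(x)X_k(y)]+C\ge \E[Y^{(k)}(x)Y^{(k)}(y)]$, so \eqref{khan1} transfers the $\alpha$-moment bound to the cascade. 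But for the cascade the integral is literally a finite sum over the $2^{dk}$ leaves of a Gaussian branching random walk, and the desired estimate
\[
\E\Big[\Big(\sum_{\bf z} e^{\beta(\sqrt{2d}\,Y^{(k)}(2^{-k}{\bf z})-2d\ln 2^k)}\Big)^\alpha\Big]\le Ck^{-3\alpha\beta/2},\qquad \beta>1,\ \alpha<1/\beta,
\]
is already available in the BRW literature (Madaule, Prop.~2.1; or Barral--Kupiainen--Nikula--Saksman--Webb, Lemma~9). Taking $\beta=\sqrt{2/d}\,\gamma>1$ gives exactly the exponent $3\alpha\gamma/\sqrt{2d}$.

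So the difference is: you want Kahane to move sideways to a nicer continuous model and then redo the hard probabilistic work; the paper uses Kahane to move \emph{down} to a discrete cascade and then quotes the result off the shelf. Your approach would presumably succeed, but the discrete comparison buys you the entire barrier/ballot argument for free and turns the proof into a few lines.
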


Let us admit this lemma for a while. We get the bound
\begin{equation*}
\E \left [ \left (   \int_{[0,1]^d} 2^{d k}  e^{2 \gamma (X_k(x)-\sqrt{2d} \ln 2^k  )  }  \dd x \right ) ^{\alpha}  \right ] \leq \frac{C}{k^{\frac{3}{\sqrt{2d}} \alpha \gamma }} .
\end{equation*}

Now, we handle the terms in the sum \eqref{leporc}. Without loss of generality, we may assume that the kernel $k$ appearing in Assumption (A) has a compact support included in the ball $B(0,1)$. Indeed if not, we can use Proposition \ref{kahaha} and \eqref{froomk} to get  a comparison with the compactly supported case. In particular, we will use the fact that, conditionally to $\mathcal{F}_j$, the sigma algebras $\sigma\{M_k(A);A\subset B_1\}$ and $\sigma\{M_k(A);A\subset B_2\}$ are independent as soon as ${\rm dist}(B_1,B_2)>2^{-j}$.
Thus, using first a conditional Jensen (recall $\alpha<1$) and then conditional independence and the martingale property, we have
\begin{align*}
  \E \left [   \left ( \int_{D_j}   \frac{1}{2^{k \gamma^2}} M_{k}(\dd x) M_{k}(\dd y) )\right )^\alpha \right ]  & \leq \E \left [ \left ( \E [ \int_{D_j}   \frac{1}{2^{k \gamma^2}} M_{k}(\dd x) M_{k}(\dd y) )  | \mathcal{F}_j  ]\right )^\alpha \right ]  \\ 
& =\E \left [   \left ( \int_{D_j}   \frac{1}{2^{j \gamma^2}} M_{j}(\dd x) M_{j}(\dd y) )\right )^\alpha \right ]   \\
& \leq   \E \left [   \left ( \int_{ |y-x| < \frac{1}{2^{j-1}}}   \frac{1}{2^{j \gamma^2}} M_{j}(\dd x) M_{j}(\dd y) )\right )^\alpha \right ]   .
\end{align*}
Now, one can conclude similarly to the previous term. \qed
 
\begin{proof}[Proof of Lemma \ref{superhushi}] We assume that $\gep=\frac{1}{2^k}$ (this brings no loss of generality  since by Proposition \ref{kahaha}, one can compare the l.h.s. of \ref{cheeez} for two $\gep$ within a factor of $2$). 

For $r\in \bbN$ let us cover the interval $[0,1]^d$ by the dyadic cubes of the form: 
\begin{equation*}
I^{(r)}_{\bf z}:= \prod_{1 \leq i \leq d} \left[ \frac{z_i}{2^r}+\frac{z_i+1}{2^r}\right[,
\end{equation*}
where ${\bf z}=(z_1, \cdots, z_d) \in \{0,\dots, 2^r-1\}^d$. 
Let $(Y^{(k)}(x))_{x \in  [0,1]^d}$ be a standard Gaussian $2^d$-adic cascade defined on $[0,1]^d$ by the covariance function 
\begin{equation*}
  \E[ Y^{(k)}(x) Y^{(k)}(y) ]:= \ln \frac{1}{d_{2}(x,y)\vee 2^{-k}},
\end{equation*}
where $d_{2}(x,y)$ is the  dyadic distance defined on $[0,1]^d$, i.e. $d_{2}(x,y)$ is an inverse power of $2$ and we have

$$ d_{2}(x,y) \le 2^{-r} \quad \Leftrightarrow \quad  \left(\exists {\bf z}, \ x,y\in I^{(r)}_{\bf z} \right).$$

Note that the process $Y$ is a particular case of branching random walk (studied in e.g. \cite{HuShi, madaule}) with 
i.i.d Gaussian step, and deterministic branching number $2^d$.

From \eqref{froomk}, there exists $C$ such that 
\begin{equation*}
\E[ X_k(x) X_k(y)  ] +C \geq  \: \E[ Y^{(k)}(x) Y^{(k)}(y) ].
\end{equation*}  
 Hence, by Proposition \ref{kahaha}, we get that there exists some $C>0$ such that:
 \begin{align*}
  \E \left [ \left (   \int_{[0,1]^d} 2^{dk} e^{2 \gamma \left (X_k(x) -\sqrt{2d} \ln 2^k  \right)}  \dd x \right ) ^{\alpha}  \right ]  &  \leq C \E \left [ \left (   \int_{[0,1]^d} 2^{dk}  e^{2 \gamma \left( Y^{(k)}(x)  -\sqrt{2d} \ln 2^k  \right)}  \dd x \right ) ^{\alpha}  \right ]    \\
&= C \E\left[  \left( \sum_{{\bf z}\in \{0,\dots, 2^k-1\}^d} e^{  2 \gamma  \left(Y^{(k)}(2^{-k}{\bf z}) - \sqrt{2d} \ln 2^k \right) }     \right)^\alpha   \right] .
\end{align*}
Now, by using  \cite[Prop 2.1]{madaule} or \cite[Lemma 9]{basic}, we know that for all $\beta>1$ and $\alpha<\frac{1}{\beta}$:
\begin{equation*}
 \E\left[  \left( \sum_{{\bf z}\in \{0,\dots, 2^k-1\}^d} e^{  \gb\left(\sqrt{2d}Y^{(k)}(2^{-k}{\bf z}) - 2d \ln 2^k\right)}     \right)^\alpha   \right] \leq \frac{C}{k^{\frac{3}{2} \alpha \beta }}
\end{equation*}
Now, recall that $\gamma>\sqrt{\frac{d}{2}}$ and hence for all $\alpha < \sqrt{\frac{d}{2}} \frac{1}{\gamma}$:
\begin{equation*}  
\E\left[  \left( \sum_{{\bf z}\in \{0,\dots, 2^k-1\}^d} e^{  2 \gamma  \left ( Y^{(k)}(2^{-k}{\bf z}) - \sqrt{2d} \ln 2^k \right ) }     \right)^\alpha   \right]   \leq \frac{C}{k^{\frac{3}{\sqrt{2d}} \alpha \gamma }}.
\end{equation*}
\end{proof}

\section{Study of the phase III and its I/III and II/III boundaries }\label{phaseIII}

\subsection{Results and organization of the proofs} 

In the inner phase III, the limit object describing the renormalized measure is a complex white-noise whose intensity 
depends on $X$ and is given by $M^{2 \gamma,0}(\dd x)$ (all the information about $Y$ is lost in the limit).

\medskip

The frontiers I/III and II/III present similar behavior but there are additional technical difficulties especially for the frontier II/III. Indeed, in that case, the intensity measure $M^{2 \gamma,0}(\dd x)$ is the so-called derivative martingale (see \cite{Rnew7,Rnew12}).

Now, we define the function $\sigma^2$ that will appear throughout the following convergence results
\begin{definition}{\bf Function $\sigma^2$.}\label{sigmasquare}
In dimension $d$, we define the function of the parameters $(\gamma,\beta)$:
\begin{equation}
\sigma^2(\gb^2+\gamma^2) :=\left\{\begin{array}{lll}
&\int_{\R^d}\exp\Big(-(\gamma^2+\beta^2)\int_0^1\frac{1-k(uz)}{u}\,\dd u\Big)\,\dd z & \text{if }\gb^2+\gamma^2>d\\
&\int_{z\in \bbS^{d-1}}  \nu_{d-1}(\dd z) \exp\left(\int_0^\infty \frac{k(uz)-\ind_{[0,1]}(u)}{u} \dd u \right) &   \text{if }\gb^2+\gamma^2=d
 \end{array}\right.
\end{equation}
where $\nu_{d-1}(z)$ denotes the surface measure on $\bbS^{d-1}$ (the $d-1$ dimensional unit sphere). 
\end{definition}
Observe that in dimension $1$, for $\beta^2+\gamma^2=1$, $\sigma^2$ takes the simpler form $$
\sigma^2(1)=2\exp\left(\int_0^\infty \frac{k(u)-\ind_{[0,1]}(u)}{u} \dd u \right).$$

Note that since $k$ decays to zero at infinity sufficiently fast (cf. Assumption ({\bf A.2}))
and is Lipshitz around zero we get

\begin{equation}
 \int_0^1\frac{1-k(uz)}{u}\,du \stackrel{z \to \infty}{\sim} \log |z|.
\end{equation}
This guaranties that $\sigma$ is well defined for $\gb^2+\gamma^2>d$.

We are now in position to state the main results of this section:
\begin{theorem}\label{th3}
\begin{itemize}
\item  When $\gamma\in[0,\sqrt{\frac{d}{2}}[$ and $\gb^2+\gamma^2>d$, we have
 \begin{equation}
\left(\gep^{\gamma^2-\frac{d}{2}} M^{\gamma,\gb}_\gep(A)\right)_{A\subset \R^d} \Rightarrow \left(W_{\sigma^2 M^{2\gamma,0}}(A)\right)_{A\subset \R^d}.
\end{equation}
where  $\sigma^2=\sigma^2(\gb^2+\gamma^2)$ and $W$ is a standard complex Gaussian measure on $\R^d$ with intensity $\sigma^2 M^{2\gamma,0}$. The above convergence holds in the sense of convergence in law of the finite dimensional distributions.
\item When $\gamma\in[0,\sqrt{\frac{d}{2}}[$ and $\gb^2+\gamma^2=d$, we have
 \begin{equation}
\left(\gep^{\gamma^2-\frac{d}{2}}|\log \gep|^{-1/2} M^{\gamma,\gb}_\gep(A)\right)_{A\subset \R^d} \Rightarrow \left(W_{\sigma^2 M^{2\gamma,0}}(A)\right)_{A\subset \R^d}.
\end{equation}
where  $\sigma^2=\sigma^2(d)$ and $W$ is a standard complex Gaussian measure on $\R^d$ with intensity $\sigma^2 M^{2\gamma,0}$. The above convergence holds in the sense of convergence in law of the finite dimensional distributions.
\end{itemize}

\end{theorem}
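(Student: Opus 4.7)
The natural approach is to condition on $\mathcal{F}^X$ and apply a method of moments. Given $\mathcal F^X$, the complex measure $M^{\gamma,\beta}_\gep(A)=\int_A e^{\gamma X_\gep(x)}e^{i\beta Y_\gep(x)}\,\dd x$ is an integral of $e^{i\beta Y_\gep(x)}$ against a deterministic weight, so its conditional moments are explicit via the Gaussianity of $Y_\gep$. The objective is to show that conditionally on $\mathcal{F}^X$ the rescaled quantity converges to a complex Gaussian with variance $\sigma^2 M^{2\gamma,0}(A)$ and vanishing pseudo-variance; combined with the classical convergence $M^{2\gamma,0}_\gep\to M^{2\gamma,0}$ (valid since $2\gamma<\sqrt{2d}$ in Phase III), this yields the unconditional statement. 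For the first moment, $\E[M^{\gamma,\beta}_\gep(A)\mid\mathcal F^X]=\gep^{\beta^2/2}\int_A e^{\gamma X_\gep(x)}\,\dd x$, and after rescaling by $\gep^{\gamma^2-d/2}$ this becomes $\gep^{(2\gamma^2+\beta^2-d)/2}$ times the mean-one chaos $M^\gamma_\gep(A)$; since the exponent equals $\gamma^2/2+(\gamma^2+\beta^2-d)/2$, which is strictly positive in both regimes considered, the conditional mean is negligible after the announced renormalization.

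The core computation is the conditional true second moment
\begin{equation*}
\gep^{2\gamma^2-d}\,\E\big[|M^{\gamma,\beta}_\gep(A)|^2\mid\mathcal F^X\big]=\gep^{2\gamma^2+\beta^2-d}\int_A\!\int_A e^{\gamma(X_\gep(x)+X_\gep(y))}e^{\beta^2 K_\gep(x-y)}\,\dd x\,\dd y.
\end{equation*}
The substitution $y=x+\gep z$, together with the identity $\gep^{\beta^2}e^{\beta^2 K_\gep(\gep z)}=\exp(\beta^2\int_\gep^1(k(sz)-1)/s\,\dd s)$, recasts the right-hand side as
\begin{equation*}
\int_A \dd x\int_{\R^d}\dd z\;\gep^{2\gamma^2}e^{\gamma(X_\gep(x)+X_\gep(x+\gep z))}\exp\!\Big(\beta^2\!\int_\gep^1\!\frac{k(sz)-1}{s}\,\dd s\Big).
\end{equation*}
A Peyri\`ere/Girsanov argument shows that the ``microscopic'' factor $e^{\gamma(X_\gep(x+\gep z)-X_\gep(x))}$, integrated against the rooted chaos $\gep^{2\gamma^2}e^{2\gamma X_\gep(x)}\,\dd x$, contributes on average the deterministic factor $\exp(\gamma^2\int_\gep^1(k(sz)-1)/s\,\dd s)$; combining with the $\beta^2$ piece and integrating over $z\in\R^d$ produces precisely $\sigma^2(\gamma^2+\beta^2)$, while the $x$-integral converges to $M^{2\gamma,0}(A)$. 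The pseudo-second moment is analogous with $e^{\beta^2 K_\gep(x-y)}$ replaced by $e^{-\beta^2 K_\gep(x-y)}$: this kernel vanishes on the diagonal like $|x-y|^{\beta^2}$, and crude Kahane-type bounds give a contribution of order $\gep^{\gamma^2+\beta^2-d}(M^\gamma_\gep(A))^2\to 0$.

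To upgrade to a complex Gaussian limit and joint convergence for disjoint sets $A_1,\dots,A_n$, one computes mixed conditional moments $\E[\prod_j M^{\gamma,\beta}_\gep(A_j)^{p_j}\overline{M^{\gamma,\beta}_\gep(A_j)}^{q_j}\mid\mathcal F^X]$. The Gaussian average of $Y_\gep$ produces a sum over pairings, and by the same fusion mechanism only the ``conjugate'' pairings (matching an $e^{i\beta Y}$-factor with an $e^{-i\beta Y}$-factor within the same $A_j$) survive as $\gep\to 0$; this yields the factorial Wick structure $\prod_j p_j!\,(\sigma^2 M^{2\gamma,0}(A_j))^{p_j}\ind_{\{p_j=q_j\}}$ characteristic of a complex Gaussian white noise with intensity $\sigma^2 M^{2\gamma,0}$. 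The frontier case $\gamma^2+\beta^2=d$ is handled identically, noting that the $z$-integrand then decays only as $|z|^{-d}$ at infinity, so the $z$-integration is logarithmically divergent and must be truncated at scale $|z|\sim 1/\gep$: this produces an extra $\log(1/\gep)$ factor, exactly absorbed by the $|\log\gep|^{-1/2}$ prescribed in the statement.

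\textbf{Main obstacle.} The delicate point is the convergence \emph{in probability} (not merely in expectation) of the conditional variance. While $\E[T_\gep]\to\sigma^2|A|$ for $T_\gep:=\gep^{2\gamma^2+\beta^2-d}\iint_{A\times A}e^{\gamma(X_\gep(x)+X_\gep(y))}e^{\beta^2 K_\gep(x-y)}\dd x\,\dd y$ is immediate, controlling $\mathrm{Var}(T_\gep)$ requires a four-point integral in $X$ with two fusion points; one must show that well-separated fusions factorize (producing $(\sigma^2 M^{2\gamma,0}(A))^2$ and cancelling the square of the mean), while close-fusion configurations remain integrable thanks to the subcritical $L^2$-theory of $M^{2\gamma,0}$. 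Carrying this out rigorously, together with tracking the logarithmic correction at the frontier and ensuring the pairing argument is robust across disjoint sets, constitutes the main technical work.
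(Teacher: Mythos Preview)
Your overall strategy coincides with the paper's: condition on $\mathcal F^X$, compute conditional moments, and identify the limit as complex Gaussian with intensity $\sigma^2 M^{2\gamma,0}$ via the method of moments. Two points deserve comment.

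First, a small imprecision: the Gaussian average over $Y_\gep$ of $\prod_j e^{i\beta Y_\gep(x_j)}\prod_l e^{-i\beta Y_\gep(y_l)}$ is a \emph{single} product $\prod G_\gep(x_j-y_l)^{-\beta^2}\prod G_\gep(x_j-x_{j'})^{\beta^2}\prod G_\gep(y_l-y_{l'})^{\beta^2}$, not a sum over pairings. The Wick/pairing structure only emerges \emph{asymptotically} as $\gep\to0$: the integral of this kernel against $M^{\gamma,0}_\gep(\dd{\bf x}\dd{\bf y})$ concentrates on configurations where each $x_j$ fuses with a distinct $y_l$. Making this rigorous is nontrivial; the paper does it in Appendix~A via an explicit Gale--Shapley optimal matching, proving pointwise bounds of the form $\prod_{i,j}G_\gep(x_i-y_j)^{-\beta^2}\prod G_\gep(x_i-x_j)^{\beta^2}\prod G_\gep(y_i-y_j)^{\beta^2}\le C\prod_i G_\gep(x_i-y_{\sigma(i)})^{-\beta^2}$ on the set where $\sigma$ is the optimal matching, and then showing the complement of a ``well-separated pairs'' set is negligible.

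Second, on your main obstacle: the paper avoids the four-point variance computation entirely. Instead of bounding $\mathrm{Var}(T_\gep)$, it writes the conditional second moment as $\gep^{\gamma^2+\beta^2-d}\int G_\gep(r)^{-(\gamma^2+\beta^2)}\hat M^\gamma_{\gep,r}\,\dd r$ where $\hat M^\gamma_{\gep,r}$ is the chaos of the doubled field $X_\gep(\cdot)+X_\gep(\cdot+r)$, and proves directly that $\hat M^\gamma_{\gep,r}\to M^{2\gamma,0}(A)$ in $\bbL_1$ as $\gep,r\to0$ (Lemma~\ref{convergence}). This uses the martingale structure: one interpolates between $X_{\gep,r}$ and $2X_\gep$ and controls the $\bbL_1$-difference via Lemma~\ref{lesgaussiennes}, then shows the family $(M^\gamma_{\gep,s})_\gep$ is uniformly Cauchy in $\bbL_q$ for some $q>1$. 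This is cleaner than a second-moment argument on $T_\gep$ and yields $\bbL_1$-convergence rather than just convergence in probability.
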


Concerning the frontier II/III, we will need to use the results in \cite{Rnew7,Rnew12}. Further assumptions must be made in order to make sure that one can construct the derivative martingale and prove the Seneta-Heyde renormalization  (see \cite[Section D and Remark 31]{Rnew12}):
\begin{assB}  We consider the following assumptions:
\begin{description}\label{assB}
\item[B1.] $k$ is  nonnegative,
\item[B2.] $k$ has compact support,
\item[B2'.] $k$ admits a nonnegative convolution square root $g$, i.e. $k(x)=\int_{\R^d}g(y)g(x+y)\,\dd y$, such that $g$ is nonnegative, $g$ and $\partial g$ are integrable, and for some constants $C>0$, $\alpha>1$
$$\sup_{|x|\geq 2}g(x)+|\partial g(x)|<+\infty,  \,\,\, \int_1^{\infty}v^{-1}\int_{B(0,\ln^\alpha v)}g^2(y)\,\dd y\,dv<\infty,  \,\,\,   k(x)+|\partial k(x)|\leq Ce^{-  |x|^{1/\alpha}}.$$
\end{description}
\end{assB}
 
\begin{theorem}\label{thfrontier23}
Assume that $k$ satisfies Assumptions (A), B1 and either B2 or B2', then $\gamma= \sqrt{d/2}$ and $\gb^2+\gamma^2>d$, we have
 \begin{equation}
\left((-\ln \gep)^{1/4} M^{\gamma,\gb}_\gep(A)\right)_{A\subset\R^d} \Rightarrow \left(W_{\sigma^2 M'}(A)\right)_{A\subset\R^d}.
\end{equation}
with  $$\sigma^2=\sqrt{\frac{2}{\pi}} \sigma^2(\gb^2+d/2),$$
and the law of $W_{\sigma^2 M'}(\cdot)$ is, conditionally to $X$, that of a complex Gaussian random measure with intensity $\sigma^2 M' $.   
The above convergence holds in the sense of convergence in law of the finite dimensional distributions.

In dimension $1$, 
the law of $(W_{\sigma^2 M'}([0,t]))_{t\ge 0}$ coincides with $(B_{\sigma^2 M'([0,t])})_t$, where $B$ is a   complex  Brownian motion  independent of $X$.
\end{theorem}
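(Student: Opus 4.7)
I would work conditionally on the real-field $\sigma$-algebra $\mathcal F^X$ and apply the method of moments to the conditional law of $(-\ln\gep)^{1/4}M^{\gamma,\beta}_\gep$. Since $Y$ is independent of $X$ and $e^{i\beta Y_\gep}$ is bounded, each joint conditional moment of $M^{\gamma,\beta}_\gep$ given $\mathcal F^X$ reduces to an explicit integral against the Gaussian characteristic function of $Y_\gep$. The goal is to show that these conditional moments match, after the $(-\ln\gep)^{1/4}$ rescaling, the moments of a conditional complex Gaussian with intensity $\sigma^2 M'$.

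The heart of the argument is the conditional second moment
\begin{equation*}
\E\bigl[M^{\gamma,\beta}_\gep(A_1)\,\overline{M^{\gamma,\beta}_\gep(A_2)}\,\big|\,\mathcal F^X\bigr]=\gep^{\beta^2}\!\int_{A_1\times A_2}e^{\gamma(X_\gep(x)+X_\gep(y))}e^{\beta^2 K_\gep(x-y)}\,\dd x\,\dd y.
\end{equation*}
The change of variables $y=x+\gep z$ together with the identity $\gep^{\beta^2}e^{\beta^2 K_\gep(\gep z)}=\exp(-\beta^2\int_\gep^1(1-k(zs))/s\,\dd s)$ rewrites the right-hand side, modulo a boundary correction, as $\gep^d\!\int_{A_1\cap A_2}e^{2\gamma X_\gep(x)}J_\gep(x,X)\,\dd x$ where
\begin{equation*}
J_\gep(x,X)=\int_{\R^d}e^{\gamma(X_\gep(x+\gep z)-X_\gep(x))}\exp\Bigl(-\beta^2\!\int_\gep^1\!\tfrac{1-k(zs)}{s}\,\dd s\Bigr)\,\dd z.
\end{equation*}
A Girsanov shift by the critical weight $e^{2\gamma X_\gep(x)}$ (valid since $2\gamma=\sqrt{2d}$) adds the drift $-2\gamma\int_\gep^1(1-k(zs))/s\,\dd s$ to $X_\gep(x+\gep z)-X_\gep(x)$; the tilted mean of the integrand in $J_\gep$ thus converges to $\exp(-(\gamma^2+\beta^2)\int_0^1(1-k(zs))/s\,\dd s)$, whose $z$-integral is exactly $\sigma^2(\gamma^2+\beta^2)$. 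Combined with the Seneta--Heyde renormalization
\begin{equation*}
(-\ln\gep)^{1/2}\,\gep^d\!\int_A e^{2\gamma X_\gep(x)}\,\dd x\longrightarrow \sqrt{2/\pi}\,M'(A)
\end{equation*}
from \cite{Rnew7,Rnew12} (this is where Assumption (B) enters), this yields $(-\ln\gep)^{1/2}\E[M^{\gamma,\beta}_\gep(A_1)\overline{M^{\gamma,\beta}_\gep(A_2)}\,|\,\mathcal F^X]\to\sigma^2 M'(A_1\cap A_2)$ in probability. The non-conjugated moment carries $e^{-\beta^2 K_\gep(x-y)}$ in place of $e^{+\beta^2 K_\gep(x-y)}$ and is $\mathrm{o}((-\ln\gep)^{-1/2})$, which encodes the complex Gaussian structure of the limit.

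For the higher $(p,q)$-moments, the same factorization applies with $Y$ contributing $\exp(-\tfrac{\beta^2}{2}\mathrm{Var}(\sum_i Y_\gep(x_i)-\sum_j Y_\gep(y_j)))$. When $p=q$, this characteristic function is largest on configurations pairing each $x_i$ with a distinct $y_{\sigma(i)}$ at distance $\lesssim\gep$; summing over the $p!$ pairings and reusing the second-moment analysis pair by pair yields
\begin{equation*}
(-\ln\gep)^{p/2}\,\E\bigl[|M^{\gamma,\beta}_\gep(A)|^{2p}\,\big|\,\mathcal F^X\bigr]\longrightarrow p!\,(\sigma^2 M'(A))^{p},
\end{equation*}
while for $p\neq q$ no such matching is available and the leading order vanishes. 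These are exactly the conditional complex Gaussian moments of $W_{\sigma^2 M'}(A)$, so the method of moments and linearity in $A$ yield convergence of the finite-dimensional distributions. In dimension one, the additivity of $W_{\sigma^2 M'}$ in $A$ and the conditional independence of its increments on disjoint intervals identify $(W_{\sigma^2 M'}([0,t]))_{t\ge 0}$ with $(B_{\sigma^2 M'([0,t])})_{t\ge 0}$. The main obstacle is the rigorous execution of the second-moment step: $J_\gep(x,X)$ is genuinely random and its untilted mean would even diverge unless $\beta^2>d+\gamma^2$, so the correct constant $\sigma^2(\gamma^2+\beta^2)$ can only be extracted through the Girsanov tilt that is exactly the spinal decomposition underlying the Seneta--Heyde construction of $M'$ in \cite{Rnew7,Rnew12}.
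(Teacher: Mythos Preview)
Your overall architecture---conditioning on $\mathcal F^X$, identifying the conditional covariance, then matching higher conditional moments to Gaussian ones via a pairing argument---is exactly the paper's (Section~\ref{twoksec} and Proposition~\ref{propclesmoments} in Appendix~\ref{Controlmom}). The higher-moment pairing step is fine in outline and matches what is done there.

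The genuine gap is the second-moment step, which on the II/III frontier carries almost all the work. Your Girsanov heuristic correctly predicts the constant $\sigma^2(\gamma^2+\gb^2)$, but it is an \emph{unconditional} computation: it yields $\E\bigl[e^{2\gamma X_\gep(x)-2\gamma^2|\log\gep|}J_\gep(x,X)\bigr]\to\sigma^2$, not any $\omega$-wise or quenched statement about $J_\gep(x,X)$. Since the Seneta--Heyde convergence $|\log\gep|^{1/2}\gep^d\int e^{2\gamma X_\gep}\to\sqrt{2/\pi}\,M'$ holds only in probability (the derivative martingale is not in $\bbL_1$), you cannot take expectations to recombine the two pieces, and nothing in your plan shows that $J_\gep(x,X)$ can be pulled out of the $x$-integral as a constant. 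The paper avoids your factorisation altogether. After the change of variable $r=y-x$ it integrates over $x$ first, forming $\hat M^{\gamma}_{\gep,r}=\int e^{\gamma X_{\gep,r}(z)-\frac{\gamma^2}{2}\E[X_{\gep,r}(z)^2]}\dd z$ with $X_{\gep,r}(z)=X_\gep(z)+X_\gep(z+r)$, and proves directly that $|\log\gep|^{1/2}\hat M^{\gamma}_{\gep,\gep s}\to\sqrt{2/\pi}\,M'$ in probability for bounded $s$ (Proposition~\ref{lesmomentos2}). This is done via a three-scale interpolation between $X_{\gep,\gep s}$ and $2X_{\gep'}$ with $\gep''=\sqrt{\gep}$ and $\gep'=\gep\log\log(1/\gep)$ (Lemma~\ref{lem:crit1}), each step controlled using the barrier event $B_\kappa$, Lemma~\ref{lesgaussiennes}, and the branching-random-walk tail bound of Lemma~\ref{superhushi}; the large-$s$ tail is handled separately (Lemma~\ref{lem:crit2}). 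A Girsanov shift appears only in Lemma~\ref{lem:crit2}, and only to bound a negligible remainder---it is not how the main constant is extracted. This interpolation machinery is what your plan is missing, and invoking ``the spinal decomposition underlying Seneta--Heyde'' does not substitute for it.
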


 To carry out the proof, we compute the limit of the moments conditionally to $X$ and prove that they match those of the prescribed Gaussian random measure. 
 The first important step is to identify the conditional second moment $\bbE\left[ \left|M^{\gamma,\gb}_\gep(A)\right|^2 \ |  \ X\right]$ in each case. This work is done in Sections \ref{slesmomentos} to
 \ref{frontier23}.
 
 \medskip
 
 Then in a second step, we conclude the proof of the above theorems in Section \ref{twoksec}. The main idea is to consider the higher order moments and to show that they are those of a Gaussian variable. The main technical material on these moments is gathered in appendix \ref{Controlmom}.
 
\begin{rem}
In dimension 1, it is enough to assume $k$ to be twice differentiable on some interval $[0,\delta]$ for some $\delta>0$. The left and right derivative at $0$ need not be the same. For instance, one can consider the kernel $k(u)=(1-|u|)_+$ in dimension $1$.
\end{rem}

\subsection{The second moments}\label{slesmomentos}
 
Let us first focus on computing the second moment.
We want to prove the following: 

\begin{proposition}\label{lesmomentos}
 Let $A$ be some compact ball with a non-empty interior.  \\
$\bullet$ When $\gamma^2+\gb^2>d$, $\gamma<\sqrt{\frac{d}{2}}$,
  we have the following $\bbL_1$-convergence
\begin{equation}\label{supercritical}
    \lim_{\gep\to 0}\bbE\left[\left|\gep^{\gamma^2-d/2} M^{\gamma,\gb}_\gep (A)\right|^2\ | X\right]
    =\sigma^2(\gamma^2+\gb^2) M^{2\gamma,0}(A).
 \end{equation}
 where 
 \begin{equation}\label{subcritsigma}
 \sigma^2(\gamma^2+\gb^2)= \lim_{\gep\to 0} \varepsilon^{\gamma^2+\beta^2-d}\int_{|z| \leq 1} \frac {\dd z }{G_{\gep}(z)^{\beta^2+\gamma^2}}  .
 \end{equation}
$\bullet$ When $\gamma^2+\gb^2=d$, $\gamma< \sqrt{\frac{d}{2}}$,
  we have  the following $\bbL_1$-convergence
\begin{equation}\label{critical}
    \lim_{\gep\to 0}\bbE\left[\left|{\gep}^{\gamma^2-d/2}|\log (\gep)|^{-1/2} M^{\gamma,\gb}_\gep (A)\right|^2\ | X\right]
    =\sigma^2(d) M^{2\gamma,0}(A).
 \end{equation}
 where 
  \begin{equation}\label{critsigma}
 \sigma^2(d)= \lim_{\gep\to \infty} |\log \varepsilon|^{-1}\int_{|z|\le 1}\frac {\dd z }{G_{\gep}(z)}.
 \end{equation}
 
\end{proposition}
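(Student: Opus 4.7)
My plan is to compute the conditional second moment by integrating out the Gaussian field $Y$, which is independent of $X$. Using $\bbE[e^{i\beta(Y_\gep(x)-Y_\gep(y))}] = e^{-\beta^2(K_\gep(0)-K_\gep(x-y))}$ and $K_\gep(0) = |\log \gep|$, one obtains exactly
\begin{equation*}
\bbE\bigl[|M^{\gamma,\beta}_\gep(A)|^2 \mid X\bigr] = \gep^{\beta^2}\int_{A\times A} G_\gep(x-y)^{-\beta^2}\, e^{\gamma X_\gep(x)+\gamma X_\gep(y)}\,dx\,dy.
\end{equation*}
Multiplying by $\gep^{2\gamma^2-d}$ (or the critical normalization $\gep^{2\gamma^2-d}|\log \gep|^{-1}$) and performing the change of variables $y = x+z$, I would write the result in the factorized form
\begin{equation*}
\int_A \bigl(\gep^{2\gamma^2} e^{2\gamma X_\gep(x)}\bigr)\,\Phi_\gep(x)\,dx, \qquad \Phi_\gep(x) := \gep^{\beta^2-d}\int_{A-x} G_\gep(z)^{-\beta^2}\, e^{\gamma(X_\gep(x+z)-X_\gep(x))}\,dz,
\end{equation*}
so that the prefactor is (up to $\bbL_1$-negligible terms) the subcritical chaos density $M^{2\gamma,0}(\dd x)$ — valid because $2\gamma<\sqrt{2d}$ — and the task reduces to showing that $\Phi_\gep(x)$ may effectively be replaced by the constant $\sigma^2(\gamma^2+\beta^2)$ (respectively $\sigma^2(d)$ after dividing by $|\log\gep|$).

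To analyze $\Phi_\gep(x)$ I would use the Girsanov lemma from Section 2: tilting by $\gep^{2\gamma^2}e^{2\gamma X_\gep(x)}$ shifts $X_\gep(\cdot)\mapsto X_\gep(\cdot)+2\gamma K_\gep(\cdot-x)$, which turns the exponential increment $e^{\gamma(X_\gep(x+z)-X_\gep(x))}$ into a pure Gaussian expectation giving, after simplification,
\begin{equation*}
\widetilde\bbE_x[\Phi_\gep(x)] = \gep^{\beta^2+\gamma^2-d}\int_{A-x} G_\gep(z)^{-\beta^2-\gamma^2}\,dz.
\end{equation*}
Rescaling $z=\gep z'$ and using A.1, A.3 together with \eqref{froomk} to get the asymptotic expansion $K_\gep(\gep z') = |\log\gep|+\int_0^1 (k(z'v)-1)/v\,dv+o(1)$, this integral converges to $\sigma^2(\gamma^2+\beta^2)$ when $\gamma^2+\beta^2>d$; when $\gamma^2+\beta^2=d$ the dyadic layers $\gep<|z|<1$ each contribute a bounded amount yielding the logarithmic divergence with coefficient $\sigma^2(d)$ as in \eqref{critsigma}, the angular integration being packaged into a spherical average.

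To upgrade this identification of the tilted mean to $\bbL_1$ convergence of the original conditional second moment, I would decompose $X_\gep = X_\delta + (X_\gep-X_\delta)$ at an intermediate scale $\delta>0$. The key structural fact is that $\Phi_\gep(x)$ is essentially local in $x$ (the kernel $G_\gep(z)^{-\beta^2}$ is effectively supported on $|z|\lesssim \delta$ once the tail is controlled, since $\gep^{\beta^2-d}\int_{|z|>\delta}G_\gep(z)^{-\beta^2}\dd z$ is negligible in each regime). Conditioning on $\mathcal F^X_\delta$, one isolates the coarse chaos $\gep^{2\gamma^2}e^{2\gamma X_\gep(\cdot)}$ and computes the conditional mean of $\Phi_\gep(x)$ by a conditional Girsanov; a direct Gaussian computation of the second moment of $\Phi_\gep(x)$ under the tilted law gives a variance that vanishes in $\gep$ uniformly in $x$ thanks to the short-range correlations of $X_\gep(x+z)-X_\gep(x)$. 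Combining this with the standard $\bbL_1$ convergence of the subcritical chaos $\gep^{2\gamma^2}e^{2\gamma X_\gep(\cdot)}\dd x \to M^{2\gamma,0}(\dd\cdot)$ and letting $\delta\to 0$ at the end produces the claim.

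The main obstacle is this final concentration step: the Girsanov tilt depends on $x$, so one cannot change measure globally, and one must show that the local fluctuations of $\Phi_\gep(x)$ average out after integration against the chaos. The delicate point is that the chaos is itself very rough, so the cancellation requires a bona fide variance estimate in the tilted measure — essentially a controlled computation of a fourth-type moment $\iint G_\gep(z)^{-\beta^2} G_\gep(z')^{-\beta^2}\,\widetilde\bbE_x[e^{\gamma(X_\gep(x+z)+X_\gep(x+z')-2X_\gep(x))}]\dd z\dd z'$ — and then an additional integration in $x$ where independence on well-separated scales finishes the job. The critical case $\gamma^2+\beta^2=d$ is the most delicate because the main contribution is no longer confined to $|z|\sim \gep$ but is spread logarithmically over all scales between $\gep$ and $1$.
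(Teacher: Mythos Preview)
Your setup and the identification of $\sigma^2$ via the tilted mean are correct, and the overall strategy is close in spirit to the paper's. However, the concentration step as you describe it fails: the tilted variance of $\Phi_\gep(x)$ does \emph{not} vanish. After rescaling $z=\gep w$, the mass of $\gep^{\beta^2-d}G_\gep(z)^{-\beta^2}\dd z$ (or rather of $\gep^{\gamma^2+\beta^2-d}G_\gep(z)^{-\gamma^2-\beta^2}\dd z$, which is what arises once you include the Girsanov mean) sits on $|z|\sim\gep$ when $\gamma^2+\beta^2>d$. On that scale the increment $X_\gep(x+z)-X_\gep(x)$ has variance $2(K_\gep(0)-K_\gep(z))$ which stays bounded away from $0$ and from $\infty$, and the covariance of the integrand at two points $z,z'$ involves the factor $e^{\gamma^2 C(z,z')}-1$ with $C(z,z')=K_\gep(z-z')-K_\gep(z)-K_\gep(z')+K_\gep(0)$; after the rescaling $z=\gep w$, $z'=\gep w'$ this converges to a nontrivial bounded function of $w,w'$ (cf.\ Lemma~\ref{greenk}). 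Hence $\widetilde{\mathrm{Var}}_x(\Phi_\gep(x))$ is of the same order as $(\widetilde\bbE_x[\Phi_\gep(x)])^2$, and $\Phi_\gep(x)$ is a genuinely fluctuating $O(1)$ random variable rather than a quantity concentrating around $\sigma^2$. Your own last paragraph essentially recognizes this, but it contradicts the earlier claim that the variance vanishes.

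The paper fixes this not by a variance estimate but by \emph{swapping the order of integration}: write the conditional second moment as
\[
\gep^{\gamma^2+\gb^2-d}\int \frac{\dd r}{G_\gep(r)^{\gamma^2+\gb^2}}\ \hat M^{\gamma}_{\gep,r},\qquad
\hat M^{\gamma}_{\gep,r}=\int_A e^{\gamma X_{\gep,r}(x)-\frac{\gamma^2}{2}\bbE[X_{\gep,r}(x)^2]}\dd x,
\]
with $X_{\gep,r}(x)=X_\gep(x)+X_\gep(x+r)$ (this is just your double integral with $r=z$ integrated out last instead of first, after absorbing the $e^{\gamma^2 K_\gep(r)}$ into the kernel). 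The key point is that the inner integral $\hat M^{\gamma}_{\gep,r}$, being an integral of a chaos-type density over $A$, converges in $\bbL_1$ to $M^{2\gamma,0}(A)$ as $(\gep,r)\to(0,0)$ (Lemma~\ref{convergence}), \emph{uniformly} in small $r$; this is where the averaging over $x$ happens and where uniform integrability of the subcritical chaos is used. The outer integral is then against the \emph{deterministic} probability-like measure $\gep^{\gamma^2+\gb^2-d}G_\gep(r)^{-(\gamma^2+\gb^2)}\dd r$, whose mass concentrates near $r=0$ and has total mass converging to $\sigma^2$. This order of operations completely avoids any pointwise concentration claim for $\Phi_\gep(x)$.
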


\begin{proposition}\label{lesmomentos2}
 Let $A$ be some compact ball with non empty interior. When $\gamma^2+\gb^2>d$, $\gamma=\sqrt{\frac{d}{2}}$,
  we have the following convergence in probability
  
\begin{equation}\label{lignedeutroi}
    \lim_{\gep\to 0}\bbE\left[\left | |\log \gep|^{1/4} M^{\gamma,\gb}_\gep (A)\right|^2\ | X\right]
    =\sigma^2(\gamma^2+\gb^2) \sqrt{\frac{2}{\pi}} M'(A).
 \end{equation}
\end{proposition}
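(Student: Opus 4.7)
The strategy mirrors that of Proposition~\ref{lesmomentos}, with the crucial novelty that $2\gamma=\sqrt{2d}=\gamma_c$ puts the ``outer'' field at criticality, so that the GMC $M^{2\gamma,0}$ gets replaced by the derivative martingale $M'$ through its Seneta--Heyde renormalization.

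Using the independence of $X$ and $Y$ and the Gaussian characteristic function, I would first write
\[
\bbE\bigl[|M^{\gamma,\beta}_\gep(A)|^{2}\bigm|X\bigr]=\gep^{\beta^{2}}\int_{A\times A}e^{\gamma(X_\gep(x)+X_\gep(y))}G_\gep(y-x)^{-\beta^{2}}\,\dd x\,\dd y.
\]
Splitting $e^{\gamma(X_\gep(x)+X_\gep(y))}=e^{2\gamma X_\gep(x)}e^{\gamma(X_\gep(y)-X_\gep(x))}$, introducing the critical martingale density $N_\gep(x):=\gep^{d}e^{\sqrt{2d}\,X_\gep(x)}$ (which has mean one, since $2\gamma^{2}=d$) and setting $z=y-x$ gives
\[
|\log\gep|^{1/2}\bbE\bigl[|M^{\gamma,\beta}_\gep(A)|^{2}\bigm|X\bigr]=\sqrt{\log(1/\gep)}\int_{A}N_\gep(x)\,H_\gep(x)\,\dd x,
\]
where the local factor is $H_\gep(x):=\gep^{\beta^{2}-d}\int_{A-x}G_\gep(z)^{-\beta^{2}}\,e^{\gamma(X_\gep(x+z)-X_\gep(x))}\,\dd z$. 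The right-hand side is thus the integral of a random fine-scale factor $H_\gep$ against the critical chaos density $N_\gep$.

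The natural value of $H_\gep(x)$ comes out by a Girsanov tilt: under $N_\gep(x)\Pb$, the field $X_\gep(\cdot)$ acquires the mean $\sqrt{2d}\,K_\gep(\cdot-x)$, and an explicit Gaussian computation yields
\[
\bbE[N_\gep(x)H_\gep(x)]=\gep^{\beta^{2}-d+\gamma^{2}}\int_{A-x}G_\gep(z)^{-(\gamma^{2}+\beta^{2})}\,\dd z \xrightarrow[\gep\to0]{}\sigma^{2}(\gamma^{2}+\beta^{2})
\]
for $x$ in the interior of $A$, via the substitution $z=\gep u$ and the asymptotics of $G_\gep(\gep u)$ already used in Proposition~\ref{lesmomentos} (integrability at infinity requires precisely $\gamma^{2}+\beta^{2}>d$). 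Combined with the Seneta--Heyde theorem from \cite{Rnew7,Rnew12}, whose hypotheses are exactly our Assumptions~(A), B1 and either B2 or B2',
$$\sqrt{\log(1/\gep)}\int_{A}\varphi(x)\,N_\gep(x)\,\dd x\xrightarrow[\gep\to0]{\Pb}\sqrt{\tfrac{2}{\pi}}\int_{A}\varphi(x)\,M'(\dd x)\qquad\text{for continuous }\varphi,$$
the formal choice $\varphi\equiv\sigma^{2}(\gamma^{2}+\beta^{2})$ then yields the announced limit $\sqrt{2/\pi}\,\sigma^{2}(\gamma^{2}+\beta^{2})\,M'(A)$.

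The main difficulty is to make this substitution rigorous, since $H_\gep$ is random, $\gep$-dependent and correlated with $N_\gep$. I plan to introduce an intermediate scale $r\gg\gep$ and exploit the independent decomposition $X_\gep=X_r+(X_\gep-X_r)$. On one hand, (i)~$\bbE[H_\gep(x)\mid X_r]$ is uniformly close to the deterministic value $\sigma^{2}(\gamma^{2}+\beta^{2})\,\gep^{d-\beta^{2}}$ on compact subsets of the interior of $A$, because the dominant contribution to $H_\gep$ comes from $|z|\sim\gep\ll r$ where $X_r(x+z)-X_r(x)$ is negligible, leaving only the small-scale piece $X_\gep-X_r$ which is independent of $X_r$. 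On the other hand, (ii)~the remaining fluctuation $\sqrt{\log(1/\gep)}\int_{A}N_\gep(x)[H_\gep(x)-\bbE(H_\gep(x)\mid X_r)]\,\dd x$ is controlled in (conditional) $L^{2}$ via the moment estimates on critical chaos gathered in the appendix (Appendix~\ref{Controlmom}); sending $\gep\to0$ and then $r\to0$ closes the argument.
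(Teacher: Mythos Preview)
Your overall strategy---express the conditional second moment as an integral of a fine-scale factor $H_\gep$ against the critical density $N_\gep$, identify the deterministic limit $\sigma^2(\gamma^2+\gb^2)$ via Girsanov, introduce an intermediate scale, and invoke the Seneta--Heyde normalization---is indeed the right architecture and matches the paper's Section~\ref{frontier23}. But step (ii), where you propose to control the fluctuation
\[
\sqrt{|\log\gep|}\int_A N_\gep(x)\bigl[H_\gep(x)-\bbE(H_\gep(x)\mid X_r)\bigr]\,\dd x
\]
``in (conditional) $L^2$ via the moment estimates on critical chaos gathered in Appendix~\ref{Controlmom}'', has a genuine gap. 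First, Appendix~\ref{Controlmom} contains nothing of the sort: it computes higher conditional $Y$-moments of $M^{\gamma,\gb}_\gep$ for the Gaussian CLT argument, not moment bounds on the critical real chaos. Second, and more fundamentally, no $L^2$ (or even $L^1$) control can work here: the target $M'(A)$ is not integrable, and the critical martingale $(\gep^{2\gamma^2}M^{2\gamma,0}_\gep)_\gep$ is not uniformly integrable, so $\sqrt{|\log\gep|}\int_A N_\gep(x)\,\dd x$ converges to $\sqrt{2/\pi}\,M'(A)$ only in probability. Any fluctuation term that carries the full weight of $N_\gep$ will typically have infinite first moment.

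The paper circumvents exactly this obstacle with three ingredients you do not mention: (a)~restriction to the barrier event $B_\kappa=\{\sup_{u\in(0,1]}\sup_x X_u(x)+\sqrt{2d}\ln u\le \kappa\}$, on which one recovers finite expectations via a Girsanov computation that turns the problem into a Brownian-barrier probability (Lemma~\ref{lem:crit2}); (b)~a three-step interpolation between $X_{\gep,\gep s}$ and $2X_{\gep'}$ through the auxiliary field $\tilde X_{\gep,\gep',s}=2X_{\gep'}+(X_{\gep,s}-X_{\gep',s})$ at carefully chosen scales $\gep''=\sqrt{\gep}$ and $\gep'=\gep\log\log(\gep^{-1})$ (Lemma~\ref{lem:crit1}); and (c)~for the piece that cannot be localized on $B_\kappa$, control in $L^q$ for $q<1$ using the supercritical moment bound of Lemma~\ref{superhushi}. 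Your outline would need to incorporate these (or equivalent) devices to close the argument; as written, step (ii) is the missing idea.
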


First, we point out that computing the limits  \eqref{subcritsigma} and \eqref{critsigma} yields the expression of Definition \ref{sigmasquare}. This point will be shown in our proofs.

The proof of the above two propositions will be the object of Sections \ref{slesmomentos} to
 \ref{frontier23}. For notational simplicity, we  consider in the proof only the case $d=1$. The 
 general case does not present additional difficulties. 
Computing the variance of $M(A)$ for an arbitrary interval is not more difficult than dealing with the case $A=[0,1]$ so that  
we will only consider this case in the proof.
Before going to the core of the proof (in which we will have to deal with the II/III case separately) let us exhibit a more 
user-friendly expression for $\bbE[ |M^{\gamma,\gb}_\gep (A)|^2 | \mathcal{F}^X]$:
\begin{equation}\label{secondmom}
 \bbE\left[\left|\gep^{\gamma^2-1/2} M^{\gamma,\gb}_\gep ([0,1])\right|^2\ | \mathcal{F}^X\right]=
\gep^{2\gamma^2+\gb^2-1}\int_{[0,1]^2} \frac{   M^{\gamma,0}_\gep(\dd x) M^{\gamma,0}_\gep(\dd y)}{G_\gep(x-y)^{\gb^2}}.
\end{equation}
The right-hand side of \eqref{secondmom} can be rewritten as
\begin{equation}\label{secondmom2}
\gep^{\gamma^2+\beta^2-1} \int_{[0,1]^2} 
\frac{\exp\left(\gamma(X_\gep(x)+X_\gep(y) ) -\frac{\gamma^2}{2}
\E[(X_\gep(x)+X_\gep(y))^2] \right) }{G_{\gep}(x-y)^{\beta^2+\gamma^2} } \dd x \dd y.
\end{equation}

By symmetry the integral is equal to $2\int_{\{y\ge x\}}$ (of course we cannot do anything of the kind when $d\ge 2$ but this step is just performed for notational convenience here and there is nothing crucial about it). With the change of variables $y-x\to r$ and setting 
\begin{equation}\label{def.omega}
X_{\gep,r} (z)= X_\varepsilon(z+r)+X_\varepsilon(z),
\end{equation}
we obtain that \eqref{secondmom2} is equal to
\begin{multline}\label{secondmom3}
2\int_0^1\frac 1 {G_{\gep}(r)^{\beta^2+\gamma^2}}\left(\int_{0}^{1-r} 
\exp\left(\gamma X_{\varepsilon,r}(z) -\frac{\gamma^2}{2}
\E[(X_{\varepsilon,r}(z))^2] \right) \dd z\right) \dd r
=2\int_{0}^1\frac 1 {G_{\gep}(r)^{\beta^2+\gamma^2}}\hat M^\gamma_{\gep,r} \dd r.
\end{multline}
where we have defined 
\begin{equation}\begin{split}
 \hat M^{\gamma}_{\gep,r}&= \int_{0}^{1-r} 
\exp\left(\gamma X_{\varepsilon,r}(z) -\frac{\gamma^2}{2} 
\E[(X_{\varepsilon,r}(z))^2]\right) \dd z,\\
 \text{ and } \quad M^{\gamma}_{\gep,r}&= \int_{0}^1 
\exp\left(\gamma X_{\varepsilon,r}(z) -\frac{\gamma^2}{2}
\E[(X_{\varepsilon,r}(z))^2]\right) \dd z.
\end{split}
\end{equation}
As a conclusion we have
\begin{equation}\label{subcritical}
  \bbE\left[\left|\varepsilon^{\gamma^2-1/2} M^{\gamma,\gb}_\varepsilon ([0,1])\right|^2\ | \mathcal{F}^X\right]
  = \varepsilon^{\gamma^2+\beta^2-1}\int_0^1\frac 2 {G_{\gep}(r)^{\beta^2+\gamma^2}} \hat M^{\gamma}_{\gep,r} \dd r.
\end{equation}

Similarly when $\gamma^2+\gb^2=1$, we have

\begin{equation}\label{untroiz}
  \bbE\left[\left|\varepsilon^{\gamma^2-1/2} |\log\gep|^{-1/4}M^{\gamma,\gb}_\varepsilon ([0,1])\right|^2\ | \mathcal{F}^X\right]
  = |\log \gep|^{-1/2}\int_0^1\frac 2 {G_{\gep}(r)} \hat M^{\gamma}_{\gep,r} \dd r.
\end{equation}

The first key tool of the argument relies on the following lemma, the proof of which is straightforward and thus left to the reader:
\begin{lemma}\label{greenk}
We can write:
$$G_\gep(\gep t)=\gep f(t)g_\gep(t)$$
with $$f(t)=e^{\int_0^{|t|}\frac{1-k(u)}{u}\,\dd u}\quad \text{ and }\quad g_\gep(t)=e^{-\int_0^{|\gep t|}\frac{1-k(u)}{u}\,\dd u}.$$
The function $f$ is continuous on $\R$ and $f(t)\simeq \ln |t|$ for $t$ large. Furthermore, for each fixed $t$, $g_\gep(t)\to 1$ as $\gep \to 0$ and $\sup_{\gep |t|\leq 1}|\ln g_\gep(t)|= C<\infty$.
\end{lemma}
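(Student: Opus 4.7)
The plan is to obtain the factorization by a pair of explicit changes of variables in the integral defining $K_\gep$, and then read off the three claimed properties from Assumptions A.1--A.3. First I would substitute $v=\gep u$ in
\[
K_\gep(\gep t)=\int_1^{1/\gep}\frac{k(\gep t u)}{u}\,\dd u
\]
to obtain $K_\gep(\gep t)=\int_\gep^1 k(tv)/v\,\dd v$, thereby eliminating the dependence on $\gep$ inside $k$. A second substitution $w=tv$, using that $k$ is even (which is forced by $K_\gep(x)=K_\gep(-x)$, itself a consequence of $X$ being real-valued), gives $K_\gep(\gep t)=\int_{\gep|t|}^{|t|} k(w)/w\,\dd w$. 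The key algebraic step is then to split $k(w)/w=1/w-(1-k(w))/w$: the first piece integrates exactly to $\log(1/\gep)$, independently of $t$, while the second piece, once rewritten as $\int_0^{|t|}-\int_0^{\gep|t|}$ of $(1-k(w))/w$, produces precisely $\log f(t)-\log g_\gep(t)$. Exponentiating gives $G_\gep(\gep t)=\gep f(t)g_\gep(t)$.

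For the properties of $f$, the crucial input is that $u\mapsto (1-k(u))/u$ is a bounded continuous function on $[0,\infty)$: at the origin this uses the normalization $k(0)=1$ (A.1) together with the Lipschitz condition A.3, giving $|1-k(u)|/u\leq C_k$, while boundedness (in fact decay) away from the origin follows from A.2. Continuity of $f$ on $\R$ is immediate. For the large-$|t|$ behavior I would split $\int_0^{|t|}(1-k(u))/u\,\dd u$ as $\int_0^1+\int_1^{|t|}$, writing $\int_1^{|t|}(1-k(u))/u\,\dd u=\log|t|-\int_1^{|t|} k(u)/u\,\dd u$; the latter integral converges absolutely as $|t|\to\infty$ thanks to A.2 (since $\nu>d\geq 1$ makes $\int_1^\infty|k(u)|/u\,\dd u<\infty$). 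This yields $\log f(t)=\log|t|+O(1)$ as $|t|\to\infty$, which is what the statement is really asserting.

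For $g_\gep$, the same boundedness $|(1-k(u))/u|\le M$ immediately gives $|\log g_\gep(t)|\leq M\gep|t|$. For fixed $t$ this tends to $0$ as $\gep\to 0$, so $g_\gep(t)\to 1$; and whenever $\gep|t|\leq 1$ the same bound yields $|\log g_\gep(t)|\leq M$ uniformly, as required.

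The proof is entirely elementary; no substantive obstacle arises. The only point requiring minor care is justifying the substitution $w=tv$ for $t<0$, where one must invoke the evenness of $k$ (equivalently, the symmetry of the covariance kernel) to reduce to the positive case and to justify using $|t|$ in the upper limits.
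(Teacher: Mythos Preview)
Your proof is correct and is exactly the elementary computation the paper has in mind (the paper in fact omits the proof as ``straightforward and left to the reader''). You also correctly flag that the claimed asymptotic $f(t)\simeq \log|t|$ is a misprint: your computation gives $\log f(t)=\log|t|+O(1)$, i.e.\ $f(t)$ is comparable to $|t|$, and this is the property actually used downstream (e.g.\ to ensure $\int_A^{1/\gep}(G_\gep(\gep s)/\gep)^{-(\beta^2+\gamma^2)}\,\dd s$ is bounded and vanishes as $A\to\infty$).
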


In \eqref{subcritical} and \eqref{untroiz}, we integrate $\hat M^{\gamma}_{\gep,r}$ w.r.t. to a measure on $[0,1]$: either $\varepsilon^{\gamma^2+\beta^2-1}\frac 2 {G_{\gep}(r)^{\beta^2+\gamma^2}}\dd r$
or $|\log \gep|^{-1}\frac 2 {G_{\gep}(r)^{1}}\dd r$.
In both cases the total mass of the measure is of order one and converges when $\gep\to 0$. Furthermore when $\gep\to 0$
most of the mass of these measures is supported by small $r$, even by $r$ that are of order $\gep$ in the case $\gamma^2+\gb^2>1$.

\medskip

Now the idea is that  $X_{\gep,r} (z)\approx 2X_\gep$ when $r$ is small and hence that $\hat M^{\gamma}_{\gep,r}$ (or $M^{\gamma}_{r,\gep}$) can be replaced by $M^{2\gamma}_\gep([0,1])$ in the integral.
This corresponds to proving the following lemma

\begin{lemma}\label{convergence}
For $\gamma<1/\sqrt{2}$, we have the following convergence in $\bbL_1$,
\begin{equation}
 \lim_{\gep,s\to 0}\hat M^{\gamma}_{\gep,s} =\lim_{\gep,s\to 0}M^{\gamma}_{\gep,s}= M^{2\gamma,0}([0,1])
\end{equation}
\end{lemma}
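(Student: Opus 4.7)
The plan is to establish the stronger $L^2$ convergence. Since $\gamma<1/\sqrt{2}$ is equivalent to $2\gamma<\sqrt{2}$, which lies in the $L^2$ phase of the standard real GMC, a classical computation gives $M^{2\gamma,0}_\gep([0,1])\to M^{2\gamma,0}([0,1])$ in $L^2$ as $\gep\to 0$. By the triangle inequality it then suffices to show that $\hat M^\gamma_{\gep,s}-M^{2\gamma,0}_\gep([0,1])\to 0$ in $L^2$ as $(\gep,s)\to(0,0)$. The corresponding statement for $M^\gamma_{\gep,s}$ follows from the $L^2$ bound $\E\bigl|\int_{1-s}^{1}f_{\gep,s}(z)\,dz\bigr|^2=O(s)$ (a direct application of the $L^2$ integrability $\int e^{4\gamma^2 K_\gep(u)}du<\infty$), where $f_{\gep,s}$ denotes the integrand defining $\hat M^\gamma_{\gep,s}$.

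The core of the proof is a direct Gaussian second-moment computation. Setting $a=\gamma^2K_\gep(u),\ b=\gamma^2K_\gep(u+s),\ c=\gamma^2K_\gep(u-s)$ with $u=z-z'$, and exploiting the evenness of $K_\gep$ to symmetrize the two cross-terms under $u\mapsto -u$, one obtains
$$\E\bigl[(\hat M^\gamma_{\gep,s}-M^{2\gamma,0}_\gep([0,1]))^2\bigr]=\int_{-(1-s)}^{\,1-s}(1-s-|u|)\, e^{4a}\, h(\delta_1(u),\delta_2(u))\,du+O(s),$$
where $\delta_1=b-a,\ \delta_2=c-a$ and the elementary function $h(x,y)=e^{x+y}-e^{2x}-e^{2y}+1$ vanishes to first order at the origin in $(x,y)$. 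The goal reduces to showing that this integral tends to zero.

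I would split the integration at $|u|=2s$. On $\{|u|>2s\}$, the Lipschitz assumption A3 combined with the approximation \eqref{froomk} yields $|\delta_i(u)|\le C\gamma^2 s/(|u|\vee\gep)$, so a Taylor expansion of $h$ together with the uniform-in-$\gep$ bound $\int e^{4\gamma^2 K_\gep(u)}du<\infty$ (again, thanks to $4\gamma^2<2$) delivers a contribution of order $\omega(s)\to 0$. The delicate region is $\{|u|\le 2s\}$ where $e^{4a}$ may blow up like $\gep^{-4\gamma^2}$; the contribution must be controlled by combining the quadratic cancellation built into $h$ with the smallness of the domain. I would treat this region by distinguishing the regimes $s\le\gep$ and $s>\gep$: in the former, $\mathrm{Var}(X_\gep(z+s)-X_\gep(z))=O(s/\gep)$ is small, so Taylor expansion of $h$ around $0$ suffices; in the latter, I would condition on $\mathcal F_s$ and exploit the near-independence at distance $s$ of the increments $X_\gep(\cdot)-X_s(\cdot)$ to compare $\hat M^\gamma_{\gep,s}$ with $M^{2\gamma,0}_{\eta}([0,1])$ for some intermediate scale $\eta=\eta(\gep,s)\to 0$. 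All second-moment bounds used along the way are made uniform via Kahane's convexity inequality (Proposition \ref{kahaha}) and the scale-invariant comparison field of Proposition \ref{scalinv}.

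The main obstacle I anticipate is giving a unified quantitative treatment of the singular region $|u|\le s$ across all regimes of $s/\gep$, particularly when this ratio stays bounded away from $0$ and $\infty$. There one cannot rely purely on either the ``perfectly correlated'' picture $X_\gep(z+s)\approx X_\gep(z)$ or the ``nearly independent'' picture, and the Taylor cancellation inside $h$ has to be made effective uniformly in $(\gep,s)$; this is the step I expect to be the most technical.
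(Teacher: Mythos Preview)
Your approach has a genuine gap: it only works for $\gamma<1/2$, not the full range $\gamma<1/\sqrt{2}$. The issue is that $M^{2\gamma,0}([0,1])$ is in $L^2$ if and only if $(2\gamma)^2<d=1$, i.e.\ $\gamma<1/2$. Your repeated appeal to ``$\int e^{4\gamma^2 K_\gep(u)}\,du<\infty$ uniformly in $\gep$, thanks to $4\gamma^2<2$'' is precisely where this shows up: since $e^{4\gamma^2 K_\gep(u)}\asymp (|u|\vee\gep)^{-4\gamma^2}$, the integral over $[-1,1]$ stays bounded as $\gep\to 0$ only when $4\gamma^2<1$, not when $4\gamma^2<2$. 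For $\gamma\in[1/2,1/\sqrt{2})$ the second moment of $M^{2\gamma,0}_\gep([0,1])$ blows up like $\gep^{1-4\gamma^2}$, so neither the target convergence $M^{2\gamma,0}_\gep\to M^{2\gamma,0}$ nor any of your second-moment bounds (including the $O(s)$ control of $M^\gamma_{\gep,s}-\hat M^\gamma_{\gep,s}$ and the treatment of the region $|u|>2s$) can hold in $L^2$.

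The paper avoids this by working in $L^1$ throughout (and in $L^q$ for $q$ just above $1$ when a quantitative rate is needed). The reduction from $\hat M$ to $M$ is a one-line $L^1$ bound $\bbE|M^\gamma_{\gep,s}-\hat M^\gamma_{\gep,s}|=s$. Then, instead of a direct second-moment expansion, the paper interpolates via an intermediate scale: compare $M^\gamma_{\gep,s}$ first with $M^\gamma_{\sqrt{s},s}$ (or with $\gep^{2\gamma^2}M^{2\gamma,0}_\gep$ when $\gep>\sqrt{s}$), then with the approximating chaos $M^{2\gamma,0}_{\sqrt{s}}$, and finally with the limit $M^{2\gamma,0}$. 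The middle comparison uses the elementary bound $\bbE|e^{X-\bbE X^2/2}-e^{Y-\bbE Y^2/2}|\le C\sqrt{\bbE(X-Y)^2}$, which stays $L^1$. The remaining piece --- that $(M^\gamma_{\gep,s})_\gep$ is Cauchy in $L^q$ uniformly in $s$ --- is obtained for any $q\in(1,\min(2,1/(2\gamma^2)))$; note $1/(2\gamma^2)>1$ exactly on the full range $\gamma<1/\sqrt{2}$, but $q=2$ is admissible only when $\gamma<1/2$. If you want to salvage your strategy, you would have to replace every $L^2$ computation by an $L^q$ one with $q$ close to $1$, at which point the direct expansion of $\bbE[(\cdot)^2]$ and the function $h(\delta_1,\delta_2)$ are no longer available and you are essentially forced into the paper's conditional-variance / subadditivity argument.
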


When $\gamma=1/\sqrt{2}$, $M^{2\gamma,0}([0,1])$ has to be replaced by the derivative martingale, which is not in $\bbL_1$ so that there is no hope for an equivalent statement to hold. The case is treated separately in Section \ref{frontier23}.

\begin{proof}[Proof of Proposition \ref{lesmomentos} using Lemma \ref{convergence}]

We first show \eqref{subcritsigma} and \eqref{critical}.
Using the definition of $G_\gep$ we can write 
\begin{equation}
\varepsilon^{\gamma^2+\beta^2-1}\int_{0}^1\frac {\dd r }{G_{\gep}(r)^{\beta^2+\gamma^2}}
=\gep^{-1}\int_0^1 \dd r \exp\left( (\gamma^2+\beta^2)\int_1^{\gep^{-1}} \frac{k(ur)-1}{u}\dd u\right).
\end{equation}
Then we use first the change of variables $r\to r'=r\gep^{-1}$ and $u\to u'=u\gep$ and the above integral becomes
\begin{equation}
\int^{1/\gep}_0 \dd r ' \exp\left( (\gamma^2+\beta^2)\int^1_{\gep} \frac{k(u'r')-1}{u'}\dd u'\right),
\end{equation}
and then we obtain  \eqref{subcritsigma} by taking the limit when $\gep\to 0$ and applying standard integration theorems.

\medskip

For \eqref{critical}, we have 
\begin{multline}
 |\log \varepsilon|^{-1}  \int_0^1\frac {\dd r }{G_{\gep}(r)}= |\log \varepsilon|^{-1}\int_{0}^{\gep|\log \gep|^{1/2}}\frac {\dd r }{G_{\gep}(r)}+
 |\log \varepsilon|^{-1}\int_{|\log \gep|^{-1/2}}^1\frac {\dd r }{G_{\gep}(r)}\\
 + |\log \varepsilon|^{-1} \int_{\gep|\log \gep|^{1/2}}^{|\log \gep|^{-1/2}}\frac {\dd r }{r}\exp\left( \int_1^{\gep^{-1}} \frac{k(ur)-\ind_{[0,r^{-1}]}(u)}{u}\dd u \right).
 \end{multline}
The first term tends to zero because $G_{\gep}(r)\ge C \gep$, the second term as well because $G_{\gep}(r)\ge C/r$.  To prove the convergence of the third  term, we perform the change of variables $u\to u'=ur$ to obtain   
\begin{equation}
\int_{\gep|\log \gep|^{1/2}}^{|\log \gep|^{-1/2}}\frac {\dd r }{r}\exp\left( \int_r^{\gep^{-1}r} \frac{k(u')-\ind_{[0,1]}(u')}{u'}\dd u' \right).
\end{equation}
The term in the exponential converges uniformly to 
$$\int_0^{\infty} \frac{k(u')-\ind_{[0,1]}(u')}{u'}\dd u'$$ on the domain of integration when $\gep$ tends to zero and then integrating with respect to $r$ cancels the 
$|\log \varepsilon|^{-1}$ in front of the integral.

\medskip

Now what remains to prove is that in $\bbL_1$:
\begin{equation}\begin{split}
\lim_{\gep\to 0} 
\gep^{\gamma^2+\beta^2-1} \int_0^1\frac 1 {G_{\gep}(r)^{\beta^2+\gamma^2}} | \hat M^{\gamma}_{\gep,r}- M^{2\gamma,0}([0,1])|  \dd r&=0, \quad \text{when } \gga+\gb<1\\
\lim_{\gep\to 0} 
|\log \gep|^{-1} \int_0^1\frac 1 {G_{\gep}(r)} | \hat M^{\gamma}_{\gep,r}- M^{2\gamma,0}([0,1])|  \dd r&=0 \quad \text{when } \gga +\gb=1
\end{split}\end{equation}
Let $\delta$ be arbitrarily small. According to Lemma \ref{convergence} 
we can find $\eta$ such that if   
$\max(r,\gep)\le \eta$, then 
$$|\hat M^{\gamma}_{\gep,r}- M^{2\gamma,0}([0,1])|\le \delta.$$
Then for all $\gep<\eta$ we have
\begin{multline}
 \bbE\left[\gep^{\gamma^2+\beta^2-1}\int_0^1\frac 2 {G_{\gep}(r)^{\beta^2+\gamma^2}} | \hat M^{\gamma}_{\gep,r}- M^{2\gamma,0}([0,1])|  \dd r\right]\\
 \le \delta \gep^{\gamma^2+\beta^2-1}\int_0^1\frac 2 {G_{\gep}(r)^{\beta^2+\gamma^2}}\dd r
 + \int_{r\in [\eta,1]}\frac 2 {G_{\gep}(r)^{\beta^2+\gamma^2}}\dd r.
\end{multline}
According the proof of \eqref{subcritsigma}, the first term is smaller than a constant times $\delta$ and the second one can be made arbitrary small for a fixed $\eta$ if $\gep$ is taken sufficiently small.
This is enough to conclude as $\delta$ is arbitrary.
The other limit can be proven in the same manner.\end{proof} 

\subsection{Proof of Lemma \ref{convergence}}
First let us notice that it is sufficient to prove the result for $M^{\gamma}_{\gep,s}$ as 
\begin{equation}
\bbE\left[|M^{\gamma}_{\gep,s}-\hat M^{\gamma}_{\gep,s}|\right]=s
\end{equation}
and thus tends to zero.

\medskip

If $\gep\le \sqrt{s}$, we write 
\begin{equation}\label{bool}
 |M^{\gamma}_{\gep,s}-M^{2\gamma,0}|\le 
 |M^{\gamma}_{\gep,s}-M^{\gamma}_{\sqrt{s},s}|+
 |M^{\gamma}_{\sqrt{s},s}-s^{\gamma^2}M^{2\gamma,0}_{\sqrt{s}}[0,1]|
 +|s^{\gamma^2} M^{2\gamma,0}_{\sqrt{s}}[0,1]-M^{2\gamma,0}[0,1]|
\end{equation} 
and if $\gep>\sqrt{s}$, 
\begin{equation}\label{booly}
  |M^{\gamma}_{\gep,s}-M^{2\gamma,0}[0,1]|\le 
 |M^{\gamma}_{\gep,s}-\gep^{2 \gamma^2}M^{2\gamma,0}_{\gep}[0,1]|
 +|\gep^{2 \gamma^2} M^{2\gamma,0}_{\gep}[0,1]-M^{2\gamma,0}[0,1]|.
\end{equation}

The last terms in the r.h.s. of both \eqref{bool} and \eqref{booly} converge to $0$ in $\bbL_1$ as the martingale $( \gep^{2 \gamma^2} M^{2\gamma,0}_{\gep})_{\gep\ge 0}$ is uniformly integrable (see e.g. \cite{cf:Kah}) 
and we just have to care about the other terms.
For the second term in \eqref{bool}, we use the following result

\begin{lemma}\label{lesgaussiennes}
 Let $(X,Y)$ be a centered Gaussian vector. There exists a universal constant $C$ such that
 \begin{equation}
  \bbE[\left| e^{X-\bbE[X^2]/2}-e^{Y-\bbE[Y^2]/2} \right|]\le C\sqrt{\bbE[(X-Y)^2]}.
 \end{equation}

\end{lemma}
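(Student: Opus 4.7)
The plan is to reduce the statement to the scalar Gaussian case via a change of measure. Set $U := X - \bbE[X^2]/2$ and $V := Y - \bbE[Y^2]/2$, and factor
$$ e^U - e^V = e^V\bigl(e^{U-V} - 1\bigr). $$
Since $\bbE[e^V] = 1$, the functional $e^V$ defines a probability measure $\tilde{\bbP}$ via $d\tilde{\bbP}/d\bbP = e^V$, and
$$\bbE\bigl[|e^U - e^V|\bigr] = \tilde{\bbE}\bigl[|e^{U-V} - 1|\bigr].$$

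By the Girsanov lemma recalled earlier in the excerpt, under $\tilde{\bbP}$ the pair $(X,Y)$ remains Gaussian with the same covariance structure, and the means become $\tilde{\bbE}[X] = \bbE[XY]$, $\tilde{\bbE}[Y] = \bbE[Y^2]$. In particular, $U-V = X-Y - (\bbE[X^2]-\bbE[Y^2])/2$ is, under $\tilde{\bbP}$, a Gaussian variable of variance $s^2 := \bbE[(X-Y)^2]$ and mean
$$ \bbE[XY] - \bbE[Y^2] - \tfrac{1}{2}\bigl(\bbE[X^2] - \bbE[Y^2]\bigr) = -\tfrac{1}{2}\bbE[(X-Y)^2] = -s^2/2. $$
Therefore $\tilde{\bbE}[|e^{U-V} - 1|] = \bbE[|e^{sZ - s^2/2} - 1|]$, where $Z$ is a standard real Gaussian, and it suffices to show $\bbE[|e^{sZ-s^2/2}-1|] \le Cs$ for some universal $C$.

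I would handle this in two regimes. For $s \le 1$, a direct computation gives
$$ \bbE\bigl[|e^{sZ-s^2/2}-1|^2\bigr] = \bbE[e^{2sZ - s^2}] - 2\bbE[e^{sZ-s^2/2}] + 1 = e^{s^2} - 1, $$
and since $e^{s^2} - 1 \le (e-1)\, s^2$ for $s \in [0,1]$, Cauchy--Schwarz yields $\bbE[|e^{sZ-s^2/2}-1|] \le \sqrt{e-1}\, s$. For $s \ge 1$, the triangle inequality gives the crude bound $\bbE[|e^{sZ-s^2/2}-1|] \le \bbE[e^{sZ-s^2/2}] + 1 = 2 \le 2s$. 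Combining both regimes yields the claim with $C = 2$. There is no serious obstacle here; the only subtle point is recognizing that after the Girsanov tilt the ``drift'' $-s^2/2$ lines up exactly with the Wick-style normalization of the exponential, turning what could have been an estimate involving $e^{s^2}$ into one linear in $s$.
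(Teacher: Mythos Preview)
Your proof is correct and follows essentially the same approach as the paper: tilt the measure by one of the two exponential martingales (the paper uses $e^{X-\bbE[X^2]/2}$, you use $e^{Y-\bbE[Y^2]/2}$, which is immaterial by symmetry), then observe that under the tilted measure the difference $U-V$ is Gaussian with variance $s^2$ and mean $-s^2/2$, reducing the problem to bounding $\bbE[|e^{sZ-s^2/2}-1|]$. The paper leaves this last scalar estimate to the reader, whereas you spell it out via the $s\le 1$ / $s\ge 1$ split, which is a welcome bit of extra detail.
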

\begin{proof}
 We use the Girsanov formula for the measure tilted by $X$ and we obtain
 \begin{multline}
    \bbE\left[\left| e^{X-\bbE[X^2]/2}-e^{Y-\bbE[Y^2]/2} \right|\right]=
    \bbE\left[e^{X-\bbE[X^2]/2}\left|1-e^{Y-X-\bbE[Y^2]/2+\bbE[X^2]/2}\right|\right]\\
    = \bbE\left[\left|1-e^{Y+\bbE[XY]-X-\bbE[X^2]-\bbE[Y^2]/2+\bbE[X^2]/2}\right|\right]
    =\bbE\left[\left|1-e^{(Y-X)-\bbE[(Y-X)^2]/2}\right|\right].
 \end{multline}
Then the reader can check that the last term can be bounded by $C\sqrt{\bbE[(X-Y)^2]}$
(it is sufficient to check it when $(X-Y)$ has small variance because it is always smaller than $2$).
\end{proof}

By Jensen's inequality and stationarity, we have
\begin{align}
  |M^{\gamma}_{\sqrt{s},s}-s^{\gamma^2} M^{2\gamma,0}_{\sqrt{s}}[0,1]|\le &
  \int_0^1\bbE\left[\left| e^{\gamma \go_{\sqrt{s},s}(x)-\frac{\gamma^2}{2}\bbE[\go_{\sqrt{s},s}(x)^2]}-
 e^{2\gamma X_{\sqrt{s}}(x)- 2\gamma^2 \bbE[X_{\sqrt{s}}(x)^2]}\right|\right]\dd x\\
 \le & C\sqrt{\bbE\left[(\go_{\sqrt{s},s}(0)-2\go_{\sqrt{s}}(0))^2\right]}\le C' s^1/4.
\end{align}
which tends to zero.
The same computation allows to control also the first term in \eqref{booly}

\medskip

Finally, we control the $|M^{\gamma}_{\gep,s}-M^{\gamma}_{\sqrt{s},s}|$ term in \eqref{bool} by proving the following Lemma that ensures that 
the  $(M^{\gamma}_{\gep,s})_{\gep\ge 0}$ is uniformly Cauchy in $\bbL^q$ for some $q>1$ (and hence in $\bbL^1)$ when $\gep\to 0$.

\begin{lemma}
For any $\gamma<1/\sqrt 2$, for any 
$q\in (1,\min(2, \frac{1}{2 \gamma^2}))$,
there exists $C>0$ and $\alpha(q,\gamma)>0$ (not depending on $s$, but only on $q$) such that  
 for any $\gep>\gep'>0$ we have
\begin{equation}
\bbE\left[ |M^{\gamma}_{\gep',s}-M^{\gamma}_{\gep,s}|^q \right]^{1/q}\le C \gep^{\alpha}
\end{equation}
\end{lemma}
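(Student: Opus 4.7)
The plan is to combine the martingale property of $(M^\gamma_{\gep,s})_\gep$ in decreasing $\gep$ with a Kahane-type comparison of the field $X_{\gep,s}(z)=X_\gep(z+s)+X_\gep(z)$ with the standard $2\gamma$-chaos. The covariance of $X_{\gep,s}$ is
$$\E[X_{\gep,s}(z)X_{\gep,s}(z')]=2K_\gep(z-z')+K_\gep(z-z'+s)+K_\gep(z-z'-s),$$
which is dominated (uniformly in $s$) by $4K_\gep(z-z')+C$; hence $X_{\gep,s}$ is log-correlated with effective intermittency parameter at most $2\gamma$, which remains subcritical under the assumption $\gamma<1/\sqrt 2$.

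The first step is to establish uniform $\bbL_q$-boundedness: by Proposition \ref{kahaha} and the classical moment bounds for the $2\gamma$-chaos (valid for $q<1/(2\gamma^2)$), one obtains $\sup_{\gep,s}\E[(M^\gamma_{\gep,s})^q]\le C$ for every admissible $q$. For the quantitative rate, I would use independence: for $\gep'<\gep$, the white-noise construction of Section \ref{mainsetup} allows us to write $X_{\gep',s}=X_{\gep,s}+W$, with $W$ a centered Gaussian field independent of $\mathcal{F}^X_\gep$. Setting $\nu_{\gep,s}(\dd z)=e^{\gamma X_{\gep,s}(z)-\frac{\gamma^2}{2}\E[X_{\gep,s}(z)^2]}\dd z$,
$$M^\gamma_{\gep',s}-M^\gamma_{\gep,s}=\int_0^1\Big(e^{\gamma W(z)-\frac{\gamma^2}{2}\E[W(z)^2]}-1\Big)\nu_{\gep,s}(\dd z).$$
Applying conditional Jensen's inequality (valid since $q/2\le 1$) and computing the Gaussian conditional second moment yields
$$\E\big[|M^\gamma_{\gep',s}-M^\gamma_{\gep,s}|^q\big]\le \E\bigg[\bigg(\int_{[0,1]^2}\big(e^{\gamma^2\E[W(z)W(z')]}-1\big)\nu_{\gep,s}(\dd z)\nu_{\gep,s}(\dd z')\bigg)^{q/2}\bigg].$$

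The kernel $\E[W(z)W(z')]$ is supported on a union of bands of width $\gep$ centered on $|z-z'|=0$ and (when $s<\gep$) on $|z-z'|=\pm s$, and inside these bands it is bounded by $2\log(\gep/\max(\gep',|z-z'|))+O(1)$. Substituting this bound and applying a multifractal moment estimate for $\nu_{\gep,s}$ in the spirit of Lemma \ref{capI}---after Kahane comparison with the exactly scale-invariant $2\gamma$-chaos of Proposition \ref{scalinv}---yields a bound $C\gep^{\alpha q}$ with $\alpha=\alpha(q,\gamma)>0$. The positivity of $\alpha$ reflects the fact that, for $q<1/(2\gamma^2)$, the multifractal exponent of the $2\gamma$-chaos satisfies $\zeta(q)>1$, so the small-distance chaos mass decays polynomially in $\gep$ at a rate that overcomes the divergence $(\gep/\gep')^{2\gamma^2}$ coming from the exponential factor.

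The main obstacle is the uniformity in $s$: the structure of both $K_\gep(r\pm s)$ and of $\E[W(z)W(z')]$ changes across the regimes $s\le\gep'$, $\gep'<s\le\gep$, and $s>\gep$, so the Kahane comparisons and the multifractal estimates must be carried out with constants independent of $s$. This is handled by a case analysis based on the size of $s$ relative to the cut-off scales, but no fundamentally new ideas beyond the scaling techniques of Lemma \ref{capI} and Proposition \ref{scalinv} are required.
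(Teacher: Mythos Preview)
Your overall architecture matches the paper's: exploit the martingale structure, bound the $q$-th moment by conditional Jensen against $\mathcal F^X_\gep$, and expand the resulting conditional variance as a double integral with kernel $e^{\gamma^2\E[W(z)W(z')]}-1$ where $W=X_{\gep',s}-X_{\gep,s}$. Where you diverge from the paper is in trying to control this kernel directly for arbitrary $\gep'<\gep$, and this is where the argument breaks.

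The gap is the missing telescoping reduction. For small $s$ (say $s\le\gep'$) the bands you describe merge, and the covariance satisfies
\[
\E[W(z)W(z')]=2(K_{\gep'}-K_\gep)(z-z')+(K_{\gep'}-K_\gep)(z-z'+s)+(K_{\gep'}-K_\gep)(z-z'-s)
\approx 4\log\!\frac{\gep}{\max(\gep',|z-z'|)},
\]
not $2\log(\cdot)$ as you claim. After exponentiating, the kernel near the diagonal behaves like $(\gep/\gep')^{4\gamma^2}$. Since $\nu_{\gep,s}$ lives at scale $\gep$, its density is essentially constant on balls of radius $\gep'$, so the mass of the strip $\{|z-z'|\le\gep'\}$ scales linearly in $\gep'$. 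The resulting upper bound on the conditional variance is of order $\gep^{4\gamma^2}\gep'^{\,1-4\gamma^2}\!\int_0^1 e^{2\gamma X_{\gep,s}-\gamma^2\E[X_{\gep,s}^2]}\dd z$, and for $\gamma\in[1/2,1/\sqrt2)$ this blows up as $\gep'\to 0$. Taking the $q/2$ power before the expectation does not help: you still pick up the factor $(\gep'^{\,1-4\gamma^2})^{q/2}$. The ``multifractal exponent $\zeta(q)>1$'' observation governs how the $2\gamma$-chaos mass scales in $\gep$, but it says nothing about the $\gep'$-divergence, which lives entirely in the kernel. (A related slip: the domination $\E[X_{\gep,s}(z)X_{\gep,s}(z')]\le 4K_\gep(z-z')+C$ fails uniformly in $s$; take $|z-z'|=s$ large so that $K_\gep(z-z')\approx 0$ while $K_\gep(z-z'-s)=K_\gep(0)=|\log\gep|$.)

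The paper's remedy is to first reduce to $\gep'\in[\gep/2,\gep)$ by a dyadic telescoping in $\bbL_q$, so that $\E[W(z)W(z')]\le 4\log 2$ uniformly. Then one can use $e^x-1\le Cx$ with a universal constant, and the kernel becomes an integrable function of $(z-z')/\gep$ (plus shifts by $\pm s$). After the symmetrization $ab\le\frac12(a^2+b^2)$ this yields the bound
\[
\bbE\big[|M^{\gamma}_{\gep',s}-M^{\gamma}_{\gep,s}|^q\big]\le C\gep^{q/2}\,\bbE\Big[\Big(\int_0^1 e^{2\gamma X_{\gep,s}(x)-\gamma^2\E[X_{\gep,s}^2]}\dd x\Big)^{q/2}\Big],
\]
and the $\gep$-rate is then extracted not via Lemma~\ref{capI} but by a more elementary device: chop $[0,1]$ into $\lceil\gep^{-1}\rceil$ intervals, use subadditivity of $x\mapsto x^{q/2}$, and apply H\"older on each piece, exploiting that $X_{\gep,s}$ varies little on scale $\gep$. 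This produces an explicit exponent $q-1-2\gamma^2q(pq-1)$, positive precisely when $q<1/(2\gamma^2)$.
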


\begin{proof}
In the proof we always consider that $\gep$ is small enough. 
It is sufficient to prove the result for $\gep'\in[\gep/2,\gep)$ as then we can use telescopic sums i.e. 
if $\gep'\in [2^{-(m+1)}\gep,2^{-m}\gep)$, we have
\begin{align}
 \bbE\left[ |M^{\gamma}_{\gep',s}-M^{\gamma}_{\gep,s}|^q \right]^{1/q} 
 \le &\sum_{k=0}^{m-1}  \bbE\left[ |M^{\gamma}_{2^{-(k+1)}\gep,s}-M^{\gamma}_{2^{-k}\gep,s}|^q \right]^{1/q}
 +\bbE\left[ |M^{\gamma}_{\gep',s}-M^{\gamma}_{2^{-m}\gep,s}|^q \right]^{1/q}
\nonumber \\ \le &C \gep^{\alpha} \sum_{k=0}^{m} 2^{-k\alpha}.
\end{align}
Recall that $\mathcal F_\gep^X$ is the $\sigma$-algebra generated by $X_{t}$, $t\ge \gep$. We have for all $q\in(1,2)$:
\begin{equation}\label{conditional}
 \bbE\left[ |M^{\gamma}_{\gep',s}-M^{\gamma}_{\gep,s}|^q \right]
 \le \bbE\left[ \bbE\left[(M^{\gamma}_{\gep',s}-M^{\gamma}_{\gep,s})^2 \ | \ \mathcal F_{\gep}^X \right]^{q/2}\right]
\end{equation}
The conditional expectation $\bbE\left[(M^{\gamma}_{\gep',s}-M^{\gamma}_{\gep,s})^2 \ | \ \mathcal F_\gep^X \right]$
is in fact a conditional variance as $\bbE [ M^{\gamma}_{\gep',s} \ | \  \mathcal F_\gep^X]=M^{\gamma}_{\gep,s}$.
Then the conditional variance can be expanded as a double integral: 
\begin{multline}\label{covdev}
\int_{[0,1]^2} \dd x \dd y \ e^{\gamma (X_{\gep,s}(x)+X_{\gep,s}(y))-\frac{\gamma^2}{2}\bbE\left[X^2_{\gep,s}(x)+X^2_{\gep,s}(y)\right]} \\
\times \Cov\left(e^{\gamma (X_{\gep',s}- X_{\gep,s})(x)-\frac{\gamma^2}{2}\bbE\left[(X_{\gep',s}-X_{\gep,s})^2(x)\right]} ,
e^{\gamma (X_{\gep',s}-X_{\gep,s})(y)-\frac{\gamma^2}{2}\bbE\left[(X_{\gep',s}- X_{\gep,s})^2(y)\right]}\right)
\end{multline}
Now we try to control the covariance term in $x$ and $y$, with a simple function.
We have
\begin{multline}
\Cov\left(e^{\gamma (X_{\gep',s}- X_{\gep,s})(x)-\frac{\gamma^2}{2}\bbE\left[(X_{\gep',s}-X_{\gep,s})^2(x)\right]} ,
e^{\gamma (X_{\gep',s}-X_{\gep,s})(y)-\frac{\gamma^2}{2}\bbE\left[(X_{\gep',s}-(X_{\gep,s})^2(y)\right]}\right)
\\=e^{\gamma^2\E\left((X_{\gep',s}- X_{\gep,s})(x) (X_{\gep',s}-X_{\gep,s})(y)\right)}-1 \\
\le C \E\left((X_{\gep',s}- X_{\gep,s})(x) (X_{\gep',s}-X_{\gep,s})(y)\right),
\end{multline}
where the last inequality uses the fact that $\gep/2 \le \gep'$ (so that the covariances are uniformly bounded by $4\log 2$).
Then using translation invariance we have 
\begin{multline}
\E\left((X_{\gep',s}- X_{\gep,s})(x) (X_{\gep',s}-X_{\gep,s})(y)\right)\\
= 2\E\left((X_{\gep'}- X_{\gep})(x) (X_{\gep'}-X_{\gep})(y)\right)+ \E\left((X_{\gep'}- X_{\gep})(x+s)(X_{\gep'}-X_{\gep})(y)\right)\\
+\E\left((X_{\gep'}- X_{\gep})(x-s)(X_{\gep'}-X_{\gep})(y)\right).
\end{multline}
The first term is equal to 
\begin{equation}\label{lechemg}
\int_{1/\gep}^{1/\gep'} \frac{k(u(x-y))}{u}\dd u \le \log 2 \max_{v \ge \frac{|x-y|}{\gep}} |k(v)| :=g((x-y)/\gep),
\end{equation}
and one can find similar bounds for the two others. Note that by our assumption 
(\textbf{A.2}) on $k$, $g$ is an integrable function.
Using the inequality $ab\le a^2/2+b^2/2$ together with the symmetry in $x$ and $y$ for the exponential term in \eqref{covdev} and combining it with 
 the bound we have just obtained for the covariance we obtain 
\begin{multline}
\bbE [ (M^{\gamma}_{\gep',s}-M^{\gamma}_{\gep,s})^2\ | \  \mathcal F_\gep^X]
\le C \int_{[0,1]^2}e^{2\gamma X_{\gep,s}(x)-\gamma^2\bbE\left[X^2_{\gep,s}(x)\right]}\\
\times \left(2g\left(\frac{x-y}{\gep}\right)+g\left(\frac{x-y-s}{\gep}\right)+g\left(\frac{x-y+s}{\gep}\right)\right)\dd x \dd y.
\end{multline}
Now we can extend the above integral to $y\in \bbR$ to obtain an upper bound 
(and then three terms in the second line above give the same contribution by translation invariance), 
and make the change of variable $z=\frac{y-x}{\gep}$ to obtain
\begin{align}
\bbE [ (M^{\gamma}_{\gep',s}-M^{\gamma}_{\gep,s})^2\ | \  \mathcal F_\gep^X]\le &
C\gep \left(\int_0^1  e^{2\gamma X_{\gep,s}(x)-\gamma^2\bbE\left[X^2_{\gep,s}(x)\right]}\dd x \right)
\left(\int_{-\infty}^{\infty} g(z)\dd z\right)\nonumber\\
\le & C'  \gep 
\left(\int^1_0 e^{2\gamma X_{\gep,s}(x)-\gamma^2\bbE\left[X^2_{\gep,s}(x)\right]}\dd x\right).
\end{align}
Plugging this inequality into \eqref{conditional}, we have
\begin{equation}\label{deconnepas} 
 \bbE \left[|M^{\gamma}_{\gep',s}-M^{\gamma}_{\gep,s}|^q \right]
  \le C\gep^{q/2} \bbE\left[\left(\int^1_0 e^{2\gamma X_{\gep,s}(x)-\gamma^2\bbE\left[X^2_{\gep,s}(x)\right]}\dd x\right)^{q/2} \right].
 \end{equation}
Now we split the interval $[0,1]$ into $K=\lceil \gep^{-1}\rceil$ disjoint intervals of length $1/K$ setting
$$I_{\gep}^j:=[(j-1)/K,j/K], \quad j=1..K $$
By subadditivity we have
\begin{equation}\label{mec}
\bbE\left[\left(\int^1_0 e^{2\gamma X_{\gep,s}(x)-\gamma^2\bbE\left[X^2_{\gep,s}(x)\right]}\dd x\right)^{q/2} \right]
\le K \bbE\left[\left(\int_{I^1_\gep} e^{2\gamma X_{\gep,s}(x)-\gamma^2\bbE\left[X^2_{\gep,s}(x)\right]}\dd x\right)^{q/2} \right].
\end{equation}

Using H\"older's inequality we have, for any $p>1$ (let $p'=p/(p-1)$ denote its H\"older conjugate),
\begin{multline}\label{cpadlablag}
\bbE\left[\left(\int_{I^1_\gep} e^{2\gamma X_{\gep,s}(x)-\gamma^2\bbE\left[X^2_{\gep,s}(x)\right]}\dd x\right)^{q/2} \right] \\
\le  (1/K)^{q/2}\bbE\left[ e^{\gamma pq X_{\gep,s}(0)-\frac{pq\gamma^2}{2}\bbE\left[X^2_{\gep,s}(x)\right]}\right]^{1/p}
\bbE\left[\left(K\int_{I^1_\gep} e^{2\gamma (X_{\gep,s}(x)- X_{\gep,s}(0)})\dd x\right)^{p'q/2}\right]^{1/p'}.
\end{multline}
By Jensen's inequality, if $p'q/2>1$ (which holds whenever $p\le 2$ as $q>1$) the last factor is smaller than 
$$
\bbE\left[K\int_{I^1_\gep} e^{\gamma p' q (X_{\gep,s}(x)- X_{\gep,s}(0))}\dd x\right]^{1/p'}
\le \exp\left(\frac{\gamma^2 p' q^2}{2} \max_{x\in [0,1/K]}\bbE[ (X_{\gep,s}(x)- X_{\gep,s}(0))^2]\right).
$$
which is bounded above by a constant that does not depend on $\gep$.
On the other hand the first term is equal to 
$$e^{\frac{\gamma^2q(pq-1)}{2}\bbE\left[X^2_{\gep,s}(x)\right]}\le \gep^{-2\gamma^2q(pq-1)} $$
because $\bbE[\go_{\gep,s}(x)^2]\le 4 \bbE[\go_{\gep}(x)^2]=4|\log \gep|$. 
In the end, combining \eqref{deconnepas}, \eqref{mec} and \eqref{cpadlablag} (recall that $K\le 2\gep^{-1}$), we get that
\begin{equation}
 \bbE\left[|M^{\gamma}_{\gep',s}-M^{\gamma}_{\gep,s}|^q \right]\le C \gep^{q-1-2\gamma^2q(pq-1)}.
\end{equation}
Then the result is proved provided one can find $p>1$ such that  $q-1>2\gamma^2q(pq-1)$, 
which is possible whenever $q<1/(2\gamma^2)$.
\end{proof}

\subsection{Proof of Proposition \ref{lesmomentos2}} \label{frontier23}

Recall \eqref{subcritical}. The aim of this Section is to show that in probability
\begin{multline}\label{grogro}
\lim_{\gep\to 0} \gep^{\gb^2-1/2}\int_0^1 \frac{2\dd r}{G_{\gep}(r)^{\gb^2+1/2}} |\log \gep|^{1/2}\hat M^{1/\sqrt{2}}_{\gep,r}\\
=
\left(\lim_{\gep\to 0}  \gep^{\gb^2-1/2} \int_0^1 \frac{2\dd r}{G_\gep(r)^{\gb^2+1/2}}\right)\sqrt{\frac{2}{\pi}}M'[0,1]=\sigma^2(1/2+\gb^2)\sqrt{\frac{2}{\pi}}M'[0,1].
\end{multline} 

The second equality is in fact \eqref{subcritsigma} and has already been proved. In fact, we can replace $\hat M^{1/\sqrt{2}}_{\gep,r}$ by $M^{1/\sqrt{2}}_{\gep,r}$ in the l.h.s.
as 

\begin{equation}
\int_0^1  \gep^{\gb^2-1/2} \frac{2\dd r}{G_\gep(r)^{\gb^2+1/2}} |\log \gep|^{1/2}\bbE[|\hat M^{1/\sqrt{2}}_{\gep,r}-M^{1/\sqrt{2}}_{\gep,r}|]=
\int_0^1  \gep^{\gb^2-1/2} \frac{2r \dd r}{G_\gep(r)^{\gb^2+1/2}} |\log \gep|^{1/2}.
\end{equation}
which tends to $0$ thanks to  Lemma \ref{greenk}. Hence what remains to prove is that 
\begin{equation}
\lim_{\gep\to 0} \gep^{\gb^2-1/2}\left|\int_0^1 \frac{\dd r}{G_{\gep}(r)^{\gb^2+1/2}} \left( |\log \gep|^{1/2}M^{1/\sqrt{2}}_{\gep,r}-\sqrt{\frac{2}{\pi}}M'[0,1] \right)\right|=0
\end{equation}
or with a change of variable $r\to s=r/\gep$
\begin{equation} \label{patropmechan}
\lim_{\gep\to 0}\left|\int_0^{\gep^{-1}} \frac{\dd s}{(G_{\gep}(\gep s)/\gep)^{\gb^2+1/2}} \left( |\log \gep|^{1/2}M^{1/\sqrt{2}}_{\gep,\gep s}-\sqrt{\frac{2}{\pi}}M'[0,1] \right)\right|=0.
\end{equation}

We will split the proof of \eqref{patropmechan} in two lemmas: one taking care of the contribution of the small $s$ in the integral (which is the most important step)
and another one controlling the contribution of the larger $s$:

\begin{lemma}\label{lem:crit2}

We have in probability
\begin{equation}\label{dandy}
\lim_{A\to \infty}\lim_{\gep\to 0}\left|\int_A^{\gep^{-1}} 
\frac{\dd s}{(G_{\gep}(\gep s)/\gep)^{\gb^2+1/2}} \left( |\log \gep|^{1/2}M^{1/\sqrt{2}}_{\gep,\gep s}-\sqrt{\frac{2}{\pi}}M'[0,1]\right)\right|=0
\end{equation}
\end{lemma}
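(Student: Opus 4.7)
I would prove Lemma \ref{lem:crit2} by splitting the integrand via the triangle inequality and controlling each piece separately, exploiting the fast decay of the measure $\dd s/(G_\gep(\gep s)/\gep)^{\beta^2+1/2}$ at infinity together with a uniform concentration bound for $|\log \gep|^{1/2} M^{1/\sqrt 2}_{\gep, \gep s}$.

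First, by Lemma \ref{greenk}, $G_\gep(\gep s)/\gep = f(s) g_\gep(s)$ with $f(s) \sim s$ as $s \to \infty$ and $\sup_{\gep, s\le 1/\gep}|\log g_\gep(s)|$ finite. Since $\beta^2 + \gamma^2 > 1$ with $\gamma^2 = 1/2$, we have $\beta^2 + 1/2 > 1$, and hence
\begin{equation*}
\int_A^{\gep^{-1}} \frac{\dd s}{(G_\gep(\gep s)/\gep)^{\beta^2+1/2}} \le \frac{C}{A^{\beta^2-1/2}}.
\end{equation*}
Combined with the triangle inequality, the contribution of the $\sqrt{2/\pi}M'[0,1]$ term is bounded by $C\sqrt{2/\pi}M'[0,1]\,A^{-(\beta^2-1/2)}$, which tends to 0 a.s.\ as $A\to\infty$ since $M'[0,1]<\infty$ a.s.\ \cite{Rnew7,Rnew12}.

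The core work is to bound
\begin{equation*}
I_1(\gep,A)=\int_A^{\gep^{-1}} \frac{|\log\gep|^{1/2}\,M^{1/\sqrt 2}_{\gep,\gep s}}{(G_\gep(\gep s)/\gep)^{\beta^2+1/2}}\,\dd s.
\end{equation*}
Naive $L^1$ estimates fail because $\bbE[M^{1/\sqrt 2}_{\gep,r}]=1$ is independent of $\gep$, which makes $|\log\gep|^{1/2}\bbE[M^{1/\sqrt 2}_{\gep,r}]$ diverge. My approach is: (i) establish the coarse-graining identity $\bbE[M^{1/\sqrt 2}_{\gep,r}\,|\,\mathcal F^X_r] = M^{1/\sqrt 2}_{r,r}$ for $\gep\le r$ by a direct Gaussian computation using the decomposition $X_\gep = X_r + (X_\gep-X_r)$ with the two summands independent; (ii) rewrite
\begin{equation*}
M^{1/\sqrt 2}_{r,r} = e^{(K_r(0)-K_r(r))/2}\int_0^{1-r} e^{\delta(z)/\sqrt 2}\,M^{\sqrt 2,0}_r(\dd z),
\end{equation*}
where $\delta(z)=X_r(z+r)-X_r(z)$ has variance $2(K_r(0)-K_r(r)) = 2\int_r^1(1-k(w))/w\,\dd w$ bounded uniformly in $r$, identifying $M^{1/\sqrt 2}_{r,r}$ up to bounded noise with the real critical chaos $M^{\sqrt 2,0}_r$; (iii) invoke the Seneta--Heyde convergence $|\log r|^{1/2} M^{\sqrt 2,0}_r([0,1])\to \sqrt{2/\pi}\,M'[0,1]$ in probability from \cite{Rnew7,Rnew12} to derive pointwise-in-$s$ convergence of $|\log\gep|^{1/2}M^{1/\sqrt 2}_{\gep s,\gep s}$ to a multiple of $M'[0,1]$; (iv) work on a good event $\{M'[0,1]\le K\}$ (whose probability tends to 1 as $K\to\infty$) to obtain via truncation a uniform-in-$s$ bound $I_1\le CKA^{-(\beta^2-1/2)}+o_\gep(1)$, and conclude by sending $\gep\to 0$, then $A\to\infty$, then $K\to\infty$.

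\textbf{Main obstacle.} The hardest step is (iv): the Seneta--Heyde convergence is pointwise in the scale $r$ and only in probability, but one must extract from it a bound that is integrable against $\dd s/f(s)^{\beta^2+1/2}$ uniformly in $\gep$. Since $|\log\gep|^{1/2}M^{1/\sqrt 2}_{\gep,r}$ is not bounded in $L^1$, a truncation at the level of the derivative martingale $M'[0,1]$ is essential, exploiting the typical size $|\log\gep|^{-1/2}M'[0,1]$ rather than the mean. Additional care is needed for the upper range of $s$ close to $\gep^{-1}$ (where $r=\gep s$ approaches $1$ and Seneta--Heyde does not apply), but there a trivial bound based on boundedness of $M^{\sqrt 2,0}_r([0,1])$ at macroscopic scales suffices.
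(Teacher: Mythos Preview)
Your first reduction (separating off the $M'[0,1]$ term via the tail decay of the weight) matches the paper exactly. The difficulty is entirely in controlling $I_1(\gep,A)$, and here your plan has a genuine gap.

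The problem is in steps (i) and (iv). The coarse-graining identity $\bbE[M^{1/\sqrt 2}_{\gep,r}\,|\,\mathcal F_r^X]=M^{1/\sqrt 2}_{r,r}$ is correct, but it lets you compute $\bbE[M^{1/\sqrt 2}_{\gep,r}\,\ind_G]$ only when $G\in\mathcal F_r^X$. Your good event $\{M'[0,1]\le K\}$ depends on the whole field $X$ and is \emph{not} in $\mathcal F_r^X$ for any $r>0$; worse, for the integral over $s\in[A,\gep^{-1}]$ you would need $G$ in $\bigcap_{r\in[\gep A,1]}\mathcal F_r^X=\mathcal F_1^X$, which is trivial. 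So the coarse-graining cannot be combined with this truncation. Even if one could coarse-grain, Seneta--Heyde is only convergence in probability at each fixed scale, not a pathwise bound uniform in $r=\gep s$; the claimed ``uniform-in-$s$ bound $I_1\le CKA^{-(\beta^2-1/2)}+o_\gep(1)$'' is asserted but not derived. The extra factor $e^{\delta(z)/\sqrt 2}$ in step~(ii) is another loose end: it has bounded variance but is not bounded, and its correlation with the critical chaos is not addressed.

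The paper's route is different and avoids both issues. It truncates on the \emph{field-maximum} event $B_\kappa=\{\sup_{x,u}X_u(x)+\sqrt 2\log u\le\kappa\}$, which satisfies $\ind_{B_\kappa}\le \ind_{\{\forall u\in[\gep,1],\ X_u(z)+\sqrt 2\log u\le\kappa\}}$ \emph{pointwise in $z$}. Plugging this into the $z$-integral defining $M^{1/\sqrt 2}_{\gep,\gep t}$ and applying Girsanov to the integrand, the constraint becomes a Brownian-maximum probability, yielding the explicit bound
\[
\bbE\!\left[M^{1/\sqrt 2}_{\gep,\gep t}\,\ind_{B_\kappa}\right]\le \sqrt{\tfrac{2}{\pi}}\,\frac{\kappa+C(1+\log t)}{|\log\gep|^{1/2}}.
\]
The $|\log\gep|^{1/2}$ cancels the one in $I_1$, the $\log t$ growth is harmless against $t^{-(\beta^2+1/2)}$, and one concludes since $\bbP(B_\kappa)\to 1$. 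The key insight you are missing is that the right truncation is on the running maximum of the field (which localizes in $z$ and survives Girsanov), not on the limiting mass $M'$.
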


\begin{lemma}\label{lem:crit1}
For all $A$, we have in probability
\begin{equation}\label{refff}
\lim_{\gep\to 0}\left|\int_0^{A} 
\frac{\dd s}{(G_{\gep}(\gep s)/\gep)^{\gb^2+1/2}} \left( |\log \gep|^{1/2}M^{1/\sqrt{2}}_{\gep,\gep s}-\sqrt{\frac{2}{\pi}}M'[0,1]\right)\right|=0
\end{equation}
\end{lemma}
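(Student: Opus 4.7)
My plan is to isolate the $s$-independent part of the integrand and reduce to a second-moment estimate for the $s$-dependent remainder. Lemma \ref{greenk} gives $G_\gep(\gep s)/\gep=f(s)g_\gep(s)$ with $f$ continuous positive and $\sup_{s\in[0,A]}|\log g_\gep(s)|\to 0$, so $w_\gep(s):=(G_\gep(\gep s)/\gep)^{-(\gb^2+1/2)}$ stays uniformly bounded on $[0,A]$. I would then decompose
\begin{equation*}
|\log\gep|^{1/2}M^{1/\sqrt{2}}_{\gep,\gep s}-\sqrt{\tfrac{2}{\pi}}M'[0,1]=\bigl(|\log\gep|^{1/2}M^{1/\sqrt{2}}_{\gep,0}-\sqrt{\tfrac{2}{\pi}}M'[0,1]\bigr)+|\log\gep|^{1/2}\bigl(M^{1/\sqrt{2}}_{\gep,\gep s}-M^{1/\sqrt{2}}_{\gep,0}\bigr),
\end{equation*}
and observe that $M^{1/\sqrt{2}}_{\gep,0}=\int_0^1 e^{\sqrt{2}X_\gep(z)-K_\gep(0)}\,\dd z$ is precisely the standard centered critical chaos. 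The first bracket does not depend on $s$ and vanishes in probability by the Seneta--Heyde theorem of \cite{Rnew7,Rnew12} (applicable under Assumption (B)); since $\int_0^A w_\gep(s)\dd s$ is bounded, its $w_\gep$-integral still tends to $0$ in probability.

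For the $s$-dependent bracket I use the Radon--Nikodym density against the critical measure:
\begin{equation*}
M^{1/\sqrt{2}}_{\gep,\gep s}-M^{1/\sqrt{2}}_{\gep,0}=\int_0^1\bigl(e^{D^{(s)}_\gep(z)+\tfrac{1}{2}c_\gep(s)}-1\bigr)\,M^{1/\sqrt{2}}_{\gep,0}(\dd z),
\end{equation*}
where $D^{(s)}_\gep(z):=\tfrac{1}{\sqrt{2}}(X_\gep(z+\gep s)-X_\gep(z))$ and $c_\gep(s):=K_\gep(0)-K_\gep(\gep s)$, which converges uniformly on $[0,A]$ to $c(s)=\int_0^s\tfrac{1-k(v)}{v}\dd v$. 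A direct Gaussian computation gives $\E[(e^{D^{(s)}_\gep(z)+\frac{1}{2}c_\gep(s)}-1)e^{\sqrt{2}X_\gep(z)-K_\gep(0)}]=0$; more importantly, after a Girsanov tilt by $e^{\sqrt{2}X_\gep(z_0)-K_\gep(0)}$ the increment $D^{(s)}_\gep(z_0)$ picks up a deterministic shift of exactly $-c_\gep(s)$, so $e^{D^{(s)}_\gep(z_0)+\frac{1}{2}c_\gep(s)}$ has mean $1$ under the tilted law. Swapping integrations by Fubini, the task reduces to showing
\begin{equation*}
|\log\gep|^{1/2}\int_0^1 J_\gep(z)\,e^{\sqrt{2}X_\gep(z)-K_\gep(0)}\,\dd z\longrightarrow 0\ \text{in probability},\quad J_\gep(z):=\int_0^A w_\gep(s)\bigl(e^{D^{(s)}_\gep(z)+\tfrac{1}{2}c_\gep(s)}-1\bigr)\dd s.
\end{equation*}

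The main obstacle is estimating the second moment of this integral, since the critical chaos $M^{1/\sqrt{2}}_{\gep,0}$ is not in $\bbL^2$. I would handle this using the truncation strategy of \cite{Rnew7,Rnew12}: restrict the $\dd z$-integral to the set of ``$a$-good'' points $\{z:X_\gep(z)\le \sqrt{2}K_\gep(0)-a\sqrt{|\log\gep|}\}$, on which the truncated chaos is $\bbL^2$-bounded with explicit estimates, and let $a\to\infty$ at the end (the remainder is known to be negligible in probability uniformly in $a$). Girsanov-tilting at two points $z_1,z_2$ and combining the mean-zero property of the $J_\gep$-integrand with the rapid decorrelation of $D^{(s)}_\gep(z_1)$ and $D^{(s')}_\gep(z_2)$ whenever $|z_1-z_2|\gg\gep(s\vee s')$ should yield an $O(|\log\gep|^{-1})$ bound for the truncated conditional second moment, which is exactly what is needed to absorb the $|\log\gep|^{1/2}$ prefactor. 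The delicate step will be controlling the double integral $\int\!\!\int_{[0,A]^2} w_\gep(s)w_\gep(s')\,\E[\cdots]\,\dd s\,\dd s'$ coming from $J_\gep(z_1)J_\gep(z_2)$ after the tilt, since one must keep precise track of how $D^{(s)}_\gep$ decorrelates simultaneously as a function of the spatial variable $z$ and of the two scale parameters $s,s'$.
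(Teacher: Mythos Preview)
Your first reduction is sound: writing $|\log\gep|^{1/2}M^{1/\sqrt{2}}_{\gep,0}=\gep|\log\gep|^{1/2}M^{\sqrt{2},0}_\gep[0,1]$ and invoking Seneta--Heyde for the $s$-independent piece is correct, and reducing the rest to the control of $|\log\gep|^{1/2}\int_0^1 J_\gep(z)\,M^{1/\sqrt{2}}_{\gep,0}(\dd z)$ is a clean reformulation. The gap is in the second half. First, the truncation you propose---a single-scale condition $\{X_\gep(z)\le \sqrt{2}K_\gep(0)-a\sqrt{|\log\gep|}\}$---is \emph{not} the barrier event $B_\kappa$ of \cite{Rnew7,Rnew12}, and it does not by itself make the critical chaos $\bbL_2$-bounded with usable constants. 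More fundamentally, your perturbation $J_\gep(z)$ is a functional of $X_\gep(z+\gep s)-X_\gep(z)$, which lives at the \emph{same} martingale scale as the base field $e^{\sqrt{2}X_\gep(z)}$; there is no independence to exploit, $J_\gep$ has $O(1)$ variance, and under a double Girsanov tilt at $z_1,z_2$ the integrand $e^{2K_\gep(z_1-z_2)}\bbE_{\mathrm{tilt}}[J_\gep(z_1)J_\gep(z_2)]$ behaves like $\gep^2|z_1-z_2|^{-4}$ for $|z_1-z_2|\gg\gep$. The near-diagonal contribution then competes with the truncation probability in a way you have not estimated; you yourself flag this as ``the delicate step'' and give no argument that it produces $o(|\log\gep|^{-1})$.

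The paper proceeds quite differently and never attempts an $\bbL_2$ bound. It introduces two intermediate scales $\gep''=\sqrt{\gep}$ and $\gep'=\gep\log\log(\gep^{-1})$, replaces $\sqrt{2/\pi}\,M'[0,1]$ by the Seneta--Heyde approximant at scale $\gep'$ (not $\gep$), and interpolates via $\tilde X_{\gep,\gep',s}:=2X_{\gep'}+(X_{\gep,s}-X_{\gep',s})$. Two of the three resulting terms are handled by direct $\bbL_1$ estimates (Lemma~\ref{lesgaussiennes}) together with a restriction to $B_{\kappa,\gep''}$. The last term has the form $|\log\gep|^{1/2}\int_0^1 e^{\sqrt{2}X_{\gep'}(z)-\bbE[X_{\gep'}(z)^2]}\xi(z)\,\dd z$, where $\xi$ depends only on the increment $X_{\gep}-X_{\gep'}$ and is therefore \emph{independent} of $X_{\gep'}$; this scale separation is precisely what your setup lacks. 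The paper then takes a $q$-th moment with $q<1$, uses conditional Jensen, bounds $\bbE[\xi(z)\xi(z')]$ by $C(\gep'/\gep)^2\log(\gep'/\gep)\,g((|z-z'|-\gep A)/\gep')$ (Lemma~\ref{covdexi}), and controls the resulting supercritical integral $\int e^{2\sqrt{2}X_{\gep'}}$ via Lemma~\ref{superhushi}, which furnishes a $|\log\gep'|^{-3q/2}$ gain calibrated to beat the $(\gep'/\gep)^2\log(\gep'/\gep)$ loss. The choice of $\gep'$ just barely makes this work; an approach without scale separation would have to find the same gain elsewhere, and your sketch does not indicate where.
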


The limit \eqref{patropmechan} results from a combination of \eqref{dandy} and \eqref{refff}.

 Before proceeding with the proofs of these results, let us introduce some notation. In what follows, for some $\kappa>0$, 
 we consider the sets
\begin{equation}\begin{split}
B_\kappa\index{Bkappa@$B_\kappa$}&=\{\sup_{x\in[0,1]}\sup_{u\in ]0,1]} X_u(x)+\sqrt{2}\ln u\leq \kappa\},\\
B_{\kappa,\gep}&=\{\sup_{x\in[0,1]}\sup_{u\in [\gep,1]} X_u(x)+\sqrt{2}\ln u\leq \kappa\}.
\end{split}\end{equation}
Obviously $B_\kappa\subset B_{\kappa,\gep}$ for all $\gep>0$, and it is proved in \cite[Proposition 19]{Rnew7} that 
$$\Pb(B_\kappa)\to 1\quad \text{ as }\kappa\to \infty.$$
For $x\in [0,1]$ and $\kappa>0$, we also introduce the stopping time
$$\tau^\kappa_x\index{taukappa@$\tau^\kappa_x$}=\sup\{u\in ]0,1];X_u(x)+\sqrt{2}\ln u> \kappa\}.$$
It is readily seen that, on $B_\kappa$, we have  $\tau^\kappa_x=0$ for all $x\in [0,1]$.
\vspace{2mm}

\noindent {\it Proof of Lemma \ref{lem:crit2}.} 
The reader can check using Lemma \ref{greenk} that 
$$\lim_{A\to \infty}\lim_{\gep\to 0}\int_A^{\gep^{-1}} \frac{\dd s}{(G_{\gep}(\gep s)/\gep)^{\gb^2+1/2}}=0.$$
Hence what we have to prove is   
\begin{equation}
\lim_{A\to \infty}\lim_{\gep\to 0}\int_A^{\gep^{-1}} \frac{\dd s}{(G_{\gep}(\gep s)/\gep)^{\gb^2+1/2}} |\log \gep|^{1/2}M^{1/\sqrt{2}}_{\gep,\gep s}=0 \quad\text{in probability}.
\end{equation}
To do so, it is enough to show that the expectation of the above expression restricted to the event $B_{\kappa}$ vanishes, for all $\kappa$, as $A\to \infty$ and $\gep\to 0$. The key point is to
prove that there exists a constant $C$ such that for all $t$:
\begin{equation}\label{integsurB}
\bbE[ M^{1/\sqrt{2}}_{\gep,\gep t}\ind_{B_{\kappa}}] \le \sqrt{\frac{2}{\pi}}\frac{\kappa+\sqrt{2}/2( 1+\log(Ct))}{|\log \gep|^{1/2}}.
\end{equation}
Indeed, this implies that for all $\kappa>0$ 
\begin{align*}
& \lim_{A\to \infty}\lim_{\gep\to 0}\bbE\left[ \ind_{B_\kappa}\int_A^{\gep^{-1}} \frac{\dd s}{(G_{\gep}(\gep s)/\gep)^{\gb^2+1/2}} |\log \gep|^{1/2}M^{1/\sqrt{2}}_{\gep,\gep s}\right]\\
&\le  \ \lim_{A\to \infty}\lim_{\gep\to 0} \sqrt{2/\pi} \int_A^{\gep^{-1}} \frac{\kappa+\sqrt{2}/2( 1+\log(Cs))}{(G_{\gep}(\gep s)/\gep)^{\gb^2+1/2}}\dd s \\
& = 0 .
\end{align*}
Let us now prove \eqref{integsurB}.
\begin{align*}
\bbE[ M^{1/\sqrt{2}}_{\gep,\gep t}\ind_{B_{\kappa}}]=&\int_0^1 \bbE\left[ \ind_{B_{\kappa}}e^{{1/\sqrt{2}}\, X_{\gep,\gep t} (z)  -\frac{1}{4}\E[X_{\gep,\gep t} (z)^2]}\right]\dd z
\\ \le &\int_0^1 \bbE\left[ \ind_{\{\forall u\in[\gep,1] X_u(z)+\sqrt{2}\ln u\leq \kappa \}}e^{\frac 1 {\sqrt{2}} \, X_{\gep,\gep t} (z)  -\frac{1}{4}\E[X_{\gep,\gep t} (z)^2]}\right]\dd z.
\end{align*}

By the Girsanov formula, the expectation in the r.h.s.\ of the above expectation is equal to (recall that $K_\gep$ denotes the covariance kernel of $X_\gep$): 
\begin{align}\label{crodin}
 \bbP \Big(\sup_{u\in [\gep,1]}X_u(z)+ \sqrt{2}/2\ln u+\sqrt{2}/2 K_u(\gep t))\leq \kappa \Big).
\end{align}
Because of Assumption (A), we may find a constant $C$ such that $|k(0)-k(x)|\leq C|x|$ for all $x\in\R$. Thus we have for $u\in[\gep,1]$:
\begin{align*}
\ln \frac{1}{u}-  K_u(\gep t)= &\int_1^{u^{-1}} \frac{1-k(vt\gep)}{v}\dd v
\le \int_{\gep}^{1} \frac{1-k(vt)}{v}\dd v\le \int_{\gep}^{(Ct)^{-1}}Ct\dd v+   \int_{(Ct)^{-1}}^{1} \frac{\dd v}{v}
\\ \le& 1+\log(Ct).
\end{align*}
For some given $z$, the process $X_u(z)_{u\in]0,1]}$ has the same law of a time changed standard 
Brownian motion $(B_{\sqrt{|\ln u|}})_{u\in(0,1]}$  and hence
\eqref{crodin} is smaller than
\begin{align*}
 \bbP \Big(\sup_{s\in [ 0,|\ln \gep| ]} B_s \leq \kappa+\sqrt{2}/2( 1+\log(Ct))\Big)
 =\sqrt{\frac{2}{\pi}}\frac{\kappa+\sqrt{2}/2( 1+\log(Ct))}{|\log \gep|^{1/2}},
\end{align*}
which is enough to conclude.\qed

\vspace{3mm}

\noindent {\it Proof of Lemma \ref{lem:crit1}.} We set $\gep''=\sqrt{\gep}$, $\gep'=\gep \log\log (\gep^{-1})$.
From \cite{Rnew12}, we know that $|\ln \gep|^{1/2}\gep M^{\sqrt{2},0}_{\gep'}[0,1]$ converges towards $\sqrt{2/\pi} \, M'[0,1]$ (note that 
$\log \gep'/\log \gep\to 1)$ when $\gep$ tends to zero and thus  it is enough
to prove \eqref{refff} with $\sqrt{2/\pi} M'[0,1]$ replaced by $|\ln \gep|^{1/2}\gep M^{\sqrt{2},0}_{\gep'}[0,1]$.

\medskip

We introduce $\tilde X_{\gep,\gep',s}$ which is an interpolation between $X_{\gep,s}$ and $X_\gep$ (when $\gep\le \gep'\le 1$):
$$\tilde X_{\gep,\gep',s}:= 2X_{\gep'}+(X_{\gep,s}-X_{\gep',s}).$$
We then use the following decomposition 
\begin{align*}
 &\left|\int_0^{A} 
\frac{|\ln \gep|^{1/2} \dd s}{(G_{\gep}(\gep s) / \gep)^{\gb^2+1/2}}\left( M^{1/\sqrt{2}}_{\gep,\gep s}-\gep M^{\sqrt{2},0}_{\gep'}\right)\right|\\
&\le \left|\int_0^{A} 
\frac{|\ln \gep|^{1/2} \dd s}{(G_{\gep}(\gep s)/ \gep)^{\gb^2+1/2}}\left(
\int\limits_{0}^{1} e^{(1/\sqrt{2}) X_{\gep,\gep s} (z)  -\frac{1}{4}
\E[X_{\gep,\gep s} (z)^2]} -  e^{(1/\sqrt{2}) \tilde X_{\gep,\gep'',\gep s}(z)-\frac{1}{4}\E[ \tilde X_{\gep,\gep'',\gep s}(z)^2]}
\,\dd z\right)\right|
\\
&+  \left|\int_0^{A} 
\frac{|\ln \gep|^{1/2} \dd s}{(G_{\gep}(\gep s)/ \gep)^{\gb^2+1/2}}\left(
\int\limits_{0}^{1} e^{(1/\sqrt{2}) \tilde X_{\gep,\gep'',\gep s}(z)-\frac{1}{4}\E[ \tilde X_{\gep,\gep'',\gep s}(z)^2]}
-e^{(1/\sqrt{2}) \tilde X_{\gep,\gep',\gep s}(z)-\frac{1}{4}\E[ \tilde X_{\gep,\gep',\gep s}(z)^2]}
\,\dd z\right)\right|\\
&+ \left|\int_0^{A} 
\frac{|\ln \gep|^{1/2} \dd s}{(G_{\gep}(\gep s) / \gep)^{\gb^2+1/2}}\left(
\int\limits_{0}^{1} e^{(1/\sqrt{2}) \tilde X_{\gep,\gep',\gep s}(z)-\frac{1}{4}\E[ \tilde X_{\gep,\gep',\gep s}(z)^2]}
- e^{\sqrt{2} X_{\gep}(z)-\E[X_{\gep}(z)^2]} \,\dd z\right)\right|
\end{align*}
and show that each of the three terms converges to zero in probability.

The first term converges in $\bbL_1$ norm. Indeed, from Lemma \ref{lesgaussiennes} we have
\begin{align*}
 \bbE&\left[\left|\int_0^{A} 
\frac{|\ln \gep|^{1/2} \dd t}{(G_{\gep}(\gep t) / \gep)^{\gb^2+1/2}}\left(
\int\limits_{0}^{1} e^{(1/\sqrt{2}) X_{\gep,\gep t} (z)  -\frac{1}{4}
\E[X_{\gep,\gep t} (z)^2]} - e^{(1/\sqrt{2}) \tilde X_{\gep,\gep'',\gep t}(z)-\frac{1}{4}\E[ \tilde X_{\gep,\gep'',\gep t}(z)^2]}
\,\dd z\right)\right|\right]
\\
\le & \int_0^{A} 
\frac{|\ln \gep|^{1/2} \dd t}{(G_{\gep}(\gep t) / \gep)^{\gb^2+1/2}}
\int\limits_{0}^{1} \bbE\left[ |e^{(1/\sqrt{2}) X_{\gep,\gep t} (z)  -\frac{1}{4}
\E[X_{\gep,\gep t} (z)^2]} - e^{(1/\sqrt{2}) \tilde X_{\gep,\gep'',\gep t}(z)-\frac{1}{4}\E[ \tilde X_{\gep,\gep'',\gep t}(z)^2]}|\right]
\,\dd z \\
\le &C\int_0^{A} 
\frac{|\ln \gep|^{1/2} \dd t}{(G_{\gep}(\gep t) / \gep)^{\gb^2+1/2}}\sqrt{\bbE[  (X_{\gep,\gep t}(z)-\tilde X_{\gep,\gep'',\gep t}(z))^2]}
\le C'|\ln \gep|^{1/2}\gep^{1/4}.
\end{align*}

The second term
converges to zero in probability. 
This is a bit more tricky to show because we do not have $\bbL_1$ convergence but we can manage to obtain it by restricting ourselves to the event
$B_{\kappa,\gep''}$ with say $\kappa=\log\log|\log \gep|$ (whose probability tends to one).
By Jensen's inequality, the expectation of the second term on the event $B_{\kappa,\gep''}$
is smaller than
 \begin{multline}\label{tirangoli}
\int_0^{A} 
\frac{|\ln \gep|^{1/2} \dd t}{(G_{\gep}(\gep t) / \gep)^{\gb^2+1/2}}\\
\times
\int\limits_{0}^{1} \bbE\left[ \ind_{B_{\kappa,\gep''}}|e^{(1/\sqrt{2}) \tilde X_{\gep,\gep'',\gep t}(z)-\frac{1}{4}\E[ \tilde X_{\gep,\gep'',\gep t}(z)^2]}
-e^{ (1/\sqrt{2}) \tilde X_{\gep,\gep',\gep t}(z)-\frac{1}{4}\E[ \tilde X_{\gep,\gep',\gep t}(z)^2]}|\right]
\,\dd z .
\end{multline}
 Then, by independence of the increments of $(X_{\gep})_{\gep\ge 0}$, we have
\begin{multline}\label{lfzr}
\bbE  \left[ \ind_{B_{\kappa,\gep''}}\Big| \int\limits_{0}^{1} e^{(1/\sqrt{2}) \tilde X_{\gep,\gep'',\gep t}(z)-\frac{1}{4}\E[ \tilde X_{\gep,\gep'',\gep t}(z)^2]}- e^{ (1/\sqrt{2})  \tilde X_{\gep,\gep',\gep t}(z)-\frac{1}{4}\E[ \tilde X_{\gep,\gep',\gep t}(z)^2]}\,\dd z\Big|\right]\\
= \int\limits_{0}^{1} \bbE[ \ind_{B_{\kappa,\gep''}} e^{\sqrt{2} X_{\gep''}(z)- \E[ X_{\gep''}(z)^2]}  ]  \bbE\Big[\Big|e^{(1/\sqrt{2}) (\tilde X_{\gep,\gep'',\gep t}-2X_{\gep''})(z)-\frac{1}{4} 
\bbE[ (\tilde X_{\gep,\gep'',\gep t}-2X_{\gep''})^2(z)]}\\
- e^{(1/\sqrt{2}) (\tilde X_{\gep,\gep',\gep t}-2X_{\gep''})(z)-\frac{1}{4} \bbE[(\tilde X_{\gep,\gep',\gep t}-2X_{\gep''})^2(z)]}\Big|\Big] \dd z.
\end{multline}
By a Girsanov transform one sees that the first factor in the r.h.s.\ of \eqref{lfzr} is equal to
\begin{equation}
\bbP\left[\max_{u\in (\gep'',1]}X_u(z)\le \kappa\right]
=\bbP[ |X_{\gep''}(z)|\le \kappa]= \sqrt{\frac 2 {\pi}}\frac{\kappa}{(\log \gep)^{1/2}}.
\end{equation}
Using Lemma \ref{lesgaussiennes} one sees that the second factor is smaller than
$$
C\sqrt{\bbE\left[(\tilde X_{\gep,\gep'',\gep t}-\tilde X_{\gep,\gep',\gep t})^2\right]}\le C\sqrt{A (\log |\log \gep|)^{-1}},$$
Hence \eqref{tirangoli} is smaller than

\begin{equation}
 C\sqrt{A}\int_0^{A} 
\frac{(\log |\log \gep|)^{-1/2} \dd t}{(G_{\gep}(\gep t) / \gep)^{\gb^2+1/2}}\le C\sqrt A \kappa (\log |\log \gep|)^{-1/2}.
 \end{equation}

\medskip

Finally we show that 
\begin{equation}\label{damusketeer}
\left|\int\limits_{0}^{A}  \frac{|\ln \gep|^{1/2}}{(G_\gep(t\gep) / \gep)^{\beta^2+1/2}}  
\left( \int\limits_{0}^{1} e^{ (1/\sqrt{2}) \tilde X_{\gep,\gep',\gep t}(z)-\frac{1}{4}
\E[ \tilde X_{\gep,\gep',\gep t}(z)^2]}
- e^{\sqrt{2}  X_{\gep'}(z)-\E[ X_{\gep'}(z)^2]}\,\dd z\right)\dd t \right|
\end{equation}
tends to zero in probability. To do so we prove that for some $q<1$ the $q$-th moment converges to zero.
The content of $|\cdot|$ can be rewritten as

\begin{equation}
|\ln \gep|^{1/2} \int\limits_{0}^{1} e^{\sqrt{2}  X_{\gep'}(z)-\E[ X_{\gep'}(z)^2]}\xi(z) \dd z,
\end{equation}
where 
$$\xi(z):=   \int\limits_{0}^{A}  
\frac{ \dd t}{(G_{\gep}(\gep t) / \gep)^{\gb^2+1/2}}\left(
e^{ (1/\sqrt{2})\left(X_{\gep,\gep t}-X_{\gep',\gep t}\right)(z)-\frac{1}{4}\E\left[ \left(X_{\gep,\gep t}-X_{\gep',\gep t}\right)(z)^2\right]}-1 \right). $$
Then using conditional Jensen's inequality, we obtain that the $q$-th moment is smaller than

\begin{equation}\label{lemoment}
|\ln \gep|^{q/2}\bbE\left[ \left( \int_{[0,1]^2} e^{\sqrt{2}  (X_{\gep'}(z)+X_{\gep'}(z'))-2\E[ X_{\gep'}(z)^2]}\bbE[\xi(z)\xi(z')] \dd z \dd z'\right)^{q/2}\right].
\end{equation}
Thus we need to have an upper bound on $\bbE[\xi(z)\xi(z')]$ to conclude.

\begin{lemma}\label{covdexi}
We stick to  the notation of \eqref{lechemg}. For any $z$ and $z'$ we have
$$\bbE[\xi(z)\xi(z')]\le C  \left(\frac{\gep'}{\gep}\right)^2 \log(\gep'/\gep)  g\left(\frac{|z-z'|-\gep A}{\gep'}\right).$$
\end{lemma}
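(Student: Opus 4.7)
The strategy is to expand the covariance as a double integral over the parameters $t,t'\in[0,A]$ and then use the standard Gaussian identity relating covariances of exponentials of Gaussians to covariances of the Gaussians themselves. Setting
\[
N_t(z):=\tfrac{1}{\sqrt 2}(X_{\gep,\gep t}-X_{\gep',\gep t})(z), \qquad F_t(z):=e^{N_t(z)-\bbE[N_t(z)^2]/2}-1,
\]
I observe that $F_t(z)$ is mean-zero, so
\[
\bbE[\xi(z)\xi(z')]=\int_{[0,A]^2}\frac{\bbE[F_t(z)F_{t'}(z')]\,\dd t\,\dd t'}{(G_\gep(\gep t)/\gep)^{\beta^2+1/2}(G_\gep(\gep t')/\gep)^{\beta^2+1/2}},
\]
and a classical computation gives $\bbE[F_t(z)F_{t'}(z')]=e^{\Cov(N_t(z),N_{t'}(z'))}-1$. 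The task thus reduces to bounding this exponential difference in terms of $|z-z'|$, uniformly in $t,t'\in[0,A]$, and then integrating.

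Using \eqref{def.omega}, the Gaussian $N_t(z)$ is a sum of values of $X_\gep-X_{\gep'}$ at $z$ and $z+\gep t$, so $\Cov(N_t(z),N_{t'}(z'))$ is one half of the sum of the four covariances $\Cov((X_\gep-X_{\gep'})(x),(X_\gep-X_{\gep'})(y))=\int_{1/\gep'}^{1/\gep}k(u(x-y))/u\,\dd u$, with $(x,y)$ ranging over $\{z,z+\gep t\}\times\{z',z'+\gep t'\}$. For each such pair, $|x-y|\ge |z-z'|-\gep A$, so after the change of variables $v=u|x-y|$ the same bookkeeping as in \eqref{lechemg} yields, for every pair,
\[
\int_{1/\gep'}^{1/\gep}\frac{|k(u(x-y))|}{u}\,\dd u\le \log(\gep'/\gep)\,\max_{v\ge(|z-z'|-\gep A)/\gep'}|k(v)|\le C\log(\gep'/\gep)\,g\!\left(\frac{|z-z'|-\gep A}{\gep'}\right).
\]
Summing the four terms and halving gives $\Cov(N_t(z),N_{t'}(z'))\le C\log(\gep'/\gep)\,g((|z-z'|-\gep A)/\gep')$. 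On top of this I will also use the unconditional bound $\Cov(N_t(z),N_{t'}(z'))\le 2\log(\gep'/\gep)$, which follows from $|k|\le k(0)=1$ (a Bochner-type bound since $k$ is the Fourier transform of $F\ge 0$).

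The decisive trick is to bound $e^{\Cov}-1$ so as to combine both of these inequalities at once. Writing $e^x-1=e^x(1-e^{-x})\le x\,e^x$ for $x\ge 0$, I get
\[
e^{\Cov(N_t(z),N_{t'}(z'))}-1\le \Cov(N_t(z),N_{t'}(z'))\,e^{\Cov(N_t(z),N_{t'}(z'))}\le C(\gep'/\gep)^2\log(\gep'/\gep)\,g\!\left(\frac{|z-z'|-\gep A}{\gep'}\right),
\]
where the unconditional bound controls the exponential factor while the refined bound provides the $g$-decay. (If $|z-z'|<\gep A$, the argument of $g$ is non-positive; since $g$ depends on $|{\cdot}|$ and $g(0)=\log 2$ is its maximum value, one simply replaces the right-hand side by $g(0)$.) Finally, Lemma \ref{greenk} guarantees that $G_\gep(\gep t)/\gep=f(t)g_\gep(t)$ stays bounded uniformly above and away from zero on $t\in[0,A]$ for $\gep$ small, so the weight integral $\int_0^A(G_\gep(\gep t)/\gep)^{-(\beta^2+1/2)}\dd t$ contributes only a constant depending on $A$, and the claim follows. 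The one nontrivial point is this bounding of $e^{\Cov}-1$: using $e^x-1\le x$ alone produces the factor $\log(\gep'/\gep)g(\cdot)$ but misses the $(\gep'/\gep)^2$, while using $e^x-1\le e^x$ gives $(\gep'/\gep)^2$ but loses the $g$-decay; the mixed bound $x\,e^x$ captures both simultaneously.
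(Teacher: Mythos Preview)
Your proof is correct and follows essentially the same route as the paper: expand $\bbE[\xi(z)\xi(z')]$ as a double integral in $t,t'$, use the Gaussian identity to reduce to $e^{\Cov}-1$, bound the covariance via the four-term decomposition of $X_{\gep,\gep t}-X_{\gep',\gep t}$ together with $|x-y|\ge |z-z'|-\gep A$, and control the $t$-integral through Lemma~\ref{greenk}. The only cosmetic difference is that the paper uses $e^x-1\le e^K x$ for $x\le K$ directly (with $K=2\log(\gep'/\gep)$), whereas you write $e^x-1\le x e^x\le x e^K$; also the paper notes that the $t$-integral is bounded uniformly in $A$ (since $f(t)$ grows like $|t|$, making $\int_0^\infty f(t)^{-(\gb^2+1/2)}\dd t$ finite when $\gb^2>1/2$), while your bound depends on $A$ --- but this is immaterial since $A$ is fixed in the surrounding argument.
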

\begin{proof}
Expanding the two integrals we get

\begin{multline}\label{grouf}
\bbE[\xi(z)\xi(z')]=\int\limits_{0}^{A}  \int\limits_{0}^{A}
\frac{1}{(G_{\gep}(\gep t) / \gep)^{\gb^2+1/2}(G_{\gep}(\gep t') / \gep)^{\gb^2+1/2}} \\
\times\left(e^{\E [(X_{\gep,\gep t}-X_{\gep',\gep t})(z)(X_{\gep,\gep t'}-X_{\gep',\gep t'})(z')] /2}-1\right) \dd t\dd t'.
\end{multline}
Using the inequality $e^x-1\le e^K x$ for $x\le K$ and the fact that 
$$\E\left[ (X_{\gep,\gep t}-X_{\gep',\gep t})(z)(X_{\gep,\gep t'}-X_{\gep',\gep t'})(z') \right] \le 4 \bbE\left[ (X_{\gep}-X_{\gep'})^2(0)\right].$$
we get
$$
e^{\E[ (X_{\gep,\gep t}-X_{\gep',\gep t})(z)(X_{\gep,\gep t'}-X_{\gep',\gep t'})(z')  ]/2}-1\le  \left(\frac{\gep'}{\gep}\right)^2
\E\left[ (X_{\gep,\gep t}-X_{\gep',\gep t})(z)(X_{\gep,\gep t'}-X_{\gep',\gep t'})(z')\right]/2
$$
To get a bound on \eqref{grouf}, we need a bound on $\E [(X_{\gep,\gep t}-X_{\gep',\gep t})(z) (X_{\gep,\gep t'}-X_{\gep',\gep t'})(z')] $ that does not depend on $t$ nor $t'$. We do so by noticing that for any values of $t$, $t'$ in $[0,A]$, we have
\begin{equation}
\min(|z-z'|,|z-z'+\gep t|, |z-z'-\gep t'|, |z-z'+\gep t-\gep t'|)\ge |z-z'|-\gep A.
\end{equation}
Hence, using the notation introduced in \eqref{lechemg}, we have
\begin{equation}
\E[(X_{\gep,\gep t}-X_{\gep',\gep t})(z) (X_{\gep,\gep t'}-X_{\gep',\gep t'})(z')] 
\le C \log(\gep'/\gep)  g\left(\frac{(|z-z'|-\gep A)_+}{\gep'}\right).
\end{equation}
The result follows from the combination of the above inequalities  and the fact that 
$\int\limits_{0}^{A}
\frac{ \dd t}{(G_{\gep}(\gep t) / \gep)^{\gb^2+1/2}}$ is bounded uniformly in $A$ and $\gep$ from Lemma \ref{greenk}.
\end{proof}

From Lemma \ref{covdexi} and \eqref{lemoment} we have that the $q$-th moment of \eqref{damusketeer} is smaller than 
\begin{multline}\label{aaaaarrrghhh}
C  |\log \gep|^{q/2}\left(\frac{\gep}{\gep'}\right)^2 \log(\gep/\gep')\\
\times  \bbE\left[ \left( \int_{[0,1]^2} e^{\sqrt{2}  (X_{\gep'}(z)+X_{\gep'}(z'))-2\E[ X_{\gep'}(z)^2]}  g\left(\frac{(|z-z'|-\gep A)_+}{\gep'}\right) \dd z \dd z'\right)^{q/2}\right].
\end{multline}
In the above expectation, by using the relation
$$e^{\sqrt{2}  (X_{\gep'}(z)+X_{\gep'}(z'))-2\E[ X_{\gep'}(z)^2]}\leq  \frac{1}{2}e^{2\sqrt{2}  X_{\gep'}(z)-2\E[ X_{\gep'}(z)^2]} +\frac{1}{2}e^{2\sqrt{2}  X_{\gep'}(z')-2\E[ X_{\gep'}(z')^2]}, $$
 we get the following upper-bound by symmetrization:
\begin{align*}
&\int_{[0,1]^2} e^{\sqrt{2}  (X_{\gep'}(z)+X_{\gep'}(z'))-2\E[ X_{\gep'}(z)^2]}  g\left(\frac{(|z-z'|-\gep A)_+}{\gep'}\right) \dd z \dd z'\\
&\le  \int_{[0,1]^2} e^{2\sqrt{2}  X_{\gep'}(z)-2\E[ X_{\gep'}(z)^2]}  g\left(\frac{(|z-z'|-\gep A)_+}{\gep'}\right) \dd z \dd z'\\
&\le  \gep' \int^1_0 e^{2\sqrt{2}  X_{\gep'}(z)-2\E[ X_{\gep'}(z)^2]}\dd z \int_{\bbR} g\left((|z''|-(\gep A)/\gep')_+ \right) \dd z''
\end{align*}
where the last line was obtained by a change of variables and expanding the integral over
$\R$. The function in the second integral is smaller than $g\left((|z''|-1)_+ \right)$,
which is integrable.
And hence it remains to show that 
\begin{equation}\label{sezr}
|\log \gep|^{q/2}\left(\frac{\gep'}{\gep}\right)^2 \log(\gep'/\gep) \bbE\left[ \left( \gep' \int^1_0 e^{2\sqrt{2}  X_{\gep'}(z)-2\E[ X_{\gep'}(z)^2]}\dd z\right)^{q/2}\right],
\end{equation}
tends to zero.

Now, we  use Lemma \ref{superhushi} with $\gamma=2\sqrt{2}$ and $\alpha=q/2< 1/2$, 
and we obtain  the following bound
$$ \E\left[ \left( \gep' \int^1_0 e^{2\sqrt{2}  X_{\gep'}(z)-2\E[ X_{\gep'}(z)^2]}\dd z\right)^{q/2}\right]    \le C |\log \gep' |^{-\frac{3q}{2}},$$
so that the expression \eqref{sezr} is smaller than 

$$C |\log \gep|^{-q}\left(\frac{\gep'}{\gep}\right)^2  \log(\gep'/\gep),$$
 which tends to zero according to our definition of $\gep'$. \qed

\subsection{Proof of Theorem \ref{th3} and \ref{thfrontier23} } \label{twoksec}
Now, we conclude the proofs of Theorem \ref{th3} and \ref{thfrontier23}. For simplicity, we consider the case $d=1$. We only treat the proof of Theorem \ref{th3} since Theorem \ref{thfrontier23} can be dealt with similarly. We consider $l$ disjoint intervals $A_1, \cdots, A_l$. We fix k vectors $u_1, \cdots, u_l$ in $\R^2$.
We denote by $u.x$ the Euclidean scalar product on $\R^2$. We also introduce a sequence $(f_j)_{j \geq 1}$ of continuous and bounded functions which is dense in the space of continuous functions with compact support.   We want to show that 
\begin{equation*}
Z_\gep^{u_1, \cdots, u_l}: =\sum_{i=1}^l \gep^{\gamma^2-\frac{1}{2}}  \: 
u_i . M^{\gamma,\gb}_\gep(A_i)
\end{equation*}
converges in law to $Z^{u_1, \cdots, u_l}:=\sum_{i=1}^l W_{\sigma^2 M^{2\gamma,0}}(A_i)$ 
as $\gep$ goes to $0$. If this was not the case, we could find an index $j_0$ 
and a subsequence $\gep_n$ going to $0$ such that 
$\E[ f_{j_0}(Z_{\gep_n}^{u_1, \cdots, u_l})]$ does not converge to 
$\E[ f_{j_0}(Z^{u_1, \cdots, u_l})]$. By 
Proposition \ref{propclesmoments} and a diagonal extraction argument, we can find a subsequence $(n_p)_{p \geq 1}$ such that for all $k \geq 1$ we get the following almost sure convergence (with respect to $X$):
\begin{equation*}        
 \E\left[  (Z_{\gep_{n_p}}^{u_1, \cdots, u_l})^k \ | \ X\right] 
 \underset{p \to \infty}{\rightarrow}   \E\left[  (Z^{u_1, \cdots, u_l})^k \  | \  X\right]
 \end{equation*}   
By the method of moments, we deduce that almost surely we have:
\begin{equation*}       
 \E\left[  f_{j_0}(Z_{\gep_{n_p}}^{u_1, \cdots, u_l})\ | \ X\right]  \underset{p \to \infty}{\rightarrow}   \E\left[  f_{j_0}(Z^{u_1, \cdots, u_l}) \ | \ X\right]
 \end{equation*}  
 Hence by dominated convergence, we get that $ \E\left[  f_{j_0}(Z_{\gep_{n_p}}^{u_1, \cdots, u_l}) \right]  \underset{p \to \infty}{\rightarrow}   \E\left[  f_{j_0}(Z^{u_1, \cdots, u_l})\right]$ which contradicts our assumption.

\section{Conjectures}\label{sec:conj}
\subsection{Reminder: conjectures on $\beta=0$}
In the case $\beta=0$, there are still some open questions about the renormalization of the measures $(M^{\gamma,0}_\gep)_{\gep}$. Some conjectures are stated in \cite{Rnew4,Rnew7} that we recall here:
\begin{conjecture}\label{real}
Assume $\gamma>\sqrt{2d}$ and set $\alpha=\frac{\sqrt{2d}}{\gamma}$. Then
\begin{equation}\label{renormsurcrit}
(-\ln\gep)^{\frac{3\gamma}{2\sqrt{2d}}}\gep^{ \gamma  \sqrt{2d}-d}M^{\gamma,0}_\gep(\dd x)
\stackrel{law}{\to} c_\gamma N_\alpha(\dd x),\quad \text{as }\gep\to 0
\end{equation}
where $c_\gamma$ is a positive constant depending on $\gamma$ and the law of $N_\alpha$ is given, conditioned on the derivative martingale $M'$, by an independently scattered random measure the law of which is characterized by
$$\forall A\in\mathcal{B}(\R^d),\forall q\geq 0,\quad \E[e^{-qN_\alpha(A)}|M']=e^{-q^\alpha M'(A)}. $$
\end{conjecture}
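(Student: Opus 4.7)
The plan is to establish the conjecture via convergence of conditional Laplace functionals, using a mesoscopic decomposition of the field together with the Seneta--Heyde convergence of the derivative martingale recalled in \cite{Rnew7,Rnew12}. Writing $a_\gep := (-\ln\gep)^{\frac{3\gamma}{2\sqrt{2d}}}\gep^{\gamma\sqrt{2d}-d}$ and $\alpha=\sqrt{2d}/\gamma\in(0,1)$, it suffices to prove that for every $f\in C_c^+(\R^d)$,
\begin{equation*}
\lim_{\gep\to 0}\E\!\left[\exp\bigl(-\langle f,\, a_\gep M^{\gamma,0}_\gep\rangle\bigr)\right]
= \E\!\left[\exp\Bigl(-c_\gamma^\alpha \int f^\alpha \, \dd M'\Bigr)\right],
\end{equation*}
since the right-hand side is precisely the Laplace functional of $c_\gamma N_\alpha$ via the identity $\E[e^{-\langle f,c_\gamma N_\alpha\rangle}|M']=\exp(-c_\gamma^\alpha\int f^\alpha\,\dd M')$ (which follows by approximating $f$ by simple functions and using the prescribed marginal).

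First I would fix a mesoscopic scale $\eta=\eta(\gep)\to 0$ chosen such that $\eta\gg\gep$ but $|\ln\eta|/|\ln\gep|\to 1$, and use the star-scale decomposition of Section \ref{mainsetup} to write $X_\gep=X_\eta+Y_{\eta,\gep}$ with $Y_{\eta,\gep}$ independent of $\mathcal F^X_\eta$. Partitioning $\mathrm{supp}(f)$ into cubes $Q_i$ of side $\eta$ and freezing $X_\eta$ at the centers $x_i$, the conditional Laplace functional factorizes, up to mesoscopic errors, across the cubes. The core of the proof is then to establish the microscopic heavy-tail estimate
\begin{equation*}
\P\Bigl( a_\gep \int_{Q_i} e^{\gamma Y_{\eta,\gep}(y)-\frac{\gamma^2}{2}\E[Y_{\eta,\gep}^2]}\,\dd y > t \Bigr) \sim c_\gamma^\alpha\, \eta^d\, t^{-\alpha},\qquad t\to\infty,
\end{equation*}
so that, conditionally on $\mathcal F^X_\eta$, the total integral is a sum of essentially independent $\alpha$-stable random variables. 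The logarithmic correction $(-\ln\gep)^{3\gamma/(2\sqrt{2d})}$ reflects, through $\exp(\gamma\,\cdot)$, the Bramson shift $\tfrac{3}{2\sqrt{2d}}\log|\ln\gep|$ in the maximum of the log-correlated field $Y_{\eta,\gep}$, and $c_\gamma$ is determined by the limiting tail constant for the integrated chaos near a high point. Summing the contributions over the $O(\eta^{-d})$ cubes, the conditional Laplace functional collapses to
\begin{equation*}
\exp\Bigl(-c_\gamma^\alpha \int f^\alpha\, \dd\widetilde M_\eta\Bigr),\qquad \widetilde M_\eta(\dd x):=\bigl(\sqrt{2d}\,\E[X_\eta(x)^2]-X_\eta(x)\bigr) e^{\sqrt{2d}\,X_\eta(x)-d\E[X_\eta(x)^2]}\,\dd x,
\end{equation*}
so that letting $\eta\to 0$ and invoking the Seneta--Heyde convergence $\widetilde M_\eta\to M'$ of \cite{Rnew7,Rnew12} closes the argument.

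The main obstacle is the microscopic tail estimate. Proving a sharp $\alpha$-stable tail for the mass of a log-correlated Gaussian chaos in the thick-point regime requires extreme-value control of the maximum of $Y_{\eta,\gep}$ with the correct Bramson constant and the correct tail constant $c_\gamma$. For multiplicative cascades, the analogous estimates were obtained by Madaule and by Barral--Rhodes--Vargas through a second-moment truncation at the tip of the front; adapting them to Gaussian multiplicative chaos requires either a direct counterpart based on Kahane's convexity inequality (Proposition \ref{kahaha}) combined with spatial decoupling of $Y_{\eta,\gep}$, or a coupling with branching Brownian motion of the type used by Bramson--Ding--Zeitouni. A subsidiary but nontrivial issue is the legitimacy of the double limit $\eta\to 0$ after $\gep\to 0$: one must show that the errors in the cubic factorization of the Laplace functional are uniform in $\gep$ and vanish as $\eta\to 0$, which effectively demands a quantitative version of the Seneta--Heyde convergence $\widetilde M_\eta\to M'$, a point currently not available in the literature in a form directly usable here.
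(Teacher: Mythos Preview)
The statement you are trying to prove is labelled \emph{Conjecture} in the paper and is not proved there; the paper merely states it (recalling earlier formulations from \cite{Rnew4,Rnew7}) and remarks that it was subsequently established in \cite{MRV} for specific cut-off schemes (compactly supported $k$, and particular approximations of the GFF). So there is no ``paper's own proof'' to compare against.

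As for your proposal itself: the Laplace-functional route with a mesoscopic decomposition, a microscopic $\alpha$-stable tail estimate, and the Seneta--Heyde convergence of the derivative martingale is indeed the natural strategy and is essentially the one carried out in \cite{MRV}. But what you have written is a roadmap, not a proof. You yourself flag the two genuine gaps: (i) the sharp heavy-tail asymptotic $\P(a_\gep\int_{Q_i}\cdots>t)\sim c_\gamma^\alpha\,\eta^d\,t^{-\alpha}$ with the correct constant, which requires fine extreme-value analysis of the log-correlated field (Bramson shift, decoration process near the maximum), and (ii) the legitimacy of the double limit $\gep\to 0$ then $\eta\to 0$, which you correctly note demands quantitative control beyond what is cited. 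Neither of these is resolved in your text, and they are precisely the substance of \cite{MRV}. In short, your outline is on the right track but does not close the conjecture; the paper itself does not claim to either.
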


Following this work, the authors of \cite{MRV} proved in fact the above conjecture in the case where $k$ has compact support but also when $X_\gep$ is a specific cut-off scheme for the (massless) GFF in a bounded domain or the massive planar GFF. Note however that the results of \cite{MRV} are valid for specific cut-off approximations and what is lacking is a universality result which establishes convergence \eqref{renormsurcrit} for a wide class of cut-off approximations (it is natural to expect that the constant $c_\gamma$ in \eqref{renormsurcrit} depends on the cut-off approximation).

\subsection{Conjectures on the inner phase II}
 We state here conjectures on this phase that we should be able to prove in the case of discrete cascades thanks to the exact study of the extremal process combined with our argument to establish convergence in law towards a complex Gaussian random measure conditionally on $X$. In the context of Gaussian multiplicative chaos, we have to rely on the conjecture \ref{real} to state:
 
 \begin{conjecture}\label{conjII}
 Let $\beta>0$ and $\gamma>\sqrt{\frac{d}{2}}$ such that $\gamma+\beta>\sqrt{2d}$. Then we get the following convergence in law:
\begin{equation*}
\left( (\ln \frac{1}{\gep})^{\frac{3 \gamma}{2\sqrt{2d}}} \gep^{   \gamma \sqrt{2d}-d} M^{\gamma,\gb}_\gep(A)\right)_{A\subset \R^d} \Rightarrow \left(W_{\sigma^2 N^\alpha_{M'}}(A)\right)_{A\subset \R^d}.   
 \end{equation*}
 where, conditionally on  $N^\alpha_{M'}$,  $W_{\sigma^2 N^\alpha_{M'}}$ is a complex Gaussian random measure with intensity  $N^\alpha_{M'}$ and  $N^\alpha_{M'}$ is a $\alpha$-stable random measure with intensity $M'$ and $\alpha=\sqrt{\frac{d}{2}}\frac{1}{\gamma}$, namely an independently scattered random measure  whose law is characterized by $\E[e^{-q N^\alpha_{M'}(A)}]=e^{-q^\alpha M'(A)}$ for all $u\geq 0$ and $A$ bounded Borel set. The constant $\sigma^2$ depends on $(\gamma,\beta)$.
 \end{conjecture}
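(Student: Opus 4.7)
The plan is to adapt to the supercritical regime the conditioning-on-$X$ scheme used in Section \ref{phaseIII} for Theorems \ref{th3} and \ref{thfrontier23}. Conditionally on $\mathcal{F}^X$, $M^{\gamma,\beta}_\gep(A)$ is a centered complex Gaussian with conditional variance
\begin{equation*}
\bbE\left[|M^{\gamma,\beta}_\gep(A)|^2\bigm|\mathcal{F}^X\right]=\gep^{\beta^2}\int_{A\times A}\frac{e^{\gamma(X_\gep(x)+X_\gep(y))}}{G_\gep(x-y)^{\beta^2}}\,\dd x\,\dd y.
\end{equation*}
Setting $r_\gep:=(\ln 1/\gep)^{3\gamma/(2\sqrt{2d})}\gep^{\gamma\sqrt{2d}-d}$, the crux of the proof is to show that, jointly with $X$,
\begin{equation*}
V_\gep(A):=r_\gep^2\,\bbE\left[|M^{\gamma,\beta}_\gep(A)|^2\bigm|\mathcal{F}^X\right]\xrightarrow[\gep\to 0]{\mathrm{law}}\sigma^2\,N^\alpha_{M'}(A)
\end{equation*}
for a suitable constant $\sigma^2=\sigma^2(\gamma,\beta)>0$.

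To establish this limit, I would use the change of variables $y=x+u$ together with the doubling approximation $X_\gep(x+u)\simeq X_\gep(x)$ for small $|u|$, exactly as in Section \ref{slesmomentos}. In phase II we have $\gamma^2+\beta^2>d$ (by Cauchy--Schwarz applied to $\gamma+\beta>\sqrt{2d}$), so after the standard Wick reorganization of the integrand the singular factor $G_\gep(u)^{-(\gamma^2+\beta^2)}$ concentrates its mass at $|u|\lesssim\gep$, which legitimates the doubling approximation. A careful bookkeeping of powers of $\gep$ should then show that $V_\gep(A)$ is asymptotically a deterministic constant times the supercritically renormalized $2\gamma$-chaos
\begin{equation*}
(\ln 1/\gep)^{3(2\gamma)/(2\sqrt{2d})}\gep^{(2\gamma)\sqrt{2d}-d}M^{2\gamma,0}_\gep(A).
\end{equation*}
Invoking Conjecture \ref{real} with parameter $2\gamma>\sqrt{2d}$ produces the desired limit with stable exponent $\alpha=\sqrt{2d}/(2\gamma)=\sqrt{d/2}/\gamma$, precisely the one appearing in Conjecture \ref{conjII}. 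The analogue of Lemma \ref{convergence} must be replaced by a truncation argument along the stopping times $\tau^\kappa_x$ introduced in Section \ref{frontier23}, since $M^{2\gamma,0}_\gep$ is not uniformly integrable in the supercritical regime.

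Once the convergence of $V_\gep$ is obtained, I would upgrade conditional Gaussian convergence to joint convergence in law of the finite-dimensional distributions via characteristic functions rather than moments. Indeed, since $\alpha=\sqrt{d/2}/\gamma<1$, the limit $N^\alpha_{M'}$ has infinite expectation and the method of moments of Section \ref{twoksec} is unavailable. For disjoint sets $A_1,\dots,A_l$ and vectors $u_1,\dots,u_l\in\R^2$, conditional Gaussianity gives
\begin{equation*}
\bbE\Big[\exp\Big(i\sum_{j=1}^l u_j\cdot r_\gep M^{\gamma,\beta}_\gep(A_j)\Big)\Bigm|\mathcal{F}^X\Big]=\exp\Big(-\tfrac{1}{2}\sum_{j=1}^l|u_j|^2\,V_\gep(A_j)\Big),
\end{equation*}
and the previous step together with bounded convergence yield the convergence of the unconditional characteristic function to that of $W_{\sigma^2 N^\alpha_{M'}}$.

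The main obstacle is the dependence on Conjecture \ref{real}, which is known at present only for specific cut-off approximations (see \cite{MRV}) and not in the star-scale invariant framework considered here. Even granting it, the truly delicate technical point is the replacement of Lemma \ref{convergence}: the quasi-$\bbL_1$ control on $|\hat M^{\gamma}_{\gep,r}-M^{2\gamma,0}|$ used in Section \ref{slesmomentos} rests crucially on the \emph{subcriticality} of the $2\gamma$-chaos, which fails here. One must instead isolate the contribution of the ``high points'' of $X_\gep$ that carry most of the mass of the supercritical chaos (in the spirit of \cite{MRV}) and show that the doubling approximation survives on these atomic contributions, which is substantially more involved than the corresponding step in phase III.
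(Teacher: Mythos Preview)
The statement you are addressing is Conjecture \ref{conjII}, which the paper does \emph{not} prove; it is presented as an open problem, with only heuristic support and a reference to the analogous result for complex branching Brownian motion \cite{MRV1}. So there is no paper proof to compare against, and your proposal should be read as a strategy toward settling the conjecture.

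Your outline contains a fundamental error at the very first step. You assert that ``conditionally on $\mathcal{F}^X$, $M^{\gamma,\beta}_\gep(A)$ is a centered complex Gaussian''. This is false: conditionally on $X$, the random variable $M^{\gamma,\beta}_\gep(A)=\int_A e^{\gamma X_\gep(x)}e^{i\beta Y_\gep(x)}\,\dd x$ is an integral of bounded complex exponentials of a Gaussian field against a deterministic weight, which is not Gaussian for any $\gep>0$. The entire content of Section~\ref{phaseIII} and Appendix~\ref{Controlmom} is to show that conditional Gaussianity emerges \emph{in the limit}, by verifying that all conditional moments $\bbE[(M^{\gamma,\beta}_\gep)^k(\overline{M^{\gamma,\beta}_\gep})^{k'}\mid X]$ converge to the Wick-matching values $k!\,\ind_{k=k'}$ times the appropriate power of the second moment. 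Your displayed conditional characteristic function is therefore incorrect, and the ``bounded convergence'' step that follows has no basis.

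Even after correcting this, a second structural difficulty remains. In phase III the conditional second moment converges (in $\bbL_1$ or in probability) to a \emph{measurable functional of $X$}, namely $\sigma^2 M^{2\gamma,0}(A)$ or $\sigma^2 M'(A)$. In phase II the putative limiting intensity $N^\alpha_{M'}$ involves additional randomness beyond $X$ (a stable subordinator conditionally on $M'$), and Conjecture~\ref{real} only gives convergence \emph{in law}. Hence $V_\gep(A)$ cannot converge to $\sigma^2 N^\alpha_{M'}(A)$ as a functional of $X$; one must establish a joint convergence in law of $(V_\gep(A),\text{higher conditional moments})$ and couple the emerging extra randomness consistently across moments and across disjoint sets. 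This is precisely why the paper leaves the statement as a conjecture: the moment-matching machinery of Appendix~\ref{Controlmom} requires the conditional second moment to converge in probability, and the remark at the end of that appendix explicitly notes that the argument breaks down in phase II because the limiting measure has atoms.
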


Following this work, the authors of \cite{MRV1} proved an analogous result  to conjecture \ref{conjII} in the simpler case of complex Branching Brownian motion (BBM). Recall that BBM can be seen as a log-correlated Gaussian field (on a metric space where the distance relies on a hierarchical structure) and its behaviour is expected (and has been proved in numerous cases) to exhibit similar features to log-correlated Gaussian fields. Therefore, the main result of \cite{MRV1} brings additional evidence for conjecture \ref{conjII}.

\subsection{Triple point}

Concerning the triple point, the situation is a bit more delicate. When looking at the proof of subsection \ref{frontier23}, it is natural to expect:
 \begin{conjecture}
For  $\beta=\gamma=\sqrt{\frac{d}{2}}$,  the following convergence in law holds:
\begin{equation*}
\left( |\ln \gep|^{-\frac{1}{4}}  M^{\gamma,\gb}_\gep(A)\right)_{A\subset \R^d} \stackrel{\gep\to 0}{\Rightarrow} \left(W_{\sigma^2 M'}(A)\right)_{A\subset \R^d}.   
 \end{equation*}
 where, conditionally on  $M'$,  $W_{\sigma^2 M'}(\cdot)$ is a complex Gaussian random measure with intensity  $\sigma^2 M'$, and $\sigma^2$ is a constant.
 \end{conjecture}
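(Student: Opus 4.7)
The plan is to mirror the conditional-moment strategy of Section \ref{frontier23}: compute conditional moments of $|\ln\gep|^{-1/4}\,u\cdot M^{\gamma,\beta}_\gep(A)$ given $\mathcal{F}^X$, show convergence to the moments of a centered complex Gaussian of variance $\sigma^2 M'(A)$, and conclude convergence of the finite-dimensional laws by the method of moments exactly as in Section \ref{twoksec}. As in Section \ref{slesmomentos}, all computations are reduced to dimension $d=1$ and $A=[0,1]$.

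The key step is the conditional second moment. Specializing \eqref{secondmom}--\eqref{subcritical} to $\gamma^2=\beta^2=1/2$ (so that the prefactor $\gep^{\gamma^2+\beta^2-1}=1$) yields
$$
|\ln\gep|^{-1/2}\,\bbE\bigl[\,\bigl|M^{\gamma,\beta}_\gep([0,1])\bigr|^{2}\,\big|\,\mathcal{F}^X\bigr]
\;=\;2\,|\ln\gep|^{-1/2}\int_0^1\frac{\hat M^{\gamma}_{\gep,r}}{G_\gep(r)}\,\dd r,
$$
which must be shown to converge in probability to $\sigma^2 M'([0,1])$. The difficulty compared with Proposition \ref{lesmomentos2} is that here the weight $|\ln\gep|^{-1}G_\gep(r)^{-1}\dd r$ distributes its total mass $\sigma^2(d)$ (cf.\ Proposition \ref{lesmomentos}) logarithmically over all scales $r\in[\gep,1]$ rather than being concentrated near $r$ of order $\gep$. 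Writing $r=\gep^s$, the change of variables turns the integral into $2\,|\ln\gep|^{1/2}\int_0^1\hat M^\gamma_{\gep,\gep^s}\,\dd s$ (up to a boundary contribution producing the $\sigma^2(d)$ constant). At each scale $s\in(0,1]$, using the white-noise decomposition $X_\gep=X_{\gep^s}+(X_\gep-X_{\gep^s})$ together with $X_{\gep^s}(z+\gep^s)\simeq X_{\gep^s}(z)$ at the cutoff scale, one recognizes $\hat M^\gamma_{\gep,\gep^s}$ as the critical chaos $M^{2\gamma,0}_{\gep^s}$ of the field $X_{\gep^s}$ (since $2\gamma=\sqrt{2d}$) modulated by two mean-one independent log-normal factors supported on scales between $\gep^s$ and $\gep$. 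A Seneta--Heyde-type argument should then give $\sqrt{s|\ln\gep|}\,\hat M^\gamma_{\gep,\gep^s}\xrightarrow{\bbP}\sqrt{2/\pi}\,M'([0,1])$, hence $|\ln\gep|^{1/2}\hat M^\gamma_{\gep,\gep^s}\simeq s^{-1/2}\sqrt{2/\pi}\,M'([0,1])$; integrating the (integrable) singular weight $s^{-1/2}$ against $\dd s$ on $(0,1]$ yields the second-moment limit. For the higher conditional moments, one follows Section \ref{twoksec}: the $2k$-point integrals should be dominated, after renormalization, by the $(2k-1)!!$ pairings producing $k$ independent copies of the second-moment limit in a conditionally Wick-like manner, with non-paired configurations controlled by the appendix machinery (Proposition \ref{propclesmoments}) adapted to the derivative-martingale intensity.

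The main obstacle is the rigorous justification of the uniform-in-$s$ Seneta--Heyde limit $\sqrt{s|\ln\gep|}\,\hat M^\gamma_{\gep,\gep^s}\to\sqrt{2/\pi}\,M'([0,1])$. Proposition \ref{lesmomentos2} treats only $r$ of order $\gep$ (i.e.\ $s=1$), whereas the triple-point analysis demands the statement uniformly for all $s\in(0,1]$, which requires simultaneous control of the fine-scale log-normal factors on scales between $\gep^s$ and $\gep$, whose conditional variances $(1-s)|\ln\gep|$ diverge jointly with the critical cutoff. A natural approach is to adapt the truncation strategy of Lemma \ref{lem:crit1} (events $B_{\kappa,\gep^s}$) to each intermediate cutoff $\gep^s$ and to argue concentration by combining the martingale structure of the critical chaos with the approximate independence of the fine-scale factors at distance $\geq\gep^s$. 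A secondary obstacle is the explicit identification of $\sigma^2$, which by the heuristic above should combine the factor $\sqrt{2/\pi}$ with the boundary constant $\sigma^2(d)$ of Definition \ref{sigmasquare}; a third is the revision of the higher-moment estimates to handle pair contributions at all dyadic scales $r_{ij}\in[\gep,1]$ rather than the narrower regime that sufficed for the II/III frontier.
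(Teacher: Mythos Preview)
This statement is labeled a \emph{Conjecture} in the paper, and the paper explicitly does \emph{not} prove it. Immediately after stating it, the authors write that at the triple point ``all scales $\gep^a$, $a\in[0,1]$ contribute [\ldots]. Our problem is that the convergence of $\hat M^{1/\sqrt{2}}_{\gep,r}$ to the derivative martingale is rather weak: it holds only in probability but not in $\bbL_1$. For this reason it is difficult to control the convergence of the integral as we have to control all scales simultaneously.''

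Your proposal is not a proof but a programme, and it is essentially the same programme the paper is alluding to: specialize the second-moment identity of Section~\ref{slesmomentos} to $\gamma^2+\gb^2=d$ with $\gamma=\sqrt{d/2}$, observe that the weight $\dd r/G_\gep(r)$ spreads its mass logarithmically over $r\in[\gep,1]$, and try to replace $\hat M^{1/\sqrt{2}}_{\gep,r}$ by a multiple of $M'$ simultaneously across all scales. You correctly locate the crux as a uniform-in-$s$ Seneta--Heyde statement, and your heuristic $\sqrt{s|\ln\gep|}\,\hat M^\gamma_{\gep,\gep^s}\to\sqrt{2/\pi}\,M'$ is plausible at the level of first-moment barrier estimates: refining the Girsanov computation in the proof of Lemma~\ref{lem:crit2}, the effective barrier for the time-changed Brownian motion is flat only on the interval $[0,s|\ln\gep|]$ and ballistic afterwards, which indeed produces order $(s|\ln\gep|)^{-1/2}$.

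The genuine gap --- both in your proposal and in the paper --- is precisely what you name as the ``main obstacle''. You need $|\ln\gep|^{1/2}\hat M^{1/\sqrt{2}}_{\gep,\gep^s}$ to converge in probability to $s^{-1/2}\sqrt{2/\pi}\,M'$ with enough uniformity in $s\in(0,1]$ to pass to the limit under $\int_0^1(\cdots)\,\dd s$. The available Seneta--Heyde convergence is only in probability, the fine-scale noise carries variance of order $(1-s)|\ln\gep|\to\infty$, and there is no $\bbL_1$ domination to invoke. Your suggestion to run the $B_{\kappa,\gep^s}$ truncations of Lemma~\ref{lem:crit1} scale by scale is natural, but that argument relied on the weight $\dd r/G_\gep(r)^{\gb^2+1/2}$ being concentrated near $r\sim\gep$, which is exactly what fails at the triple point. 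In short, you have accurately diagnosed the difficulty but not resolved it; your sketch remains at the same level as the paper's own discussion, and the conjecture stays open.
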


The technical difficulty lies in the proof of the second moment, i.e.\ the equivalent of Proposition \ref{lesmomentos}:
in the integral \eqref{grogro}, most of the mass of the integral is supported by the $r$ of order $\gep$ and the rest is negligible.

\medskip

On the contrary at the triple point  all scales $\gep^{a}$, $a\in [0,1]$ contribute equaly to to the integral. 
Our problem is that the convergence of $\hat M^{1/\sqrt{2}}_{\gep,r}$ to the derivative martingale is rather weak:
it holds only in probability but not in $l_1$. For this reason it is difficult to control the convergence of the integral as we have 
to control all scales converge simultaneously.

\subsection{Continuity in dimension $1$ of the limiting process on the frontier I/II}

Looking at the proof of lemma  \ref{capI_II}, one can write the following heuristics for $k \leq n$: 
\begin{align*}
 \E\left[  \left| (2^{-n})^{\frac{\gamma^2}{2}-\frac{\beta^2} {2}}M_{\frac{1}{2^n}}^{\gamma, \beta}[0,2^{-k}]  \right|^2 \  | \  \mathcal{F}^X \right] & = \int_{[0,\frac{1}{2^k}] \times [0,\frac{1}{2^k}]} 
(2^{-n})^{\gamma^2} \frac{M_{\frac{1}{2^n}}^{\gamma,0}(\dd x) M_{\frac{1}{2^n}}^{\gamma,0}(dy)}{G_{\frac{1}{2^n}}(y-x)^{(\sqrt{2}-\gamma)^2}}  \\
& \approx \sum_{j=k}^n 2^{j \beta^2} \sum_{l=1}^{2^{j-k}} \frac{1}{2^{j\gamma^2}}  M_{\frac{1}{2^j}}^{\gamma,0}\left[  \frac{l-1}{2^j}, \frac{l}{2^j}   \right]^2 \\
& \approx \sum_{j=k}^n  \sum_{l=1}^{2^{j-k}} e^{  2 \gamma (X_{2^{-j}} (\frac{l}{2^j}) - \sqrt{2} \ln 2^j )  }
\end{align*}  
Recall that it is conjectured that $\sum_{l=1}^{2^{j-k}} e^{  2 \gamma (X_{2^{-j}} (\frac{l}{2^j})  - \sqrt{2} \ln 2^j )  }$ converges in law to some atomic random measure $\nu$ (see \eqref{renormsurcrit}). Hence, the limit $M^{\gamma,\beta}$ should satisfy the bound $\E[      M^{\gamma,\beta}([ 0, \frac{1}{2^k}   ] )^2 | \mathcal{F}^X   ]  \leq  $ $\nu( [ 0, \frac{1}{2^k}   ])  (k^{ \frac{3 \gamma}{\sqrt{2}} -1})^{-1} $ and more generally:
\begin{equation*}
\E[      M^{\gamma,\beta}([ s,t  ]) ^2 | \mathcal{F}^X  ]  \leq \frac {\nu( [ s,t   ])}  {(\ln_+\frac{1}{|t-s|}^{ \frac{3 \gamma}{\sqrt{2}} -1})} 
\end{equation*}
It is therefore natural to conjecture that in dimension 1, we can reinforce the above result by an almost sure convergence in the space of continuous functions. Indeed, as soon as one can show that the limiting measure $M^{\gamma,\beta}$ is cadlag, the above estimates entail continuity.


\section{Gaussian Free Fields}\label{GFF}

The Gaussian Free field \index{Gaussian free field (GFF)} \index{Massive free field (MFF)} with mass $m\ge 0$ on a set $D\subset \bbR^2$ (for simplicity we can say that $D$ is either a  planar bounded domain or the whole plane) and Dirichlet boundary condition
is the Gaussian field whose covariance function is given by the Green function of the problem
$$\triangle u- 2mu=-2\pi f \text { on }D,\quad u_{|\partial D}=0.$$
Notice the unusual normalization factor $2\pi$ in order to get correlations of the form \eqref{Kintrorev}. When $D=\bbR^2$ we have to consider $m>0$: otherwise the Green function is infinite everywhere. In the case of a bounded domain $D$, we are mostly interested in the case $m=0$ due to conformal invariance.

 The Green function can be written as 
$$g(x,y)=\pi\int_0^\infty e^{-rm} p(r,x,y)\,dr.$$
where $p(t,x,y)$ will denote the transition densities of the Brownian motion on $D$ killed upon touching $\partial D$.
A formal way to define   the complex  Gaussian field $X+iY$ (with $X$ and $Y$ independent GFF\index{Gaussian free field (GFF)}) is to consider  two independent white noises $W^X, W^Y$ on $\R_+\times D$ and define
\begin{equation}\begin{split}\label{GFFXY}
X(x)&=\sqrt{\pi}\int_{0}^{\infty}\int_{D}e^{-mr/2} p(r/2,x,y)\,W^X(dr,dy),\\
Y(x)&=\sqrt{\pi}\int_{0}^{\infty}\int_{D}e^{-mr/2} p(r/2,x,y)\,W^Y(dr,dy).
\end{split} \end{equation}
To define the exponential of the field $X+iY$, we need to use a cut-off procedure. So we define the approximations $X_{\gep}$ and $Y_{\gep}$ (respectively of the fields  $X$ and $Y$) by integrating over $(\gep^2,\infty)\times D$ in \eqref{GFFXY} instead of $(0,\infty)\times D$. The covariance function for these approximations is given by 
\begin{equation}\label{covar1}
\E[X_\gep(x)X_{\gep'}(y)]=\E[Y_\gep(x)Y_{\gep'}(y)]=\pi\int_{\gep^2\vee\gep'^2}^{\infty} e^{-rm} p(r,x,y)\,dr.
\end{equation}

\subsection{Massive Gaussian Free Field in the plane}
Observe that, in the case of the massive Gaussian Free Field on $\bbR^2$ (see subsection \ref{extension}), the kernel $p$ is translation invariant and has a simple expression
$$p(t,x,y)=\frac{1}{2\pi t}e^{-\frac{|x-y|^2}{2t}}.$$ 
The whole plane massive Green function then takes the form
\begin{equation} 
\forall x,y \in \R^2,\quad G_m(x,y)=\int_0^{\infty}e^{-mu-\frac{|x-y|^2}{2u}}\frac{\dd u}{2 u}.
\end{equation}
 and can be rewritten as 
\begin{equation} 
G_m(x,y)=\int_{1}^{+\infty}\frac{k_m(u(x-y))}{u}\,du.
\end{equation}
 for the continuous covariance kernel $k_m=\frac{1}{2}\int_0^\infty e^{-\frac{m}{v}|z|^2-\frac{v}{2}}\,dv$.  Therefore the whole plane massive free field strictly enters the framework of the first part of our paper.

\subsection{Gaussian Free Field \index{Gaussian free field (GFF)} on planar bounded domains}\label{GFFplanar}

The case of the GFF\index{Gaussian free field (GFF)} on a planar bounded domain is a bit more delicate (but nothing too serious) as $p(t,x,y)$ is not translation invariant this time.
We use the change of variables $r\to r^{-2}$ in the integral in the r.h.s. of \eqref{covar1} to find something closer to the setup that we have worked with in the previous sections and obtain:
\begin{equation}\label{covar2}
\E[X_\gep(x)X_{\gep'}(y)]=\E[Y_\gep(x)Y_{\gep'}(y)]=2\pi\int_0^{\gep^{-1}\wedge\gep^{'-1}} e^{-m / r^2}  r^{-3} p(r^{-2},x,y)\,dr.
\end{equation}
The kernel $2\pi e^{-m / r^2}r^{-3} p(r^{-2},x,y)$ will have to play the role of $\frac{k(r(x-y))}{r}$.

We have 
\begin{equation}\label{dominp}
2\pi r^{-3} p(r^{-2},x,y)\le \frac{e^{\frac{-(r|x-y|)^2}{2}}}{r}, \quad  \forall (x,y, r).
\end{equation}
Furthermore the two kernels are asymptotically equivalent in the interior of $D$ in the sense that for any compact $K\subset D$:
\begin{equation}
\lim_{r\to \infty} \sup_{x,y \in K} | 2\pi r^{-2} p(r^{-2},x,y)e^{\frac{(r|x-y|)^2}{2}}-1|=0.
\end{equation}
We will also consider the following decomposition of the covariance function
\begin{align*}
g_\gep(x,y):=&\E[X_\gep(x)X_{\gep}(y)]\\
=& \int_1^{\gep^{-1}} 2\pi r^{-3} p(r^{-2},x,y)\,dr+ \int_0^{1} 2\pi r^{-3} p(r^{-2},x,y)\,dr\\
=:&\tilde g_\gep(x,y)+ g'(x,y),
\end{align*}
This corresponds to writing 
\begin{equation}\label{decompo}
X_\gep=\tilde X_\gep+X'
\end{equation} 
where $X'$ and $\tilde X_\gep$  have respective covariance functions $\tilde g_\gep(x,y)$ and $g'(x,y)$.

The conformal radius $C(x,D)$ \index{conformal radius} of a point $x$ in the planar bounded domain $D$ is defined by
\begin{equation}\label{confrad}
C(x,D):=\frac{1}{|\varphi'(x)|}
\end{equation}
where $\varphi$ is any conformal mapping of $D$ to the unit disc such that 
$\varphi(x)=0$. In fact, we will use the following definition which is more useful for our purpose. Let $\varphi$ be any conformal map from $D$ to the upper half plane $\mathbb{H}$. Then we have the following expression for the conformal radius:
\begin{equation}\label{confrad2}
C(x,D)=\frac{2 \text{Im}(\varphi(x))}{|\varphi'(x)|}.
\end{equation}
Set 
$$\bar{G}_\gep(x,y):=e^{-g_\gep(x,y)}$$
 We stress here that $\bar{G}_\gep$ is not to be confused with $G_\gep$ of the previous sections. 

The following claim is proved in the Appendix.
\begin{lemma}\label{greenradius}
For all $x\in D$, we set $C_\gep(x,D)=\gep/\bar{G}_\gep(x,x)$. We have:
\begin{equation}
\lim_{\gep \to 0} C_\gep(x,x)=C(x,D)
\end{equation}
uniformly on the compact subsets of $D$.
\end{lemma}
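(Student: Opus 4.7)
The natural first step is the change of variables $t = r^{-2}$ in the definition of $g_\gep$, which recasts it in the standard heat-semigroup form
\[
g_\gep(x,x) = \pi \int_{\gep^2}^{\infty} e^{-mt}\, p(t,x,x)\, \dd t.
\]
Since $\bar G_\gep(x,x) = e^{-g_\gep(x,x)}$, the claim $\gep/\bar G_\gep(x,x) \to C(x,D)$ is equivalent to the short-distance expansion
\[
g_\gep(x,x) \;=\; \log(1/\gep) + \log C(x,D) + o(1)
\]
uniformly for $x$ in any compact $K\subset D$, so the whole argument reduces to extracting the logarithmic divergence of this integral and identifying the finite remainder.

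To extract the divergence, I would fix an auxiliary $T>0$, split the integral at $t=T$, and on the short-time piece replace $p(t,x,x)$ by the full-plane heat kernel $p_{\R^2}(t,x,x)=1/(2\pi t)$, writing $p(t,x,x) = \tfrac{1}{2\pi t} - R_D(t,x,x)$. The Brownian-bridge representation of the killed heat kernel gives the uniform bound
\[
R_D(t,x,x) \;\leq\; C_K\, t^{-1}\, e^{-c_K/t}, \qquad x\in K,\ t\leq T,
\]
with $c_K = d(K,\partial D)^2/2 > 0$, so the $R_D$ contribution to the integral is uniformly bounded in $\gep$ and converges as $\gep\to 0$ to a continuous function of $x$. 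The substitute $\tfrac12\int_{\gep^2}^T e^{-mt}/t\,\dd t$ produces $\log(1/\gep) + \tfrac12\log T + \tfrac12\int_0^T (e^{-mt}-1)/t\,\dd t + o(1)$, isolating the logarithmic divergence. The long-time piece $\pi\int_T^\infty e^{-mt} p(t,x,x)\,\dd t$ is finite and continuous in $x$ (by the factor $e^{-mt}$ when $m>0$, and by the principal-Dirichlet-eigenvalue decay of $p_D$ when $m=0$). Assembling these estimates gives $g_\gep(x,x) = \log(1/\gep) + h(x) + o(1)$ for a continuous function $h$ on $D$, and a direct differentiation in $T$ shows that $h(x)$ is independent of the auxiliary cutoff.

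The remaining step, which is the main conceptual content, is the identification $h(x) = \log C(x,D)$. I would carry this out by performing the analogous heat-kernel splitting for the full Green function $g(x,y)$, using the explicit evaluation
\[
\pi\int_0^T p_{\R^2}(t,x,y)\,\dd t \;=\; -\log|x-y| + \tfrac12\log(2T) - \gamma_E/2 + o_{y\to x}(1),
\]
obtained from the exponential integral $E_1(z)=-\gamma_E-\log z+O(z)$, and matching with the defining expansion $g(x,y) = -\log|x-y| + \log C(x,D) + o(1)$ as $y\to x$; this last expansion is the Robin-constant characterization of the conformal radius, which one can verify directly for $D=\mathbb{H}$ via \eqref{confrad2} and extend to simply-connected $D$ by conformal invariance of the Dirichlet GFF. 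Uniformity on $K$ is automatic since every estimate above is uniform in $x\in K$, the only technical input being the uniform Gaussian bound on $R_D$. The main point to watch, and the most likely source of an error, is the precise bookkeeping of the universal constants (the $\tfrac12\log 2$ and the $\gamma_E$) arising in the two splittings, which must reconcile with the particular normalization of $C(x,D)$ fixed in \eqref{confrad}.
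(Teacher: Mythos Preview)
Your approach is sound and genuinely different from the paper's. The paper argues probabilistically: it uses the Markov property at time $\gep^2$ to rewrite $\int_{\gep^2}^\infty p_D(t,x,x)\,\dd t$ as $\E^x[G_D(B_{\gep^2},x)\ind_{\{\tau_D>\gep^2\}}]$, shows the indicator can be dropped with negligible cost (a Gaussian tail estimate for the exit time), maps conformally to the upper half-plane where the Green function is explicit, Taylor-expands $\varphi(x+\gep u)\approx\varphi(x)+\gep\varphi'(x)u$, and finally integrates against the Gaussian density of $B_{\gep^2}-x$. The conformal radius then appears directly through the formula $C(x,D)=2\,\mathrm{Im}(\varphi(x))/|\varphi'(x)|$, with no separate identification step needed. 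Your route is purely analytic: split $p_D=p_{\R^2}-R_D$, extract the logarithmic divergence from the free heat kernel on the diagonal, and identify the finite remainder $h(x)$ by matching against the off-diagonal Robin-constant expansion of the full Green function. The paper's method is more direct for the constant identification (no exponential-integral asymptotics), while yours is closer to classical potential theory and extends without change to the massive case $m>0$. Your caution about the constants $\tfrac12\log 2$ and $\gamma_E$ is well placed: both arguments, if carried out with full care, leave one with the universal additive term $\tfrac12(\gamma_E-\log 2)=\int_{\R^2}\frac{e^{-|u|^2/2}}{2\pi}\log\tfrac1{|u|}\,\dd u$ to reconcile, and the paper's proof is itself somewhat informal on this point (and on the $\pi$ normalisation of $G_D$).
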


\vspace{1mm}

We define for $(\gamma,\beta)\in\R_+^2$ and $\gep$ the following operator:
$$\bar{M}^{\gamma,\beta}_\gep(\varphi)=\int_D \varphi(x)  e^{\gamma X_\gep(x)+i\beta Y_\gep (x)}\bar{G}_\gep(x,x)^{\frac{\gamma^2-\beta^2}{2}} C(x,D)^{\frac{\gamma^2}{2}-\frac{\beta^2}{2}}\,\dd x.$$
where $\varphi(x)$ is a bounded measurable function on $D$. Notice that the renormalization term $\bar{G}_\gep(x,x)^{\frac{\gamma^2-\beta^2}{2}}$  is chosen such that $M^{\gamma,\beta}_\gep(\varphi)$ is a martingale in $\gep$.

Given another planar domain $\widetilde{D}$ and a conformal map $\psi:\widetilde{D}\to D$, we will denote by $(X^\psi_\gep)_{\gep\in]0,1]}$ and $(Y^\psi_\gep)_{\gep\in]0,1]}$ the random fields defined by 
$$X^\psi_\gep(x)=X_\gep(\psi(x))\quad \text{and}\quad Y^\psi_\gep(x)=Y_\gep(\psi(x)).$$
These two families form two independent white noise approximating sequences of the GFFs\index{Gaussian free field (GFF)}  $X\circ \psi$ and $Y\circ\psi$ defined on $\widetilde{D}$. Then we define for $\varphi$ defined on $\widetilde{D}$:
$$\bar{M}^{\gamma,\beta,\psi}_\gep(\varphi)=\int_{\widetilde{D}}\varphi(x) e^{\gamma X^\psi_\gep(x)+i\beta Y^\psi_\gep (x)} \bar{G}_\gep(\psi(x),\psi(x))^{\frac{\gamma^2-\beta^2}{2}}C(x,\tilde{D})^{\frac{\gamma^2}{2}-\frac{\beta^2}{2}}|\psi'(x)|^{2\gamma} \,\dd x.$$
When $\psi$ is the identity we simply write $\bar{M}^{\gamma,\beta}_\gep$.
This allows us to define simultaneously the GFF\index{Gaussian free field (GFF)} on every planar bounded domain conformally equivalent to $D$. We also mention the rule, for $x\in\tilde{D}$, $| C( \psi(x),D)  |= |\psi'(x)| | C(x,\tilde{D})  |$ where  $|\psi'(x)|^2$  is the Jacobian  of the mapping $\psi:\widetilde{D}\to D$ (this follows right away from the definition of the conformal radius \eqref{confrad}).

\subsubsection{Phase I and frontier I/II}

Consider a couple $(\gamma,\beta)\in\R_+\times \R$ and define 
\begin{equation*}
 \zeta(p)=\left(2+ \frac{\gamma^2}{2}-\frac{\beta^2} {2}\right)p -\frac{\gamma^2}{2}p^2.
 \end{equation*}
We have the following behavior inside phase I:
\begin{theorem}\label{maringo}
Consider a couple $(\gamma,\beta)\in\R_+\times \R$
in phase I or in the frontier I/II (excluding the extremal points). Consider $p\in (1,2]$ 
such that $\zeta(p)>2$ in the inner phase I or $p\in \left[1,\frac{2}{\gamma}\right)$ on the frontier I/II. Then:
\begin{enumerate}
\item For every bounded planar bounded domain $\widetilde{D}$ and conformal map $\psi:\widetilde{D}\to D$, 
for all bounded functions $\varphi$ defined on $\widetilde{D}$, the martingale:
\begin{equation*} 
 (\bar{M}_\gep^{\gamma, \beta ,\psi}(\varphi) )_\gep
\end{equation*}
is uniformly bounded in $\bbL_p$. 
\item Almost surely, the sequence $\bar{M}_\gep^{\gamma, \beta,\psi}(\cdot)$   converges in the space of distributions of order $2$ towards a limit we will denote by $M^{\gamma, \beta ,\psi}(\cdot)$. We set $M^{\gamma, \beta}(\cdot):=M^{\gamma, \beta,\psi}(\cdot)$ when $\psi$ is the identity map on $D$. The operator norm in the space of distributions of order $2$ of the limiting distribution  $M^{\gamma, \beta ,\psi}(\cdot)$ is $\bbL_p$-integrable. 
\item  For all $q\in [0,p]$ and all functions $\varphi\in C^2_c(\widetilde{D})$:
\begin{equation*}
\E  [  |  M^{\gamma, \beta,\psi}(\varphi(\cdot/r))  |^p ] \underset{r \to 0}{\sim} C_x r^{\zeta(p)}
\end{equation*}
for all $x\in \widetilde{D}$ and some constant $C_x>0$, which is continuous with respect to $x$ on $\widetilde{D}$. 
\end{enumerate}
\end{theorem}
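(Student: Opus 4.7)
The plan is to reproduce, in the two-dimensional GFF setting, the proofs of Theorems \ref{phase1} and \ref{th:frontier12}, with the star-scale-invariance of $K_\gep$ replaced by the asymptotic behaviour of the Green function $\bar G_\gep$ near the diagonal encoded in Lemma \ref{greenradius} and by the decomposition \eqref{decompo} $X_\gep=\tilde X_\gep+X'$ (and likewise for $Y_\gep$) that exhibits an approximately star-scale-invariant piece plus a smooth remainder. The renormalization $\bar G_\gep(x,x)^{(\gamma^2-\beta^2)/2}$ has been chosen precisely so that $(\bar M^{\gamma,\beta,\psi}_\gep(\varphi))_\gep$ is a complex-valued martingale in $\gep$; this is checked from \eqref{GFFXY} and a direct computation using $g_\gep(x,x)=-\log\bar G_\gep(x,x)$.

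For items 1 and 2, the argument of Theorems \ref{phase1} and \ref{th:frontier12} goes through almost verbatim. I would condition on $\mathcal F^X$ and apply Jensen's inequality to bound
\[
\bbE\bigl[|\bar M^{\gamma,\beta,\psi}_\gep(\varphi)|^p \,\big|\, \mathcal F^X\bigr]\le \bbE\bigl[|\bar M^{\gamma,\beta,\psi}_\gep(\varphi)|^2 \,\big|\, \mathcal F^X\bigr]^{p/2},
\]
after which the conditional second moment is an explicit double integral involving $\bar G_\gep(\psi(x),\psi(y))^{\beta^2}$ and the real chaos $\bar M^{2\gamma,0,\psi}_\gep$. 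Using \eqref{dominp} together with Lemma \ref{greenradius} and Kahane's convexity inequality (Proposition \ref{kahaha}) I would compare this expression with its star-scale-invariant counterpart on a slightly smaller domain, at the cost of a multiplicative constant absorbing the contribution of the smooth field $X'$ on compact subsets of $\widetilde D$; Lemma \ref{capI} (resp.\ Lemma \ref{capI_II}) then yields the uniform $\bbL_p$ bound. The $\mathcal D'_2$-convergence proceeds as in item 2 of Theorem \ref{phase1}: integrate twice by parts in the two coordinate directions to obtain the pointwise bound
\[
|\bar M^{\gamma,\beta,\psi}_\gep(\varphi)|\le \sup_{x\in \widetilde D}\Bigl|\tfrac{\partial^2\varphi(x)}{\partial x_1\partial x_2}\Bigr|\,Z_\gep,
\]
where $Z_\gep$ is an $\bbL_p$-bounded positive submartingale, hence converges almost surely, giving an almost sure limit as a distribution of order $2$.

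For the conformal covariance part, i.e.\ the dependence on $\psi$, the factor $|\psi'(x)|^{2\gamma}$ is precisely the Jacobian correction needed to match the change of variables $z=\psi(x)$ in the integral defining $\bar M^{\gamma,\beta}_\gep$, once one uses the transformation rule $C(\psi(x),D)=|\psi'(x)|\,C(x,\widetilde D)$ for the conformal radius and the identity $g_\gep(\psi(x),\psi(x))=g_\gep^{\widetilde D}(x,x)-\log|\psi'(x)|+o(1)$ as $\gep\to 0$ on compact subsets. One then defines the limiting operator consistently on all conformally equivalent domains.

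Item 3 is the most delicate step and where I expect the main obstacle: near a fixed point $x\in\widetilde D$ the GFF is not exactly star scale invariant, so I cannot apply Theorem \ref{coromuzy} directly. The strategy is to use the decomposition $X_\gep=\tilde X_\gep+X'$ (and the analogue for $Y$) and rescale by $r$ around $x$, writing $\varphi(\cdot/r)$ as an integral over $B(0,r)$ and exploiting that $\tilde g_\gep(x+ru,x+rv)=\log(r^{-1})+k_*(u-v)+o(1)$ for a star-scale-invariant kernel $k_*$ uniformly on compact sets, while $X'$ is of class $C^1$. After a Girsanov transform to remove the factor $e^{\gamma X_\gep(x)}$ and a change of variables, the $p$-th moment reduces to $r^{\zeta(p)}$ times a quantity which, by the argument of the proof of Theorem \ref{coromuzy} adapted to two dimensions, converges as $r\to 0$ to a positive constant; the dependence on $x$ comes through $C(x,\widetilde D)$ and the regular part of the Green function, whose continuity yields the continuity of $C_x$. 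The main work is controlling the errors induced by the smooth field $X'$ uniformly as $r\to 0$, which is done by treating $X'$ as a Girsanov shift with a $C^1$ drift and using the $\bbL_p$ bound of item 1 for the resulting perturbed chaos.
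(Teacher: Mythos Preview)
Your approach is essentially the same as the paper's, which is itself extremely terse: the paper simply states that ``the proofs of items 1, 2, 3 are exactly the same as in Section \ref{phaseI}: when computing the $\bbL_p$ norm, thanks to \eqref{dominp}, we can use Proposition \ref{kahaha} to compare the capacity with the one of the stationary case $k(x)=e^{-x^2/2}$.'' For items 1 and 2 this is precisely what you outline (condition on $\mathcal F^X$, Jensen, then Kahane's inequality via the domination \eqref{dominp} to reduce the capacity estimate to Lemma \ref{capI} or \ref{capI_II}, then integration by parts for the $\mathcal D'_2$ bound). Your additional remarks on conformal covariance and the role of $|\psi'|^{2\gamma}$ are correct and simply make explicit what the paper leaves implicit.

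For item 3 you are right to flag this as the least automatic step: Kahane's inequality alone only yields two-sided bounds $c\,r^{\zeta(q)}\le \bbE[|\cdot|^q]\le C\,r^{\zeta(q)}$ with possibly different constants, not an asymptotic equivalence with a single $C_x$. The paper's proof of Theorem \ref{coromuzy} really uses the exact scaling decomposition \eqref{star}, which is unavailable for the GFF. Your proposed route (decompose $X_\gep=\tilde X_\gep+X'$, rescale around $x$, treat the smooth field $X'$ as a Girsanov drift, and run the doubly-indexed martingale argument of Theorem \ref{coromuzy} on the approximately star-scale-invariant part) is exactly what is needed to turn the paper's one-line claim into a proof; the $x$-dependence of $C_x$ then comes from $C(x,\widetilde D)$ and the regular part of the Green function, as you say. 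One small correction: your asymptotic $\tilde g_\gep(x+ru,x+rv)=\log(r^{-1})+k_*(u-v)+o(1)$ cannot be right as written since the left-hand side still depends on $\gep$; what you want is that the rescaled increment field has covariance converging (after subtracting $\log(1/r)$) to a star-scale-invariant kernel as $\gep/r\to 0$, uniformly on compacts.
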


\vspace{2mm}
\noindent {\it Proof.} The proofs of items 1,2,3 are exactly the same as in Section \ref{phaseI}: when computing the $\bbL_p$ norm,
thanks to \eqref{dominp}, we  can use Proposition \ref{kahaha} to compare the capacity with the one of the stationary case $k(x)=e^{-x^2/2}$. \qed

\begin{rem}
As a consequence of the above Theorem 
we have for any $\varphi\in C^2_c(\tilde{D})$,

$$\lim_{\gep\to 0} \gep^{\frac{\gamma^2}{2}-\frac{\beta^2}{2}} \int_{D}\varphi(x)e^{\gamma X_\gep(x)+i\beta Y_\gep(x)}\,\dd x=M^{\gamma, \beta}(\varphi),$$ 
in the $\bbL_p$ sense. 

\medskip

To see this, the reader can check that with our definitions
\begin{multline}
\gep^{\frac{\gamma^2}{2}-\frac{\beta^2}{2}} \int_{D}\varphi(x)e^{\gamma X_\gep(x)+i\beta Y_\gep(x)}\,\dd x -\bar{M}^{\gamma,\gb}_\gep(\varphi) \\
= \int_{D}\left(C_\gep(x,D)-C(x,D)\right)^{\frac{\gamma^2}{2}-\frac{\beta^2}{2}}  \varphi(x)e^{\gamma X_\gep^\psi(x)+i\beta Y_\gep^\psi(x)-\frac{\gamma^2-\gb^2}2 \bar{G}_{\gep}(x,x)}\,\dd x.
\end{multline}
As $ C_\gep(x,D)$ converges uniformly (Lemma \ref{greenradius}), Theorem \ref{phase1} (Item 1) shows that the moment of order $p$ of the above quantity tends to zero.

\end{rem}

\subsubsection{Gaussian Free Field on planar bounded domains: another approach in phase I and frontier I/II}

In this subsection, we also construct the limit $M^{\gamma, \beta}$ of theorem \ref{maringo} by other cut-off procedures, where the approximations $X_\gep$ and $Y_\gep$  are functions of $X$ and $Y$. In particular, this gives a construction of $M^{\gamma, \beta}$ as a function of $X$ and $Y$. This can be useful in applications (for couplings with SLE, etc...).

\paragraph{Circle averages.}
In fact, we can extend the framework of \cite{cf:DuSh} to the complex case in phase I and frontier I/II: this framework enables to construct the corresponding limits as functions of the GFF. Let $X$ and $Y$ be two independent GFFs\index{Gaussian free field (GFF)}  on a domain $D$. 
We introduce the circle averages $(X_\gep)_{\gep\in]0,1]}$ and $(Y_\gep)_{\gep\in]0,1]}$ of radius $\gep$, i.e. $X_\gep(x)$ (resp. $Y_\gep(x)$) stands for the mean value of $X$ (resp. $Y$) on the circle centered at $x$ with radius $\gep$ (cf. \cite{cf:DuSh} for further details). We then consider the  operator:
 $$\varphi\mapsto \tilde{M}^{\gamma,\beta}_\gep(\varphi)= \int_D\varphi(x) e^{\gamma X_\gep(x)+i\beta Y_\gep (x) -(\frac{\gamma^2}{2}-\frac{\beta^2}{2} )  \E[  X_\gep(x)^2   ]  }  C(x,D)^{\frac{\gamma^2}{2}-\frac{\beta^2}{2}} \,\dd x.$$
 Recall also that $ M^{\gamma,\beta}_\gep$ is given by the following expression:
 \begin{equation*}
 \varphi\mapsto M^{\gamma,\beta}_\gep(\varphi)= \int_D\varphi(x) e^{\gamma X_\gep(x)+i\beta Y_\gep (x) }\,\dd x.
 \end{equation*}

 We set $\tilde{G}_{\gep, \gep'}(x,y)=\E[ X_\gep(x) X_{\gep'}(y)   ] $. We can now state the following theorem:
\begin{theorem}\label{circle}
In the inner phase I, we consider $p\in ]1,2]$ such that $\zeta(p)>2$ and on the frontier I/II (excluding the extremal points), we consider $p\in]1,\frac{2}{\gamma}[$. For all  bounded measurable functions $\varphi$ with compact support in $D$, the family $\tilde{M}^{\gamma,\beta}_\gep(\varphi)$ converges in $\bbL_p$ towards a variable we will denote $M^{\gamma,\beta}(\varphi)$, since it has same law as the limit $M^{\gamma,\beta}(\varphi)$ of theorem \ref{maringo}. The sequence $( \gep^{\frac{\gamma^2}{2}-\frac{\beta^2}{2}}   M^{\gamma,\beta}_\gep(\varphi))_\gep$ converges also in $\bbL_p$ towards the same variable $M^{\gamma,\beta}(\varphi)$.
\end{theorem}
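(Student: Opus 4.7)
The plan is to establish the martingale property of $\tilde{M}^{\gamma,\beta}_\gep(\varphi)$ in decreasing $\gep$, bound it uniformly in $\bbL_p$, pass to the limit by martingale convergence, and finally identify the limit in law with the object $M^{\gamma,\beta}(\varphi)$ constructed in Theorem~\ref{maringo}. The martingale property is the core structural input from \cite{cf:DuSh}: for each fixed $x\in D$ the circle-average process $\gep\mapsto X_\gep(x)$ is, up to the time change $t=-\log\gep$, a Brownian motion, hence $e^{\gamma X_\gep(x)-\frac{\gamma^2}{2}\bbE[X_\gep(x)^2]}$ is a martingale; the same holds for $Y$, and by independence the complex exponential $e^{\gamma X_\gep(x)+i\beta Y_\gep(x)-\frac{\gamma^2-\beta^2}{2}\bbE[X_\gep(x)^2]}$ is itself a martingale in decreasing $\gep$. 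Integration against the deterministic weight $\varphi(x)\,C(x,D)^{(\gamma^2-\beta^2)/2}\,\dd x$ preserves this property.

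For the uniform $\bbL_p$ bound I plan to condition on $\mathcal{F}^X$ and apply Jensen to the concave map $u\mapsto u^{p/2}$ (valid since $p\leq 2$). A direct computation using independence and Gaussianity of $Y$ gives
$$\bbE\bigl[|\tilde{M}^{\gamma,\beta}_\gep(\varphi)|^2\bigm|\mathcal{F}^X\bigr] =\int\!\!\!\int \varphi(x)\overline{\varphi(y)}\,\frac{e^{\beta^2\bbE[X_\gep(x)X_\gep(y)]}}{[C(x,D)C(y,D)]^{\beta^2/2}}\,\tilde{M}^{\gamma,0}_\gep(\dd x)\,\tilde{M}^{\gamma,0}_\gep(\dd y),$$
where $\tilde{M}^{\gamma,0}_\gep$ denotes the associated real chaos. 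Since the circle-average covariance $\bbE[X_\gep(x)X_\gep(y)]$ differs from $-\log(|x-y|\vee\gep)$ by a continuous bounded function on compact subsets of $D^2$, the exponential factor is dominated by $C(|x-y|\vee\gep)^{-\beta^2}$, and the capacity estimates of Lemma~\ref{capI} (resp.\ Lemma~\ref{capI_II} on the frontier) transfer from the stationary star-scale-invariant setting to the GFF setting via Proposition~\ref{kahaha}, exactly as in the proof of Theorem~\ref{maringo}. The martingale convergence theorem then yields a.s.\ and $\bbL_p$ convergence of $\tilde{M}^{\gamma,\beta}_\gep(\varphi)$ to some limit $\tilde{M}^{\gamma,\beta}(\varphi)$.

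The step I expect to be the main obstacle is identifying the law of this limit with that of $M^{\gamma,\beta}(\varphi)$ from Theorem~\ref{maringo}, which is built from the heat-kernel white-noise approximation of subsection~\ref{GFFplanar}. I plan to realize both approximations as explicit deterministic linear functionals of the common underlying GFF, verify that their covariance kernels agree asymptotically up to a continuous bounded discrepancy, and apply Kahane's convexity inequality (Proposition~\ref{kahaha}) on a Gaussian interpolation between them to show that the $\bbL_p$-distance between the two limits vanishes. Finally, for the unrenormalized version, I write
$$\gep^{(\gamma^2-\beta^2)/2}M^{\gamma,\beta}_\gep(\varphi) =\tilde{M}^{\gamma,\beta}_\gep(\varphi\cdot h_\gep),\qquad h_\gep(x):=e^{\frac{\gamma^2-\beta^2}{2}\bigl(\bbE[X_\gep(x)^2]+\log\gep-\log C(x,D)\bigr)}.$$
The Duplantier-Sheffield expansion $\bbE[X_\gep(x)^2]=-\log\gep+\log C(x,D)+o(1)$ uniformly on compact subsets of $D$ gives $h_\gep\to 1$ uniformly on the support of $\varphi$, and a routine $\bbL_p$ perturbation argument using the uniform bound from the previous step delivers the same $\bbL_p$ limit.
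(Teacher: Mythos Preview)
Your central structural claim---that $\tilde{M}^{\gamma,\beta}_\gep(\varphi)$ is a martingale in decreasing $\gep$---is false, and the whole argument collapses without it. It is true that for a \emph{fixed} point $x$ the process $t\mapsto X_{e^{-t}}(x)$ is a Brownian motion, so $e^{\gamma X_\gep(x)-\frac{\gamma^2}{2}\bbE[X_\gep(x)^2]}$ is a martingale in its own filtration. But integration over $x$ preserves the martingale property only if all these processes are martingales for a \emph{common} filtration, and for circle averages no such filtration exists. Concretely, for the whole-plane logarithmic kernel, with $\gep'=\gep/2$ and $0<|x-y|<\gep/2$, one has $\bbE[X_{\gep'}(x)X_\gep(y)]=-\log\gep$ (the circle $\partial B(x,\gep')$ lies inside $B(y,\gep)$, so the potential is constant), whereas a short computation with the circular potential $u\mapsto-\log_+(|u|/\gep)$ shows that $\bbE[X_\gep(x)X_\gep(y)]<-\log\gep$. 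Hence $\bbE[(X_{\gep'}(x)-X_\gep(x))\,X_\gep(y)]>0$: the increment at $x$ is correlated with the field at scale $\gep$ at nearby points $y$, so it is not independent of $\mathcal F_\gep=\sigma(X_u(\cdot):u\ge\gep)$, and the conditional expectation $\bbE[\tilde{M}^{\gamma,\beta}_{\gep'}(\varphi)\mid\mathcal F_\gep]$ is not $\tilde{M}^{\gamma,\beta}_\gep(\varphi)$. The paper states this explicitly at the start of its proof (``Here $M^{\gamma,\beta}_\gep$ is not a martingale so we cannot content ourselves with proving boundedness in $\bbL_p$: we must show that the sequence is Cauchy'').

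The fix the paper adopts is to prove directly that $(\tilde{M}^{\gamma,\beta}_\gep(\varphi))_\gep$ is Cauchy in $\bbL_p$. After conditioning on $X$ and Jensen, the quantity $\bbE[|\tilde{M}^{\gamma,\beta}_\gep-\tilde{M}^{\gamma,\beta}_{\gep'}|^p]$ is bounded by $\bbE[|A(\gep,\gep)+A(\gep',\gep')-2A(\gep,\gep')|^{p/2}]$ with $A(\gep,\gep')$ a double integral against the real chaoses at the two scales. One then splits the integration domain into $\{|x-y|\le\delta\}$ and $\{|x-y|>\delta\}$: on the far set the singularity $e^{\beta^2\tilde G_{\gep,\gep'}}$ is bounded and the difference vanishes by $\bbL_p$-convergence of the \emph{real} circle-average chaos (this is the input from \cite{cf:DuSh}, upgraded from a.s.\ to $\bbL_p$ by uniform $\bbL_q$-boundedness for some $q>p$); on the near set each $A$-term separately is small by the capacity Lemmas~\ref{capI}/\ref{capI_II}. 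Your uniform $\bbL_p$ bound is a correct ingredient, but by itself it yields nothing without the martingale or the Cauchy estimate. Your final paragraph on the unrenormalized version is fine and matches the paper.
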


\proof
Here $M^{\gamma,\beta}_\gep$  is not a martingale so we cannot content ourselves with proving boundedness in $\bbL_p$: we must show that the sequence is Cauchy.
If $\gep, \gep'>0$, we have the following bounds:
\begin{align}
  &\E[  |     \tilde{M}^{\gamma,\beta}_\gep ([0,1]^2)- \tilde{M}^{\gamma,\beta}_{\gep'}( [0,1]^2 )   | ^p ] \nonumber\\
& \leq \E[    \E [ |\tilde{M}^{\gamma,\beta}_\gep ([0,1]^2)-\tilde{M}^{\gamma,\beta}_{\gep'}( [0,1]^2 )   | ^2  |  X]^{p/2} ]     \nonumber\\
& = \E[  |   A(\gep,\gep)+A(\gep',\gep')-2 A(\gep,\gep')   |^{p/2}  ]  \label{porcasse1} 
\end{align}
where we have set:
\begin{equation*}
A(\gep,\gep')= \int_{([0,1]^2)^2}   e^{\gamma X_\gep(x)  -\frac{\gamma^2}{2}  \E[  X_\gep(x)^2   ] } e^{  \gamma X_{\gep'}(y)  -\frac{\gamma^2}{2}  \E[  X_{\gep'}(y)^2   ]   } e^{\beta^2 \tilde{G}_{\gep, \gep'}(x,y) }  C(x,D)^{\frac{\gamma^2}{2}-\frac{\beta^2}{2}}  C(y,D)^{\frac{\gamma^2}{2}-\frac{\beta^2}{2}}   \dd x \dd y .
\end{equation*}
If $\delta>0$, we further define
\begin{align*}
A(\gep,\gep', \delta )=& \int_{([0,1]^2)^2,|x-y|\leq \delta}   e^{\gamma X_\gep(x)  -\frac{\gamma^2}{2}  \E[  X_\gep(x)^2   ] } e^{  \gamma X_{\gep'}(y)  -\frac{\gamma^2}{2}  \E[  X_{\gep'}(y)^2   ]   } e^{\beta^2 \tilde{G}_{\gep, \gep'}(x,y) }  C(x,D)^{\frac{\gamma^2}{2}-\frac{\beta^2}{2}}  C(y,D)^{\frac{\gamma^2}{2}-\frac{\beta^2}{2}}  \dd x \dd y\\
C(\gep,\gep', \delta )=& \int_{([0,1]^2)^2,|x-y|> \delta}   e^{\gamma X_\gep(x)  -\frac{\gamma^2}{2}  \E[  X_\gep(x)^2   ] } e^{  \gamma X_{\gep'}(y)  -\frac{\gamma^2}{2}  \E[  X_{\gep'}(y)^2   ]   } e^{\beta^2 \tilde{G}_{\gep, \gep'}(x,y) }  C(x,D)^{\frac{\gamma^2}{2}-\frac{\beta^2}{2}}  C(y,D)^{\frac{\gamma^2}{2}-\frac{\beta^2}{2}} \dd x \dd y . 
\end{align*} 
The main idea of what follows is the following: we split the integrals appearing in \eqref{porcasse1} in two regions $|x-y|\leq \delta$ and $|x-y|>\delta$ for some $\delta>0$. On the set $|x-y|> \delta$,  the singularity $e^{\beta^2 \tilde{G}_{\gep, \gep'}(x,y) }$ is bounded by a constant (eventually depending on $\delta$). Therefore, the convergence of the term
$$\E[  |   C(\gep,\gep,\delta)+C(\gep',\gep',\delta)-2 C(\gep,\gep',\delta)   |^{p/2}  ]$$
towards $0$ boils down to establishing the convergence of the family $(\tilde{M}^{\gamma,0}_\gep)$ in $\bbL^p$. This is "almost" proved in \cite{cf:DuSh}: actually, the authors in \cite{cf:DuSh} only prove almost sure convergence. On the other hand, it is plain to check (using Proposition \ref{kahaha} to get a comparison with a stationary field) that this family is uniformly bounded in $\bbL_q$ for some $q>p$. The claim of convergence in $\bbL_p$ follows. We deduce:
\begin{align*}
\limsup_{\gep, \gep' \to 0} &\E[ |    \tilde{M}^{\gamma,\beta}_\gep ([0,1]^2)-\tilde{M}^{\gamma,\beta}_{\gep'}( [0,1]^2 )   | ^p  ]  \\&\leq \limsup_{\gep, \gep' \to 0} \E[  |   A(\gep,\gep,\delta) |^{p/2} ] +  \E[ |A(\gep',\gep',\delta)|^{p/2}] + 2 \E[ | A(\gep,\gep',\delta)   |^{p/2}  ]   .
\end{align*}
By the capacity lemmas \ref{capI} or \ref{capI_II} (depending if we are in the inner phase I or the frontier I/II), the above quantity goes to $0$ as $\delta$ goes to $0$; therefore $(\tilde{M}^{\gamma,\beta}_\gep ([0,1]^2))_\gep$ is a Cauchy sequence in $\bbL_p$.

The fact that $( \gep^{\frac{\gamma^2}{2}-\frac{\beta^2}{2}}   M^{\gamma,\beta}_\gep(\varphi))_\gep$ converges also in $\bbL_p$ towards the same variable $M^{\gamma,\beta}(\varphi)$ is a consequence of the relation $\E[X_\gep(x)^2]= \ln \frac{1}{\gep}+ \ln C(x,D)+ o(1)$ as $\gep$ goes to $0$.

\qed

\paragraph{Orthonormal basis expansion of the GFF\index{Gaussian free field (GFF)}.} 

 As a preliminary, the reader is referred to \cite{She07,cf:DuSh} for further background about the expansion of the GFF\index{Gaussian free field (GFF)} in an orthonormal basis. Let us denote by $H(D)$ the Hilbert space closure of the space $C^\infty_c(D)$ with respect to the inner product
$$(f,g)_\nabla=\frac{1}{2\pi}\int_D\nabla f(x)\cdot\nabla g(x)\,\dd x.$$
Now we want to expand the GFF\index{Gaussian free field (GFF)} along a given orthonormal basis of $H(D)$ to produce another way of defining the limiting random variable $M^{\gamma,\beta}$. We will also show that the limit obtained with this procedure does not depend on the choice of the   orthonormal basis.

So we consider an orthonormal basis $(f_k)_{k\geq 1}$ of $H(D)$ made up  of continuous functions. We consider the projections of $X$ and $Y$ onto this orthonormal basis, namely we define the sequence of i.i.d. Gaussian random variables:
\begin{equation*}
\gep_k= \frac{1}{2\pi}\int_D \nabla X(x) \nabla f_k(x) \dd x,\quad \text{and}\quad  \gep'_k=\frac{1}{2\pi} \int_D \nabla Y(x) \nabla f_k(x) \dd x.
\end{equation*}
The projections  of $X$ and $Y$ onto the span of $\{f_1,\dots,f_n\}$ are given by:
$$X_n(x)= \sum_{k=1}^n  \gep_k f_k(x)\quad \text{and}\quad Y_n(x)= \sum_{k=1}^n  \gep'_k f_k(x).$$
In this context, we set:
\begin{equation}\label{defbon}
\bar{M}^{\gamma,\beta}_n(A)= \int_A e^{\gamma X_n(x)+i\beta Y_n  (x) -(\gamma^2/2-\beta^2/2) \E[X_n(x)^2]  } C(x,D)^{\frac{\gamma^2}{2}-\frac{\beta^2}{2}} \,\dd x.
\end{equation}

We have the following result: 
\begin{theorem}\label{cv:bon}
In the inner phase I, we consider $p\in ]1,2]$ such that $\zeta(p)>2$ and on the frontier I/II (excluding the extremal points), we consider $p\in]1,\frac{2}{\gamma}[$. For all  bounded measurable functions $\varphi$ with compact support in $D$, the sequence $(\bar{M}^{\gamma,\beta}_n(\varphi))_n$ converges almost surely and in $\bbL_p$ to $M^{\gamma,\beta}(\varphi)$, i.e. the same limit as the circle average approximations of Theorem \ref{circle}. 
\end{theorem}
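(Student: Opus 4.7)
\emph{Proof plan.} The argument has two main stages: establish convergence of the complex martingale $(\bar M_n^{\gamma,\beta}(\varphi))$, then identify its limit with $M^{\gamma,\beta}(\varphi)$.

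\emph{Step 1: Martingale property and $\bbL_p$-boundedness.} Let $\mathcal F_n=\sigma(\gep_k,\gep'_k:k\le n)$. Since the increments $X_{n+1}-X_n=\gep_{n+1}f_{n+1}$ and $Y_{n+1}-Y_n=\gep'_{n+1}f_{n+1}$ are independent of $\mathcal F_n$ and of each other, the Laplace transform satisfies $\E[e^{\gamma(X_{n+1}-X_n)(x)+i\beta(Y_{n+1}-Y_n)(x)}]=e^{(\gamma^2-\beta^2)f_{n+1}(x)^2/2}$, which is exactly cancelled by the increment of the renormalization factor $e^{-(\gamma^2/2-\beta^2/2)\E[X_{n+1}(x)^2]}$ in \eqref{defbon}. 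Hence $(\bar M_n^{\gamma,\beta}(\varphi))$ is a complex $\mathcal F_n$-martingale. To bound its $\bbL_p$-norm uniformly, I would condition on the real field, integrate out the Gaussian imaginary part, and apply Jensen's inequality with exponent $p/2\le 1$ to get
\[
\E\bigl[|\bar M_n^{\gamma,\beta}(\varphi)|^p\bigr]\le C\,\E\left[\left(\int_{D^2}|\varphi(x)\varphi(y)|\,e^{\beta^2\sum_{k\le n}f_k(x)f_k(y)}\,\bar{\mathcal M}^n(\dd x)\bar{\mathcal M}^n(\dd y)\right)^{p/2}\right],
\]
where $\bar{\mathcal M}^n$ denotes the real chaos built from $X_n$. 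Since $\sum_{k\le n}f_k(x)f_k(y)\le g(x,y)\le \log_+|x-y|^{-1}+C$, Kahane's convexity inequality (Proposition \ref{kahaha}) reduces this to the translation-invariant setting, and Lemma \ref{capI} (inner phase I) or Lemma \ref{capI_II} (frontier I/II) yields a uniform bound in $n$. Doob's $\bbL_p$-convergence theorem then gives both almost sure and $\bbL_p$ convergence of $\bar M_n^{\gamma,\beta}(\varphi)$ to some limit $L(\varphi)$.

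\emph{Step 2: Identification $L(\varphi)=M^{\gamma,\beta}(\varphi)$.} This is the main obstacle. The strategy is to interpolate through a common regularization. For $n$ large and $\gep$ small, let $X_{n,\gep}$ denote the circle average at scale $\gep$ of the smooth field $X_n$ (and similarly for $Y_{n,\gep}$), and define $\tilde M_{n,\gep}^{\gamma,\beta}(\varphi)$ analogously with the appropriate normalization. Writing
\[
\bar M_n^{\gamma,\beta}(\varphi)-\tilde M_\gep^{\gamma,\beta}(\varphi) = \bigl(\bar M_n^{\gamma,\beta}(\varphi)-\tilde M_{n,\gep}^{\gamma,\beta}(\varphi)\bigr) + \bigl(\tilde M_{n,\gep}^{\gamma,\beta}(\varphi)-\tilde M_\gep^{\gamma,\beta}(\varphi)\bigr),
\]
the first difference tends to $0$ in $\bbL_p$ as $\gep\to 0$ for fixed $n$ by a pointwise Gaussian comparison (Lemma \ref{lesgaussiennes}), since the $f_k$ are continuous so that $X_{n,\gep}\to X_n$ uniformly on compact subsets of $D$. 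The second difference is handled by the same diagonal/off-diagonal decomposition used in the proof of Theorem \ref{circle}: conditional on the real fields, the contribution from $|x-y|>\delta$ vanishes as $n\to\infty$, $\gep\to 0$ because both covariance kernels converge uniformly to the Green function away from the diagonal; the near-diagonal contribution $|x-y|\le\delta$ is uniformly controlled by the capacity Lemmas \ref{capI} or \ref{capI_II}, and made arbitrarily small by first taking the joint limit $n\to\infty$, $\gep\to 0$ and then sending $\delta\to 0$. Combined with the known $\bbL_p$-convergence $\tilde M_\gep^{\gamma,\beta}(\varphi)\to M^{\gamma,\beta}(\varphi)$ from Theorem \ref{circle}, the triangle inequality yields $L(\varphi)=M^{\gamma,\beta}(\varphi)$ almost surely. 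The delicate point is ensuring that the capacity bounds hold uniformly for the mixed covariance $\E[X_n(x)X_\gep(y)]$ and the pure ones, which is precisely where the Green-function domination together with the Kahane comparison with a stationary field play their role.
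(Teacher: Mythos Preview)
Your Step 1 is correct and matches the paper's opening move.

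Your Step 2 is where you diverge from the paper, and it contains a difficulty you have not actually resolved. The decomposition $\bar M_n - \tilde M_\gep = (\bar M_n - \tilde M_{n,\gep}) + (\tilde M_{n,\gep} - \tilde M_\gep)$ forces you to control a double limit $(n\to\infty,\,\gep\to 0)$. You say the first bracket vanishes as $\gep\to 0$ for fixed $n$, and the second (off-diagonal part) vanishes as $n\to\infty$ for fixed $\gep$; but these are opposite orders of limits, and to conclude $L=M^{\gamma,\beta}$ you need one of them to hold uniformly in the other parameter. Concretely: for fixed $n$, as $\gep\to 0$ one has $\tilde M_{n,\gep}\to\bar M_n$ and $\tilde M_\gep\to M^{\gamma,\beta}$, so the second bracket converges to $\bar M_n - M^{\gamma,\beta}$ --- exactly the quantity you are trying to bound. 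The argument is circular unless you establish some uniformity (e.g.\ a uniform-in-$n$ rate for $\|\bar M_n-\tilde M_{n,\gep}\|_p$, which is not obvious since the fields $X_n$ become increasingly rough). Your last sentence flags this as ``the delicate point'' but does not address it.

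The paper bypasses the whole issue with a single observation: $\bar M_n^{\gamma,\beta}(\varphi) = \E\bigl[M^{\gamma,\beta}(\varphi)\,\big|\,\mathcal F_n\bigr]$. To see this, one first computes $\E[\tilde M_\gep^{\gamma,\beta}(\varphi)\,|\,\mathcal F_n]$: by independence of $(\gep_k,\gep'_k)_{k>n}$ from $\mathcal F_n$, this equals the $\gep$-circle-averaged chaos built from $X_n,Y_n$. Then one lets $\gep\to 0$ on \emph{both sides simultaneously}: the left side converges in $\bbL_p$ to $\E[M^{\gamma,\beta}(\varphi)\,|\,\mathcal F_n]$ by Theorem~\ref{circle} and continuity of conditional expectation in $\bbL_p$; the right side converges to $\bar M_n^{\gamma,\beta}(\varphi)$ because $X_n,Y_n$ are continuous functions. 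Once this identity is in hand, martingale convergence and $\mathcal F_\infty$-measurability of $M^{\gamma,\beta}(\varphi)$ give $L=M^{\gamma,\beta}$ immediately. No double limit, no mixed covariances, no uniformity to check.
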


\proof First observe that the sequence $(M^{\gamma,\beta}_n(\varphi))_n$ is a martingale uniformly bounded in $\bbL_p$ (because of Lemma \ref{capI} or Lemma \ref{capI_II} depending if we are in the inner phase I or the frontier I/II). Thus it converges almost surely and in $\bbL_p$.

Let $\gep>0$. For $n\geq 1$, we denote by $X_{n,\gep}(x)$ (resp. $Y_{n,\gep}(x)$)  the circle average of $X_{n}(x)$ (resp. $Y_{n}(x)$), i.e. the mean value of $X_n$ (resp. $Y_n$) along the circle centered at $x$ with radius $\gep$. For all $n \leq m$, we have the following:
\begin{align*}
& \E[    \int_{D}\varphi(x)  e^{\gamma X_{m,\gep}(x)+i\beta Y_{m,\gep}  (x) -(\gamma^2/2-\beta^2/2) \E[X_{m,\gep}(x)^2]  }   C(x,D)^{\frac{\gamma^2}{2}-\frac{\beta^2}{2}}  \,\dd x     |  (\gep_k,\gep'_k)_{k \leq n}   ] \\
& = \int_{D}\varphi(x)   e^{\gamma X_{n,\gep}(x)+i\beta Y_{n,\gep}  (x) -(\gamma^2/2-\beta^2/2) \E[X_{n,\gep}(x)^2]  }  C(x,D)^{\frac{\gamma^2}{2}-\frac{\beta^2}{2}} \,\dd x 
\end{align*}
 Now, we take the limit as $m\to \infty$ and get that:
\begin{equation*}
\E[   \tilde{M}^{\gamma,\beta}_\gep (\varphi)   |  (\gep_k,\gep'_k)_{k \leq n}   ] =  \int_{D}\varphi(x) e^{\gamma X_{n,\gep}(x)+i\beta Y_{n,\gep}  (x) -(\gamma^2/2-\beta^2/2) \E[X_{n,\gep}(x)^2]  }  C(x,D)^{\frac{\gamma^2}{2}-\frac{\beta^2}{2}} \,\dd x.
\end{equation*}
Since the variable $\tilde{M}^{\gamma,\beta}_\gep$ converges in $\bbL_p$, we can take the limit in the above identity as $\gep \to 0$ hence getting:
\begin{equation*}
\E[   M^{\gamma,\beta} (\varphi)   |  (\gep_k,\gep'_k)_{k \leq n}   ] =  \bar{M}^{\gamma,\beta}_n(\varphi) .
\end{equation*}
Now, we conclude that $\bar{M}^{\gamma,\beta}_n(\varphi) $ is a martingale bounded in $\bbL_p$ which converges to $M^{\gamma,\beta}(\varphi)  $. \qed

\subsubsection{Phases II and III,  frontier I/III and II/III}

 One can adapt the proofs of Theorems \ref{th3} and \ref{thfrontier23}
  to the case of the GFF\index{Gaussian free field (GFF)} with Dirichlet boundary condition. Here, we work with the approximations $X_\gep, Y_\gep$ given by \eqref{covar2} 
 
 \medskip
 
 Concerning the corresponding statements, we have   to specify what  the value of $\sigma$ and  the intensity measure are.
 It appears through the computations that the natural thing to do is to renormalize 
 $e^{\gamma X_\gep+i\gb Y_\gep}$ by a power of $\gep$ (i.e. by considering Wick ordering\index{Wick ordering}). Recall that
$$M^{\gamma,\beta}_\gep(A) =
\int_{A} e^{\gamma X_\gep (x)+i\beta Y_\gep (x)}\,\dd x$$ 
for all measurable bounded sets $A\subset D$. In this context, one can show (the computation of the constant $\tilde{\sigma}^2$ below is left to the reader and presents no special difficulty):
 
 \begin{theorem}\label{th3GFF}
\begin{itemize}
\item When $\gamma\in[0, 1[$ and $\gb^2+\gamma^2>2$, we have
 \begin{equation}\label{foum}
\left(\gep^{\gamma^2-1} M^{\gamma,\gb}_\gep(A)\right)_{A\subset \R^2} \Rightarrow \left(W_{\tilde{\sigma}^2  M^{2\gamma,0}}(A)\right)_{A\subset \R^2}.
\end{equation}
with
$$\tilde{\sigma}^2=\tilde{\sigma}^2(\gb^2+\gamma^2):=2\pi \int_0^\infty \exp\Big(-(\gamma^2+\beta^2)\int_0^1\frac{1-e^{-(ur)^2/2}}{u}\,\dd u\Big)\,{ r\dd r},$$
where recall that 
 \begin{equation*}
   M^{2\gamma,0}(\dd x):=\lim_{\gep\to 0} \gep^{2\gamma^2}e^{2\gamma X_\gep(x)}\dd x
 =C(x,D)^{2\gamma^2} \lim_{\gep\to 0}e^{2\gamma X_\gep(x)-2\gamma^2\bbE[X^2_\gep(x)]}\dd x
 \end{equation*}
 and $W _{\tilde{\sigma}^2  M^{2\gamma,0}} $ is a standard complex Gaussian measure on $\R^2$ with intensity $\tilde{\sigma}^2 M^{2\gamma,0}$. The above convergence holds in the sense of convergence in law of the finite dimensional marginals.
\item  When $\gamma\in[0,1[$ and $\gb^2+\gamma^2=2$, we have
 \begin{equation}
\left(\gep^{\gamma^2-1}|\log \gep|^{-1/2} M^{\gamma,\gb}_\gep(A)\right)_{A\subset \R^2} \Rightarrow \left(W_{\tilde{\sigma}^2  M^{2\gamma,0}}(A)\right)_{A\subset \R^d}.
\end{equation}
with 
$$\tilde{\sigma}^2=\tilde{\sigma}^2(2):= 2\pi\exp\left(\int_0^\infty \frac{e^{-u^2/2}-\ind_{[0,1]}(u)}{u} \dd u \right).
$$
and $W_{  \tilde{\sigma}^2 M^{2\gamma,0}}$ is a standard complex Gaussian measure on $\R^2$ with intensity $\tilde{\sigma}^2 M^{2\gamma,0}$. The above convergence holds in the sense of convergence in law of the finite dimensional distributions.
\end{itemize}
 \end{theorem}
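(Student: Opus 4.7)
The plan is to adapt the proofs of Theorem \ref{th3} from Sections \ref{slesmomentos}--\ref{twoksec} to the GFF setting. By the method of moments conditional on $X$, as implemented in Section \ref{twoksec}, finite-dimensional convergence reduces to identifying the limiting conditional second moment (the GFF analogue of Propositions \ref{lesmomentos}--\ref{lesmomentos2}) and verifying that higher conditional moments match those of the complex Gaussian $W_{\tilde\sigma^{2}M^{2\gamma,0}}$. The combinatorial higher-moment control of Appendix \ref{Controlmom} and Proposition \ref{propclesmoments} transfers essentially verbatim, so I focus on the second moment.

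A direct Gaussian computation (the analogue of \eqref{secondmom}) gives
\begin{equation*}
\bbE\Big[\,\big|\gep^{\gamma^{2}-1}M^{\gamma,\beta}_\gep(A)\big|^{2}\,\Big|\,\mathcal{F}^{X}\,\Big]=\gep^{2\gamma^{2}+\beta^{2}-2}\int_{A\times A}\frac{M^{\gamma,0}_\gep(\dd x)\,M^{\gamma,0}_\gep(\dd y)}{\bar G_\gep(x,y)^{\beta^{2}}}\,C_\gep(x,D)^{-\beta^{2}/2}C_\gep(y,D)^{-\beta^{2}/2},
\end{equation*}
using $\bar G_\gep(x,x)=\gep/C_\gep(x,D)$ and the locally uniform convergence $C_\gep\to C(\cdot,D)$ from Lemma \ref{greenradius}. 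Changing variables $y=x+\gep z$, splitting $X_\gep(x)+X_\gep(y)=2X_\gep(x)+[X_\gep(y)-X_\gep(x)]$, and using the short-distance asymptotic $\bar G_\gep(x,x+\gep z)/\gep\to f(|z|)/C(x,D)$ with $f(r):=\exp\!\big(\int_{0}^{r}\tfrac{1-e^{-u^{2}/2}}{u}\dd u\big)$ (which follows from the heat-kernel representation \eqref{covar2}), the integrand concentrates on the diagonal and factorizes into a local scaling integral times the real chaos density. For $\gamma^{2}+\beta^{2}>2$ the scaling integral evaluates in polar coordinates to $\tilde\sigma^{2}(\gamma^{2}+\beta^{2})$; on the critical line $\gamma^{2}+\beta^{2}=2$ it diverges logarithmically at infinity and the prefactor $|\log\gep|^{-1/2}$ extracts $\tilde\sigma^{2}(2)$, mirroring the computation of $\sigma^{2}(d)$ in \eqref{critsigma}. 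The diagonal concentration rests on the GFF analogue of Lemma \ref{convergence}, which follows from uniform integrability of the Wick martingale $\gep^{2\gamma^{2}}e^{2\gamma X_\gep(x)}\dd x$ for $\gamma<1$ (proved for the GFF in Theorem \ref{cv:bon}) combined with the Gaussian replacement estimate Lemma \ref{lesgaussiennes}. The loss of translation invariance is absorbed by the pointwise Gaussian upper bound \eqref{dominp} and Proposition \ref{kahaha}, so all capacity-type bounds of Section \ref{phaseI} transfer with the kernel $k(u)=e^{-u^{2}/2}$ playing the role of the translation-invariant kernel.

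The expected main obstacle lies on the critical line $\gamma^{2}+\beta^{2}=2$, where one must extract the $|\log\gep|^{-1/2}$ prefactor uniformly in $x$ on the compact support of the test function. In the translation-invariant setting \eqref{critsigma} this is effortless thanks to the exact scale-invariance of the kernel, whereas here the logarithmic divergence in the $z$-integral depends on the position $x$ through $C(x,D)$; the required uniformity is obtained by combining the uniform convergence $C_\gep\to C(\cdot,D)$ on compacts (Lemma \ref{greenradius}) with dominated convergence against the integrable Gaussian majorant from \eqref{dominp}, exactly as in the proof of Proposition \ref{lesmomentos} but with position-dependent error terms. Once this uniform second-moment control is secured, the moment-method argument of Section \ref{twoksec} yields the claimed convergence in law of the finite-dimensional distributions to $W_{\tilde\sigma^{2}M^{2\gamma,0}}$.
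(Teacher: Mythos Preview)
Your proposal is correct and follows essentially the same route as the paper. The paper's own ``proof'' of Theorem \ref{th3GFF} is a one-line remark that one adapts the proofs of Theorems \ref{th3} and \ref{thfrontier23} (with the computation of $\tilde\sigma^2$ left to the reader), and you have outlined precisely that adaptation: conditional second-moment identification via the heat-kernel asymptotic that plays the role of Lemma \ref{greenk} with $k(u)=e^{-u^2/2}$, Kahane comparison through \eqref{dominp} to absorb the lack of translation invariance, and the moment method of Section \ref{twoksec} and Appendix \ref{Controlmom} for the higher moments. Two minor points: the uniform integrability of $\gep^{2\gamma^2}e^{2\gamma X_\gep}\dd x$ for $\gamma<1$ is not the content of Theorem \ref{cv:bon} (which treats orthonormal-basis cutoffs in phase I) but is the standard real GFF chaos result, e.g.\ \cite{cf:DuSh} or \cite{cf:Kah} via Kahane comparison; and the paper explicitly flags (in the remark after Theorem \ref{thfrontier23GFF}) that $\tilde\sigma$ is just $\sigma$ from Definition \ref{sigmasquare} with $k(u)=e^{-u^2/2}$, so the critical-line extraction you worry about really is the computation \eqref{critsigma} verbatim and does not require new uniformity arguments.
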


\begin{theorem}\label{thfrontier23GFF}
When $\gamma= 1$ and $\gb^2+\gamma^2>2$, we have
 \begin{equation}
\left((-\ln \gep)^{1/4} M^{\gamma,\gb}_\gep(A)\right)_{A\subset\R^2} \Rightarrow \left(W_{\tilde{\sigma}^2  M'}(A)\right)_{A\subset\R^2}.
\end{equation}
with  $$\tilde{\sigma}^2=\sqrt{\frac{2}{\pi}} \tilde{\sigma}^2(\gb^2+1).$$  
Convergence holds in the sense of convergence in law of the finite dimensional distributions and the law of $W_{\tilde{\sigma}^2  M'}(\cdot)$ is  that of a complex Gaussian random measure with intensity $\tilde{\sigma}^2  M' $ where 
\begin{equation*}
  M'(\dd x):=\lim_{\gep\to 0}\sqrt{\frac{\pi}{2}} \gep^{2}|\log \gep|^{1/2}e^{2 X_\gep(x)}\dd x=C(x,D)^{2} \lim_{\gep\to 0}[2\bbE[X^2_\gep(x)]-X_\gep(x)]  e^{2 X_\gep(x)-2\bbE[X^2_\gep(x)]}\dd x.
 \end{equation*}
\end{theorem}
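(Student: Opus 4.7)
The plan is to transport the machinery developed in Section \ref{phaseIII} (specifically Proposition \ref{lesmomentos2} and Section \ref{twoksec}) to the GFF setting, making use of the local structure of the GFF covariance and the derivative-martingale construction of \cite{Rnew7,Rnew12} for the GFF. Because of the non-translation invariance, the appropriate "local dictionary" is provided by Lemma \ref{greenradius} and by the decomposition \eqref{decompo} $X_\gep=\tilde X_\gep+X'$, which separates the singular (stationary-like) part of the kernel from a smooth field.

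First, I would compute the conditional second moment. Integrating out $Y$ we get
\begin{equation*}
\bbE[|(-\ln\gep)^{1/4}M^{\gamma,\gb}_\gep(A)|^2\mid\mathcal F^X]
=|\ln\gep|^{1/2}\int_{A^2}
\frac{e^{\gamma(X_\gep(x)+X_\gep(y))}}{\bar G_\gep(x,y)^{\gb^2}\,\bar G_\gep(x,x)^{-\gb^2/2}\bar G_\gep(y,y)^{-\gb^2/2}}\,\dd x\,\dd y,
\end{equation*}
exactly mirroring \eqref{secondmom}. Using $2\pi r^{-3}p(r^{-2},x,y)\sim r^{-1}e^{-(r|x-y|)^2/2}$ and Lemma \ref{greenradius}, the change of variables $r=y-x$ (in polar coordinates, since $d=2$) leads to the analog of \eqref{subcritical}. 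The "radial integral" then converges to $\tilde\sigma^2(\gb^2+1)$ as in the proof of Proposition \ref{lesmomentos}, and it suffices to show that the "$r$-shifted chaos"
$\hat M^{1/\sqrt2}_{\gep,r}(x):=\int \exp(\gamma X_{\gep,r}(z)-\tfrac{\gamma^2}{2}\bbE[X_{\gep,r}(z)^2])\mathbf 1_{z\in A}\,dz$
(localized near $x$) satisfies, after multiplication by $|\ln\gep|^{1/2}$, a conditional convergence to $\sqrt{2/\pi}\,C(x,D)^{2}\,\lim_{\gep\to 0}(2\bbE[X^2_\gep(x)]-X_\gep(x))e^{2X_\gep(x)-2\bbE[X^2_\gep(x)]}$, i.e.\ the density of the derivative martingale $M'$.

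The core step is therefore the analog of Lemma \ref{lem:crit1} and \ref{lem:crit2} in the GFF setting. This is where the conformal radius enters: the Seneta–Heyde renormalization of \cite{Rnew12} adapts to the GFF precisely because the local covariance splits as $\tilde g_\gep(x,y)=-\log(|x-y|\vee\gep)+\log C(x,D)+o(1)$ uniformly on compacts, so the field $\tilde X_\gep+X'$ admits the barrier/stopping-time analysis of \cite[Prop.~19]{Rnew7} with $X_u(x)+\sqrt{2}\log u$ replaced by $X_u(x)+\sqrt{2}\log u-\tfrac{\sqrt2}{2}\log C(x,D)$. The Girsanov computation in \eqref{crodin} and the interpolation argument with $\gep''=\sqrt\gep$, $\gep'=\gep\log\log(\gep^{-1})$ carry over verbatim, since the only place where translation invariance was used in Section \ref{frontier23} was to bound covariance differences by a single integrable function $g$ via \eqref{lechemg}; this bound survives in the GFF setting thanks to the uniform estimate \eqref{dominp} and Kahane's comparison inequality \eqref{khan1}.

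Once the conditional second moment has the correct limit $\tilde\sigma^2 M'(A)$ in probability, the convergence of the finite-dimensional distributions follows exactly as in Section \ref{twoksec}: conditionally on $X$, the measure $(-\ln\gep)^{1/4}M^{\gamma,\gb}_\gep(\cdot)$ is asymptotically a mean-zero complex Gaussian, which is captured via the method of moments. The higher mixed moments $\bbE[\prod_j (M^{\gamma,\gb}_\gep(A_j))^{k_j}(\overline{M^{\gamma,\gb}_\gep}(A_j))^{\ell_j}\mid X]$ expand into sums over pairings, where only the $(1,1)$-pairings (pairing a holomorphic copy with an antiholomorphic one) contribute in the limit; the non-paired contributions vanish by the same capacity estimate that kills the second moment restricted to $\{|x-y|>\eta\}$ (Lemma \ref{capI_II} style), while the paired contributions factorize into products of the second moment, giving the Wick formula characteristic of the complex Gaussian. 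The main technical obstacle is exactly the same one as in the plane-kernel case: controlling $M^{1/\sqrt2}_{\gep,r}$ on the triple point-like regime where all scales $r\in[\gep,1]$ contribute, which is only possible because the mass is concentrated at $r$ of order $\gep$ thanks to $\gb^2+\gamma^2>2$.
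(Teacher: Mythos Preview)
Your proposal is correct and follows precisely the route the paper prescribes: the paper's own ``proof'' of Theorem \ref{thfrontier23GFF} consists of the single sentence that one adapts Theorems \ref{th3} and \ref{thfrontier23} to the GFF cut-off \eqref{covar2}, and you have spelled out what that adaptation entails --- the conditional second moment computation, the analogs of Lemmas \ref{lem:crit1}--\ref{lem:crit2} using the Seneta--Heyde result of \cite{Rnew12} for the GFF together with the local approximation \eqref{dominp} and Lemma \ref{greenradius}, and finally the conditional method of moments of Section \ref{twoksec}. One small misattribution: the control of the non-paired higher moments is not a ``Lemma \ref{capI_II} style'' capacity estimate (that lemma lives in phase I/II); it is the optimal-matching argument of Proposition \ref{propclesmoments} in Appendix \ref{Controlmom}, which shows that cross terms are negligible compared to the product of second moments.
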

 
 \begin{rem}  Note that the expression that we find for $\tilde{\sigma}$ is that of $\sigma$ in Theorem \ref{th3} where $k$ is taken equal to $e^{-u^2/2}$.
 \end{rem}

\subsubsection{Other GFFs\index{Gaussian free field (GFF)}}
There are other possible choices of the underlying GFF\index{Gaussian free field (GFF)}. One may for instance consider Neumann boundary conditions instead of Dirichlet's, or consider a GFF\index{Gaussian free field (GFF)} with vanishing mean on the sphere or the torus. Since everything works essentially the same as in the case of the GFF with Dirichlet boundary conditions, we will not give the details here.

%

\section{Applications in $2D$-string theory}\label{LQG}

\subsection{Introduction} 
Euclidian  quantum gravity is an attempt to quantize general relativity 
based on Feynman's functional integral and on the 
Einstein-Hilbert action.   The main motivation for considering $2D$ (i.e. two-dimensional) quantum gravity comes from the fact  that the Einstein-Hilbert action becomes trivial in $2D$ as it reduces to a topological term via the Gauss-Bonnet theorem. 
The basic approach is to couple a Conformal Field Theory  (CFT)  with central charge $c$ to gravity. A famous example is the coupling of $c$ free scalar matter fields to gravity (in this case, $c$ is an integer), leading to an interpretation of such a specific theory of $2D$-Liouville Quantum Gravity as a bosonic string  theory   in $c$ dimensions \cite{Pol}.
 
 The following discussion will focus on the coupling of one free scalar matter field to gravity: the CFT (here the free scalar field) is then said to have a central charge \index{central charge} $c=1$ and this corresponds to the bosonic string \index{bosonic string} in $1$ dimension, or equivalently $2D$ string theory. For a central charge $c=1$, it is shown in \cite{Pol,cf:KPZ,cf:Da} that the action of the bosonic string in one dimension factorizes as a tensor product: the fluctuations of the metric are governed by the Liouville quantum field theory (LFT for short, see \cite{DKRV} for a mathematical construction) and are  independent of the CFT. More precisely,  the random metric    takes on the form \cite{Pol,cf:KPZ,cf:Da} (we consider an Euclidean background metric for simplicity)
\begin{equation}\label{tensor}
g(x)=e^{2 X(x)}dx^2,
\end{equation}
where the   field $X$ is a Liouville field  and the CFT becomes an independent free field, which we may call $Y$. When neglecting the cosmological effects (or treating the LFT as a free  field theory),  the Liouville action turns the field $X$ in \eqref{tensor} into a Free Field, with appropriate boundary conditions. For an extensive review on $2D$ string theory, 
we refer to Klebanov's lecture notes \cite{Kle}. As expressed by Klebanov in \cite{Kle}: "Two-dimensional string theory is the kind of toy model which possesses a remarkably simple structure but at the same time incorporates 
some of the physics of string theories embedded in higher dimensions". The reader is also referred to \cite{david-hd,cf:Da,cf:DuSh,DFGZ,DistKa,bourbaki,LBM,GM,glimm,Kle,cf:KPZ,Nak,Pol} for more insights on $2D$-Liouville quantum gravity. 

 Therefore, in the following section, we first  review the basic notions of CFT with central charge $c=1$ 
 on its own (Section \ref{oio}) and then  coupled to gravity, i.e. 
 two-dimensional string theory (Section \ref{apres}). In particular, we show that our work enables to define mathematically the so-called Tachyon fields.

\subsection{Conformal Field theory with central charge $c=1$}\label{oio}
 We consider a domain $D$ and a GFF\index{Gaussian free field (GFF)} $Y$ on $D$ with Dirichlet boundary conditions. In the physics literature, one considers the conformally invariant action
   \begin{equation}\label{ActionGFF} 
  S(Y)= \frac{1}{4 \pi}\int_D | \nabla Y(x) | d^2x 
  \end{equation}
 and all averages of functionals $F(Y)$ are denoted formally as
 \begin{equation*}
 \E[F(Y)  ]= \frac{\int F(Y) e^{-S(Y)} \dd Y}{\int e^{-S(Y)} \dd Y}.
 \end{equation*}
Note that the normalization in the definition of $S$ ensures that
\begin{equation*}
\E[Y(y)Y(x)] \underset{|y-x| \to 0}{\sim} \ln \frac{1}{|y-x|}. 
\end{equation*}
In this context, Conformal Field Theory (CFT) \index{conformal field theory (CFT)} with central charge  \index{central charge} $c=1$ involves defining and studying operators (or fields) formally constructed as functions of the GFF\index{Gaussian free field (GFF)}: see the mathematically oriented article \cite{KanMak} for more on this.  Since the GFF\index{Gaussian free field (GFF)} is a distribution (generalized function), this is often not straightforward mathematically. Of particular importance are the so-called vertex operators  \index{vertex operator} denoted $V_\alpha= e^{\alpha Y}$  ($\alpha\in \C$) by physicists  that we will rather denote formally in the following way
\begin{equation}\label{eqformel}    
V_{\alpha}(Y(x),x)= C(x,D)^{\frac{\alpha^2}{2}}e^{ \alpha Y(x) -\frac{\alpha^2}{2}  \E[  Y(x)^2 ] }
\end{equation}
where $C(x,D)$ is the conformal radius. This formal definition is more accurate to denote what physicists of CFT or Quantum gravity call normal or Wick ordering \index{Wick ordering} of $e^{\alpha Y}$ (in other fields, in the Wick ordering \index{Wick ordering} of $e^{\alpha Y}$, the conformal radius does not appear in expression \ref{eqformel}). When integrated under this form, we recognize Gaussian multiplicative chaos if $\alpha$ is a nonnegative real less than $2$. If $\alpha$ is complex with $|\alpha|<\sqrt{2}$, one can define $V_{\alpha}$ as  a random distribution (see \cite{KanMak}). The construction is obvious because $|\alpha|<\sqrt{2}$ ensure the fields are $\bbL_2$-integrable. 

In this paper and in the following discussion, we consider the case $\alpha=i \beta$ where $\beta$ is real.   
The conformal dimension of $V_{i \beta}(Y,z)$ measures how the field changes when one switches to another parametrization. More precisely, let $\psi:  \tilde{D} \rightarrow D$ be a conformal map and set $\tilde{Y}(\tilde{x})= (Y \circ \psi)(\tilde{x})$. Since the action (\ref{ActionGFF}) maps $Y \rightarrow \tilde{Y}$ under $\psi$, the conformal dimension  \index{conformal dimension} $\Delta_{i\beta}$ of $V_{i \beta}$ is defined by  
\begin{equation}\label{eqdimformelle}
 V_{i \beta}(Y( \psi(\tilde{x})), \psi(\tilde{x}))= |\psi'(\tilde{x})|^{-2 \Delta_{i \beta}} V_{i \beta}(\tilde{Y} (\tilde{x}), \tilde{x}) . 
\end{equation}
In fact, this corresponds to the definition of conformal dimension for spinless operators.
 By the rule $| C( \psi(\tilde{x}),D)  |= |\psi'(\tilde{x})| | C(\tilde{x},\tilde{D})  | $, we get that $\Delta_{i \beta}=\frac{\beta^2}{4}$. In particular, we get the following rigorous relation by integrating the formal relation  (\ref{eqdimformelle}) for some compact set $\tilde{K}$ ($K=\psi(\tilde{K})$)
 \begin{align*} 
\int_{\tilde{K}}  V_{i \beta}(\tilde{Y} (\tilde{x}), \tilde{x}) d \tilde{x} & = \int_{\tilde{K}}  V_{i \beta}(Y( \psi(\tilde{x})), \psi(\tilde{x})) |\psi'(\tilde{x})|^{2 \Delta_{i \beta}}   d \tilde{x}       \\ 
& =  \int_{K}  V_{i \beta}(Y(x), x) |\psi'(\psi^{-1}(x))|^{2 \Delta_{i \beta}-2}   d x   
\end{align*}
where $K$ is some compact set.

\subsection{CFT with central charge $c=1$ coupled to gravity}\label{apres}
 In the special case $c=1$ (hence the case of a GFF\index{Gaussian free field (GFF)} $Y$ with Dirichlet boundary conditions on some domain $D$), the Polyakov action \index{Polyakov action} 
 can be written in the following tensor form
 \begin{equation}\label{actionPol} 
  S(X,Y)= \frac{1}{4 \pi}\int_D | \nabla Y(x) |^2 \dd^2x + \frac{1}{4 \pi} \int_D | \nabla X (x) |^2+  Q R(x)  X(x) \dd^2x
  \end{equation}      
where the first term is the classical GFF\index{Gaussian free field (GFF)} action (CFT \index{conformal field theory (CFT)} with $c=1$) and the second is the classical Liouville action (where we have set the cosmological constant to $0$, $R$ is the curvature and $Q=2$). 

Following the physics literature (see \cite{DistKa,GM,Nak}), we consider the following equivalence class of random surfaces: if $\psi:  \tilde{D} \rightarrow D$ is a conformal map then we get the following rule for the fields $(X,Y)$:
\begin{equation*}
  (X,Y) \rightarrow (X \circ \psi +2 \ln | \psi' | , Y \circ \psi).
\end{equation*}
This equivalence class is a generalization of \cite{cf:DuSh} in the sense that now we incorporate  the matter field $Y$. The reparametrization rule for $Y$ is a just a consequence of the conformal invariance of the Free Field, i.e. the action $\int_D | \nabla Y(x) | d^2x$ is conformally invariant. 
Since Liouville Quantum Gravity \index{Liouville Quantum Gravity} is a conformal field theory\index{conformal field theory (CFT)}, the relevant operators are the ones which are invariant under the above rule; this ensures that the operators are stable under reparametrization, i.e. are independent of the underlying background metric which is used to define the theory. For consistency reasons (no conformal anomaly), one can only consider conformally invariant dressed operators within Liouville Quantum Gravity\index{Liouville Quantum Gravity}. The simplest of such operators are    the so-called
{\bf tachyon fields}  \index{tachyon field} (see the reviews \cite{Kle,Nak}). More precisely, the tachyon fields \index{tachyon field} are the CFT vertex operators \index{vertex operator} $e^{i\beta Y }$ with gravitational dressing of the form $e^{\gamma X}$ that are conformally invariant under the action (\ref{actionPol}). Hence they are formally of the form $e^{\gamma X(x)+i\beta Y(x) }$. 
 Here, we stress the fact that $e^{\gamma X(x)+i\beta Y(x) }$ is a function of the two GFFs\index{Gaussian free field (GFF)}  $X,Y$ in order to determine the way it 
changes under reparametrization. In what follows, we will thus consider the Wick ordering of the field   $e^{\gamma X(x)+i\beta Y(x) }$, i.e. the limit $\lim_{\gep\to 0} \gep^{\frac{\gamma^2}{2}-\frac{\beta^2}{2}} e^{\gamma X_\gep(x)+i\beta Y_\gep(x)}\,\dd x$ (when it exists) and see if it properly defines a conformally invariant operator  under the action (\ref{actionPol}).

\subsubsection{The special point $\gamma=2,\beta=0$}
At the special point $(\gamma=2,\beta=0)$, we recover the special tachyon field \index{tachyon field}
$$M^{\gamma,\beta}_{X,Y}(A)= M'(A)$$
where $M'$ is the derivative martingale\index{derivative multiplicative chaos} defined in \cite{Rnew7, Rnew12}:
$$M'(dz)= \, C(z,D)^2 (2\E[X (z)^2]-X )e^{2X (z)-2\E[X (z)^2]}\,dz.$$

\subsubsection{Tachyons within phase I and frontier I/II}
It is natural to first look for  other tachyons fields \index{tachyon field} in phase I together with the frontier of phases I/II (excluding the extremal points) because in this case the Wick ordering \index{Wick ordering} of the field $e^{\gamma X(x)+i\beta Y(x) } $ converges to $M^{\gamma, \beta}$ defined in subsection \ref{GFFplanar}. In order to stress the dependence of $M^{\gamma, \beta}$ with respect to $X,Y$, we denote it by $M^{\gamma,\beta}_{X,Y}$. In view of subsection \ref{GFFplanar}, it is natural to give the following formal expression to $M^{\gamma,\beta}_{X,Y}$:    
$$M^{\gamma,\beta}_{X,Y}(\dd x)=e^{\gamma X(x)+i\beta Y(x) -\frac{\gamma^2}{2}\E[X(x)^2]+\frac{\beta^2}{2}\E[Y(x)^2]}   C(x,D)^{\frac{\gamma^2}{2}-\frac{\beta^2}{2}}  \dd x.$$
Now, we are looking for the couples $(\gamma, \beta)$   satisfying $$M_{X,Y}^{\gamma,\beta} ( \varphi\circ \psi^{-1}))= M^{\gamma,\beta}_{X \circ \psi +2 \ln | \psi' |,Y\circ \psi} (\varphi)$$ for every function $\varphi\in C^2_c(\tilde{D})$. 

  Now, we get that:
  \begin{align*}
  & M_{X \circ \psi +2 \ln | \psi' |,Y \circ \psi}^{\gamma,\beta} (\varphi) \\
  & =  \int_{\tilde{D}}\varphi(\tilde{x}) e^{\gamma (X(\psi(\tilde{x})) + 2 \ln | \psi'(\tilde{x}) |  )+ i \beta Y(\psi(\tilde{x}))-  \frac{\gamma^2}{2} \E[X(\psi(\tilde{x}))^2]+ \frac{\beta^2}{2} \E[Y(\psi(\tilde{x}))^2]} | C(\tilde{x},\tilde{D})  |^{\gamma^2/2- \beta^2/2} d^2 \tilde{x}  \\
  & =  \int_{\tilde{D}}\varphi(\tilde{x}) e^{\gamma X(\psi(\tilde{x}))+ i \beta Y(\psi(\tilde{x}))- \frac{\gamma^2}{2} \E[X(\psi(\tilde{x}))^2]+\frac{\beta^2}{2} \E[Y(\psi(\tilde{x}))^2]} | C(\psi(\tilde{x}),D)  |^{ \frac{\gamma^2}{2}- \frac{\beta^2}{2}}   |\psi'(\tilde{x})|^{  2\gamma - \frac{\gamma^2}{2}+ \frac{\beta^2}{2}  }   d^2 \tilde{x} \\
 & =  \int_{D} \varphi(\psi^{-1}(x)) e^{\gamma X(x)+ i \beta Y(x)-  \frac{\gamma^2}{2} \E[X(x)^2]+\frac{\beta^2}{2}\E[Y(x)^2]} | C(x,D)  |^{ \frac{\gamma^2}{2}- \frac{\beta^2}{2}}   |\psi'(\psi^{-1}(x))|^{ 2 \gamma-  \frac{\gamma^2}{2}+ \frac{\beta^2}{2} -2  }   d^2 x.
  \end{align*}
  Though the above computation is formal, it is not difficult to make it rigorous by making the same computation with the regularized fields $X_\gep,Y_\gep$ of subsection \ref{GFFplanar} and then taking the limit as $\gep$ goes to $0$.  
  
  Hence, for the field  to be conformally invariant, one must solve the equation $2 \gamma = \gamma^2/2- \beta^2/2+2$, yielding the points of  the frontier of phases I/II: $\gamma\pm\beta=2$ for $\gamma \in]1,2[$). This partially confirms predictions of physicists (see \cite{Kle} and references therein) in the sense that it was claimed that we get tachyons on the segment $\gamma\pm\beta=2$ for $\gamma \in [0,2[$). Yet, our approach does not allow us to interpret the tachyon fields  as random distributions (in the sense of Schwartz)  for the values $\gamma\pm\beta=2$ and $\gamma \in [0,1]$ by this Wick ordering procedure as   we enter phase III and the triple point, which involves non standard renormalization that highly perturbs the behaviour of the exponent of the conformal radius, yielding operators that are not conformally invariant. 

\subsubsection{KPZ formula for the tachyon fields}
Consider the vertex operators  \index{vertex operator}   $V_{i\beta}= e^{i\beta Y}$, with $\beta\in [0,1)$, of a Conformal Field Theory (CFT) \index{conformal field theory (CFT)} with central charge  \index{central charge} $c=1$, i.e. $Y$ is a GFF. We have seen that the conformal dimension  \index{conformal dimension} $\Delta^0_{i\beta}$ of $V_{i \beta}$ is $\frac{\beta^2}{4}$.  The quantum dimension \index{quantum dimension} $\Delta^q_{i\beta} $ of the operator  $V_{i\beta}$ is defined as the value such that the operator
$$e^{2(1-\Delta^q_{i\beta})X}V_{i\beta}$$ is conformally invariant within the theory, i.e. becomes a tachyon field. We have seen that we must choose 
$$2(1-\Delta^q_{i\beta})+\beta=2$$ in order for this field to be conformally invariant, yielding
$$\Delta^q_{i\beta}=\frac{\beta}{2}.$$
We thus recover the celebrated KPZ formula \index{KPZ formula}(see \cite{cf:KPZ}) 
$$\Delta^0_{i\beta}=\Delta^q_{i\beta}+\Delta^q_{i\beta}(\Delta^q_{i\beta}-1) $$
in its original derivation for (critical) Liouville quantum gravity with a $c=1$ ($b=2$) central charge.
  

 \appendix 
 \appendixpage
\vspace{2mm}

\section{Control of moments of order $2k$}  \label{Controlmom}

In this appendix, we gather technical estimates on the convergence of the higher order moments in phase III and its frontiers with the other phases. The main purpose of this appendix is to prove proposition \ref{clesmoments} below. The following results have straightforward analogs in all dimensions: for the sake of clarity, we state and prove them in dimension 1.

\begin{proposition}\label{propclesmoments}
Let $k$ and $k'$ be natural integers. Then for any interval $J$, in the phase $III$, $I/III$ and $II/III$ 
we have the following convergence in probability:

\begin{equation}\label{clesmoments}
\lim_{\gep\to 0} \frac{ \bbE\left[(M^{\gamma,\gb}_\gep(J))^k (\overline{ M^{\gamma,\gb}_\gep(J)})^{k'} \ | \ \mathcal{F}^X  \right]}
{\bbE \left[ M^{\gamma,\gb}_\gep(J)\overline {M^{\gamma,\gb}_\gep(J)}  \ | \  \mathcal{F}^X  \right]^{k+k'/2}}=k! \ind_{k=k'},
                                                  \end{equation}
where $\overline{M^{\gamma,\gb}_\gep(J)}$ denotes the complex conjugate of $M^{\gamma,\gb}_\gep(J)$.                                               

For all $l \geq 2$ and all $2l$-tuple of natural integers $(k_1,\dots,k_{2l})$, for any collection of disjoint intervals $J_1, \cdots, J_l$, we have the following convergence in probability: 
\begin{multline}\label{clesmoments2}
\lim_{\gep\to 0} \frac{ \bbE\left[ \prod_{1 \leq i \leq l}(M^{\gamma,\gb}_\gep(J_i))^{k_{2i-1}} (\overline{ M^{\gamma,\gb}_\gep(J_i)})^{k_{2i}}
\ | \ \mathcal{F}^X  \right]}
{\prod_{1 \leq i \leq l} \E \left[ M^{\gamma,\gb}_\gep(J_i) \overline {M^{\gamma,\gb}_\gep(J_i)}  \ | \  \mathcal{F}^X  \right]^{(k_{2i-1}+k_{2i})/2}}= 
\prod_{1 \leq i \leq l} \:  k_{2i-1}! \ind_{k_{2i-1}= k_{2i}}   
.
                                                  \end{multline}
                                                  
\end{proposition}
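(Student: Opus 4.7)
The plan is to compute the conditional moment $\bbE[Z^k \bar Z^{k'}\mid\mathcal{F}^X]$ with $Z=M^{\gamma,\gb}_\gep(J)$ explicitly by integrating out the Gaussian field $Y_\gep$ (which is independent of $X_\gep$), and then identify its leading contribution via a pairing expansion in the style of Isserlis--Wick. A direct Gaussian computation, using $\bbE[\exp(i\beta\sum_i Y_\gep(x_i) - i\beta\sum_j Y_\gep(y_j))\mid\mathcal{F}^X] = \exp(-\tfrac{\beta^2}{2}\mathrm{Var}(\cdot))$, yields
\[
\bbE[Z^k \bar Z^{k'}\mid\mathcal{F}^X] = G_\gep(0)^{(k+k')\beta^2/2}\int_{J^{k+k'}}\Phi_\gep(\mathbf x,\mathbf y)\prod_{i=1}^k e^{\gamma X_\gep(x_i)}\dd x_i\prod_{j=1}^{k'}e^{\gamma X_\gep(y_j)}\dd y_j,
\]
where
\[
\Phi_\gep(\mathbf x,\mathbf y) := \prod_{i<i'}G_\gep(x_i-x_{i'})^{\beta^2}\prod_{j<j'}G_\gep(y_j-y_{j'})^{\beta^2}\prod_{i,j}G_\gep(x_i-y_j)^{-\beta^2}.
\]
Because $G_\gep$ blows up on the diagonal, the cross factors $G_\gep(x_i-y_j)^{-\beta^2}$ favour configurations where each $x_i$ lies within $O(\gep)$ of some $y_j$, whereas the same-type factors penalise clustering of $x$'s or of $y$'s at scale $\gep$. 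The heuristic is therefore that the integral concentrates on configurations where the $x_i$'s and $y_j$'s decompose into matched pairs at scale $\gep$, which is precisely what produces Gaussian moments.

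\textbf{Diagonal case $k=k'$.} Decompose the domain $J^{2k}$, up to an error, as the union over bijections $\sigma\in\mathfrak{S}_k$ of the regions $R_\sigma$ where $y_{\sigma(i)}$ is the $y$-point closest to $x_i$ for each $i$. On $R_\sigma$ perform the change of variables $y_{\sigma(i)}=x_i+\gep u_i$. Using Lemma \ref{greenk} together with \eqref{froomk}, the singular pair factors $G_\gep(\gep u_i)^{-\beta^2}$ rescale to $\gep^{-\beta^2}f(|u_i|)^{-\beta^2}(1+o(1))$; the macroscopic parts of $\Phi_\gep$ simplify (the same-type factors $\prod_{i<i'}|x_i-x_{i'}|^{\beta^2}$, from both the $x$ and $y$ blocks, are cancelled by the non-pair cross contributions $\prod_{i\neq j}|x_i-x_{j}|^{-\beta^2}$); and by continuity of $X_\gep$ at scale $\gep$ the exponential factor converges to $\prod_i e^{2\gamma X_\gep(x_i)}$. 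After collecting powers of $\gep$ exactly matching $\bbE[|Z|^2\mid\mathcal{F}^X]^k$, the $u$-integrals produce the constant $\sigma^2(\gb^2+\gga^2)$ of Proposition \ref{lesmomentos} (resp.\ Proposition \ref{lesmomentos2} on the frontier II/III) to the $k$-th power, and the $x$-integration yields $M^{2\gga,0}(J)^k$ (resp.\ $M'(J)^k$). Summing the identical contributions of the $k!$ pairings produces the prescribed factor $k!$.

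\textbf{Off-diagonal and multi-interval cases.} For $k\neq k'$, at most $\min(k,k')$ pair singularities can be extracted; integration over the $|k-k'|$ remaining unpaired variables contributes a power of $\gep$ strictly larger than that encoded in the $(k+k')/2$ exponent of the denominator of \eqref{clesmoments}, so the normalised ratio vanishes. Configurations with multiple $x$'s (or $y$'s) within a single $\gep$-cluster are suppressed by $G_\gep(x_i-x_{i'})^{\beta^2}\lesssim \gep^{\beta^2}$ for each extra short bond. The multi-interval statement \eqref{clesmoments2} follows from the same pairing analysis, since cross-interval factors $G_\gep(x_i-y_j)^{-\beta^2}$ with $x_i\in J_a, y_j\in J_b$, $a\neq b$, remain uniformly bounded (no singularity), so pairings cannot cross intervals; the leading contribution factorises over intervals and the single-interval result applies on each $J_a$.

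\textbf{Main obstacle.} The genuine difficulty lies in turning these heuristics into quantitative estimates valid in probability: one must control, uniformly at each step, the sum over non-fully-paired configurations after renormalisation. This requires iterated use of the capacity lemmas \ref{capI} and \ref{capI_II} applied to higher-order correlation functions of $M^{2\gga,0}_\gep$, in order to show that each partial-pairing contribution is of strictly smaller order than $\bbE[|Z|^2\mid\mathcal{F}^X]^{(k+k')/2}$ in $\bbP$. On the frontier II/III, the additional technicality is that $M'$ is not $\bbL^1$-integrable, so the whole argument must be run on the events $B_\kappa$ of Section \ref{frontier23} (where uniform-in-$\gep$ bounds on the Gaussian tail are available) and concluded by letting $\kappa\to\infty$, exactly as in Lemmas \ref{lem:crit2} and \ref{lem:crit1}.
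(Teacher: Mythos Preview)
Your overall pairing strategy is correct and matches the paper's approach: the conditional moment is indeed governed by configurations where $x$-points and $y$-points are paired at scale $\gep$. The paper formalises this via the Gale--Shapley optimal matching of $\{x_i\}$ to $\{y_j\}$, restricts by symmetry to the set $B$ where this matching is the identity, and proves a comparison lemma (Lemma \ref{lezinek}) showing that $\Phi_\gep({\bf x},{\bf y})$ is uniformly bounded above by $C\prod_i G_\gep(x_i-y_i)^{-\beta^2}$ on $B$, and equal to this product up to $1\pm\delta$ on a well-separated subset $A$ (pairs closer than $a(\gep)\gg\gep$, distinct pairs farther than $b(\gep)\gg a(\gep)$). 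This replaces your change of variables $y_i=x_i+\gep u_i$ and handles rigorously the cancellation of macroscopic factors you describe. The $k<k'$ and multi-interval cases are treated exactly along the lines you sketch.

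However, your identification of the tools for the error control is off. The capacity Lemmas \ref{capI}--\ref{capI_II} play no role here: they are Phase I / frontier I--II bounds on $p$-th moments with $p<2$ and do not control higher-order correlations of $M^{2\gamma,0}_\gep$ in Phase III. What the paper actually uses (Lemma \ref{lesdebris}) is the convergence in probability of the renormalised second moment to the limiting measure $M^{2\gamma,0}$ (or $M'$ on II/III), already established in Propositions \ref{lesmomentos}--\ref{lesmomentos2}, together with the fact that this limit has \emph{no atoms}: the contribution of configurations where two distinct pairs collide is bounded by $\sup_j \bar{\bar M}^{2\gamma,0}_\gep(I_j)\cdot \bar{\bar M}^{2\gamma,0}_\gep([0,1])$ over a fine partition $(I_j)$, which vanishes as the mesh tends to zero precisely because the limit is diffuse. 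No separate treatment on the events $B_\kappa$ is needed for the higher moments themselves --- that work was already absorbed into Proposition \ref{lesmomentos2}, which the moment proof uses as a black box.
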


\subsection{Optimal matching between two finite sets in $\bbR^d$}
Before starting the proof, we introduce a matching procedure. This algorithm was introduced by Gale and Shapley \cite{cf:Marriage} to provide a solution to the stable marriage problem.

Given $k\le k'$,
let $(x_1,\dots,x_k)$ and $(y_1,\dots,y_{k'})$ be two sets of points in $\bbR^d$ such that all the pairwise distances 
$|x_i-y_j|$ are distinct.
The {\bf optimal matching} of ${\bf x}$ with ${\bf y}$ is an injective application:
$$\sigma({\bf x},{\bf y}): \{1,\dots,k\}\to \{1,\dots,k'\}$$ 
obtained by the following procedure:
\begin{itemize}
 \item [(i)] If $x_i$ and $y_j$ are mutually closest, i.e. if: 
 $$\forall i', j' \:  |x_i-y_j|<|x_{i'}-y_j| \text{ and } |x_i-y_j|<|x_{i}-y_{j'}|$$
 then we set $\sigma(i)=j$.
 \item [(ii)] We delete the points that have been matched in step $(i)$.
 \item [(iii)] We iterate the procedure until all the $x_i$'s have been matched.
\end{itemize}

\subsection{Proof of proposition \ref{propclesmoments} for matching indices $k=k'$}
We prove (\ref{clesmoments}) for $J=[0,1]$. The general case can be proved along the same lines. We introduce the following notation:

$$M^{\gamma,\gb}_\gep (\dd {\bf x}):= \prod_{j=1}^k M^{\gamma,\gb}_\gep (\dd x_j)$$

The moment $\bbE\left[\left| M^{\gamma,\gb}_\gep ([0,1])\right|^{2k}\ | \  \mathcal{F}^X  \right]$ is given by
 
\begin{equation}\label{theintegral}
\gep^{k\gb^2}\int_{[0,1]^{2k}} 
\frac{M^{\gamma,0}_\gep(\dd {\bf x}\dd {\bf y}) 
\left(\prod_{1\le i<j\le k}G_\gep(x_i-x_j) \prod_{1\le i<j\le k}G_\gep(y_i-y_j)\right)^{\gb^2}}{\prod_{i,j=1}^k G_\gep(x_i-y_j)^{\gb^2}}.
\end{equation}

Hence \eqref{clesmoments} for $k=k'$ corresponds to proving 

\begin{equation}\label{wouhou}
\lim_{\gep\to 0} \frac{\int_{[0,1]^{2k}} 
\frac{M^{\gamma,0}_\gep(\dd {\bf x}\dd {\bf y}) 
\left(\prod_{1\le i<j\le k}G_\gep(x_i-x_j) \prod_{1\le i<j\le k}G_\gep(y_i-y_j)\right)^{\gb^2}}{\prod_{i,j=1^k} G_\gep(x_i-y_j)^{\gb^2}}}
{\left(\int_{[0,1]^{2}} M^{\gamma,0}_\gep(\dd { x}\dd { y})  G_\gep(x-y)^{-\gb^2}\right)^k}=k!.
\end{equation}

Let  $\sigma({\bf x},{\bf y})$ be the permutation obtained by the optimal matching procedure described in the previous section (which is Lebesgue almost-everywhere 
well defined) and set

$$B:=\{ ({\bf x},{\bf y}) \ | \  \sigma({\bf x},{\bf y})=\ind \},$$
where $\ind$ denotes the identity.
By symmetry of the indices, we can rewrite \eqref{theintegral} (divided by $\gep^{k\gb^2}$) as 
 
 \begin{equation}
 k!\int_B 
 \frac{M^{\gamma,0}_\gep(\dd {\bf x}\dd {\bf y}) \left(\prod_{1\le i<j\le k}G_\gep(x_i-x_j) \prod_{1\le i<j\le k}G_\gep(y_i-y_j)\right)^{\gb^2}}
 {\prod_{i,j=1}^k G_\gep(x_i-y_j)^{\gb^2}}.
 \end{equation}

A reformulation of \eqref{wouhou} is that one can find a $\delta$ that tends to zero with $\gep$ which is such that for all $k$ with high probability
\begin{multline}\label{lagrossineq}
(1-\delta)\left( \int_{[0,1]^2} G_\gep(x-y)^{-\gb^2} M^{\gamma,0}_\gep(\dd { x}\dd { y})\right)^k\\
\le \int_{B} \frac{   M^{\gamma,0}_\gep(\dd {\bf x}\dd {\bf y}) \left(\prod_{1\le i< j\le k}
G_\gep(x_i-x_j) \prod_{1\le i<j\le k}G_\gep(y_i-y_j)\right)^{\gb^2}}{\prod_{i,j=1}^k G_\gep(x_i-y_j)^{\gb^2}}
\\\le (1+\delta)\left( \int_{[0,1]^2} G_\gep(x-y)^{-\gb^2} M^{\gamma,0}_\gep(\dd { x}\dd { y})\right)^k.
\end{multline}
Let us consider functions $a(\gep)$ and $ b(\gep)$ such that
 $\gep \ll a(\gep)\ll b(\gep)\ll 1$. We set
\begin{equation} \label{defa}
A:= \{√Ç¬†({\bf x},{\bf y})\in [0,1]^{2k}\  | \forall i,\ |x_i-y_i|\le a(\gep), \  \forall i\ne j,\  |x_i-x_j|\ge b(\gep),\ |y_i-y_j|\ge b(\gep)\}.
\end{equation}

Note that $A \subset B$ (because when $({\bf x},{\bf y})\in A$ all the pairs $(x_i,y_i)$ are mutually closest)

\begin{lemma}\label{lezinek}
When $({\bf x},{\bf y})\in A$ we have
\begin{multline}\label{ineqa}
(1-\delta)  \prod_{i=1}^k G_\gep(x_i-y_i)^{-\gb^2}\\
 \le \frac{\prod_{1\le i<j\le k}G_\gep(x_i-x_j)^{\beta^2} \prod_{1\le i<j\le k}G_\gep(y_i-y_j)^{\beta^2}}{\prod_{i,j=1}^k G_\gep(x_i-y_j)^{\gb^2}}
 \\
 \le (1+\delta)  \prod_{i=1}^k G_\gep(x_i-y_i)^{-\gb^2}
\end{multline}
where $\delta=\delta(\gep)$ tends to zero when $\gep$ does.

For $({\bf x},{\bf y})\in B$ we have a general upper bound
\begin{equation}\label{ineqB}
\frac{\prod_{1\le i<j\le k}G_\gep(x_i-x_j)^{\beta^2} \prod_{1\le i<j\le k}G_\gep(y_i-y_j)^{\beta^2}}{\prod_{i,j=1}^k G_\gep(x_i-y_j)^{\gb^2}}
 \le C(k,\gb) \prod_{i=1}^k G_\gep(x_i-y_i)^{-\gb^2}.
\end{equation}
\end{lemma}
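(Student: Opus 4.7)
My plan is to reduce both inequalities to the same symmetric ratio by dividing through by $\prod_{i=1}^k G_\gep(x_i-y_i)^{\beta^2}$ and extracting the diagonal $i=j$ terms from $\prod_{i,j=1}^k G_\gep(x_i-y_j)$. After this reduction both estimates become statements about
\begin{equation*}
\Pi({\bf x},{\bf y}) := \prod_{1\le i<j\le k} \frac{G_\gep(x_i-x_j)\,G_\gep(y_i-y_j)}{G_\gep(x_i-y_j)\,G_\gep(x_j-y_i)},
\end{equation*}
namely \eqref{ineqa} says $\Pi = 1+o(1)$ on $A$ and \eqref{ineqB} says $\Pi$ is bounded on $B$. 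From \eqref{froomk} together with $|k|\le 1$ (a consequence of $k(0)=1$ and positive-definiteness), there is $C_0=C_0(R)$ with $C_0^{-1}(|z|\vee\gep)\le G_\gep(z)\le C_0(|z|\vee\gep)$ uniformly for $z\in B(0,R)$ and $\gep\in(0,1]$, which I will use as a uniform proxy for $G_\gep$.

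For \eqref{ineqa}, on $A$ each of the four arguments $x_i-x_j,\,y_i-y_j,\,x_i-y_j,\,x_j-y_i$ lies at pairwise distance at most $2a(\gep)$ and has modulus in $[b(\gep)-a(\gep),\,2R]$, in particular much larger than $\gep$. It therefore suffices to prove the pointwise asymptotic $G_\gep(z)/G_\gep(z')=1+o(1)$ uniformly over $|z|,|z'|\ge b(\gep)-a(\gep)$ with $|z-z'|\le 2a(\gep)$. Using $d=1$ and the evenness of $k$, the change of variable $v=uz$ gives
\begin{equation*}
K_\gep(z)-K_\gep(z') = \int_{|z'|}^{|z|}\frac{k(v)}{v}\dd v - \int_{|z'|/\gep}^{|z|/\gep}\frac{k(v)}{v}\dd v;
\end{equation*}
the first integral is bounded by $|\log(|z|/|z'|)|\le |z-z'|/\min(|z|,|z'|)\lesssim a(\gep)/b(\gep)=o(1)$ via $|k|\le 1$, and the second by $C\int_{b(\gep)/\gep}^\infty v^{-\nu-1}\dd v = O((b(\gep)/\gep)^{-\nu})=o(1)$ via assumption A.2. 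Taking products over the $\binom{k}{2}$ factors of $\Pi$ then yields $\Pi=1+o(1)$, and together with the corresponding squeeze for $\prod_i G_\gep(x_i-y_i)^{-\beta^2}$ (which is untouched) gives \eqref{ineqa}.

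For \eqref{ineqB}, the key is to unpack the Gale--Shapley algorithm: after relabelling one may assume it processes the pairs in the order $(x_1,y_1),(x_2,y_2),\dots,(x_k,y_k)$. Mutual-closestness of $(x_l,y_l)$ among the remaining points at step $l$ gives, for every $j>l$, the two inequalities $|x_l-y_l|\le |x_l-y_j|$ and $|x_l-y_l|\le |x_j-y_l|$. By the triangle inequality this upgrades, for all $i<j$, to
\begin{equation*}
|x_i-x_j|\le |x_i-y_i|+|y_i-x_j|\le 2|x_j-y_i|,\qquad |y_i-y_j|\le 2|x_i-y_j|,
\end{equation*}
and the same bounds transfer to $|\cdot|\vee\gep$ (trivially when the RHS is $\ge\gep$, and via $|x_i-x_j|\le 2|x_j-y_i|<2\gep$ otherwise). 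The comparison $G_\gep\asymp(|\cdot|\vee\gep)$ then yields $G_\gep(x_i-x_j)G_\gep(y_i-y_j)\le (2C_0^2)^2 G_\gep(x_i-y_j)G_\gep(x_j-y_i)$ for each pair, and multiplying over $\binom{k}{2}$ pairs and raising to $\beta^2$ gives \eqref{ineqB} with $C(k,\beta)=(4C_0^4)^{\binom{k}{2}\beta^2}$. The main obstacle in the whole lemma is the uniform convergence step in \eqref{ineqa}: one must exploit both the decay at infinity (assumption A.2) to kill the contribution of large $v=uz$ and the uniform boundedness of $|k|$ to handle the interior range $v\in[b(\gep),|z|]$, while the bound \eqref{ineqB} is essentially combinatorial once the stable-matching property is exposed.
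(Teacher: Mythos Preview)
Your proof is correct and follows essentially the same route as the paper's. For \eqref{ineqB} you argue directly by first linearizing the Gale--Shapley procedure so that the triangle inequalities $|x_i-x_j|\le 2|x_j-y_i|$ and $|y_i-y_j|\le 2|x_i-y_j|$ are available simultaneously for all $i<j$, whereas the paper peels off one mutually-closest pair at a time by induction on $k$; for \eqref{ineqa} you give an explicit change-of-variable estimate for $K_\gep(z)-K_\gep(z')$ where the paper simply invokes Lemma~\ref{greenk}, but the underlying comparison $G_\gep(s)/G_\gep(t)\to 1$ for $s\ge b(\gep)$, $|s-t|\le a(\gep)$ is the same.
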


\begin{proof}
We prove \eqref{ineqB} by induction on $k$.
We can assume that $x_1=y_1$ are mutually closest (there is at least one pair of mutually closest vertices and we exchange the indices if needed).
Then we have for all $j \in \lbrace 2,\dots,k \rbrace$
\begin{equation}\label{grominet}\begin{split}
 |y_1-y_j|&\le |x_1-y_1|+|x_1-y_j|\le 2|x_1-y_j|\\
 |x_1-x_j|&\le |x_1-y_1|+|y_1-x_j|\le 2|y_1-x_j|.
\end{split}\end{equation}
We have from Lemma \ref{greenk}
$$\sup_{s\in [0,1],t\le 2s,\gep\le 1}\frac{G_{\gep}(t)}{G_{\gep}(s)}=C_1<\infty,$$
and hence, using the inequalities \eqref{grominet} we obtain that 
\begin{multline}\label{ineqBinter}
\frac{\prod_{1\le i<j\le k}G_\gep(x_i-x_j) \prod_{1\le i<j\le k}G_\gep(y_i-y_j)}{\prod_{i,j=1}^k G_\gep(x_i-y_j)}
\\
 \le C_1^{2(k-1)} \frac{1}{G_\gep(x_1-y_1)}
\frac{ \prod_{2\le i<j\le k}G_\gep(y_i-y_j)}{\prod_{i,j=2}^k G_\gep(x_i-y_j)}.
\end{multline}
Then using the induction hypothesis, (note that the identity is still the optimal matching once $(x_1,y_1)$ have been deleted)
we obtain \eqref{ineqB} with $C(k,\gb)=C_1^{\gb^2k(k-1)}$.

\medskip

The inequality \eqref{ineqa} is obtained by noticing that on the set $A$ for all $i<j$
\begin{equation}\label{petitminet}\begin{split}
 \big | |y_i-y_j|-|y_i-x_j|\big|&\le |x_j-y_j|\le a(\gep)\\
 \big | |x_i-x_j|-|x_i-y_j|\big|&\le |x_j-y_j|\le a(\gep)
\end{split}\end{equation}

We have from Lemma \ref{greenk}
$$\lim_{\gep\to 0}\sup_{s\ge b(\gep), t\in (s-a(\gep),s+a(\gep))}\left| \frac{G_{\gep}(s)}{G_{\gep}(t)}-1 \right|=0.$$
Let $\delta_1(\gep)$ be the quantity in the limit. Using \eqref{petitminet} we obtain \eqref{ineqa} with
$(1\pm \delta_1(\gep))^{\gb^2k(k-1)}$ instead of $(1\pm \delta)$.
\end{proof}

Now, we state the following lemma whose proof is postponed to the next subsection.

\begin{lemma}\label{lesdebris}
For all $k$, we have the following convergence in probability 
\begin{equation}\label{michto}
\lim_{\gep\to 0}\frac{\int_{[0,1]^{2k}\setminus A}
   M^{\gamma,0}_\gep(\dd {\bf x}\dd {\bf y})\prod_{i=1}^k G_\gep(x_i-y_i)^{-\gb^2}}{\left(\int_{[0,1]^2} M^{\gamma,0}_\gep(\dd { x}\dd { y})G_\gep(x-y)^{-\gb^2}\right)^k}=0.
 \end{equation}
 \end{lemma}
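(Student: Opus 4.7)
My plan is to split the complement $A^c$ into two pieces, $C_1:=\{\exists i:|x_i-y_i|>a(\gep)\}$ and $C_2:=A^c\setminus C_1$ (on $C_2$ every pair $(x_i,y_i)$ is close but some two distinct $x$-indices or $y$-indices come within $b(\gep)$ of each other), and to exploit the product structure $M^{\gamma,0}_\gep(\dd\mathbf{x}\dd\mathbf{y})=\prod_i M^{\gamma,0}_\gep(\dd x_i)M^{\gamma,0}_\gep(\dd y_i)$ together with the factorised integrand $\prod_i G_\gep(x_i-y_i)^{-\gb^2}$ to reduce the estimates on $C_1$, resp.\ $C_2$, to a two-point, resp.\ four-point, problem. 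Throughout, I will replace the denominator by its sharp asymptotic from Proposition \ref{lesmomentos} (or Proposition \ref{lesmomentos2} on the frontier II/III), which identifies $\int G_\gep^{-\gb^2}M^{\otimes 2}$ up to the a.s.\ positive random factor $\sigma^2 M^{2\gamma,0}([0,1])$, resp.\ $\sqrt{2/\pi}\,\sigma^2 M'([0,1])$.

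For $C_1$, permutation symmetry gives $\int_{C_1}\le k\int_{\{|x_1-y_1|>a(\gep)\}}$, and Fubini factorises this as
$$k\Bigl(\int_{|x-y|>a(\gep)}G_\gep(x-y)^{-\gb^2}M^{\gamma,0}_\gep(\dd x)M^{\gamma,0}_\gep(\dd y)\Bigr)\Bigl(\int_{[0,1]^2}G_\gep^{-\gb^2}M^{\otimes 2}\Bigr)^{k-1}.$$
After division by $(\int G_\gep^{-\gb^2}M^{\otimes 2})^k$ the remaining task is that $\int_{|x-y|>a(\gep)}G_\gep^{-\gb^2}M^{\otimes 2}$ divided by $\int G_\gep^{-\gb^2}M^{\otimes 2}$ goes to $0$ in probability. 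The numerator is controlled by Markov via its expectation $\gep^{-\gamma^2}\int_{|x-y|>a}G_\gep(x-y)^{-(\gamma^2+\gb^2)}\dd x\,\dd y$, so choosing $a(\gep)=\gep^\eta$ for a small $\eta>0$ within open phase III, and $a(\gep)=1/|\log\gep|$ on the frontier I/III (so that $\log(1/a)\ll|\log\gep|$ absorbs the logarithmic correction), suffices.

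For $C_2$, the symmetries in the indices and between the $x$- and $y$-roles allow us WLOG to fix $|x_1-x_2|<b(\gep)$; Fubini in the remaining $k-2$ pair coordinates yields
$$\int_{C_2}\prod_i G_\gep(x_i-y_i)^{-\gb^2}M^{\otimes 2k}\le C_k\, J(\gep)\,\Bigl(\int G_\gep^{-\gb^2}M^{\otimes 2}\Bigr)^{k-2},$$
where $J(\gep):=\int_{\{|x_1-x_2|<b,\,|x_i-y_i|\le a,\,i=1,2\}}G_\gep(x_1-y_1)^{-\gb^2}G_\gep(x_2-y_2)^{-\gb^2}M^{\otimes 4}$. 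The Case 2 contribution is then reduced to $J(\gep)/(\int G_\gep^{-\gb^2}M^{\otimes 2})^2\to 0$ in probability. Computing $\E[J(\gep)]$ by the Gaussian exponential moment formula and passing to the coordinates $u=x_2-x_1$, $v=y_1-x_1$, $w=y_2-x_2$ produces a triple integral of a product of six Green factors subject to $|u|<b(\gep)$, $|v|,|w|\le a(\gep)$; the volume restriction on $u$ yields a gain (of order $b(\gep)^{1-4\gamma^2}$ when $\gamma^2<1/4$) relative to $c_\gep^2=(\E\int G_\gep^{-\gb^2}M^{\otimes 2})^2$, after which Markov concludes.

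The main obstacle lies in the regime $\gamma^2\ge 1/4$ within phase III, and the frontier II/III with $\gamma^2=1/2$, where $M^{2\gamma,0}([0,1])$ has infinite second moment, $\E[(\int G_\gep^{-\gb^2}M^{\otimes 2})^2]$ is much larger than $c_\gep^2$, and even the unconstrained 4-point integral is concentrated at $|x_1-x_2|\sim\gep\ll b(\gep)$, so the crude Markov-versus-$c_\gep^2$ bound loses all gain from the cutoff $b(\gep)$. In this regime one must abandon the naive Markov route and either condition on an event under which $\int G_\gep^{-\gb^2}M^{\otimes 2}$ is of its typical random order, or compare $\E[J(\gep)]$ directly with $\E[(\int G_\gep^{-\gb^2}M^{\otimes 2})^2]$, so that the parallel divergences in numerator and denominator cancel and only the effective volume gain from the constraint $|x_1-x_2|<b(\gep)$ survives. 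Handling this uniformly across phase III and the two frontiers requires a careful, phase-dependent choice of $\gep\ll a(\gep)\ll b(\gep)\ll 1$.
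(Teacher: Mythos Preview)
Your decomposition of $[0,1]^{2k}\setminus A$ and the treatment of $C_1$ match the paper's proof exactly (the paper calls the corresponding sets $\bar A_i$ and $D_2$, and uses the same $\bbL_1$/Markov bound against the asymptotic of the denominator). The genuine gap is in the $C_2$ part, which you correctly flag as the obstacle for $\gamma^2\ge 1/4$ but do not resolve. Neither of your two suggested fixes works as stated. Comparing $\bbE[J(\gep)]$ with $\bbE\bigl[(\int G_\gep^{-\gb^2}M^{\gamma,0}_\gep(\dd x)M^{\gamma,0}_\gep(\dd y))^2\bigr]$ gives no gain: when the second moment of $M^{2\gamma,0}([0,1])$ is infinite, both expectations are dominated by the region where all four points are within distance of order $\gep$ of one another, and since $\gep\ll a(\gep)\ll b(\gep)$ the constraints $|x_1-x_2|<b(\gep)$ and $|x_i-y_i|\le a(\gep)$ are automatically satisfied there, so the ratio of expectations tends to $1$, not $0$. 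The conditioning idea is too vague to be assessed.

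The paper's argument for $D_4=\{|x_1-x_2|<b(\gep)\}$ is pathwise and avoids $\bbL_1$ altogether. Cover $[0,1]$ by overlapping intervals $I_j$ of length $2b(\gep)$; then
\[
\int_{D_4}\prod_{i=1}^2 G_\gep(x_i-y_i)^{-\gb^2}M^{\gamma,0}_\gep(\dd{\bf x}\dd{\bf y})
\ \le\ \sum_j \bar{\bar M}_\gep(I_j)^2
\ \le\ \Bigl(\sup_j \bar{\bar M}_\gep(I_j)\Bigr)\,\bar{\bar M}_\gep([0,1]),
\]
where $\bar{\bar M}_\gep(I)$ is the correctly renormalised $\int_{I\times[0,1]}G_\gep(x-y)^{-\gb^2}M^{\gamma,0}_\gep(\dd x)M^{\gamma,0}_\gep(\dd y)$. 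After division by $\bar{\bar M}_\gep([0,1])^2$ one is left with $\sup_j \bar{\bar M}_\gep(I_j)/\bar{\bar M}_\gep([0,1])$. By Propositions~\ref{lesmomentos}--\ref{lesmomentos2} the random measures $\bar{\bar M}_\gep$ converge in probability to $\sigma^2 M^{2\gamma,0}$ (resp.\ $\sigma^2\sqrt{2/\pi}\,M'$ on the II/III frontier), which has \emph{no atoms}; hence $\sup_j \bar{\bar M}_\gep(I_j)\to 0$ as $b(\gep)\to 0$. This atomlessness argument is the missing idea.
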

 
 With this lemma and Lemma \ref{lezinek}, we can conclude the proof of \eqref{clesmoments}. Let us first prove the lower bound in \eqref{lagrossineq}. 
First we replace the domain integration $B$ by $A$ which is smaller. Then we use \eqref{ineqa} and obtain that 
\begin{align*}
\int_{B}& \frac{   M^{\gamma,0}_\gep(\dd {\bf x}\dd {\bf y}) \left(\prod_{1\le i< j\le k}G_\gep(x_i-x_j) \prod_{1\le i< j\le k} G_\gep(y_i-y_j)\right)^{\gb^2}}{\prod_{i,j=1}^k G_\gep(x_i-y_j)^{\gb^2}}\\
&\ge (1-\delta) \int_A  M^{\gamma,0}_\gep(\dd {\bf x}\dd {\bf y}) \prod_{i=1}^k G_\gep(x_i-y_i)^{-\gb^2}\\ &\ge 
(1-\delta') \left(\int_{[0,1]^2} M^{\gamma,0}_\gep(\dd { x}\dd { y})G_\gep(x-y)^{-\gb^2}\right)^k.
\end{align*}
where the last line holds with high probability according to Lemma \ref{lesdebris}.

\medskip

For the upper bound in \eqref{lagrossineq}, we remark that from \eqref{ineqa} we have:
\begin{multline}
\int_{A} \frac{   M^{\gamma,0}_\gep(\dd {\bf x}\dd {\bf y}) \left(\prod_{1\le i< j\le k}
G_\gep(x_i-x_j) \prod_{1\le i<j\le k}G_\gep(y_i-y_j)\right)^{\gb^2}}{\prod_{i,j=1}^k G_\gep(x_i-y_j)^{\gb^2}}\\
\le (1+\delta)\int_{A} M^{\gamma,0}_\gep(\dd {\bf x}\dd {\bf y})\prod_{i=1}^k G_\gep(x_i-y_i)^{-\gb^2}.
\end{multline}
Thus it is sufficient to control  the contribution of $B\setminus A$ to conclude.
From \eqref{ineqB} we have: 
\begin{multline}
\int_{B\setminus A} \frac{   M^{\gamma,0}_\gep(\dd {\bf x}\dd {\bf y}) \left(\prod_{1\le i< j\le k}
G_\gep(x_i-x_j) \prod_{1\le i<j\le k}G_\gep(y_i-y_j)\right)^{\gb^2}}{\prod_{i,j=1}^k G_\gep(x_i-y_j)^{\gb^2}}\\
\le C(k,\gb^2) \int_{[0,1]^{2k}\setminus A} M^{\gamma,0}_\gep(\dd {\bf x}\dd {\bf y})\prod_{j} G_\gep(x_j-y_j)^{-\gb^2}.
\end{multline}
According to \eqref{michto}, the r.h.s. is smaller than $\delta  \left(\int_{[0,1]^2} M^{\gamma,0}_\gep(\dd { x}\dd { y})G_\gep(x-y)^{-\gb^2}\right)^k$
provided $\gep$ is chosen sufficiently small.

\subsection{Proof of Lemma \ref{lesdebris}}

We decompose the set $[0,1]^{2k}\setminus A$ as a union of non-disjoint events as follows
\begin{align*}
[0,1]^{2k}\setminus A&:=\bigcup_{i=1}^k \{({\bf x},{\bf y})\in[0,1]^{2k}\  | \ |x_i-y_i|> a(\gep)\}\\
& \quad \quad \quad\cup\bigcup_{1\le i< j\le k}\{  ({\bf x},{\bf y})\in[0,1]^{2k} \  | \ |x_i-x_j|< b(\gep)\}\\
&\quad \quad \quad \cup\bigcup_{1\le i< j\le k}\{  ({\bf x},{\bf y})\in[0,1]^{2k} \  | \ |y_i-y_j|< b(\gep)\}\\
&=:\bigcup_{i=1}^k\bar A_i\cup
\bigcup_{1\le i<j\le k}
\bar A_{i,j} \bigcup_{1\le i<j\le k}
\bar A'_{i,j}
\end{align*}

Then by permutation of the indices and symmetry in $x,y$ we have: 
\begin{align*}
\int_{[0,1]^{2k}\setminus A}
   M^{\gamma,0}_\gep (\dd {\bf x}\dd {\bf y})\prod_{i=1}^k G_\gep(x_i-y_i)^{-\gb^2}  &\le k
   \int_{\bar A_1} M^{\gamma,0}_\gep(\dd {\bf x}\dd {\bf y})\prod_{i=1}^k G_\gep(x_i-y_i)^{-\gb^2}\\
   &\quad +k(k-1)
   \int_{\bar A_{1,2}} M^{\gamma,0}_\gep(\dd {\bf x}\dd {\bf y})\prod_{i=1}^k G_\gep(x_i-y_i)^{-\gb^2}.
\end{align*}
Hence it is sufficient to show \eqref{michto} with $[0,1]^{2k}\setminus A$ replaced by ${\bar A_1}$ and ${\bar A_{1,2}}$ in the numerator's integrand.
After simplification it amounts to showing two things (see the next lemma).
One sets 
\begin{equation}
\begin{split}
D_2&:=\{(x,y)\in[0,1]^2 \ | \ |x-y|>a(\gep)\}\\
D_4&:=\{({\bf x},{\bf y})\in[0,1]^4 \ | \ |x_1-x_2|<b(\gep)\}.
\end{split}
\end{equation}

\begin{lemma}
The two following convergences hold in probability:
\begin{equation}\begin{split}\label{crimou}
 \lim_{\gep\to 0} \frac{\int_{D_2} G_\gep(x-y)^{-\gb^2} M^{\gamma,0}_\gep(\dd x\dd y)}{ \int_{[0,1]^2} G_\gep(x-y)^{-\gb^2} M^{\gamma,0}_\gep(\dd x\dd y) }
 &=0 ,\\
  \lim_{\gep\to 0} \frac{\int_{D_4} \prod_{i=1}^2 G_\gep(x_i-y_i)^{-\gb^2} M^{\gamma,0}_\gep(\dd {\bf x}\dd {\bf y})}{\left( \int_{[0,1]^2} G_\gep(x-y)^{-\gb^2} M^{\gamma,0}_\gep(\dd x\dd y) \right)^2 }&=0.
 \end{split}\end{equation}

\end{lemma}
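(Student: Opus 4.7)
The plan is to deduce both claims from the $\bbL_1$-convergence in Propositions \ref{lesmomentos} and \ref{lesmomentos2} combined with the a.s.\ non-atomicity of the limiting intensity measure. Abbreviate
$$\mathcal I_\gep := \int_{[0,1]^2} G_\gep(x-y)^{-\gb^2}\,M^{\gamma,0}_\gep(\dd x\dd y),$$
and let $m_\gep$ be the deterministic scaling (a power of $\gep$, possibly multiplied by $|\log \gep|^{\pm 1/2}$ at a boundary) such that $m_\gep^{-1}\mathcal I_\gep \to \sigma^2\Lambda([0,1])$ in probability, with $\Lambda=M^{2\gamma,0}$ in phase III and at the I/III boundary, and $\Lambda=M'$ on the II/III frontier.

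For the first convergence, I would apply an unconditional Markov inequality. A direct Gaussian computation gives
$$\bbE\!\left[\int_{D_2} G_\gep(x-y)^{-\gb^2}\,M^{\gamma,0}_\gep(\dd x\dd y)\right] = \gep^{-\gamma^2}\int_{\{x\in[0,1],\,|x-y|>a(\gep)\}}G_\gep(x-y)^{-\gb^2-\gamma^2}\,\dd x\dd y.$$
Using $G_\gep(r)\asymp |r|\vee\gep$ (Lemma \ref{greenk}) together with the phase condition $\gb^2+\gamma^2\ge 1$, this quantity equals $\bbE[\mathcal I_\gep]$ times a factor of order $(\gep/a(\gep))^{\gb^2+\gamma^2-1}$ (or $|\log a(\gep)|/|\log\gep|$ at the critical boundary $\gb^2+\gamma^2=1$, after choosing $a$ with $|\log a(\gep)|=o(|\log\gep|)$, which is compatible with $\gep\ll a(\gep)\ll b(\gep)\ll 1$). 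Both factors vanish as $\gep\to 0$; Markov's inequality then shows that the numerator of \eqref{crimou} (first line) is $o_\P(m_\gep)$, while $\mathcal I_\gep/m_\gep$ converges to a strictly positive limit in probability, so the ratio tends to $0$ in probability.

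For the second convergence, Markov alone does not suffice since $\mathcal I_\gep^2$ may fail to be in $\bbL_1$ (the limit $M^{2\gamma,0}$ is not $\bbL_2$-integrable once $\gamma>1/2$ in dimension one). Instead, I would argue by partition. Fix $\eta>0$, partition $[0,1]$ into consecutive intervals $(I_i^\eta)_{i=1}^{N_\eta}$ of length $\le\eta$, and set
$$\mathcal I_{\gep,i}^\eta := \int_{I_i^\eta\times[0,1]} G_\gep(x-y)^{-\gb^2}\,M^{\gamma,0}_\gep(\dd x\dd y),\qquad \sum_i \mathcal I_{\gep,i}^\eta=\mathcal I_\gep.$$
For $\gep$ small enough that $b(\gep)<\eta$, the constraint $|x_1-x_2|<b(\gep)$ in $D_4$ forces $x_2\in I_{i-1}^\eta\cup I_i^\eta\cup I_{i+1}^\eta$ whenever $x_1\in I_i^\eta$. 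Combining this with $ab\le(a^2+b^2)/2$ and $\sum_i a_i^2\le(\max_i a_i)\sum_i a_i$ yields
$$\int_{D_4}G_\gep(x_1-y_1)^{-\gb^2}G_\gep(x_2-y_2)^{-\gb^2}M^{\gamma,0}_\gep(\dd{\bf x}\dd{\bf y})\le 3\,\mathcal I_\gep\cdot\max_i \mathcal I_{\gep,i}^\eta.$$

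It remains to show $\max_i\mathcal I_{\gep,i}^\eta/\mathcal I_\gep\to 0$ in probability by first letting $\gep\to 0$ and then $\eta\to 0$. For fixed $\eta$, $N_\eta$ is finite and Propositions \ref{lesmomentos}/\ref{lesmomentos2} applied to each $I_i^\eta$ give $m_\gep^{-1}\mathcal I_{\gep,i}^\eta\to\sigma^2\Lambda(I_i^\eta)$ jointly in $i$, up to a boundary contribution from the region $\{y\notin I_{i-1}^\eta\cup I_i^\eta\cup I_{i+1}^\eta\}$ (on which $|x-y|>\eta$, so it is negligible by the first-moment estimate from the previous step with $a(\gep)$ replaced by the fixed scale $\eta$). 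Consequently, for any $\delta>0$,
$$\limsup_{\gep\to 0}\P\!\left(\max_i\mathcal I_{\gep,i}^\eta/\mathcal I_\gep>\delta\right)\le\P\!\left(\max_i\Lambda(I_i^\eta)\ge c\,\delta\,\Lambda([0,1])\right)$$
for some numerical $c>0$; the right-hand side tends to $0$ as $\eta\to 0$ by the a.s.\ non-atomicity of $\Lambda$ combined with $\Lambda([0,1])>0$ a.s. The main technical obstacle is precisely the boundary-correction argument identifying $\mathcal I_{\gep,i}^\eta$ with the second-moment integral over $I_i^\eta\times I_i^\eta$; it reduces to the same first-moment estimate as the one used in the first convergence, but applied at the fixed scale $\eta$ rather than $a(\gep)$.
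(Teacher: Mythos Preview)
Your proof is correct and follows the same strategy as the paper's: a first-moment/Markov argument for the $D_2$ term, and a partition--plus--non-atomicity argument for the $D_4$ term. Two minor remarks. First, on the II/III frontier ($\gamma=1/\sqrt{2}$) the denominator scales like $\gep^{-\gb^2}|\log\gep|^{-1/2}$ while $\bbE[\mathcal I_\gep]\asymp\gep^{-\gb^2}$, so your Markov step requires $(a(\gep)/\gep)^{\gb^2+\gamma^2-1}\gg|\log\gep|^{1/2}$ there as well, not only at the I/III boundary; the paper makes the same observation. Second, the paper partitions at the scale $b(\gep)$ itself (intervals $I_j=[b(j-1),b(j+1)]$) and then dominates by a coarser fixed-scale partition before letting $\gep\to 0$; your direct use of a fixed $\eta$-partition is a slight streamlining of that step, and your handling of the discrepancy between $\int_{I_i^\eta\times[0,1]}$ and $\int_{I_i^\eta\times I_i^\eta}$ is adequate since only an upper bound of the form $\sigma^2\Lambda(\text{interval of length }O(\eta))$ is needed.
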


\begin{proof}
Using the results of section \ref{slesmomentos} concerning convergence of the second moment, one can make the following replacement in the denominator of \eqref{crimou} 
\begin{equation}
\int_{[0,1]^2} G_\gep(x-y)^{-\gb^2} M^{\gamma,0}_\gep(\dd x\dd y) \approx \begin{cases}
\gep^{1-2\gamma^2-\gb^2} & \text{ if } \gamma<1/\sqrt{2} \text{ √Ç¬†and } \gamma^2+\gb^2>1, \\
\gep^{-\gamma^2}|\log(\gep)| & \text{ if } \gamma<1/\sqrt{2} \text{ √Ç¬†and } \gamma^2+\gb^2=1,\\
\gep^{1-2\gamma^2-\gb^2}|\log(\gep)|^{-1/2} & \text{ if } \gamma=1/\sqrt{2} \text{ √Ç¬†and } \gamma^2+\gb^2>1,
\end{cases}
\end{equation}
in the sense that the above ratios converge in probability towards a positive random variable.
Then the first line of \eqref{crimou} comes from an easy $\bbL_1$ computation
\begin{equation}
\bbE\left[\int_{D_2} G_\gep(x-y)^{-\gb^2} M^{\gamma,0}_\gep(\dd x\dd y)\right]\approx \begin{cases}
\gep^{-\gamma^2} a(\gep)^{1-\gamma^2-\gb^2} & \text{ if } \gamma\le 1/\sqrt{2} \text{ √Ç¬†and } \gamma^2+\gb^2>1, \\
\gep^{-\gamma^2}|\log( a(\gep))| & \text{ if } \gamma<1/\sqrt{2} \text{ √Ç¬†and } \gamma^2+\gb^2=1.
\end{cases}
\end{equation}
which is good enough as $a(\gep)>0$ (we only need to use the Markov inequality).
When  $\gamma= 1/\sqrt{2}$ we require   $(a(\gep)/\gep)^{\gamma^2+\gb^2-1}\gg |\log \gep|^{1/2} $ to make things work.

\medskip

A similar computation works for $\gamma<\frac{1}{2}$ for the second line of \eqref{crimou} but miserably fails in the other cases.
We present below a method that works in all cases.
Set $B(\gep)=b(\gep)^{-1}$ (and we assume that $b$ is defined so that $B$ is an integer).
For $j=1, \cdots, B-1$, we define $I_j:=[b(j-1),b(j+1)]$.

We have
\begin{equation}
\int_{D_4} \prod_{i=1}^2 G_\gep(x_i-y_i)^{-\gb^2} M^{\gamma,0}_\gep(\dd {\bf x}\dd {\bf y})
\le \sum_{j=1}^{B-1}\left(\int_{I_j\times [0,1]} G_\gep(x-y)^{-\gb^2} M^{\gamma,0}_\gep(\dd {x}\dd {y}) \right)^2.
\end{equation}
Now we treat the case $ \gamma<1/\sqrt{2} \text{ √Ç¬†and } \gamma^2+\gb^2>1$, the others can be dealt with similarly. We set 
\begin{equation*}
\bar{\bar{M}}^{2\gamma,0}_\gep(I)= \gep^{2\gamma^2+\beta^2-1} \int_{I \times [0,1]} G_\gep(x-y)^{-\gb^2} M^{\gamma,0}_\gep(\dd {x}\dd {y})
\end{equation*}
and therefore get:
\begin{equation}
( \gep^{2\gamma^2+\beta^2-1} )^2 \int_{D_4} \prod_{i=1}^2 G_\gep(x_i-y_i)^{-\gb^2} M^{\gamma,0}_\gep(\dd {\bf x}\dd {\bf y})
\le   \sup_{1 \leq j \leq B}  ( \bar{\bar{M}}^{2\gamma,0}_\gep[I_j] )  \:  \bar{\bar{M}}^{2\gamma,0}_\gep([0,1])
\end{equation}

By the results of section \ref{slesmomentos}, the random measures $\bar{\bar{M}}^{2\gamma,0}_\gep$ converge in probability in the space of Radon measures to the measure $M^{2\gamma,0}$. By extracting a subsequence, we can assume that almost sure convergence holds. Let $b >0$ be fixed. We have:
\begin{equation*} 
\underset{\gep \to 0}{\overline{\lim}}  \sup_{1 \leq j \leq B}  ( \bar{\bar{M}}^{2\gamma,0}_\gep[I_j] )  \:  \bar{\bar{M}}^{2\gamma,0}_\gep([0,1]) \leq \sup_{1 \leq j \leq b} ( M^{2\gamma,0} (I_j ) )  \:  M^{2\gamma,0}([0,1]) 
\end{equation*}
Now one can conclude by letting $b$ go to $0$ in the above inequality and using the fact that $M^{2\gamma,0}$ has no atoms.

Hence we have proved that: 
\begin{equation}
  \lim_{\gep\to 0} 
  \frac{\int_{D_4} \prod_{i=1}^2 G_\gep(x_i-y_i)^{-\gb^2} M^{\gamma,0}_\gep(\dd {\bf x}\dd {\bf y})}{\left( \int_{[0,1]^2} G_\gep(x-y)^{-\gb^2} M^{\gamma,0}_\gep(\dd x\dd y) \right)^2 }=0.
\end{equation} \end{proof}
 
 \begin{rem} Note that this last argument that cannot be adapted in phase 2. Not only the limiting measure possesses 
  atoms so that the proof doe not work but also the integral over $[0,1]^{2k}\setminus A$ is not negligible.
 \end{rem}

 \subsection{The case $k<k'$}
 
 This case is easier and only uses the tools developed for the $k=k'$ case.

An easy computation shows that   \eqref{clesmoments} for $k<k'$ corresponds to proving 

\begin{equation}\label{wouhou2}
\lim_{\gep\to 0} \frac{\int_{[0,1]^{k+k'}} 
\frac{M^{\gamma,0}_\gep(\dd {\bf x}\dd {\bf y}) \left(
\prod_{1\le i<j\le k}G_\gep(x_i-x_j) \prod_{1\le i<j\le k'}G_\gep(y_i-y_j)\right)^{\gb^2}}{\prod_{i=1}^k\prod_{j=1}^{k'}  G_\gep(x_i-y_j)^{\gb^2}}}
{\left(\int_{[0,1]^{2}} M^{\gamma,0}_\gep(\dd { x}\dd { y})  G_\gep(x-y)^{-\gb^2}\right)^{(k+k')/2}}=0.
\end{equation}

 Let $\sigma$ denote the function obtained from the matching procedure of ${\bf x}\in \bbR^k$, ${\bf y}\in \bbR^{k'}$. Set
 $$B:=\{ ({\bf x},{\bf y})\in [0,1]^{k+k'} \ | \  \sigma({\bf x},{\bf y})(i)=i, \ \forall i\in \{1,\dots, k\} \}.$$
 We have by invariance under permutation of the indices that the numerator above is equal to 
 
 \begin{equation}
 \frac{k'!}{(k'-k)!}\int_{B} 
\frac{M^{\gamma,0}_\gep(\dd {\bf x}\dd {\bf y}) {
\left(\prod_{1\le i<j\le k}G_\gep(x_i-x_j) \prod_{1\le i<j\le k'}G_\gep(y_i-y_j)\right)^{\gb^2}}}{\prod_{i=1}^{k}\prod_{j=1}^{k'}  G_\gep(x_i-y_j)^{\gb^2}}.
 \end{equation}
 Now we can adapt the proof of \eqref{ineqB} in Lemma \ref{lezinek} and show that 
 for all $({\bf x}, {\bf y})\in B$
 \begin{equation}
 \frac{\left(\prod_{1\le i<j\le k}G_\gep(x_i-x_j) \prod_{1\le i<j\le k'}G_\gep(y_i-y_j)\right)^{\gb^2}}{\prod_{i=1}^k \prod_{j=1}^{k'}  G_\gep(x_i-y_j)^{\gb^2}}
 \le C_1^{\gb^2 k(k'-1)} \frac{\prod_{k\le i<j\le k'}G_\gep(y_i-y_j)^{\gb^2}}{\prod_{i=1}^k G_\gep(x_i-y_i)^{\gb^2}}.
 \end{equation}
Then using Lemma \ref{greenk}, we see that  the numerator of the r.h.s. above is bounded by a constant.
Hence there is a constant $C$ such that 
\begin{align*}
\int_{B}& 
\frac{M^{\gamma,0}_\gep(\dd {\bf x}\dd {\bf y}) 
\left(\prod_{1\le i<j\le k}G_\gep(x_i-x_j) \prod_{1\le i<j\le k'}G_\gep(y_i-y_j)\right)^{\gb^2}}{\prod_{i=1}^k\prod_{j=1}^{k'}  G_\gep(x_i-y_j)^{\gb^2}}
\\
&\le C \int_{[0,1]^{k+k'}}\frac{M^{\gamma,0}_\gep(\dd {\bf x}\dd {\bf y})}{\prod_{i=1}^{k} G_\gep(x_i-y_i)^{\gb^2}}
\\&=
\left(\int_{[0,1]^{2}} M^{\gamma,0}_\gep(\dd { x}\dd { y})  G_\gep(x-y)^{-\gb^2}\right)^{k}\left(M^{\gamma,0}_\gep([0,1])\right)^{k'-k}.
\end{align*}
Hence to prove \eqref{wouhou2}, it is sufficient to prove that 

\begin{equation}
\lim_{\gep\to 0} \frac{M^{\gamma,0}_\gep([0,1])}{\sqrt{\int_{[0,1]^{2}} M^{\gamma,0}_\gep(\dd { x}\dd { y})  G_\gep(x-y)^{-\gb^2}}}=0.
\end{equation}
This is a simple consequence of the results of Section \ref{slesmomentos}.

\subsection{Proof of \eqref{clesmoments2}}

We treat only the case of two intervals, more intervals meaning only more notational problems.
We further assume that $J_1$ and $J_2$ are at a positive distance from one another for simplicity.

\medskip

If this is not the case, as when $J_1=[-1,0]$ and $J_2=[0,1]$, we can split $J_2$ into two intervals $J'_2=[0,\delta]$ and $J''_2=[\delta,1]$ 
and expand the factor $M(J_2)=M(J'_2)+M(J''_2)$ in the product.
Then using  H\"older's inequality together with \eqref{clesmoments} we prove that all the term where $J'_2$ appear have a negligible contribution when
 $\delta$ goes to zero.
 
 \medskip

The moment in the numerator is equal to
\begin{multline}
\gep^{k\gb^2}\int_{{\bf x}^i\in I^{k_i}} M^{\gamma,0}_\gep(\dd {\bf x})
\frac{\left(\prod_{l=1}^4 \prod_{1\le i<j\le k_l}G_\gep(x^l_i-x^l_j)\right)^{\gb^2}}{\left(\prod_{(l,m)\in\{ (1,2),(3,4)\}}\prod_{i=1}^{k_l}\prod_{j=1}^{k_m}G_\gep(x^l_i-x^m_j)\right)^{\gb^2}}\\
\times
\frac{
\left(\prod_{i=1}^{k_1}\prod_{j=1}^{k_3}G_\gep(x^1_i-x^3_j)\right)^{\gb^2}\left(\prod_{i=1}^{k_2}\prod_{j=1}^{k_4}G_\gep(x^2_i-x^4_j)\right)^{\gb^2}}{
\left(\prod_{(l,m)\in\{ (1,4),(2,3)\}}\prod_{i=1}^{k_l}\prod_{j=1}^{k_m}G_\gep(x^l_i-x^m_j)\right)^{\gb^2}}
\end{multline}

Now if $I$ and $J$ are at a positive distance, the term appearing on the second line is uniformly bounded, and hence we obtain the result for free when either 
$k_1\ne k_2$ or $k_3\ne k_4$.
In the case 
$k_1= k_2$ or $k_3= k_4$ we have to show that the cross term
$$\frac{
\left(\prod_{i=1}^{k_1}\prod_{j=1}^{k_3}G_\gep(x^1_i-x^3_j)\right)^{\gb^2}\left(\prod_{i=1}^{k_2}\prod_{j=1}^{k_4}G_\gep(x^2_i-x^4_j)\right)^{\gb^2}}{
\left(\prod_{(l,m)\in\{ (1,4),(2,3)\}}\prod_{i=1}^{k_l}\prod_{j=1}^{k_m}G_\gep(x^l_l-x^m_j)\right)^{\gb^2}}$$
is roughly equal to one on the subset of $J_1^{2k_1}\times J_2^{2k_3}$ that really matters.
First we remark that multiplying by $k_1!k_3!$ we can restrict to the set 
$$B_1\times B_2:=
\{ ({\bf x}^1, {\bf x}^2,{\bf x}^3, {\bf x}^4)\in J_1^{2k_1}\times J_2^{2k_3} \ | \ \sigma({\bf x}^1, {\bf x}^2)=\ind, \sigma({\bf x}^3, {\bf x}^4)=\ind\}.$$

Then we show that on the set $A_1\times A_2$ where $A_1$ and $A_2$ are defined similarly to $A$ in \eqref{defa}
we show that the cross term is in the interval $(1-\delta,1+\delta)$.
Then one can conclude by using Lemma \ref{lesdebris}.

\section{Differentiability of Gaussian processes}
In this section, we state a few results about Gaussian processes, mainly about differentiability   and convergence in law in the sense of distributions. These results are certainly not new but we have not found any proper reference. 

In what follows, $D$ stands for a compact subset of $\R^d$ and  let $X$ be a $\R^m$-valued stochastically continuous stochastic process defined on $D$.

First we recall the classical 
\begin{proposition}\label{kolm1}
If, for some $\beta,\alpha,C>0$: 
$$\forall x,z\in D,\quad \E[|X_x-X_z|^q]\leq C|x-z|^{d+\beta}.$$
For all $\gamma\in ]0,\frac{\beta}{q}[$, we set $L=\sup_{x\not = z}\frac{|X_x-X_z|}{|x-z|^\gamma}$. Then, for all $p<q$, $\E[L^\beta]\leq 1+\frac{C p2^{\beta-q\gamma}}{(q-p)(2^{\beta-q\gamma}-1)}$.
\end{proposition}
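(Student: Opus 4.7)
The plan is to establish this quantitative Kolmogorov--Chentsov continuity bound via the standard dyadic chaining argument, keeping careful track of the constants so as to recover the stated explicit form. Since $D$ is compact, one may cover it by finitely many cubes and reduce to $D=[0,1]^d$ via an affine change of variables, which only affects $L$ by a fixed constant. On $[0,1]^d$ I would introduce the dyadic grid $\mathcal{D}_n=2^{-n}\bbZ^d\cap[0,1]^d$ together with the scale-$n$ oscillation
$$K_n:=\max\bigl\{|X_x-X_y|\,:\,x,y\in\mathcal{D}_n,\ |x-y|_\infty=2^{-n}\bigr\},$$
which is a maximum over at most $c_d\,2^{nd}$ ordered neighbouring pairs. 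A union bound combined with Markov's inequality and the hypothesis gives
$$\Pb(K_n>t)\leq c_d\,2^{nd}\cdot C\,2^{-n(d+\beta)}/t^q=c_dC\,2^{-n\beta}/t^q,$$
and integrating this tail up to the natural cut-off $t^*=(c_dC\,2^{-n\beta})^{1/q}$ yields, for every $1\leq p<q$, the moment bound
$$\E[K_n^p]\leq \frac{q}{q-p}\,(c_dC)^{p/q}\,2^{-n\beta p/q}.$$
This is where the $(q-p)^{-1}$ prefactor in the announced constant first appears.

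Next I would perform the chaining step. For any two dyadic points $x,z\in\bigcup_n\mathcal{D}_n$ with $|x-z|_\infty\in(2^{-(n+1)},2^{-n}]$, approximating each by its nearest ancestors in $\mathcal{D}_m$ for $m\geq n$ and telescoping along the resulting chain produces $|X_x-X_z|\leq c_d\sum_{m\geq n}K_m$. Dividing by $|x-z|^\gamma\geq 2^{-(n+1)\gamma}$ and taking the supremum over such pairs yields the deterministic inequality
$$L_{\mathrm{dyad}}:=\sup_{x\neq z\in\bigcup_n\mathcal{D}_n}\frac{|X_x-X_z|}{|x-z|^\gamma}\leq c'_d\sum_{m\geq 0}2^{m\gamma}K_m,$$
and stochastic continuity of $X$ identifies $L_{\mathrm{dyad}}$ with $L$ almost surely. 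Combining Minkowski in $L^p$ with the moment bound on $K_m$ then produces
$$\|L\|_p\leq c'_d\Bigl(\frac{q}{q-p}\Bigr)^{1/p}(c_dC)^{1/q}\sum_{m\geq 0}2^{-m(\beta/q-\gamma)},$$
a geometric series which converges precisely because $\gamma<\beta/q$ and sums to $2^{\beta/q-\gamma}/(2^{\beta/q-\gamma}-1)$. Raising to the $p$-th power and repackaging the combinatorial constants into $C$ reproduces the stated bound, with the additive $1$ accommodating the contribution of $X$ at a fixed base point.

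The hard part, I expect, will be purely bookkeeping: recovering the exact form $2^{\beta-q\gamma}/(2^{\beta-q\gamma}-1)$ of the geometric-series constant and the $(q-p)^{-1}$ factor in the precise shape stated requires organising the sum at the correct scale before exponentiating and using the tail-integration moment bound rather than a cruder Jensen estimate. Beyond this calibration the argument is the classical Kolmogorov--Chentsov one and presents no conceptual obstacle.
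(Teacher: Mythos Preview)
The paper does not actually prove this proposition: it is merely ``recalled'' as classical, with no argument given. Your proposal is the standard Kolmogorov--Chentsov chaining proof and is essentially correct; the dyadic oscillation bound via union bound plus Markov, the telescoping/chaining step, and the use of Minkowski to sum the resulting geometric series are all sound, and the $(q-p)^{-1}$ factor indeed arises exactly where you indicate, from the tail-integration formula for $\E[K_n^p]$.

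Two minor remarks. First, the statement in the paper almost certainly contains a typo: $\E[L^\beta]$ should read $\E[L^p]$, since $\beta$ is the H\"older exponent in the hypothesis and the bound is claimed ``for all $p<q$''. Second, your geometric-series constant comes out as $2^{\beta/q-\gamma}/(2^{\beta/q-\gamma}-1)$ while the paper writes $2^{\beta-q\gamma}/(2^{\beta-q\gamma}-1)$; which one appears depends on whether one sums $\|K_m\|_p$ via Minkowski (your route) or bounds $\E[K_m^q]\le c_dC\,2^{-m\beta}$ directly by the union bound and only then passes to the $p$-th moment. Either constant is acceptable: the proposition is used in the paper only qualitatively, to obtain tightness in the proof of Proposition~\ref{kolm2}. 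Your restriction to $p\ge 1$ (needed for Minkowski) is harmless; for $0<p<1$ one simply replaces Minkowski by the subadditivity $(\sum a_m)^p\le \sum a_m^p$, which yields the same conclusion.
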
 

Now we claim the following:
\begin{proposition}\label{kolm2}
Assume that $X$ is a centered Gaussian process with covariance kernel $K$. Then:\\
1) if for some $\alpha>0$ and $\forall x,y\in D$, $K(x,x)+K(z,z)-2K(x,z)\leq C|x-z|^\alpha$ then $X$ admits a $\gamma$-H\"older modification for all $\gamma\in]0,\alpha/2[$. \\
2) if, for some $k\in\N^*$ and $\alpha>0$, $K$ is of class $C^{2k}$ and $\forall x,y\in D$:
 \begin{equation}\label{regKkolm}
 \partial_x^k\partial_z^kK(x,x)+\partial_x^k\partial_z^kK(z,z)-2\partial_x^k\partial_z^kK(x,z)\leq C|x-z|^\alpha,
 \end{equation}
 then $X$ is of class $C^k$. Furthermore the $k$-th derivative is $\gamma$-H\"older   for all $\gamma\in]0,\alpha/2[$ and the H\"older constant is in $L^p$ for all $p>0$. 
\end{proposition}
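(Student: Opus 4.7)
Part 1) I would obtain as a direct application of Proposition~\ref{kolm1} combined with Gaussian moment bounds. Since $X_x - X_z$ is centered Gaussian with variance at most $C|x-z|^\alpha$, one gets $\E[|X_x - X_z|^q] \leq c_q |x-z|^{q\alpha/2}$ for every $q \geq 1$ and some universal constant $c_q$. Choosing $q$ so that $q\alpha/2 > d$ and invoking Proposition~\ref{kolm1} yields a $\gamma$-H\"older modification for every $\gamma < \alpha/2 - d/q$; letting $q \to \infty$ covers the whole range $\gamma \in ]0,\alpha/2[$.

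For part 2), my plan is to first construct the candidate derivative processes jointly with $X$ from the covariance $K$, and then to identify them as classical derivatives. Since $K \in C^{2k}$, for $0 \leq i,j \leq k$ the kernel $(x,z) \mapsto \partial_x^i \partial_z^j K(x,z)$ is a valid cross-covariance; positive semi-definiteness in the diagonal case $i=j$ follows from its being a pointwise limit of covariance matrices of scaled finite differences of $X$. This yields on a single probability space a centered Gaussian family $(X^{(i)})_{0 \leq i \leq k}$ with $X^{(0)} = X$ and $\E[X^{(i)}_x X^{(j)}_z] = \partial_x^i \partial_z^j K(x,z)$. By hypothesis~\eqref{regKkolm} the covariance of $X^{(k)}$ has an $\alpha$-H\"older diagonal bound, so part 1) applied to $X^{(k)}$ gives a $\gamma$-H\"older modification for every $\gamma < \alpha/2$; applying Proposition~\ref{kolm1} with $q$ arbitrarily large then places the corresponding H\"older seminorm in $L^p$ for all $p > 0$ (using Gaussianity of the supremum).

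It remains to identify $X^{(i+1)}$ as the pathwise derivative of $X^{(i)}$ for each $0 \leq i \leq k-1$. I describe the argument when $d=1$; the general case reduces to it by integrating along axis-parallel segments and treating each partial derivative in turn. Fix $x_0 \in D$ and set
\begin{equation*}
Z^{(i)}_x := X^{(i)}_x - X^{(i)}_{x_0} - \int_{x_0}^x X^{(i+1)}_t \,\dd t.
\end{equation*}
A direct computation of $\E[(Z^{(i)}_x)^2]$ using Fubini, the identities $\partial_z[\partial_x^i \partial_z^i K] = \partial_x^i \partial_z^{i+1} K$ and $\partial_x \partial_z[\partial_x^i \partial_z^i K] = \partial_x^{i+1} \partial_z^{i+1} K$, and the symmetry $K(x,z)=K(z,x)$, shows that all three of $\Var(X^{(i)}_x - X^{(i)}_{x_0})$, $\Var(\int_{x_0}^x X^{(i+1)}_t\,\dd t)$ and $\Cov(X^{(i)}_x - X^{(i)}_{x_0}, \int_{x_0}^x X^{(i+1)}_t \,\dd t)$ are equal to $G(x,x) - 2G(x,x_0) + G(x_0,x_0)$ where $G := \partial_x^i \partial_z^i K$, so that $\E[(Z^{(i)}_x)^2] = 0$. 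Hence $Z^{(i)}_x = 0$ almost surely for each $x$; continuity of $X^{(i)}$ and of $x \mapsto \int_{x_0}^x X^{(i+1)}_t\,\dd t$ (both inherited from part 1) promotes this to an almost-sure identity simultaneously in $x$, and the fundamental theorem of calculus identifies $X^{(i+1)}$ as the derivative of $X^{(i)}$. Iterating from $i=k-1$ down to $i=0$ produces the desired $C^k$ modification of $X$ with $k$-th derivative $X^{(k)}$, inheriting the H\"older and $L^p$ bounds obtained above.

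The main obstacle I foresee is the covariance cancellation yielding $\E[(Z^{(i)}_x)^2]=0$: although completely elementary, it requires careful bookkeeping of partial derivatives and uses the $C^{2k}$ regularity of $K$ to justify exchanging integration and differentiation. Once that is granted, the rest of the argument is a routine assembly of Proposition~\ref{kolm1} with standard Gaussian moment estimates.
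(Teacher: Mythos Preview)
Your proof is correct and takes a genuinely different route from the paper. The paper proceeds by mollification: it sets $X_n = X \star \rho_n$, notes that the covariance of $\partial^k X_n$ is a mollified version of $\partial_x^k\partial_z^k K$, and uses the hypothesis~\eqref{regKkolm} with Proposition~\ref{kolm1} to obtain tightness of $(X_n)_n$ in $C^k$; since $X_n\to X$ in $C(D)$, any subsequential $C^k$-limit is the desired $C^k$ modification of $X$.

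You instead build the candidate derivatives $X^{(i)}$ directly (as $L^2$-limits of finite differences of $X$, which simultaneously justifies the joint Gaussian structure on the original probability space), and then identify $X^{(i+1)}$ as the pathwise derivative of $X^{(i)}$ via the clean variance cancellation $\E[(Z^{(i)}_x)^2]=0$. This is more constructive and sidesteps any compactness argument, at the price of the explicit bookkeeping you flag. One point you pass over quickly: your appeal to part~1) for the continuity of the intermediate $X^{(i)}$, $i<k$, uses not~\eqref{regKkolm} but the bound $G(x,x)+G(z,z)-2G(x,z)\le C|x-z|^2$ with $G=\partial_x^i\partial_z^i K$, which follows from $K\in C^{2k}$ by writing the left side as a double integral of $\partial_x^{i+1}\partial_z^{i+1}K$. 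With that spelled out, both approaches are complete; yours gives the derivative processes explicitly, while the paper's is shorter once one is willing to invoke tightness in $C^k$.
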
 

\noindent {\it Proof.} The first claim is just an application of Proposition \ref{kolm1}.  We outline the second claim. We consider a mollifying sequence $(\rho_n)_n$ and we define the Gaussian process   by convolution $X_n=X\star \rho_n$. For each $n$, this process is infinitely differentiable and the covariance structure is readily seen to be given, after integration by parts,  by:
$$\E[\partial^k_{x}X_n(x)\partial_{x}^kX_n(z)]=\int \int \rho_n(y)\rho_n(y')\partial_{x}^k\partial_{z}^kK(x-y,z-y')\,dydy'\stackrel{\text{def}}{=}K_{k,n}(x,z).$$
Because of \eqref{regKkolm}, it is plain to see that there exists a constant $C$ such that, for all $n$ and $x,y\in D$
$$\E[|\partial^k_{x_i}X_n(x)-\partial^k_{x_i}X_n(z)|^2]\leq C|x-y|.$$
 
The same argument holds for all the derivatives of order $k'$ for $k'=0,\dots,k$. The Kolmogorov criterion (Prop. \ref{kolm1}) ensures that the sequence $(X_n)_n$ is tight in $C^k$ equipped with the topology of uniform convergence of all derivatives of order $k'$ for $k'=0,\dots,k$. Since $(X_n)_n$ converges almost surely in $C(D)$ towards $X$, we deduce that $X$ is of class $C^k$.\qed

We deduce:

\begin{corollary}\label{kolm3}
Consider a sequence $(X_n)_n$   of centered Gaussian processes with respective covariance kernel $K_n$ of class $C^{2k}$ such that all the derivatives up to order $2k$ uniformly converges on $D$ towards a covariance kernel $K$. Then the sequence $(X_n,\partial_xX_n,\dots\partial^k_x X_n)$ converges in law in $C(D)$ towards $(X,\partial_xX,\dots\partial^k_x X)$, where $X$ is a centered Gaussian process on $D$.
\end{corollary}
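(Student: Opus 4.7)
The plan is to establish the result via the classical two-step route: convergence of finite-dimensional distributions plus tightness in the uniform topology.

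For the first step, I would apply Proposition \ref{kolm2} to each $K_n$ in order to guarantee that $X_n$ is almost surely of class $C^k$, so that the vector-valued process $(\partial^\alpha X_n)_{|\alpha|\le k}$ is a well-defined centered Gaussian process on $D$ with covariance
$$\E\bigl[\partial^\alpha X_n(x)\,\partial^\beta X_n(z)\bigr]=\partial^\alpha_x\partial^\beta_z K_n(x,z),\qquad |\alpha|,|\beta|\le k.$$
Since these are $(2k)$-th order derivatives of $K_n$ evaluated on $D\times D$, the uniform $C^{2k}$ convergence assumption yields uniform convergence of each such covariance function on $D\times D$. For any finite collection of points $x_1,\dots,x_m$, the covariance matrix of the centered Gaussian vector $(\partial^\alpha X_n(x_j))_{|\alpha|\le k,\, 1\le j\le m}$ therefore converges, which gives convergence in law of this vector to the Gaussian vector built from $K$ and its $2k$-th derivatives. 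Denoting the limiting Gaussian process by $(\partial^\alpha X)_{|\alpha|\le k}$, this yields the convergence of all finite-dimensional marginals.

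For tightness in $C(D)$, by Prokhorov together with the Arzel\`a--Ascoli theorem, it suffices to prove a uniform modulus-of-continuity bound for each component $\partial^\alpha X_n$ separately. The basic identity is
$$\E\bigl[(\partial^\alpha X_n(x)-\partial^\alpha X_n(z))^2\bigr]=\partial^\alpha_x\partial^\alpha_z K_n(x,x)+\partial^\alpha_x\partial^\alpha_z K_n(z,z)-2\,\partial^\alpha_x\partial^\alpha_z K_n(x,z),$$
which is a second-order increment of a $2k$-th derivative of $K_n$. Under the hypothesis of the corollary, the right-hand side is bounded uniformly in $n$ by the modulus of continuity of $\partial^\alpha_x\partial^\alpha_z K$. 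Combined with the Gaussian moment bound $\E[|G|^q]\le C_q\,\E[G^2]^{q/2}$, this provides a uniform control of high-order increments which, via Proposition \ref{kolm1}, yields a uniform H\"older modulus and thereby tightness.

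The main obstacle is that pure $C^{2k}$ regularity provides only a modulus of continuity $\omega(|x-z|)\to 0$ without a power rate, while Proposition \ref{kolm1} requires a polynomial bound of the form $|x-z|^{d+\beta}$. The cleanest way to address this is to interpret the corollary under the slightly stronger (and in fact natural) hypothesis, present at the point of use in Theorem \ref{corostar}, that the $2k$-th derivatives of $K_n$ are H\"older of some order $\eta>0$ uniformly in $n$, which directly gives $\E[(\partial^\alpha X_n(x)-\partial^\alpha X_n(z))^2]\le C|x-z|^\eta$ and feeds Proposition \ref{kolm1} to close the argument. An alternative is to invoke a Gaussian-specific tightness criterion \emph{\`a la} Dudley--Fernique, which only needs a general modulus of continuity at the price of trading the Kolmogorov bound for a metric entropy estimate; either route concludes the proof.
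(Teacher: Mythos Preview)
The paper does not actually give a proof of this corollary: it simply writes ``We deduce:'' after Proposition~\ref{kolm2} and states the result. Your two-step plan (finite-dimensional convergence via convergence of the covariance matrices, tightness via a uniform Kolmogorov bound on the increments of each $\partial^\alpha X_n$) is precisely the intended argument, and is exactly parallel to the mollification proof the paper gives for Proposition~\ref{kolm2}.

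Your observation about the hypothesis is well taken and worth recording. For $|\alpha|<k$ the function $\partial^\alpha_x\partial^\alpha_z K_n$ is still at least $C^2$, so a second-order Taylor expansion in the diagonal direction yields a uniform bound $\E[(\partial^\alpha X_n(x)-\partial^\alpha X_n(z))^2]\le C|x-z|^2$, which feeds Proposition~\ref{kolm1} directly. For $|\alpha|=k$ the stated $C^{2k}$ hypothesis alone only gives a modulus of continuity and not a power rate; your fix (reading the corollary with the H\"older assumption on the top derivatives, as in the hypotheses of Proposition~\ref{kolm2} and Theorem~\ref{corostar}) is the right one and matches how the paper actually uses these results. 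So your proof is correct and in line with the paper, and you have in addition spotted a minor imprecision in the statement.
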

\section{Auxiliary results of section \ref{GFF}}

\noindent {\it Proof of Lemma \ref{greenradius}.} We suppose that $B(x,\delta) \subset D$. We set $G_D$ to be the Green function in $D$. We have for all functions $F \ge 0$:
\begin{align*}
\int_D F(y)\left( \int_{\gep^2}^{\infty} p_{D}(t,x,y)\dd t\right) dy & =   \E^x[   \int_{\gep^2}^{\infty} F(B_t)  \ind_{\{\tau_D>t\}} \dd t ]  =   \E^x[   \E^{B_{\gep^2}}  [ \int_{0}^{\infty} F(B_t)  \ind_{\tau_D>t} \dd t]  \ind_{\{\tau_D>\gep^2\}}   ]  \\
& =   \E^x[   \int_D G_D(B_{\gep^2},y ) F(y)  dy \ind_{\{\tau_D>\gep^2\}}  ]      = \int_D  F(y)   \E^x[ G_D(B_{\gep^2},y ) \ind_{\{\tau_D>\gep^2\}} ] dy.  
\end{align*}
Hence we have$ \int_{\gep^2}^{\infty} p_{D}(t,x,y)\dd t = \E^x  [   G_D(B_{\gep^2},y) \ind_{\{\tau_D>\gep^2\}}  ] $. Now, we extend $G_D(x,y)$ to all $\R^2 \times \R^2$ by setting it equal to 0 as soon as $x$ or $y$ are not in the domain $D$. Recall that there exists some constant $C>0$ such that for all $x,y$ in $\R^2$, $
G_D(x,y) \le \ln_{+} \frac{1}{|y-x|}+C$. Now, we have:
\begin{align*}
\big|\E^x  [   G_D(B_{\gep^2},x)   ]-\E^x  [   G_D(B_{\gep^2},x) \ind_{\{\tau_D>\gep^2\}} ] \big|  
& = \E^x  [   G_D(B_{\gep^2},x) \ind_{\{\tau_D\le  \gep^2\}} ]   \\
& \le  \E^x  [   (\ln_{+} \frac{1}{|B_{\gep^2}-x|} + C)\ind_{\{\sup_{u \leq \gep^2} |B_u-x|   \ge \delta\}}  ]     \\  
& \le \E^x  [   (\ln_{+} \frac{1}{|B_{\gep^2}-x|} + C)^2]^{1/2} \Pb( \sup_{u \leq \gep^2} |B_u-x|   \ge \delta )^{1/2}  \\
& \le C (\ln \frac{1}{\gep})^2 e^{- C \delta^2/{\gep^2}}.
\end{align*}
Hence, we can replace $\E^x  [   G_D(B_{\gep^2},x) \ind_{\tau_D>\gep^2}  ] $ by $\E^x  [   G_D(B_{\gep^2},x) ] $. Similarly, we can replace $\E^x  [   G_D(B_{\gep^2},x) ] $ by $\E^x  [   G_D(B_{\gep^2},x) \ind_{\{|B_{\gep^2}-x| \le \delta\}}  ]  $. 
Therefore we get
\begin{align*}
\E^x  [   G_D(B_{\gep^2},x) \ind_{\{\tau_D>\gep^2\}}   ]  &  = \E^x  [   G_D(B_{\gep^2},x) \ind_{\{|B_{\gep^2}-x| \le \delta\}}  ]  +o(1) \\
& = \int_{|u| \le \frac{\delta}{\gep} }   \frac{e^{-|u|^2/2}}{2 \pi}  G_D(x+\gep u,x) \dd u +o(1)  .
\end{align*}
Now, by conformal invariance, we get that:
\begin{align*}
\int_{|u| \le \frac{\delta}{\gep} }   \frac{e^{-|u|^2/2}}{2 \pi}  G_D(x+\gep u,x) \dd u & = \int_{|u| \le \frac{\delta}{\gep} }   \frac{e^{-|u|^2/2}}{2 \pi}  G_{\mathbb{H}}(\varphi(x+\gep u), \varphi(x)) \dd u \\
& \approx   \int_{|u| \le \frac{\delta}{\gep} }   \frac{e^{-|u|^2/2}}{2 \pi}  G_{\mathbb{H}}(\varphi(x)+\gep \varphi'(x) u), \varphi(x)) \dd u 
\end{align*}
where the last line can be made rigorous by using the explicit expression of $G_{\mathbb{H}}$:
$$G_{\mathbb{H}}(x,y)=\ln\frac{1}{|x-y|}-\ln\frac{1}{|x-\bar{y}|}. $$
Therefore we get:
\begin{align*}
\int_{\gep^2}^{\infty} p_{D}(t,x,y)\dd t 
& = \int_{|u| \le \frac{\delta}{\gep} }   \frac{e^{-|u|^2/2}}{2 \pi}  G_{\mathbb{H}}(\varphi(x)+\gep \varphi'(x) u), \varphi(x)) \dd u +o(1)\\
& =   \int_{|u| \le \frac{\delta}{\gep} }   \frac{e^{-|u|^2/2}}{2 \pi}  \ln\frac{1}{|\gep \varphi'(x) u|} \, \dd u \\
& \quad  \quad    -\int_{|u| \le \frac{\delta}{\gep} }   \frac{e^{-|u|^2/2}}{2 \pi}  \ln\frac{1}{|\varphi(x)-\bar{\varphi}(x)+\gep \varphi'(x) u|} \, \dd u +o(1)\\
&=\ln\frac{1}{\gep}+ \ln\frac{1}{| \varphi'(x)|} +\ln 2{\rm Im}(\varphi(x))+o(1).
\end{align*}
The result follows via \eqref{confrad2}. \qed


\end{document}